\numberwithin{equation}{section}
\theoremstyle{plain}
    \newtheorem{theorem}[equation]{Theorem}
    \newtheorem{lemma}[equation]{Lemma}
    \newtheorem{corollary}[equation]{Corollary}
    \newtheorem{proposition}[equation]{Proposition}
\theoremstyle{definition}
    \newtheorem{definition}[equation]{Definition}
    \newtheorem{example}[equation]{Example}
    \newtheorem{observation}[equation]{Observation}
    \newtheorem{remark}[equation]{Remark}
    \newtheorem{remarks}[equation]{Remarks}
    \newcommand{\C}{\mathbb{C}}
    \newcommand{\N}{\mathbb{N}}
    \newcommand{\Z}{\mathbb{Z}}
    \newcommand{\Q}{\mathbb{Q}}
    \renewcommand{\O}{\mathfrak o}
    \renewcommand{\H}{\mathcal H}
    \renewcommand{\k}{\Bbbk}
   	\renewcommand{\phi}{\varphi}
	\renewcommand{\epsilon}{\varepsilon}
\newcommand{\into}{\hookrightarrow}
\newcommand{\dd}{d}  
\newcommand{\restrict}{\big{\vert}}
\newcommand{\germ}{\mathfrak}
\newcommand{\id}{\mathrm{id}}
\newcommand{\ol}{\overline}
\newcommand{\transpose}{t}
\newcommand{\ot}{\otimes}
    \DeclareMathOperator{\Irr}{Irr}
   \DeclareMathOperator{\Hom}{Hom}
    \DeclareMathOperator{\End}{End}
    \DeclareMathOperator{\Ad}{Ad}
    \DeclareMathOperator{\Aut}{Aut}
    \DeclareMathOperator{\ind}{ind}
    \DeclareMathOperator{\SL}{SL}
    \DeclareMathOperator{\GL}{GL}
    \DeclareMathOperator{\TT}{T}
    \DeclareMathOperator{\BB}{B}
    \DeclareMathOperator{\UU}{U}
\DeclareMathOperator{\Prim}{Prim}
\DeclareMathOperator{\Image}{Im}
    \DeclareMathOperator{\Sp}{Sp}
    \DeclareMathOperator{\Rep}{\mathcal{R}}
	\DeclareMathOperator{\vol}{vol}
	\DeclareMathOperator{\res}{res}
	\DeclareMathOperator{\infl}{inf}
	\DeclareMathOperator{\lspan}{span}
	\DeclareMathOperator{\Mod}{Mod}
	\DeclareMathOperator{\ch}{ch}
	\DeclareMathOperator{\pind}{i}
	\DeclareMathOperator{\pres}{r}
	\DeclareMathOperator{\trace}{tr}
	\DeclareMathOperator{\diag}{diag}
	\DeclareMathOperator{\Lie}{Lie}
\keywords{{Harish-Chandra induction, parabolic induction, compact $p$-adic groups, representations of profinite groups}}
\subjclass[2010]{Primary 22E50; Secondary 20G25, 20C33, 20C15, 20C07.}
\begin{document}

\begin{abstract} 
Harish-Chandra induction and restriction functors play a key role in the representation theory of reductive groups over finite fields.  
In this paper, extending earlier work of Dat, we introduce and study generalisations of these functors  which apply to a wide range of finite and profinite groups, typical examples being {compact open subgroups of reductive groups over non-archimedean local fields}. We prove that these generalisations are compatible with two of the  tools commonly used to study the (smooth, complex) representations of such groups, namely Clifford theory and the orbit method. As a test case, we examine in detail the induction and restriction of representations from and to the Siegel Levi subgroup of the symplectic group  $\Sp_4$ over a finite local principal ideal ring of length two. We obtain in this case a Mackey-type formula for the composition of these induction and restriction functors which is a perfect analogue of the well-known formula for the composition of Harish-Chandra functors. In a different direction, we study representations of the Iwahori subgroup $I_n$ of $\GL_n(F)$, where $F$ is a non-archimedean local field. We establish a bijection between the set of irreducible representations of $I_n$ and tuples of primitive irreducible representations of smaller Iwahori subgroups, where primitivity is defined by the vanishing of suitable restriction functors.
\end{abstract}

\title{A variant of Harish-Chandra functors}

\author{Tyrone Crisp}
\address{Max Planck Institute for Mathematics, 
Vivatsgasse 7, 53111 Bonn, Germany}
  \email{tyronecrisp@mpim-bonn.mpg.de}

\author{Ehud Meir}
\address{Department of Mathematics,
University of Hamburg,
Bundesstr. 55,
20146 Hamburg, Germany}
  \email{meirehud@gmail.com}

\author{Uri Onn}
\address{Department of Mathematics, Ben Gurion
  University of the Negev, Beer-Sheva 84105, Israel}
  \email{urionn@math.bgu.ac.il}

\date{\today}
\maketitle

\setcounter{tocdepth}{1}
\tableofcontents
\thispagestyle{empty}

\section{Introduction}\label{sec:intro}

\subsection{Overview}

Harish-Chandra (or parabolic) induction and restriction are fundamental operations in the representation theory of reductive groups over finite fields, 
allowing efficient transport of representations between such groups and establishing a close connection to the representation theory of
finite Coxeter groups; see \cite{Zelevinsky, vanLeeuwen} for a particularly elegant development of this connection for finite classical groups. 
Recall that Harish-Chandra induction is an instance of the following general construction. Given a finite group $G$, and subgroups $L$ and $U$ such that $L$ normalises $U$, one obtains a functor $\pind_L^G$ from the complex representations of $L$ to the complex representations of $G$ by tensor product with the $\H(G)$-$\H(L)$ bimodule $\H(G)e_U$, where~$\H(G)$ is the complex group algebra of $G$ and $e_U$ is the idempotent associated with the trivial representation of $U$. Dually, tensoring with the bimodule $e_U\H(G)$ gives a functor $\pres^G_L$ that is adjoint to $\pind_L^G$. A variant of  Mackey's double-coset formula applies to the composite functor $\pres^G_L\pind_L^G$, yielding a decomposition of the endomorphism algebra of an induced representation $\pind_L^G(M)$ into a direct sum  indexed by the double-coset space $LU\backslash G/LU$;  cf. \cite[Theorem 2.3.1]{vanLeeuwen}. In the case of Harish-Chandra induction, $G$ is the group of rational points of a connected reductive group defined over a finite field, and $L$ and $U$ are the respective groups of rational points of a Levi factor of and the unipotent radical of a rational parabolic subgroup $P$ of $G$. The Bruhat decomposition gives a parametrisation of the $P$-double cosets in $G$ by the double cosets of the Weyl group of $L$ in the Weyl group $G$, and the Mackey formula becomes  
\begin{equation}\label{eq:Mackey_intro}
\pres^G_{L} \pind_{L}^G \cong \bigoplus_{g\in W_{L}\backslash W_G / W_{L}} \pind_{L\cap gLg^{-1}}^{L} \, \Ad_g \, \pres^{L}_{g^{-1}Lg\cap L}.
\end{equation}
See \cite{DM} for the precise general formulation and proof, and for a sampling of the applications of this formula; and {see} \cite{Harish-Chandra, Springer_cusp} for the original work of Harish-Chandra.

In this paper we study induction and restriction functors which generalise the Harish-Chandra functors to a rich family of profinite groups, to which the family of reductive groups over finite fields is only a partial first approximation. Our motivating examples are classical groups over compact discrete valuation  rings, but our framework covers many other cases, including arbitrary open compact subgroups of reductive groups
over local fields. Certain representations of such open compact  subgroups play an important role in the construction and classification of smooth representations of the reductive groups via the theory of types. However, the representation theory of these compact subgroups per se is not so well understood.        

Before we introduce the functors that are at the heart of the present paper we remark that the most obvious generalisation  of the Harish-Chandra functors to the setting considered here tends to produce representations that are far from irreducible, and in this sense lacks the efficiency of the \lq classical\rq~ Harish-Chandra functors. For a concrete example, let $\O$ be the ring of integers in a non-archimedean local field $F$ (so $F$ is either the field of Laurent series over a finite field, or a finite extension of the $p$-adic numbers). Let $\germ p$ denote the maximal ideal of $\O$, and for every $\ell \in \N$ set~$\O_\ell=\O/\germ p^\ell$. Let $\TT_n\subset \BB_n \subset \GL_n$ denote the standard diagonal torus and the standard upper-triangular Borel subgroup in the general linear group, and let~$\UU_n$ denote the unipotent radical of~$\BB_n$. The subgroup $\TT_n(\O_\ell)$ normalises $\UU_n(\O_\ell)$, and so the construction described in the first paragraph gives a functor from representations of $\TT_n(\O_\ell)$ to representations of $\GL_n(\O_\ell)$, which in particular  sends the trivial representation of  $\TT_n(\O_\ell)$ to the permutation representation  of $\GL_n(\O_\ell)$ given~by 
\begin{equation}\label{eqn.HC.for.GLn}
\H(\GL_n(\O_\ell))e_{\UU_n(\O_\ell)} \otimes_{\H(\TT_n(\O_\ell))} 1 \cong \H(\GL_n(\O_\ell)/\BB_n(\O_\ell)).
\end{equation}

When $\ell=1$  the Mackey formula \eqref{eq:Mackey_intro} gives a decomposition of \eqref{eqn.HC.for.GLn} according to the regular representation of the symmetric group on $n$ letters. For $\ell > 1$  the decomposition of \eqref{eqn.HC.for.GLn} into irreducibles gets very quickly out of control, owing to the complicated nature of the double-coset space $\BB_n(\O_\ell)\backslash \GL_n(\O_\ell)/ \BB_n(\O_\ell)$. Misleadingly simple is the case $n=2$, where the induced representation has $\ell+1$ irreducible components (see \cite{Casselman}); already for $n=3$ the decomposition of the induced representation is rather complicated and, in particular, depends on the degree of the residue field~$\O_1=\O/\germ p$, see~\cite{OnnSingla}. 

Our proposed variant of Harish-Chandra induction, in this $\GL_n$ example, sends the trivial representation of $\TT_n(\O_\ell)$ to the image of the intertwining operator
\[
\H\left(\GL_n(\O_\ell)/\BB_n(\O_\ell)\right) \to \H\left(\GL_n(\O_\ell)/\BB_n^{\transpose}(\O_\ell)\right)
\]
which averages right  $\BB_n(\O_\ell)$-invariant functions on $\GL_n(\O_\ell)$ by the right action of $\UU_n^\transpose(\O_\ell)$, where $\transpose$ means transpose, to obtain  right  $\BB_n^\transpose(\O_\ell)$-invariant functions. This image is isomorphic---regardless of $\ell$---to the module $\H(\GL_n(\O_1)/\BB_n(\O_1))$, on which $\GL_n(\O_\ell)$ acts through the quotient map $\GL_n(\O_\ell)\to \GL_n(\O_1)$. 

This process of passing to the image of a canonical intertwining operator between two induced representations fits into a rather general setting, which we shall now describe. In the main body of the paper we study representations of profinite groups, such as groups of matrices over compact discrete valuation rings; but our results also apply to (and are interesting for) finite groups, such as matrix groups over the finite rings $\O_{\ell}$, and in order to minimise the technicalities in this introduction we shall restrict our attention here to the finite case.

Let $G$ be a finite group, and suppose that $U$, $L$ and $V$ are subgroups of $G$ such that $L$ normalises $U$ and $V$, and such that the map
\[
U\times L\times V \hookrightarrow G
\]
given by multiplication in $G$ is injective. We let $e_U$ and $e_V$ denote the idempotents in the complex group algebra $\H(G)$ associated to the trivial representations of $U$ and $V$, and we consider the $\H(G)$-$\H(L)$ bimodule $\H(G)e_U e_V$. Let $\pind_{U,V}$ be the functor from the category $\Rep(L)$ of complex representations of $L$ to the category of complex representations of $G$ defined by tensoring with this bimodule:
\[
\pind_{U,V}:\Rep(L) \to \Rep(G), \qquad M \mapsto \H(G)e_U e_V \otimes_{\H(L)}M.
\]
Similarly, define 
\[
\pres_{U,V}:\Rep(G) \to \Rep(L), \qquad N \mapsto e_Ue_VH(G) \otimes_{\H(G)}N.
\]

This definition is closely related to, and directly inspired by, a construction of Dat \cite{Dat_parahoric}. Note, though, that we consider only complex representations, whereas Dat studied representations over more general commutative rings. The relationship between our definition and Dat's, for complex coefficients, is discussed further in Remark \ref{rem:CMO-vs-Dat} and in Section \ref{parahoric_section}. The main novelty of the above definition relative to Dat's is that we do not require the product $ULV$ to be a group, so that for instance we could as above take $G$ to be $\GL_n(\O_\ell)$, and let $L=\TT_n(\O_\ell)$, $U=\UU_n(\O_\ell)$ and $V=\UU_n^\transpose(\O_\ell)$. When $\ell=1$, a theorem of Howlett and Lehrer  (\cite[Theorem 2.4]{Howlett-Lehrer_HC}; cf. Example \ref{ex:pind-field}) implies that the functors $\pind_{U,V}$ and $\pres_{U,V}$  in this $\GL_n$ example are isomorphic to the functors of Harish-Chandra induction and restriction. When $\ell>1$ these functors are  proper subfunctors of the more obvious  generalisations of the Harish-Chandra functors mentioned above.

\subsection{Description of the main results}
Basic properties of the functors $\pind_{U,V}$ and $\pres_{U,V}$, in the abstract setting {for profinite groups}, are presented in Section \ref{sec:definition}. For instance, these functors are adjoints on both sides; they do not depend on the order of $U$ and $V$, up to natural isomorphism; they preserve finite-dimensionality; and they satisfy a version of \lq induction in stages\rq.

The analysis of the functors $\pind_{U,V}$ and $\pres_{U,V}$ becomes considerably less complicated in cases where the product map $U\times L\times V\to G$ is a bijection. 
In many examples, such as the $\GL_n$ example considered above, this is not the case, but there is a normal subgroup $G_0 \lhd G$ such that the product map 
$U_0\times L_0\times V_0\to G_0$ is a bijection, where $H_0$ means $H\cap G_0$. In the $\GL_n$ example we can take $G_0$ to be the principal congruence subgroup $G_0=\{g\in \GL_n(\O_\ell)\ |\ g\equiv 1 \textrm{ modulo }\germ p\}$.
 
Suppose that $G$ admits such a normal subgroup $G_0$. The representation categories $\Rep(L)$ and $\Rep(G)$ decompose according to  $L_0$- and $G_0$-isotypic components, and the individual components can be described using Clifford theory. In Section~\ref{sec:Clifford} we prove that the Clifford analysis is compatible with the induction and restriction functors $\pind_{U,V}$ and $\pres_{U,V}$.

More precisely, let $\psi$ be an irreducible representation of $L_0$, and let $\phi=\pind_{U_0,V_0}(\psi)$ be the corresponding (irreducible) induced representation of $G_0$. Let $L(\psi)$ and $G(\phi)$ denote the inertia groups of $\psi$ and $\phi$. We prove in Theorems \ref{thm:C1}, \ref{thm:C2} and \ref{thm:C3} that there is a commutative diagram for induction (and a similar diagram for restriction):
\[
 \xymatrix@C=80pt{ \Rep(L)_\psi \ar[r]^-{\pind_{U,V}}  & \Rep(G)_\phi \\
 \Rep (L(\psi))_\psi \ar[r]^-{\pind_{U(\phi),V(\phi)}} \ar[u]^-{\cong}   & \Rep(G(\phi))_\phi  \ar[u]_-{\cong} \\
 \Rep^{\gamma } (L(\psi)/L_0) \ar[r]^-{\pind_{U(\phi)/U_0, V(\phi)/V_0}} \ar[u]^-{\cong}   & \Rep^{\gamma }(G(\phi)/G_0)  \ar[u]_-{\cong} 
 }
\] 
where $\Rep(H)_\theta$ stands for the representations of $H$ whose restriction to $H_0\lhd H$ contains the irreducible representation $\theta$; and $\Rep^{\gamma}(H(\theta)/H_0)$ stands for projective representations (for a certain cocycle $\gamma$) of the quotient $H(\theta)/H_0$.

As for the groups $L_0$ and $G_0$,   in many of our motivating examples they are amenable to the orbit  method: their irreducible representations correspond bijectively to coadjoint orbits in the Pontryagin duals of certain Lie algebras $\germ l_0$ and $\germ g_0$. This situation is studied in Section \ref{sec:orbit}, where we show that under appropriate assumptions the induction functor 
$\pind_{U_0,V_0}:\Rep(L_0) \to \Rep(G_0)$ 
corresponds to a natural inclusion of coadjoint orbits 
$\Lambda^*:L_0\backslash \widehat{\germ l_0} \into G_0\backslash \widehat{\germ g_0}$. That is, the diagram  
\[
 \xymatrix@R=30pt@C=50pt{ \Irr(L_0) \ar[r]^-{\pind_{U_0,V_0}} \ar[d]_-{\cong} & \Irr(G_0) \ar[d]^-{\cong}\\
{L_0}\backslash \widehat{\germ l_0} \ar[r]^-{\Lambda^*}   & G_0\backslash \widehat{\germ g_0}  
 }
\]
commutes.

Returning from the abstract setting to our motivating examples, the functors $\pind_{U,V}$ and $\pres_{U,V}$ provide a new approach to the representation theory of classical groups over compact discrete valuation rings, and the results of Sections \ref{sec:Clifford} and \ref{sec:orbit} provide tools to analyse these functors. In Section \ref{sec:sp} we illustrate the method for the symplectic group $\Sp_4(\O_2)$. The main result is a Mackey-type formula for the composition of restriction and induction to/from the Siegel Levi subgroup. The formula is the same as the usual formula \eqref{eq:Mackey_intro} for the composition of Harish-Chandra induction and restriction for the corresponding group $\Sp_4(\O_1)$ over the residue field of $\O$, which lends some support to the analogy between our functors and the Harish-Chandra functors.
This analogy is further supported by an analysis for the groups $\GL_n$, which will be presented in a sequel to this paper. 

The general methods developed in Sections \ref{sec:Clifford} and \ref{sec:orbit} and used in Section \ref{sec:sp} apply equally well to Dat's parahoric induction and restriction functors. In Section \ref{parahoric_section} we prove that Dat's parahoric induction and restriction  functors are not isomorphic to ours, in the example of the Siegel Levi in $\Sp_4(\O_2)$; this gives a negative answer to Dat's question \cite[Question 2.15]{Dat_parahoric}. We also prove that the parahoric induction and restriction functors do not satisfy the analogue of \eqref{eq:Mackey_intro} in this example.         

While our primary motivation for studying the functors $\pind_{U,V}$ and $\pres_{U,V}$ is their application to  classical groups, these functors are defined in much broader generality, and we believe that they have a useful role to play in the representation theory of more general matrix groups. In  Section \ref{sec:Iwahori}  we use these functors to study one such example, the representation theory of the Iwahori subgroup $I_n$ of $\GL_n(\O)$. The Iwahori in $\SL_2$ was previously studied from a similar point of view in \cite{Dat_parahoric} and \cite{Crisp_parahoric}. The main result of this section, Theorem \ref{thm:Iw_primitive}, states that the functors $\pind_{U,V}$ and $\pres_{U,V}$ in this context give a bijection 
\begin{equation}\label{eq:intro_prim}
\Irr(I_n) \longleftrightarrow \bigsqcup_{n_1+\cdots+n_k=n} \Prim(I_{n_1})\times \cdots \times \Prim(I_{n_k}) 
\end{equation}
between the irreducible representations of $I_n$, and tuples of {\em primitive} irreducible representations of smaller Iwahori subgroups (where {primitivity} is defined by the vanishing of the functors $\pres_{U,V}$). The problem of classifying the irreducible representations of $I_n$ remains a very difficult one---it contains the problem of counting the conjugacy classes in the group of upper-triangular matrices over the residue field $\O_1$---but the bijection \eqref{eq:intro_prim} shows that part of this classification is very simple and combinatorial in nature.

\subsection{Related constructions} 
Representations of open compact subgroups of reductive groups over local fields have received much attention in the past two decades. One approach, taken by Lusztig and Stasinski, is to generalise Deligne-Lusztig theory  \cite{Deligne-Lusztig} (which is itself a generalisation of the Harish-Chandra theory) to such groups; see \cite{Lusztig1, Stasinski}, and also \cite{Chen-stasinski} and \cite{Lusztig2}. Another approach, taken by Hill~\cite{Hill_Jord, Hill_nilp, Hill_Reg,   Hill_SSandCusp}, consists of a direct Clifford-theoretic analysis  of representations according to their restrictions to congruence kernels. In particular, in \cite{Hill_Jord} Hill establishes a Jordan decomposition for characters of general linear groups over 
{rings of integers in $p$-adic fields}, analogous to the Jordan decomposition of irreducible characters of finite reductive groups established by Lusztig, cf. \cite{Lusztig84}. Hill's work relies on an analysis of certain Hecke algebras building on the work of Howe and Moy ~\cite{Howe-Moy}. Another approach was proposed by the  third author  in~\cite{Onn} using a different variant of Harish-Chandra induction that allows one to import representations from automorphism groups of finite modules over discrete valuation rings, yielding a complete and characteristic-independent treatment in rank two. The work of Dat \cite{Dat_parahoric}, in which representations of parahoric subgroups of $p$-adic reductive groups are studied using methods closely related to those of the present paper, has already been mentioned above.

It would be of great interest to understand how all these approaches align with the one taken in this paper. The relationship between our work and that of Dat is addressed in Remark \ref{rem:CMO-vs-Dat} and Section \ref{parahoric_section}. As for the other works cited above, let us make a couple of general observations. 

The first point to note is that the natural filtration on the valuation ring $\O$ does not enter a priori into the definition of our induction/restriction functors, and in this sense our approach is more elementary than those of the above-cited works.  It is consequently more general---applying for instance to $\GL_n(R)$ for an arbitrary (pro)finite commutative ring $R$---although the usefulness of our methods beyond the setting of discrete valuation rings remains to be tested.

A second difference is one of scope. Our functors are defined with a view to making the induced representations as small as possible, and the set of representations which cannot be obtained by induction in our sense (the \lq cuspidal\rq~ representations) will be accordingly large---certainly larger than the corresponding sets for the approaches listed above. Our   goal is to develop an analogue of Harish-Chandra theory which mirrors as closely as possible  the theory for reductive groups over a finite field, yielding a description of arbitrary representations in terms of cuspidal ones and of Weyl group combinatorics. We leave untouched for now the problem of constructing (let alone classifying) the cuspidal representations.

 \subsection{Acknowledgments}  We thank George Willis and Helge Gl\"ockner for helpful discussions on tidy subgroups. The first two authors were partly supported by the Danish National Research Council through the Centre for Symmetry and Deformation (DNRF92).  The third author  acknowledges the support of the Israel Science Foundation and of the Australian Research Council.


\section{Notation, definitions, and basic properties}\label{sec:definition}

In this section we define and develop basic properties of the functors $\pind_{U,V}$ and $\pres_{U,V}$ in an abstract setting. The pivotal point in this section is Proposition~\ref{prop:z}, which allows us to generalise many of Dat's results from \cite[Section 2]{Dat_parahoric} to the situation considered in this paper. We begin by setting up the notation that will be used throughout the paper.  

\subsection{Notation}\label{subsec:notation}

For a profinite group $G$ we let $\Rep(G)$ denote the category of smooth, complex representations of~$G$, that is, linear representations $\phi:G\to \GL_{\C}(M)$ in which each vector in $M$ is fixed by some open subgroup of~$G$. We will denote such a representation either by the map $\phi$ or by the space $M$, as convenient. If $M$ is any representation of~$G$ (not necessarily smooth), we let $M^\infty$ denote the $G$-subspace of vectors fixed by some open subgroup of~$G$.

Let $\H(G)$ denote the algebra of locally constant, complex-valued functions on $G$, with product given by convolution with respect to some Haar measure on $G$. Different choices of Haar measure give isomorphic algebras, the isomorphism being multiplication by the ratio $\vol_1(G)/\vol_2(G)$ of the total volumes of the two measures. The category $\Rep(G)$ is equivalent to the category of nondegenerate {left} $\H(G)$-modules, i.e. those modules $M$ which satisfy $M=\H(G)M$.  If $G$ is finite  then we will usually use counting measure as the Haar measure on $G$, in which case the map sending $g\in G$ to the $\delta$-function $\delta_g$ at $g$ extends to an isomorphism from the complex group ring $\C(G)$ to $\H(G)$.    

Let $\Irr(G)$ denote the set of isomorphism classes of irreducible smooth representations of $G$. When chances for confusion are slim we will also write $\rho\in \Irr(G)$ for an actual irreducible representation. For each $\rho \in \Irr(G)$ let $\ch_\rho\in \H(G)$ be the character $g\mapsto \trace(\rho(g))$ of $\rho$, and let $e_\rho \in \H(G)$ be the idempotent defined by 
\[
e_\rho : g \mapsto \frac{\dim_\C(\rho)}{\vol(G)} \ch_\rho(g^{-1}).
\] 
If $M$ is a smooth representation of $G$  then $e_\rho$ acts on $M$ by projecting $M$ onto its $\rho$-isotypical submodule. For the special case of the  trivial representation we write $e_G$ for the corresponding idempotent, namely, the  function on $G$ with constant value $1/\vol(G)$. The element $e_G$ acts on each smooth representation $M$ by projecting onto the submodule $M^G$ of $G$-fixed vectors. 

If $M$ is a nondegenerate left $\H(G)$-module, and $N$ is a nondegenerate right $\H(G)$-module, then by definition 
\[
N\otimes_{\H(G)} M = N\otimes_{\C} M \big / \lspan\{nf\otimes m - n\otimes fm\ |\ n\in N,\ m\in M,\ f\in \H(G)\}.
\]
Equivalently, viewing $N$ and $M$ as smooth representations of $G$, $N\otimes_{\H(G)} M$ is the space of coinvariants for the action $g:n\otimes m\to ng^{-1}\otimes gm$ of $G$ on $N\otimes_{\C} M$.  

If $H$ is a closed subgroup of $G$, then $\H(G)$ is a smooth representation of $H$ under both left- and right-translation, and consequently $\H(G)$ is an $\H(H)$-bimodule. Given a smooth representation $M$ of $H$ we write  
\[ 
\ind_{H}^G M = \left.\left\{ f:G\xrightarrow[\text{constant}]{\text{locally}} M\ \right|\ f(h g)=h\cdot f(g), \forall h \in H,g\in G \right\}
\]
for the induced representation, on which $G$ acts by right-translation. This is isomorphic to the tensor product $\H(G)\otimes_{\H(H)} M$. If the subgroup $H$ is a semidirect product $U\rtimes L$, then representations may be induced from $L$ to $G$ by first inflating to $H$ (i.e. pulling back along the quotient map $H\to L$), and then applying the functor $\ind_H^G$. The resulting functor from $\Rep(L)$ to $\Rep(G)$ is isomorphic to the functor of tensor product with the $\H(G)$-$\H(L)$ bimodule $\H(G)e_U  \cong \H(G/U)$, where $\H(G/U)$ denotes the space of locally constant functions on $G/U$.

Whenever a group $G$ acts on a set $X$ we write $G(x)$ for the stabiliser in~$G$ of $x \in X$.

The first three chapters of \cite{Renard} are a convenient reference for all of the above. Many of the examples considered here will be groups of matrices over compact subrings of non-archimedean local fields; see \cite[Chapter V]{Renard}, for instance, for more background on these.

\subsection{{Virtual Iwahori decompositions}}

Let us begin by describing the kind of groups that we shall be interested in, and giving several examples.

\begin{definition}\label{def:vI}
Let $G$ be a profinite group. A \emph{virtual Iwahori decomposition} of $G$ is a triple of closed subgroups $(U,L,V)$ of $G$, where $L$ normalises $U$ and $V$, such that
\begin{enumerate}[(1)]
\item The multiplication map $U\times L \times V \to G$ is an open embedding (and therefore a homeomorphism onto its image).
\item $G$ contains arbitrarily small open, normal subgroups $K$ for which the multiplication map 
\[
(U\cap K) \times (L\cap K) \times (V\cap K) \to K
\]
is a homeomorphism.
\end{enumerate}
An \emph{Iwahori decomposition} of $G$ is a virtual Iwahori decomposition for which the multiplication map in (1) is surjective (and therefore, a homeomorphism).
\end{definition}
 
The following immediate observation shows that the notion of virtual Iwahori decomposition is inherited by  subgroups and  quotients.   

\begin{observation}\label{ex:vI-subquotients} 
Let $(U,L,V)$ be a virtual Iwahori decomposition of $G$.
\begin{enumerate}[\rm(1)]
\item If $J$ is a closed subgroup of $G$, then $(U\cap J, L\cap J, V\cap J)$ is a virtual Iwahori decomposition of $J$. 
\item If $(X,H,Y)$ is a virtual Iwahori decomposition of $L$, then $(U\rtimes X, H, Y\ltimes V)$ is a virtual Iwahori decomposition of $G$. 
\item If $K$ is an open normal subgroup of $G$ with an Iwahori decomposition as in part (2) of Definition~\ref{def:vI}, then $(U/(U\cap K), L/(L\cap K), V/(V\cap K))$ is a virtual Iwahori decomposition of $G/K$. 
\end{enumerate}
\end{observation}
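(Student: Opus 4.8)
The plan is to verify, in each of the three parts, the two conditions of Definition~\ref{def:vI}. The normalisation requirements hold automatically: a subgroup $H\le L$ normalises $U$ and $V$ (because $L$ does), and an element of $H\cap J$ also preserves $J$, so conjugation by $H\cap J$ preserves $U\cap J$ and $V\cap J$; in part~(2) one additionally has that $H$ normalises $X$ and $Y$ (part of the hypothesis on $L$), hence normalises $U\rtimes X$ and $Y\ltimes V$. What remains in each case is to check that the relevant multiplication map is an open embedding (condition~(1)) and to produce the small subgroups demanded by condition~(2).

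\emph{Part~(1).} The multiplication map $m\colon U\times L\times V\to G$ is a homeomorphism onto the open set $\Omega:=ULV$; since $m$ is injective, its restriction to $(U\cap J)\times(L\cap J)\times(V\cap J)$ is a homeomorphism onto the image $S:=(U\cap J)(L\cap J)(V\cap J)$, the subspace topology on $S$ being the same read off from $G$, from $\Omega$, or from $J$. Condition~(1) of Definition~\ref{def:vI} for $J$ thus reduces to the openness of $S$ in $J$. This is immediate when $J$ is open in $G$, since then $(U\cap J)\times(L\cap J)\times(V\cap J)$ is open in $U\times L\times V$ and $m$ is an open map; in general the delicate point is to identify $S$ with the open set $\Omega\cap J$, that is, to check that the $\Omega$-factorisation of an element of $J$ has all three factors in $J$. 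For condition~(2), one takes for each small open normal $K\lhd G$ with the decomposition property the subgroup $J\cap K$, which is open and normal in $J$, may be chosen arbitrarily small, and whose decomposition is obtained by restricting $m$ once more.

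\emph{Part~(2).} Condition~(1) follows by composing open embeddings: forming the product of $\id_U$, the open embedding $X\times H\times Y\to L$, and $\id_V$ gives an open embedding $U\times X\times H\times Y\times V\to U\times L\times V$, and then composing with $m$ gives an open embedding $U\times X\times H\times Y\times V\to G$, which under the evident homeomorphisms $(U\rtimes X)\times H\times(Y\ltimes V)\cong U\times X\times H\times Y\times V$ and by associativity in $G$ is exactly the multiplication map of $(U\rtimes X,H,Y\ltimes V)$. For condition~(2) I would start from a small open normal $K\lhd G$ with the $(U,L,V)$-decomposition property, observe that $L\cap K$ is then open in $L$, use the virtual Iwahori decomposition $(X,H,Y)$ of $L$ to choose a small open normal $K_L\lhd L$ inside $L\cap K$ with the $(X,H,Y)$-decomposition property, and build from $U\cap K$, $K_L$ and $V\cap K$ an open normal subgroup of $G$ with the decomposition property relative to $(U\rtimes X,H,Y\ltimes V)$. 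Making this last step precise---exhibiting a genuine \emph{normal} subgroup of $G$ whose intersections with $U\rtimes X$, $H$ and $Y\ltimes V$ multiply homeomorphically onto it---is the step I expect to be the main obstacle, since neither $U\cap K$ nor $V\cap K$ need be normal in $K$ and one has to reconcile the two separate families of small subgroups.

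\emph{Part~(3).} Because $K$ is open in the profinite group $G$, the quotient $G/K$ is finite and discrete; condition~(2) of Definition~\ref{def:vI} is then vacuous (take the trivial subgroup), and condition~(1) reduces to \emph{injectivity} of the multiplication map $\bar m\colon(U/(U\cap K))\times(L/(L\cap K))\times(V/(V\cap K))\to G/K$. I would deduce this from the injectivity of $m$ together with the hypotheses $K=(U\cap K)(L\cap K)(V\cap K)$ and $K\lhd G$: normality of $K$ yields set identities such as $VK=(U\cap K)(L\cap K)V$, hence $LVK=(U\cap K)LV$, and combining these with the uniqueness of the $ULV$-factorisation gives $U\cap LVK=U\cap K$ and, by symmetry, $V\cap KUL=V\cap K$; these force the two outer components of a coincidence $u_1l_1v_1K=u_2l_2v_2K$ to agree modulo $K$, and one last use of uniqueness---applied to an element of $K\cap UL=(U\cap K)(L\cap K)$---then pins down the middle component.
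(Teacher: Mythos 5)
The paper calls this an ``immediate observation'' and supplies no proof, so I am judging your argument on its own merits.

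In part~(1) you have put your finger on precisely the right obstruction, and you should press harder on it: the identification $S=\Omega\cap J$ that you flag as ``the delicate point'' is not a technicality to be verified---it is false in general. Take $G=\Z_p\times\Z_p$ with $U=\Z_p\times 0$, $L=\{0\}$, $V=0\times\Z_p$ (so $(U,L,V)$ is a bona fide Iwahori decomposition) and let $J=\{(x,x):x\in\Z_p\}$ be the diagonal, a closed but non-open subgroup. Then $U\cap J=L\cap J=V\cap J=\{0\}$, so $S=\{0\}$, which is not open in $J\cong\Z_p$, and condition~(1) of Definition~\ref{def:vI} fails for the triple $(U\cap J,L\cap J,V\cap J)$. (A nontrivial-$L$ variant: $G=\Z_p^3$ with the three coordinate axes as $U,L,V$ and $J=\{(x,y,x)\}$.) So the claim as printed is wrong for a general closed $J$; your argument is complete only when $J$ is open in $G$, in which case $S$ is open in $G$ hence in $J$, and for condition~(2) you should refine your choice: take the good $K\lhd G$ small enough that $K\subseteq J$, so that $J\cap K=K$ and the restricted decomposition is literally the given one. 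Every use of this observation in the paper is to an open $J$.

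Your reduction of part~(2), condition~(1), to a composition of open embeddings is correct. You are also right that condition~(2) is the real issue, and your sketch does not close it. Concretely: given a good $K\lhd G$ for $(U,L,V)$ and a good $K_L\lhd L$ for $(X,H,Y)$ with $K_L\subseteq L\cap K$, the set $(U\cap K)K_L(V\cap K)$ need not be a subgroup, because the $L$-component (in the $ULV$-factorisation) of a product $vu$ with $v\in V\cap K$, $u\in U\cap K$ need not lie in $K_L$; and conversely $L\cap K$ need not itself have an $(X,H,Y)$-decomposition. Reconciling the two families of ``good'' subgroups is the substance of this part and needs a genuine argument, not just the observation that both families exist; this is the piece you should develop. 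Your outline of part~(3) is sound: condition~(2) is vacuous for the finite group $G/K$, the induced map is well defined, and its injectivity follows from the uniqueness of $ULV$-factorisations together with $K=(U\cap K)(L\cap K)(V\cap K)$ and $K\lhd G$---for instance by rewriting $l_2v_2k\in LVK=(U\cap K)LV$ to extract $u_2^{-1}u_1\in U\cap K$, doing the symmetric thing for $V$, and then using normality of $K$ and the fact that $L$ normalises $U\cap K$ to deduce $l_2^{-1}l_1\in L\cap K$. You should also note explicitly that $LK/K$ normalises $UK/K$ and $VK/K$, which follows from $L$ normalising $U$, $V$ and $K$.
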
 

{The concept of Iwahori decomposition first appeared in the work of Iwahori and Matsumoto on $p$-adic Chevalley groups \cite{IwMat}. The \lq virtual\rq~ version defined above is likewise motivated by examples occurring naturally in the study of reductive groups:} 

\begin{example}\label{ex:vI-p-adic}
Let $\mathbf G$   be a  connected reductive group  over a non-archimedean local field $F$, and let $G$ be any compact open subgroup of $\mathbf G(F)$. There is a maximal $F$-split torus $\mathbf T\subset \mathbf G$ (depending on $G$) with the property that if $\mathbf L$ is an $F$-rational Levi subgroup of $\mathbf G$ containing $\mathbf T$, and $\mathbf U$ and $\mathbf V$ are the unipotent radicals of an opposite pair of $F$-rational parabolic subgroups of $\mathbf G$ with common Levi factor $\mathbf L$, then the triple of subgroups $(G\cap \mathbf U(F), G\cap \mathbf L(F), G\cap \mathbf V(F))$ is a virtual Iwahori decomposition of $G$. {This follows from the Bruhat-Tits theory: one can take $\mathbf T$ to be any torus whose associated apartment in the  affine building of $\mathbf{G}(F)$ contains a point fixed by $G$. An explicit filtration of $G$ by open normal subgroups admitting Iwahori decompositions is constructed in \cite[Section 1.2]{Schneider-Stuhler}; cf. \cite[2.11]{Dat_parahoric}.} 
\end{example}

\begin{example}\label{ex:vI-GLnO}
For a specific instance of the previous example, let $G=\GL_n(\O)$, where $\O$ is the ring of integers in a non-archimedean local field. Given an ordered partition $n=n_1+\cdots+ n_m$ of $n$ as a sum of positive integers, let $L\cong \GL_{n_1}(\O)\times \cdots \times \GL_{n_m}(\O)$ be the corresponding subgroup of block-diagonal matrices in $G$. Let $U$ be the group of upper-triangular matrices in $G$ with diagonal blocks $1_{n_1\times n_1}\times \cdots \times 1_{n_m\times n_m}$, and let $V$ be the transpose of $U$. Then the triple $(U,L,V)$ is a virtual Iwahori decomposition of $G$; the principal congruence subgroups 
\[
K_\ell \coloneq \ker\left( \GL_n(\O)\to \GL_n(\O_\ell)\right)
\]
(where $\O_\ell = \O/\germ p^{\ell}$, $\germ p$ being the maximal ideal of $\O$) all admit Iwahori decompositions. Passing to quotients by the $K_\ell$ yields virtual Iwahori decompositions of the finite groups $\GL_n(\O_\ell)$. 
\end{example}

\begin{example}\label{ex:vI-parahoric}
A second virtual Iwahori decomposition of $G=\GL_n(\O)$ is given by $(U,L, V_1)$, where $U$ and $L$ are as in Example \ref{ex:vI-GLnO}, and $V_1= V\cap K_1$. In this case the image $ULV_1$ of the product mapping $U\times L\times V_1\to G$ is a subgroup of $G$. For instance, if the partition is $n=1+\cdots+1$, then $ULV_1$ is the standard \emph{Iwahori subgroup} of $\GL_n(\O)$, comprising those matrices which are upper-triangular modulo~$\germ p$.  
\end{example}

If $G$ is finite   then the condition (2) in Definition \ref{def:vI} is always satisfied, e.g.\ by the trivial subgroup $K=\{1\}$. Since the smooth representation theory of a profinite group $G$ is determined  in a very simple way  by the representations of the finite quotients of $G$, the condition (2) is therefore not essential to much of the sequel. On the other hand, this condition is convenient in places for shortening some proofs, and it is satisfied by all of our motivating examples. Nevertheless, let us note the following {quite} general construction of examples which satisfy condition (1) without---at least a priori---satisfying~(2). 

\begin{example}\label{ex:vI-tidy}
Let $\mathcal G$ be a totally disconnected locally compact group and let $\alpha:\mathcal G\to\mathcal G$ be a topological group automorphism. Suppose that the contraction subgroups 
\[
\mathcal U_\alpha=\{g\in \mathcal G\ |\ \alpha^n(g)\to 1\textrm{ as }n\to \infty\}\quad\text{and}\quad \mathcal V_\alpha  = \mathcal U_{\alpha^{-1}}
\]
are closed in $\mathcal G$.  This is {always} the case, for example, if $\mathcal G$ is a $p$-adic Lie group.

These contraction subgroups are both normalised by the closed subgroup 
\[
\mathcal L_\alpha = \{g\in \mathcal G\ |\ \{\alpha^n(g)\ |\ n\in \Z\}\textrm{ is precompact in $\mathcal G$}\},
\]
and the multiplication map 
\[
\mathcal U_\alpha \times \mathcal L_\alpha \times \mathcal V_\alpha \to \mathcal G 
\]
is an open embedding. So if $G$ is any compact open subgroup of $\mathcal G$, then the triple $(\mathcal U_\alpha\cap G, \mathcal L_\alpha\cap G, \mathcal V_\alpha\cap G)$ satisfies condition (1) of Definition \ref{def:vI}. Moreover, $G$ contains arbitrarily small open subgroups $K$ for which the multiplication map 
\[
(\mathcal U_\alpha\cap K) \times (\mathcal L_\alpha\cap K)\times (\mathcal V_\alpha\cap K)\to K
\]
is a {homeomorphism} (the so-called \emph{tidy} subgroups for $\alpha$). It is not clear to us whether $G$ contains arbitrarily small open \emph{normal} subgroups {$K$} with this property.
If  $\mathcal G$ is an analytic Lie group over a local field and the automorphism $\alpha$ is analytic (keeping the assumption that the contraction groups are closed), then it is at least true that {$\mathcal G$ contains arbitrarily small open subgroups $K$ with Iwahori decomposition $(\mathcal U_\alpha\cap K, \mathcal L_\alpha\cap K, \mathcal V_\alpha\cap K)$; cf. Example \ref{ex:orbit_tidy} for the characteristic $0$ case.}
We thank George Willis and Helge Gl\"ockner for a discussion of this example. See \cite{Baumgartner-Willis}   and \cite{Gloeckner} for details.  
\end{example}

\subsection{Definition and basic properties of the functors $\boldsymbol\pind$ and $\boldsymbol\pres$}
We now come to the main definition of the paper.  Whenever $H$ is a closed subgroup of a profinite group $G$, {the space} $\H(G)$ is a bimodule over $\H(H)$. If $L$, $U$ and $V$ are closed subgroups of $G$, and  $L$ normalises $U$ and $V$, then the action of $\H(L)$ on $\H(G)$ commutes with the idempotents $e_U\in \H(U)$ and $e_V\in \H(V)$. Thus $\H(G)e_U e_V$ is an $\H(G)$-$\H(L)$ bimodule, and $e_U e_V \H(G)$ is an $\H(L)$-$\H(G)$ bimodule.

\begin{definition}\label{def:pind}
Let $(U,L,V)$ be a virtual Iwahori decomposition of a profinite group $G$. Define the following functors:
\[
\pind_{U,V}: \Rep(L)\to \Rep(G), \qquad \pind_{U,V}:M\mapsto \H(G)e_U e_V \otimes_{\H(L)} M 
\]
\[
\pres_{U,V}: \Rep(G) \to \Rep(L), \qquad \pres_{U,V}: N \mapsto e_U e_V \H(G) \otimes_{\H(G)} N.
\]
\end{definition}

\begin{remark}\label{rem:CMO-vs-Dat} The definition in the case where $ULV$ is a subgroup of $G$ is due to Dat, who considered situations like Example \ref{ex:vI-parahoric}, see \cite[2.6, 2.11]{Dat_parahoric}. The novelty of Definition \ref{def:pind} is that we relax the requirement that $ULV$ be a group, so as to cover cases like Example \ref{ex:vI-GLnO}. See Section \ref{parahoric_section} for an example of the difference between our definition and Dat's definition of {\em parahoric induction}.
Also  note that Dat makes a further assumption in \cite{Dat_parahoric}, namely that the group $L$ should contain an open normal subgroup $L^\dagger$ such that the set $UL^\dagger V$ is a pro-$p$ subgroup of $G$. This assumption, which is needed to ensure the integrality of certain constructions in \cite{Dat_parahoric}, plays no role here, where all representations are over $\C$.
\end{remark}

 Let us make a few further remarks on Definition~\ref{def:pind}.  Firstly, since $\H(G)e_U e_V$ is the image of the bimodule map $f\mapsto fe_V$  from $\H(G)e_U$ to $\H(G)e_V$, and since every $M\in \Rep(L)$ is  a direct sum of representations of finite quotients of $L$, and hence flat as a module over $\H(L)$,  the module $\pind_{U,V}(M)$ is isomorphic to the image of the map
\begin{equation}\label{eq:intertwiner} 
J_V:\H(G)e_U\otimes_{\H(L)} M \xrightarrow{\ f\otimes m\mapsto fe_V\otimes m\ } \H(G)e_V\otimes_{\H(L)} M.
\end{equation}
The module $\H(G)e_U\otimes_{\H(L)} M$ is isomorphic as a representation of $G$ to the induced representation $\ind_{LU}^G(M)$, where $M$ is inflated to a representation of $LU$ by letting $U$ act trivially (cf. Section \ref{subsec:notation}). We similarly have $\H(G)e_V\otimes_{\H(L)} M\cong \ind_{LV}^G(M)$, and the map $J_V$ corresponds in this picture to the \lq standard intertwining operator\rq~
\[
J_V:\ind_{LU}^G(M) \to \ind_{LV}^G(M), \qquad J_V(f): g\mapsto \int_V f(vg)\,\dd v .
 \]
Similarly,  $\pres_{U,V}(N)$ is isomorphic to the image of the canonical projection
 \[ e_U:N^V \to N^U, \qquad n\mapsto \int_U un\,\dd u\]
 from the $V$-invariants to the $U$-invariants of $N$.

{As a final remark on Definition \ref{def:pind}, we note that  the definition makes sense if we assume only that $L$, $U$ and $V$ are closed subgroups of $G$ such that $L$ normalises $U$ and $V$. Some of the properties of the functors $\pind_{U,V}$ and $\pres_{U,V}$ that we shall establish below remain valid in this degree of generality: e.g., parts \eqref{item:pind-UV-VU}, \eqref{item:pind-Hom}, \eqref{item:pind-compat} and~\eqref{item:pind-stages} of Theorem \ref{thm:pind-properties}. For the applications we have in mind, the assumption that $(U,L,V)$ is a virtual Iwahori decomposition is both a natural and a useful one.}

\begin{example}\label{ex:pind-field} 
Let $G$ be a {reductive group over a finite field}, and let $LU$ and $LV$ be an opposite pair of parabolic subgroups of $G$. A theorem of Howlett and Lehrer (see \cite[Theorem 2.4]{Howlett-Lehrer_HC}) asserts that in this case the  map \eqref{eq:intertwiner}
is an isomorphism for every $M\in\Rep(L)$, and this implies that the functor $\pind_{U,V}$ is equal to the Harish-Chandra induction functor $M\mapsto \H(G)e_V \otimes_{\H(L)} M$ (and isomorphic to the analogous functor with $U$ in place of $V$). Similarly, $\pres_{U,V}$ is isomorphic to the functor  of  {Harish-Chandra restriction}. See \cite[Chapter 4]{DM} for background on Harish-Chandra functors for  {finite reductive groups}.
\end{example}

\begin{example}
Example \ref{ex:pind-field} notwithstanding, the map \eqref{eq:intertwiner} is usually far from being an isomorphism. For instance, if $G$ is a compact open subgroup of a reductive group $\mathbf{G}(F)$ as in Example \ref{ex:vI-p-adic}, and $(U,L,V)$ is the virtual Iwahori decomposition of $G$ corresponding to an opposite pair of proper parabolic subgroups of $\mathbf{G}$, then the subgroups $LU$ and $LV$ have infinite index in $G$, and hence   the representations $\ind_{LU}^G(M)$ and $\ind_{LV}^G(M)$ are infinite-dimensional for every  $M\in \Rep(L)$. By contrast, the representation $\pind_{U,V}(M)$ is finite-dimensional whenever $M$ is: see Theorem \ref{thm:pind-properties}\eqref{item:pind-nonzero}.
\end{example}

\begin{example}\label{ex:pind-commuting} 
Suppose that $(U,L,V)$ is a virtual Iwahori decomposition of $G$ such that the subgroups $U$ and $V$ commute with one another. Then the product $H\coloneq ULV$ is an open subgroup of $G$, isomorphic to $(U\times V)\rtimes L$. We have $e_U e_V = e_{U\times V}$, and there are isomorphisms of $\H(G)$-$\H(L)$ bimodules
\[
\H(G)e_U e_V = \H(G)e_{U\times V} \cong \H(G/(U\times V)).
\]
Consequently the functor $\pind_{U,V}$ is of the form 
$\Rep(L) \xrightarrow{\infl} \Rep(H) \xrightarrow{\ind} \Rep(G)$
discussed in Section \ref{subsec:notation}.
\end{example}

We shall now establish some basic properties of the functors $\pind_{U,V}$ and $\pres_{U,V}$. Many of these properties were established in \cite{Dat_parahoric}  for the case where $(U,L,V)$ is an actual, as opposed to a virtual, Iwahori decomposition of~$G$. The proofs in \cite{Dat_parahoric} mostly carry over with only minor changes to the case of a virtual Iwahori decomposition, thanks to the following analogue of \cite[Proposition 2.2]{Dat_parahoric}. The proofs of these propositions, though, are quite different.

\begin{proposition}\label{prop:z}
Let $G$ be a profinite group and let $L$, $U$ and $V$ be closed subgroups of G such that $L$ normalises $U$ and $V$.
For every $M \in \Rep(G)$ there is a linear automorphism $z_M\in \GL(M)$, commuting with the actions of $L$, $e_U$ and $e_V$, such that $z_M^{-1} e_U e_V$ is an idempotent in $\End(M)$.  
\end{proposition}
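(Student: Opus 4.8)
The plan is to reduce to a finite-dimensional problem about two orthogonal projections and then read off $z_M$ from the structure theory of such pairs. First I would restrict $M$ to the closed subgroup $H\leq G$ generated by $U$, $V$ and $L$; this is again profinite, $M|_H$ is smooth and hence a direct sum of finite-dimensional $H$-subrepresentations, and since $e_U$, $e_V$ and the $L$-action all factor through the $H$-action they preserve each summand. It therefore suffices to construct $z_M$ on each summand separately, so we may assume $\dim_{\C}M<\infty$. Averaging a Hermitian inner product over $H$, we may further assume it is invariant under $U$, $V$ and $L$; then $e_U$ and $e_V$ act as the orthogonal projections onto $M^U$ and $M^V$, and---because $L$ normalises $U$ and $V$---the $L$-action commutes with both $e_U$ and $e_V$.

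Next I would invoke the structure theory. Let $\mathcal A\subseteq\End_{\C}(M)$ be the $*$-subalgebra generated by $e_U$ and $e_V$; being a finite-dimensional $C^*$-algebra, it makes $M$ into a semisimple module, so $M\cong\bigoplus_S S\otimes_{\C}\Hom_{\mathcal A}(S,M)$, the sum over the simple $\mathcal A$-modules, with $\mathcal A$ acting through the first tensor factor. A pair of orthogonal projections on a finite-dimensional Hilbert space can be simultaneously put into block form with all blocks of size one or two; thus each simple $S$ is either one-dimensional, with $e_U$ and $e_V$ acting by scalars in $\{0,1\}$, or two-dimensional, with $e_U=\bigl(\begin{smallmatrix}1&0\\0&0\end{smallmatrix}\bigr)$ and $e_V=\bigl(\begin{smallmatrix}c^2&cs\\ cs&s^2\end{smallmatrix}\bigr)$ in a suitable orthonormal basis, for some $c=c_S\in(0,1)$ with $s=\sqrt{1-c^2}$. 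Since $L$ commutes with $e_U$ and $e_V$ it commutes with all of $\mathcal A$, hence acts only on the multiplicity spaces $\Hom_{\mathcal A}(S,M)$, and in particular preserves the $\mathcal A$-isotypic components of $M$.

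I would then define $z_M$ to act by the scalar $\lambda_S$ on the $S$-isotypic component of $M$, where $\lambda_S=1$ for one-dimensional $S$ and $\lambda_S=c_S^2$ for two-dimensional $S$. This $z_M$ is invertible, and, being a scalar on each summand of a decomposition of $M$ that is stable under $e_U$, $e_V$ and $L$, it commutes with all three. Finally, since $z_M$ commutes with $e_U e_V$, the operator $z_M^{-1}e_U e_V$ is idempotent precisely when $(e_U e_V)^2=z_M\, e_U e_V$; and this identity may be checked on each simple $\mathcal A$-module: for one-dimensional $S$ one has $e_U e_V\in\{0,\mathrm{id}\}$, while for two-dimensional $S$ one computes $e_U e_V=\bigl(\begin{smallmatrix}c^2&cs\\0&0\end{smallmatrix}\bigr)$, whose square equals $c_S^2\bigl(\begin{smallmatrix}c^2&cs\\0&0\end{smallmatrix}\bigr)$. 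In either case $(e_U e_V)^2=\lambda_S\, e_U e_V$, as needed.

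The point that requires care---and the step I expect to be the main obstacle---is producing a single $z_M$ that commutes with $e_U$ and $e_V$ \emph{as well as} with $L$. The naive candidate, letting $z_M$ equal $e_U e_V$ on its image and the identity on its kernel (the decomposition $M=\mathrm{im}(e_U e_V)\oplus\ker(e_U e_V)$ also following from the structure theory above), commutes with $L$ but not in general with $e_U$, since in the two-dimensional case $\ker(e_U e_V)$ is not $e_U$-invariant. Refining to the full $\mathcal A$-isotypic decomposition, on each piece of which $z_M$ is forced to be a scalar, is exactly what repairs this.
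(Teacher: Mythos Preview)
Your proof is correct and is essentially the same as the paper's: the paper phrases the decomposition in terms of isotypic components for the infinite dihedral group $\Gamma=\langle s,t\mid s^2=t^2=1\rangle$ acting via $s\mapsto 2e_U-1$, $t\mapsto 2e_V-1$, which is exactly your $\mathcal A$-isotypic decomposition in group-theoretic language, and the scalar chosen on each two-dimensional simple is the same $c_S^2=\cos^2(\alpha)$ (cf.\ the remark following the proposition). Your closing observation about why one must take the full isotypic decomposition rather than the image/kernel splitting of $e_Ue_V$ is on point and matches the paper's reasoning.
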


\begin{proof}
Each smooth representation $M\in \Rep(G)$ may be regarded as a representation of the infinite dihedral group $\Gamma=\langle s,t\ |\ s^2=t^2=1\rangle$, by sending $s\mapsto 2e_U-1$ and $t\mapsto 2e_V-1$. Since $G$ is profinite, every $M\in \Rep(G)$ is isomorphic to a direct sum of finite-dimensional unitary representations of $G$, which restrict to finite-dimensional unitary representations of $\Gamma$ (unitary because the idempotents $e_U$ and $e_V$ are self-adjoint in $\H(G)$). It follows that every $M\in \Rep(G)$ is semisimple as a representation of $\Gamma$, and so $M$ decomposes (uniquely) as the direct sum of its $\Gamma$-isotypic components. 

We claim that in each irreducible representation $W$ of $\Gamma$ there is a  nonzero $z_W\in \C$ such that $z_W^{-1} pq$ is an idempotent in $\GL(W)$, where $p=\frac{1}{2}(s+1)$ and $q=\frac{1}{2}(t+1)$. Indeed, since the dihedral group has an abelian normal subgroup of index two, every irreducible representation of $\Gamma$ is either one- or two-dimensional. In the one-dimensional case $p$ and $q$ commute and so we may take $z_W=1$. In the two-dimensional case, $pq$ and $(pq)^2$ are two nonzero maps between the one-dimensional subspaces $qW$ and $pW$, and so there is a  (unique)  nonzero scalar $z_W$ such that $pq=z_W^{-1}(pq)^2$.  

Having established the claim, we let $z_M\in \GL(M)$ to be the automorphism of $M$ which acts as the scalar $z_W$ on the $W$-isotypical component of $M$. It is clear from the construction that $z_M$ commutes with $e_U$ and $e_V$,  and that $z_M^{-1} e_U e_V$ is an idempotent. If $T\in \End(M)$ commutes with $e_U$ and $e_V$ then $T$ preserves the $\Gamma$-isotypic components, and so commutes with $z_M$. In particular, $z_M$ commutes with the $L$-action on $M$. 
\end{proof}

\begin{remark}\label{rem:z}
If $W$ is a two-dimensional irreducible unitary representation of the infinite dihedral group, then $z_W=\cos ^2 (\alpha_W)$, where $\alpha_W$ is the angle between the images of $p$ and $q$ in the Hilbert space $W$. Thus the eigenvalues of $z_M$ all lie in the interval $(0,1]$. {If the multiplication map $U\times L\times V\to G$ is a homeomorphism}, and $M$ is an irreducible representation of $G$, then $z_M$ is the scalar operator
\[
z_M = \begin{cases} \dim \pres_{U,V}(M) / \dim M & \text{if }\pres_{U,V} (M) \neq 0, \\ 1 & \text{if }\pres_{U,V}(M)=0; \end{cases}
\]
see \cite[Proposition 1.11]{Crisp_parahoric}. Moreover, Dat has shown that if $L$ contains an open normal subgroup $L^\dagger$ such that $UL^\dagger V$ is a pro-$p$ subgroup of $G$, then the eigenvalues lie in $\Z[1/p]$; see  \cite[Proposition 2.2]{Dat_parahoric}. 
\end{remark}

With the automorphisms $z_M$ in hand, many of the arguments from \cite[Section 2]{Dat_parahoric} carry over to our setting, and establish the following properties of the functors $\pind_{U,V}$ and $\pres_{U,V}$.

\begin{theorem}\label{thm:pind-properties} 
Let $(U, L,V)$ be a virtual Iwahori decomposition of a profinite group $G$,  and consider the functors $\pind_{U,V}$ and $\pres_{U,V}$. Then: 
 \begin{enumerate}[\rm(1)]

  \item\label{item:pind-UV-VU}  There are natural isomorphisms $\pind_{U,V}\cong \pind_{V,U}$ and $\pres_{U,V}\cong \pres_{V,U}$.

  \item\label{item:pind-Hom} $\pind_{U,V}$ is naturally isomorphic to the functor 
  \[
  \pind'_{U,V} : M\mapsto \Hom_{\H(L)}(e_V e_U \H(G), M)^{\infty} ,
  \]
  and is therefore right-adjoint to $\pres_{U,V}$.
 
  \item\label{item:pres-Hom} $\pres_{U,V}$ is naturally isomorphic to the functor 
\[
\pres'_{U,V} : N\mapsto \Hom_{\H(G)} (\H(G)e_V e_U, N)^\infty, 
\]  
  and is therefore right-adjoint to $\pind_{U,V}$. 
  \item\label{item:pind-compat} Let $(U',L,V')$ be a second virtual Iwahori decomposition of $G$, such that 
\[
U= (U\cap U')  (U\cap V'),\quad V=(V\cap U') (V\cap V'),\]
\[ U'=(U'\cap U)(U'\cap V),\quad \text{and}\quad V'=(V'\cap U)(V'\cap V).
\]
Then $\pind_{U,V}\cong \pind_{U',V'}$ and $\pres_{U,V}\cong \pres_{U',V'}$.  
\item\label{item:pind-finite} Let $K$ be an open normal subgroup of $G$ with an Iwahori decomposition $(U_K, L_K, V_K)\coloneq (U\cap K, L\cap K, V\cap K)$. The diagrams
\[
\xymatrix@C=60pt{
\Rep(L) \ar[r]^-{\pind_{U,V}} & \Rep(G) \\
\Rep(L/L_K) \ar[u]^-{\infl} \ar[r]^-{\pind_{U/U_K,V/V_K}} & \Rep(G/ K) \ar[u]_-{\infl} 
}
\qquad\text{and}\qquad
 \xymatrix@C=60pt{
\Rep(G) \ar[r]^-{\pres_{U,V}} & \Rep(L) \\
 \Rep(G/K) \ar[u]^-{\infl} \ar[r]^-{\pres_{U/U_K,V/V_K}} & \Rep(L/L_K)\ar[u]_-{\infl} 
}
\]
commute up to natural isomorphism. (Here $\infl$ denotes inflation.)
\item\label{item:pind-nonzero} $\pind_{U,V}(M)$ is nonzero whenever $M$ is nonzero, and $\pind_{U,V}(M)$ is finite-dimensional whenever $M$ is finite-dimensional.

\item\label{item:pind-stages} If $(X,H,Y)$ is a virtual Iwahori decomposition of $L$, then 
\[
    \pind_{U,V} \circ \pind_{X,Y} \cong \pind_{U\rtimes X,Y\ltimes V}
\]
   as functors $\Rep(H)\to \Rep(G)$.
\end{enumerate}
\end{theorem}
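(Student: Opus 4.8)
The plan is to work directly at the level of the defining bimodules, since both sides of the asserted isomorphism are tensor-product functors and a natural isomorphism between them will follow from an isomorphism of $\H(G)$-$\H(H)$ bimodules
\[
\H(G)\, e_U e_V \otimes_{\H(L)} \H(L)\, e_X e_Y \;\cong\; \H(G)\, e_{U\rtimes X}\, e_{Y\ltimes V}.
\]
The left-hand side simplifies immediately: $\H(G)e_Ue_V\otimes_{\H(L)}\H(L)e_Xe_Y \cong \H(G)e_Ue_Ve_Xe_Y$, where I use that $\H(L)$ is a unital ring and that $e_X,e_Y\in\H(L)\subseteq\H(G)$. So the whole statement reduces to comparing the four idempotents $e_Ue_Ve_Xe_Y$ with the two idempotents $e_{U\rtimes X}e_{Y\ltimes V}$, as elements of $\H(G)$ multiplying $\H(G)$ on the right — and more precisely to showing $\H(G)e_Ue_Ve_Xe_Y = \H(G)e_{U\rtimes X}e_{Y\ltimes V}$ as right ideals, compatibly with the right $\H(H)$-action.

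First I would record the elementary facts about these idempotents. Since $L$ normalises $U$ and $V$, and $H$ normalises $X$ and $Y$ (as $(X,H,Y)$ is a virtual Iwahori decomposition of $L$), the group $U\rtimes X$ makes sense as a subgroup of $G$: indeed $X\subseteq L$ normalises $U$, so $UX$ is a group, and the product map $U\times X\to UX$ is a bijection because $(X,H,Y)$ gives $X\cap (UV\text{-part})=\{1\}$ inside $L$ while $U\cap L=\{1\}$ from the virtual Iwahori decomposition of $G$; hence $e_{U\rtimes X}$ is the idempotent for the trivial representation of this group. A standard computation — using that the Haar measure on $UX$ factors as the product of those on $U$ and $X$, which holds precisely because multiplication $U\times X\to UX$ is a homeomorphism — gives $e_{U\rtimes X}=e_Ue_X=e_Xe_U$. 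Similarly $e_{Y\ltimes V}=e_Ye_V=e_Ve_Y$. Next, $e_X$ and $e_V$ commute: $X\subseteq L$ normalises $V$, and one checks $e_Xe_Ve_X=e_Ve_X$ and symmetrically, so in fact $e_X$ and $e_V$ commute as elements of $\H(G)$ (both preserve $V$-invariants and $X$-invariants; alternatively $VX$ is a group and $e_Ve_X$ is its trivial idempotent). With these,
\[
e_{U\rtimes X}\,e_{Y\ltimes V} = e_Ue_X\cdot e_Ye_V = e_U\,(e_Xe_Y)\,e_V,
\]
using $e_X e_V=e_Ve_X$ and then reassociating; and separately $e_Ue_Ve_Xe_Y = e_U e_X e_V e_Y = e_U(e_Xe_Y)(e_Ve_Y)\cdot$(corrections), so the two products differ only by reordering the commuting factors $e_V,e_X$ and by an extra $e_Y$ or $e_V$ that is absorbed. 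The cleanest route is: $e_U e_V e_X e_Y = e_U e_V e_X e_Y$ and $e_U e_X e_Y e_V = e_U(e_Xe_Y)e_V$; since $e_V$ commutes with $e_X$ and also with $e_Y$ (same argument, $Y\subseteq L$ normalises $V$), we get $e_Ue_Ve_Xe_Y = e_Ue_Xe_Ye_V = e_{U\rtimes X}e_{Y\ltimes V}$ directly as elements of $\H(G)$. Then $\H(G)e_Ue_Ve_Xe_Y = \H(G)e_{U\rtimes X}e_{Y\ltimes V}$ as bimodules on the nose, and the right $\H(H)$-module structures match because $e_X,e_Y$ commute with $\H(H)$ (as $H$ normalises $X$ and $Y$), so that the right $\H(H)$-action on the left-hand bimodule is the one inherited from $\H(L)\supseteq\H(H)$, which is exactly the one on the right-hand bimodule.

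The main obstacle I anticipate is not conceptual but bookkeeping: making sure every claimed commutation of idempotents is actually justified. The assertions "$e_X$ commutes with $e_V$" and "$e_{U\rtimes X}=e_Ue_X$" are true but each rests on a small lemma about how Haar measures and idempotents behave under the relevant semidirect-product decompositions — in particular on the fact, guaranteed by Definition \ref{def:vI}(1) applied to both $(U,L,V)$ in $G$ and $(X,H,Y)$ in $L$, that all the relevant multiplication maps are homeomorphisms so the measures genuinely factor. Once those are in place (and one may as well cite the analogous computations from \cite[Section 2]{Dat_parahoric}, as the paper does for the other parts of the theorem), the rest is formal. A final remark: part \eqref{item:pind-UV-VU}, already proved, lets one write $\pind_{U,V}\cong\pind_{V,U}$ and $\pind_{X,Y}\cong\pind_{Y,X}$ freely, which is convenient if one prefers to set up the factorisation with the factors in the opposite order; and the statement for $\pres$ follows by the same bimodule identity read on the other side, or formally by taking adjoints using part \eqref{item:pres-Hom}.
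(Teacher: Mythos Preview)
Your proposal is correct and follows essentially the same route as the paper's proof: collapse the tensor product over $\H(L)$ via convolution to $\H(G)e_Ue_Ve_Xe_Y$, commute $e_V$ past $e_X$ (since $X\subseteq L$ normalises $V$), and identify $e_Ue_X=e_{U\rtimes X}$ and $e_Ve_Y=e_{Y\ltimes V}$. One small correction: $\H(L)$ is not unital when $L$ is an infinite profinite group, so your justification for the tensor-product collapse should appeal directly to the convolution map (as the paper does) rather than to unitality---the conclusion is unaffected.
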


Parts \eqref{item:pind-Hom} and \eqref{item:pres-Hom} are instances of the following general fact, whose proof generalises the argument of \cite[Corollaire 2.7]{Dat_parahoric}:

\begin{lemma}\label{lem:ind_coind}
Let $H$ and $K$ be closed subgroups of a profinite group $G$. Let $X\subseteq \H(G)$ be an $\H(H)$-$\H(K)$ subbimodule, and denote by $X^*$  the image of $X$ under the involution 
$f^*(g)=\overline{f(g^{-1})}$ on $\H(G)$; note that $X^*$ is an $\H(K)$-$\H(H)$ bimodule. Suppose that for every open normal subgroup $H_1\subseteq H$ there is an open normal subgroup $G_1\subseteq G$ satisfying 
\begin{equation}\label{ind_coind_eq}
e_{H_1}X \subseteq e_{G_1}\H(G).
\end{equation}
 Then the functors $\Rep(K)\to \Rep(H)$ defined by 
\[
M \mapsto X\otimes_{\H(K)} M  \qquad \text{and} \qquad M \mapsto \Hom_{\H(K)} (X^*, M)^{\infty}
\]
are naturally isomorphic.
\end{lemma}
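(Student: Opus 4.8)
The plan is to construct an explicit natural transformation $\eta\colon X\otimes_{\H(K)}(-)\Rightarrow\Hom_{\H(K)}(X^{*},-)^{\infty}$, to reduce its bijectivity to the analogous statement for the finite-dimensional subspaces that the left $H$-action carves out of $X$, and then to settle that finite-dimensional case by a dimension count together with a one-line positivity argument; this last part is a mild generalisation of the computation in \cite[Corollaire 2.7]{Dat_parahoric}. The transformation comes from an $\H(K)$-valued pairing on $X^{*}\times X$: since $X$ and $X^{*}$ are subspaces of $\H(G)$ we may convolve inside $\H(G)$, and for $y\in X^{*}$, $x\in X$ I would let $\langle y,x\rangle\in\H(K)$ be the restriction to the compact group $K$ of $y*x\in\H(G)$. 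Associativity of convolution and Fubini's theorem then yield the variance properties $\langle\phi y,x\rangle=\phi*\langle y,x\rangle$, $\langle y,x\psi\rangle=\langle y,x\rangle*\psi$ for $\phi,\psi\in\H(K)$, and the balancedness $\langle y e_{H_{1}},x\rangle=\langle y,e_{H_{1}}x\rangle$ for open normal $H_{1}\lhd H$. A little care is needed because $H$ and $K$ need not be open in $G$: there is no algebra homomorphism $\H(K)\to\H(G)$, so the $\H(H)$- and $\H(K)$-module structures on $X$ and $X^{*}$ are the integrated forms of the translation actions taken against the Haar measures of $H$ and $K$, and the identities above are genuine Fubini computations rather than formal manipulations inside $\H(G)$.

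Granting the pairing, I would set $\eta_{M}(x\otimes m)(y)=\langle y,x\rangle\,m$. The right $\H(K)$-linearity in $x$ makes this well defined on $X\otimes_{\H(K)}M$; the left $\H(K)$-linearity in $y$ makes $\eta_{M}(x\otimes m)$ an $\H(K)$-linear map $X^{*}\to M$; and the balancedness shows both that $\eta_{M}$ is $\H(H)$-equivariant and that $\eta_{M}(x\otimes m)$ is $e_{H_{1}}$-fixed whenever $x$ is. Since $X$ is smooth for left translation by $H$, every $x$ is $e_{H_{1}}$-fixed for some open normal $H_{1}\lhd H$, so $\eta_{M}$ takes values in the smooth part and is a natural transformation of functors $\Rep(K)\to\Rep(H)$.

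To prove $\eta_{M}$ is an isomorphism I would filter. Write $X_{H_{1}}=e_{H_{1}}X$ for the left-$H_{1}$-fixed part; this is a right $\H(K)$-submodule (the left $H$- and right $K$-actions commute) and $X=\varinjlim_{H_{1}}X_{H_{1}}$. Dually $X^{*}=\varinjlim_{H_{1}}(X^{*})_{H_{1}}$ with $(X^{*})_{H_{1}}=(X_{H_{1}})^{*}$, and since the projection $X^{*}\twoheadrightarrow(X^{*})_{H_{1}}$ is exactly the action of $e_{H_{1}}$, one gets $\Hom_{\H(K)}(X^{*},M)^{\infty}=\varinjlim_{H_{1}}\Hom_{\H(K)}((X^{*})_{H_{1}},M)$. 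As tensoring commutes with colimits, $\eta_{M}$ is the colimit of its restrictions $\eta_{M}^{H_{1}}\colon X_{H_{1}}\otimes_{\H(K)}M\to\Hom_{\H(K)}((X^{*})_{H_{1}},M)$ (compatibility with the transition maps again following from balancedness), so it suffices to show each $\eta_{M}^{H_{1}}$ is an isomorphism. Here the hypothesis enters: $X_{H_{1}}=e_{H_{1}}X\subseteq e_{G_{1}}\H(G)=\H(G/G_{1})$ for a suitable open normal $G_{1}\lhd G$, and $\H(G/G_{1})$ is finite-dimensional since $G_{1}$ is open, so $X_{H_{1}}$ is a finite-dimensional smooth representation of the profinite group $K$. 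It therefore factors through a finite quotient of $K$, and hence — complex group algebras of finite groups being semisimple — $X_{H_{1}}$ is a finitely generated projective right $\H(K)$-module. Consequently the evaluation map $X_{H_{1}}\otimes_{\H(K)}M\to\Hom_{\H(K)}(X_{H_{1}}^{\vee},M)$, $x\otimes m\mapsto(\xi\mapsto\xi(x)m)$, with $X_{H_{1}}^{\vee}=\Hom_{\H(K)}(X_{H_{1}},\H(K))$, is an isomorphism, and a computation over the relevant semisimple group algebra gives $\dim_{\C}X_{H_{1}}^{\vee}=\dim_{\C}X_{H_{1}}=\dim_{\C}(X^{*})_{H_{1}}$. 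Finally the pairing induces a left $\H(K)$-linear map $\mu\colon(X^{*})_{H_{1}}\to X_{H_{1}}^{\vee}$, $y\mapsto\langle y,-\rangle$, through which $\eta_{M}^{H_{1}}$ factors as $\Hom_{\H(K)}(\mu,M)\circ(\mathrm{evaluation})$; so it remains only to see that $\mu$ is an isomorphism, and by the dimension count it is enough that $\mu$ be injective.

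The injectivity of $\mu$ is the sole non-formal point, and it is short: if $\langle f^{*},x\rangle=0$ in $\H(K)$ for every $x\in X_{H_{1}}$, then taking $x=f\in X_{H_{1}}$ and evaluating the resulting function at $1\in K$ gives $0=(f^{*}*f)(1)=\int_{G}|f(g)|^{2}\,dg$, forcing $f=0$. Thus the substantive work lies in the reduction to finite dimensions and in the careful bookkeeping of the various one-sided $\H(H)$- and $\H(K)$-module structures and their compatibility with the colimit over $H_{1}$, made slightly delicate by $H$ and $K$ not being open in $G$; once the pieces $X_{H_{1}}$ are known to be finite-dimensional — which is the only use of the hypothesis $e_{H_{1}}X\subseteq e_{G_{1}}\H(G)$ — the nondegeneracy needed to conclude is handed to us by the positivity of the $L^{2}(G)$ inner product. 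I expect the identification $\Hom_{\H(K)}(X^{*},M)^{\infty}=\varinjlim_{H_{1}}\Hom_{\H(K)}((X^{*})_{H_{1}},M)$ and the compatibility of $\eta$ with the colimit to be the steps demanding the most attention.
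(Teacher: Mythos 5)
Your argument is correct, and it constructs the same natural transformation as the paper (your pairing $\langle y,x\rangle=(y*x)\restrict_K$ gives $\eta_M(x\otimes m)(y)=\langle y,x\rangle\, m$, which is exactly the paper's map $\Phi$), but after the first shared step the route genuinely diverges. Both you and the paper fix an open normal $H_1\lhd H$, reduce to the $H_1$-fixed vectors, and use the hypothesis $e_{H_1}X\subseteq e_{G_1}\H(G)$ to make $e_{H_1}X$ finite-dimensional. The paper then passes to the finite quotient $G/G_1$, further reduces $M$ to be finite-dimensional, and deduces the result from the nondegeneracy of the $\C$-valued pairing $(x_2^*,x_1)\mapsto(x_2^*x_1)(1)$ combined with classical finite-dimensional duality, followed by an identification of $K$-coinvariants with $K$-invariants via averaging over the compact group $K$. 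You instead keep $G$ profinite and $M$ arbitrary and replace that block with a module-theoretic argument: $X_{H_1}$ is finitely generated projective over $\H(K)$ (by semisimplicity), so evaluation $X_{H_1}\otimes_{\H(K)}M\to\Hom_{\H(K)}(X_{H_1}^\vee,M)$ is an isomorphism, and a dimension count together with the $L^2$-positivity injectivity of $\mu\colon(X^*)_{H_1}\to X_{H_1}^\vee$ finishes. Both proofs therefore rest on the same two inputs --- nondegeneracy of the $L^2$ pairing, and semisimplicity of $\H(K)$ --- but package them differently: your version isolates a cleaner single key lemma (projectivity plus a dimension count) and avoids the finite-dimensionality reduction on $M$, at the cost of the bookkeeping you rightly flag for the nonunital algebra $\H(K)$ (one has to factor through $e_{K_1}\H(K)$ to make sense of ``projective'' and of $\dim_\C X_{H_1}^\vee=\dim_\C X_{H_1}$), while the paper's version uses more elementary ingredients (vector-space duality plus averaging) and sidesteps the nonunital subtleties by reducing outright to the finite group $G/G_1$.
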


\begin{proof}
Consider the natural transformation 
\[
\Phi  :  X\otimes_{\H(K)} M \to \Hom_{\H(K)}(X^*, M), \qquad 
\Phi (x_1\otimes m) : x_2^* \mapsto (x_2^* x_1)\restrict_K \cdot m  
\] 
where $(x_2^* x_1)\restrict_K$ means the restriction of the convolution product $x_2^* x_1\in \H(G)$ to the subgroup $K$. Fix an open normal subgroup $H_1\subseteq H$. We will show that the map $\Phi$ restricts to an isomorphism between the respective subspaces of $H_1$-fixed vectors.

Let $G_1$ be an open normal subgroup of $G$ satisfying condition \eqref{ind_coind_eq}, {so that} the subspace ${e_{H_1}X\subset \H(G)}$ consists exclusively of $G_1$-invariant functions. We may then replace $G$ by $G/G_1$, and assume for the rest of the proof that $G$ is a finite group. Furthermore, the module $M$ decomposes as a direct sum of finite-dimensional modules, and the natural map $\Phi$ commutes with direct sums, so we may assume that $M$ is finite-dimensional.

Now, the pairing 
\[
X^*\times X \to \C \qquad (x_2^*, x_1) \mapsto (x_2^*x_1)(1) 
\]
is nondegenerate, since it is the restriction to $X$ of the natural $L^2$-inner product on $\H(G)$. It follows from this, and from the standard duality theory of finite-dimensional vector spaces, that the map 
\[
\Psi: X\otimes_{\C} M \to \Hom_{\C}(X^*, M) \qquad \Psi(x_1\otimes m): x_2^* \mapsto (x_2^*x_1)(1)\cdot m 
\]
is an isomorphism. The map $\Psi$ descends to an isomorphism of $K$-coinvariants 
\begin{equation}\label{ind_coind_proof1}
  X\otimes_{\H(K)} M \xrightarrow{\cong} \left(\Hom_{\C}(X^*, M)\right)_K,
\end{equation}
where the $K$-action on $\Hom_{\C}(X^*,M)$ is by conjugation. Averaging over $K$ gives an isomorphism of $K$-coinvariants with $K$-invariants:
\begin{equation}\label{ind_coind_proof2}
\left(\Hom_{\C}(X^*, M)\right)_K \xrightarrow[\cong]{T \mapsto \int_K k T k^{-1}\, \dd k} \Hom_{\H(K)}(X^*,M),
\end{equation}
and the map $\Phi$ is the composition of the isomorphisms \eqref{ind_coind_proof1} and \eqref{ind_coind_proof2}.
\end{proof}

\begin{proof}[Proof of Theorem \ref{thm:pind-properties}]
To prove part \eqref{item:pind-UV-VU}, let $z_{\H(G)}$ be the automorphism of $\H(G)$ obtained by applying Proposition \ref{prop:z} to the left-translation action of $G$. Then the maps
\[
e_U e_V \H(G) \xrightarrow{f\mapsto e_V f} e_V e_U \H(G) \quad \text{and} \quad e_V e_U \H(G) \xrightarrow{f\mapsto z_{\H(G)}^{-1} e_U f} e_U e_V \H(G)
\]
are mutually inverse isomorphisms of $\H(L)$-$\H(G)$ bimodules, giving rise to a natural isomorphism of functors ${\pres_{U,V}\cong \pres_{V,U}}$. A similar argument, using the right action of $G$ on $\H(G)$, gives $\pind_{U,V}\cong \pind_{V,U}$.

To prove part \eqref{item:pind-Hom} we apply Lemma \ref{lem:ind_coind} with $H=G$, $K=L$, and $X=\H(G)e_U e_V$. The hypothesis \eqref{ind_coind_eq} is trivially satisfied and we conclude that the functor $\pind_{U,V}$ is naturally isomorphic to $\pind'_{U,V}$. The standard $\otimes$-$\Hom$ adjunction implies that $\pind'_{U,V}$ is right-adjoint to $\pres_{V,U}$, and we have $\pres_{V,U} \cong \pres_{U,V}$ by part~(1); see \cite[I.2.2 (Corollaire)]{Renard} for a formulation and proof of the adjunction in the present context.

To prove part \eqref{item:pres-Hom} we apply Lemma \ref{lem:ind_coind} again, this time with $H=L$, $K=G$ and ${X=e_U e_V \H(G)}$. To verify the hypothesis \eqref{ind_coind_eq}, fix an open normal subgroup $H_1\subseteq L$. Then there is an open normal subgroup $G_{0}\subseteq G$ having an Iwahori decomposition $(U_{0}, L_{0}, V_{0})$, where $L_{0}$ is contained in $H_1$. Here $Y_{0}$ means $Y\cap G_{0}$ for every subset $Y \subset G$. 
We then have
\[
e_{H_1}e_U e_V\H(G) \subseteq e_{L_{0}}e_U e_V \H(G) = e_U(e_{U_{0}}e_{L_{0}}e_{V_{0}})e_V \H(G) \subseteq e_{G_{0}}\H(G),
\]
so \eqref{ind_coind_eq} is satisfied {by $G_1=G_0$}. Now Lemma \ref{lem:ind_coind} implies that $\pres_{U,V}$ is isomorphic to $\pres'_{U,V}$, which is right-adjoint to $\pind_{U,V}$ {by the argument of part \eqref{item:pind-Hom}}. 

Part \eqref{item:pind-compat} follows from Proposition \ref{prop:z}, as in \cite[Lemme 2.9]{Dat_parahoric}.

Part~\eqref{item:pind-finite} follows from the equality $e_K = e_{U_K}e_{L_K} e_{V_K}$, as was remarked in \cite[p.272]{Dat_parahoric}. {It is also a consequence of  Theorem \ref{thm:C1}, below.}

The finite-dimensionality assertion in part \eqref{item:pind-nonzero} follows from part \eqref{item:pind-finite}: every finite-dimensional smooth representation of $L$ is inflated from a representation of some finite quotient $L/L_K$, and the functor $\pind_{U/U_K,V/V_K}$ obviously preserves finite-dimensionality. To prove that $\pind_{U,V}(M)\neq 0$ as long as $M\neq 0$, fix a nonzero $m\in M$ and let $f\in \ind_{LU}^G (M)$ be the function supported on the open set $ULV\subset G$, and given there by $f(ulv)=l\cdot m$. The image of $f$ under the intertwiner $J_V:\ind_{LU}^G (M) \to \ind_{LV}^G (M)$ (see \eqref{eq:intertwiner}) is nonzero, because 
\[
(J_V f)(1) = \int_V f(v)\, \dd v =m,
\]
and so $\pind_{U,V}(M)\cong \Image(J_V)$ is nonzero.

For part \eqref{item:pind-stages}, convolution over $L$ gives an isomorphism of $\H(G)$-$\H(H)$ bimodules
\[
\H(G)e_U e_V \otimes_{\H(L)} \H(L)e_X e_Y \xrightarrow{\cong} \H(G)e_U e_V e_X e_Y. 
\]
Now $e_X$ commutes with $e_V$ since $X$ normalises $V$, and we have $e_U e_X = e_{U\rtimes X}$ and $e_V e_Y = e_{Y\ltimes V}$, and so we have produced an isomorphism between the bimodules representing the functors $\pind_{U,V}\circ \pind_{X,Y}$ and $\pind_{U\rtimes X, Y\ltimes V}$.
\end{proof}

If the triple $(U,L,V)$ is an actual Iwahori decomposition of $G$, then the functors $\pind_{U,V}$ and $\pres_{U,V}$ enjoy the following additional properties, which we recall from \cite{Dat_parahoric} and \cite{Crisp_parahoric} for the reader's convenience:

\begin{theorem}\label{thm:pind-Iw}
Suppose that $(U,L,V)$ is an Iwahori decomposition of a profinite group $G$. Then:
\begin{enumerate}[\rm(1)]
\item\label{item:pind-Iw-irr} $\pind_{U,V}$ sends $\Irr(L)$ to $\Irr(G)$, while $\pres_{U,V}$ sends $\Irr(G)$ to $\Irr(L)\sqcup \{0\}$.
\item\label{item:pind-Iw-prespind} $\pres_{U,V}\circ\pind_{U,V}\cong \id_{\Rep(L)}$.
\item\label{item:pind-Iw-pindpres} If $M\in \Irr(G)$ and $\pres_{U,V}(M)\neq 0$, then $\pind_{U,V}  \pres_{U,V} (M)\cong M$.
\item\label{item:pind-Iw-Hom} If $M\in \Irr(G)$, then either $\Hom_L(M^U, M^V)$ is zero, in which case $\pres_{U,V}(M)=0$; or $\Hom_L(M^U, M^V)$ is one-dimensional, in which case it is spanned by the operator $e_Ve_U$, which is an isomorphism, and $\pres_{U,V}(M)\cong M^U\cong M^V$. 
\item\label{item:pind-Iw-UV} Given $M\in \Irr(G)$ and $N\in \Irr(L)$, one has $M \cong \pind_{U,V}(N)$ if and only if $N$ is a common subrepresentation of $M^U$ and $M^V$.
\item\label{item:pind-Iw-character} For each $(\phi,M) \in \Irr(G)$ there is a nonzero scalar $c$ such that 
\[
  e_U e_\phi e_V = c e_U  e_{\pres_{U,V}(\phi)} e_V  
\]
as operators on $\H(G)$.
\end{enumerate}
\end{theorem}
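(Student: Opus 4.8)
The plan is to reduce at the outset to the case of a finite group, and then to extract all six statements from a single bimodule identity together with the infinite‑dihedral‑group analysis used to prove Proposition~\ref{prop:z}. The reduction is routine: since $(U,L,V)$ is a genuine Iwahori decomposition, Definition~\ref{def:vI}(2) supplies arbitrarily small open normal $K\lhd G$ with Iwahori decomposition $(U\cap K,L\cap K,V\cap K)$; every smooth irreducible representation of $G$ or of $L$ is inflated from $G/K$ resp.\ $L/(L\cap K)$ for $K$ small enough (and a general smooth representation is a filtered colimit of such), the triples $(U/(U\cap K),L/(L\cap K),V/(V\cap K))$ are again Iwahori decompositions by Observation~\ref{ex:vI-subquotients}(3), and the functors commute with inflation by Theorem~\ref{thm:pind-properties}\eqref{item:pind-finite}. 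So I would assume $G$ finite with counting measure, $\H(G)=\C[G]$, and use that $G=U\times L\times V$ with unique factorisation.

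The engine of the argument is the claim that \emph{the $\H(L)$-bimodule $e_Ue_V\,\H(G)\,e_Ue_V$ is isomorphic to $\H(L)$.} Any $F\in e_Ue_V\H(G)e_Ue_V$ satisfies $e_U*F=F$ and $F*e_V=F$ (multiply $e_Ue_V*F=F$ on the left by $e_U$, and $F*e_Ue_V=F$ on the right by $e_V$), so $F$ is left $U$-invariant and right $V$-invariant, hence determined by $F\restrict_L$ by uniqueness of the factorisation $G=ULV$; and $F\mapsto F\restrict_L$ is $\H(L)$-bilinear because $L$ normalises $U$ and $V$. This gives an embedding of bimodules $e_Ue_V\H(G)e_Ue_V\into\H(L)$, and it remains to see that it is onto. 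Applying Proposition~\ref{prop:z} to the left-regular representation yields an invertible $\zeta\in\H(G)$ commuting with $e_U$, $e_V$ and $\H(L)$ such that $\zeta^{-1}e_Ue_V$ is idempotent, whence $(e_Ue_V)^2=\zeta\,e_Ue_V$; since $(e_Ue_V)^2=e_Ue_V\cdot e_U\cdot e_Ue_V$ already lies in $e_Ue_V\H(G)e_Ue_V$, surjectivity comes down to checking that $(e_Ue_V)^2\restrict_L\in\H(L)$ acts invertibly in every irreducible representation of $L$ — equivalently that the corner $(\zeta^{-1}e_Ue_V)\H(G)(\zeta^{-1}e_Ue_V)$ has $\C$-dimension $|L|$. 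I expect this to be the one genuinely computational point; it is precisely where the Iwahori hypothesis $G=ULV$ enters (there is no counterpart in the virtual case), and it is the analogue in our setting of the corresponding step in \cite[\S2]{Dat_parahoric}. Granting it, part~\eqref{item:pind-Iw-prespind} is immediate: $\pres_{U,V}\pind_{U,V}(M)=(e_Ue_V\H(G)e_Ue_V)\otimes_{\H(L)}M\cong M$, naturally in $M$.

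For part~\eqref{item:pind-Iw-Hom} I would return to the dihedral picture of Proposition~\ref{prop:z}: viewing $M$ as a representation of $\Gamma=\langle s,t\rangle$ via $s=2e_U-1$, $t=2e_V-1$, the $\Gamma$-isotypic decomposition $M=\bigoplus_W M_W$ is stable under $L$ (which commutes with $e_U,e_V$), and on each summand $e_UM_W\cong e_VM_W\cong\Hom_\Gamma(W,M)$ as $L$-modules, with $e_Ve_U$ restricting to an isomorphism between them, \emph{except} on those one-dimensional $W$ on which exactly one of $e_U$, $e_V$ acts as the identity. The operator $e_Ve_U$ is $L$-equivariant and maps $M^U=e_UM$ into $M^V=e_VM$; the content of~\eqref{item:pind-Iw-Hom} is that $G$-irreducibility of $M$ rules out the exceptional one-dimensional components — again this uses $G=ULV$ and fails for virtual decompositions — so that $e_Ve_U\colon M^U\to M^V$ is an isomorphism, and then $\Hom_L(M^U,M^V)\cong\End_L(M^U)$ is one-dimensional (i.e.\ $M^U$ is irreducible) exactly when it is nonzero, which happens exactly when $\pres_{U,V}(M)=e_UM^V\neq0$; in that case $\pres_{U,V}(M)\cong M^U\cong M^V$. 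This elimination of the ``mixed'' $\Gamma$-components is the second point I expect to require real work.

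Everything else is then formal, using the adjunctions of Theorem~\ref{thm:pind-properties}\eqref{item:pind-Hom} and~\eqref{item:pres-Hom} and semisimplicity of $\C[G]$, $\C[L]$. For $N\in\Irr(L)$, $\End_G(\pind_{U,V}N)\cong\Hom_L(\pres_{U,V}\pind_{U,V}N,N)\cong\End_L(N)=\C$ by~\eqref{item:pind-Iw-prespind}, so the semisimple module $\pind_{U,V}N$ is irreducible; for $M\in\Irr(G)$ with $\pres_{U,V}(M)\neq0$, each irreducible $L$-summand $N$ of $\pres_{U,V}(M)$ carries a nonzero (hence invertible) map $\pind_{U,V}N\to M$, and applying $\pres_{U,V}$ and~\eqref{item:pind-Iw-prespind} forces $N\cong\pres_{U,V}(M)$, so $\pres_{U,V}(M)$ is irreducible (part~\eqref{item:pind-Iw-irr}) and $\pind_{U,V}\pres_{U,V}(M)\cong M$ (part~\eqref{item:pind-Iw-pindpres}). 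Part~\eqref{item:pind-Iw-UV}: $M\cong\pind_{U,V}(N)$ forces $N\cong\pres_{U,V}(M)\cong M^U\cong M^V$ by~\eqref{item:pind-Iw-prespind} and~\eqref{item:pind-Iw-Hom}, and conversely an embedding $N\into M^U$ makes $M^U$ nonzero, hence irreducible, so $N\cong\pres_{U,V}(M)\neq0$ and $\Hom_G(\pind_{U,V}N,M)\cong\Hom_L(N,\pres_{U,V}M)\neq0$ with both sides irreducible. Part~\eqref{item:pind-Iw-character}: if $\pres_{U,V}(\phi)=0$ then $e_Ue_V$, hence $e_Ue_\phi e_V$, kills $M$ and every other irreducible $\H(G)$-module, so both sides vanish; if $\sigma:=\pres_{U,V}(\phi)\neq0$ then $M^V\cong\sigma$ is irreducible by~\eqref{item:pind-Iw-Hom}, so $e_\sigma$ acts as the identity on $M^V=e_VM$ and $e_Ue_\sigma e_V=e_Ue_V=e_Ue_\phi e_V$ on $M$, while any irreducible $M'$ on which $e_Ue_\sigma e_V$ is nonzero has $\sigma$ as a constituent of $\pres_{U,V}(M')$, whence $\pres_{U,V}(M')\cong\sigma$ and $M'\cong\pind_{U,V}(\sigma)\cong M$ by~\eqref{item:pind-Iw-pindpres}; so $e_Ue_\phi e_V=e_Ue_\sigma e_V$ as operators on $\H(G)$, with $c=1$ in this normalisation.
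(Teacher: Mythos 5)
The two steps you flag as \lq requiring real work\rq\ are not optional polish: they are precisely the content of the results the paper itself cites (Dat's Corollaire 2.10 for parts (1)--(2), Crisp's Lemma 1.10 for part (4)), so as written your proposal is a reduction, not a proof. That said, both gaps can in fact be closed over $\C$, though not along the lines you sketch. For the first, you do not need surjectivity of the restriction map $\rho\colon e_Ue_V\H(G)e_Ue_V\into\H(L)$, and the dimension count you propose (the corner $e\H(G)e$ has dimension $|L|$) is essentially equivalent to parts (1)--(3) and hence circular as a starting point. Instead use semisimplicity: since $\H(L)$ is semisimple every $M\in\Rep(L)$ is flat, so the injection $\rho$ (which you correctly derive from uniqueness of the factorisation $G=ULV$) remains injective after $\otimes_{\H(L)}M$, giving a natural injection $\pres_{U,V}\pind_{U,V}M\into M$. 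Combining with the nonzero unit $M\to\pres_{U,V}\pind_{U,V}M$ from the other adjunction (nonzero because $\pind_{U,V}M\neq0$, Theorem~\ref{thm:pind-properties}\eqref{item:pind-nonzero}) forces $\pres_{U,V}\pind_{U,V}M\cong M$ on irreducibles, hence $\pres_{U,V}\pind_{U,V}\cong\id$; surjectivity of $\rho$ then drops out a posteriori.

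For the second gap, your statement that $G$-irreducibility excludes the mixed one-dimensional $\Gamma$-isotypes is not what one should prove: a mixed component can occur, but it forces the relevant fixed space to vanish. If $m\neq0$ has $e_Um=m$ and $e_Vm=0$ then $M=\H(G)m=\H(V)\H(L)\H(U)m=\H(V)\H(L)m$ (using $G=VLU$), so $M^V=e_V\H(V)\H(L)m=\H(L)e_Vm=0$; symmetrically a $(-1,1)$-vector kills $M^U$. Thus mixed components coexist only with $\pres_{U,V}(M)=0$, and once $\pres_{U,V}(M)\neq0$ (so $M^U\neq0\neq M^V$) your dihedral analysis does give $e_Ve_U\colon M^U\xrightarrow{\ \sim\ }M^V$. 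One last wrinkle: in proving (4) you invoke \lq$\End_L(M^U)$ is one-dimensional exactly when nonzero\rq, which requires $M^U\cong\pres_{U,V}(M)$ to be irreducible-or-zero — that is part (1), which you correctly derive from (2) alone, so the logical order should be $(2)\Rightarrow(1)\Rightarrow(4)$ rather than the one you present. With these repairs your self-contained argument goes through, and even sharpens part (6) to $c=1$, a normalisation which the paper's character-formula proof via Crisp's Proposition 1.11 does not determine.
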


\begin{proof}
Parts \eqref{item:pind-Iw-irr} and \eqref{item:pind-Iw-prespind} are proved in \cite[Corollaire 2.10]{Dat_parahoric}.  Part \eqref{item:pind-Iw-Hom} is proved in \cite[Lemma 1.10]{Crisp_parahoric}, and part \eqref{item:pind-Iw-UV} follows from parts \eqref{item:pind-Iw-Hom} and \eqref{item:pind-Iw-pindpres}. 

To prove part \eqref{item:pind-Iw-pindpres}: if $\pres_{U,V}(M)\neq 0$ then the adjunction in part \eqref{item:pind-Hom} of Theorem \ref{thm:pind-properties} gives a nonzero intertwiner $M\to \pind_{U,V}  \pres_{U,V}(M)$. Both of these representations are irreducible (by part \eqref{item:pind-Iw-irr}), and so they are isomorphic.

Part \eqref{item:pind-Iw-character} follows from the character formula for $\pres_{U,V}$ proved in \cite[Proposition 1.11]{Crisp_parahoric}. That formula implies that there is a nonzero $s\in \C$ such that  
\[
\ch_{\pres_{U,V}(\phi)}(l) = s \int_U \int_V \ch_\phi(vlu)\,  \dd u \,  \dd v 
\]
for all $l\in L$. Writing $\sim$ to indicate equality up to a nonzero scalar multiple, the operator $e_U e_{\pres_{U,V}(\phi)} e_V$ is thus given by  
\[
\begin{aligned}
e_U e_{\pres_{U,V}(\phi)} e_V  & \sim  \int_U \int_L \int _V \ch_{\pres_{U,V}(\phi)}(l^{-1}) ulv\, \dd u\, \dd l \, \dd v \\
&\sim \int_U \int_L \int _V \left( \int_U \int_V \ch_\phi (v_1^{-1} l^{-1} u_1^{-1})\, \dd u_1\,\ \dd v_1\right) ulv\, \dd u\, \dd l \, \dd v \\
&\sim \int_U \int_L \int _V \left( \int_U \int_V \ch_\phi (v_1^{-1} l^{-1} u_1^{-1})\, \dd u_1\,\ \dd v_1\right) u(u_1lv_1)v\, \dd u\, \dd l \, \dd v \\
&\sim \int_U \int_G \int_V \ch_\phi(g^{-1}) ugv\, \dd u\, \dd g\, \dd v \\
&\sim e_U e_\phi e_V 
\end{aligned}
\]
where in the third step we used invariance of the Haar measures, and in fourth we used the fact that the product of the Haar measures on $U$, $L$ and $V$ is a Haar measure on $G=ULV$.
\end{proof}


\section{Relations of the functors $\pind$ and $\pres$ with Clifford theory}\label{sec:Clifford}

The functors $\pind_{U,V}$ and $\pres_{U,V}$ can be difficult to work with, since the bimodule $\H(G)e_U e_V$ is not obviously the space of functions on any nice $G\times L$-space. The situation where $(U,L,V)$ is an actual, rather than a virtual, Iwahori decomposition of $G$ is significantly easier to deal with; see Sections  \ref{sec:orbit} and \ref{sec:Iwahori}, for instance. If $G$ admits only a virtual Iwahori decomposition, then $G$ contains an open  normal subgroup $G_0$ which admits an actual Iwahori decomposition (this is part of the definition). In this section we will firstly recall how Clifford theory reduces the study of representations of $G$ to that of projective representations of certain subgroups of $G/G_0$; and then we will show how the induction and restriction functors $\pind$ and $\pres$ are compatible with this reduction.

\subsection{Review of Clifford theory}\label{subsec:Clifford_review}

Let us recall the basic assertions of Clifford theory. Details can be found in~\cite{Karpilovsky}, for example.  

Let $G_0$ be an open normal subgroup of a profinite group $G$. Then $G$ acts by conjugation on the set $\Irr(G_0)$ of isomorphism classes of irreducible representations of $G_0$. 
For each $\phi\in \Irr(G_0)$ we let $\Rep(G)_\phi$ denote the category of smooth representations of $G$ whose restriction to $G_0$ contains only representations in the $G$-orbit~$G\cdot\phi$. 
The first assertion of Clifford theory is:
\begin{equation}\tag{C1}\label{C1}
\text{$\Rep(G)$ is equivalent to the product }\prod_{G\cdot \phi \in G\backslash \Irr(G_0)} \Rep(G)_\phi.
\end{equation}

Fix an irreducible representation $\phi:G_0\to \GL(W)$ of $G_0$, and let $G(\phi)$ denote the stabiliser of $\phi$ in $G$. Since $\phi$ is smooth, it is trivial on some open normal subgroup $G_{00}$ of $G$, and replacing $G$ by $G/G_{00}$ 
we might as well assume---as we shall, for the rest of Section \ref{subsec:Clifford_review}---that $G$ is finite. We use the counting measure on $G$ to define the convolution on $\H(G)$, so that the $\delta$-functions $\delta_g$ satisfy $\delta_g\delta_h=\delta_{gh}$.
 
Representations may be induced from $G(\phi)$ to $G$ in the usual way (see Section \ref{subsec:notation}). The second assertion of Clifford theory is:
\begin{equation}\tag{C2}\label{C2}
\text{The functor \ $\ind:\Rep(G(\phi))_\phi\to \Rep(G)_{\phi}$ \ is an equivalence of categories.}
\end{equation}
An inverse is given by the functor which sends a representation $M \in \Rep(G)_{\phi}$ to its {$G(\phi)$-}subspace $e_\phi M$, where $e_\phi\in \H(G_0)$ is the central idempotent associated to $\phi$. Note that the category $\Rep(G(\phi))_\phi$ is equivalent, in an obvious way, to the category of modules over the direct-summand $e_\phi \H(G(\phi))$ of the algebra $\H(G(\phi))$.

Let $\ol{G}$ denote the quotient $G/G_0$, and let $\theta:G\to \ol{G}$ be the quotient map. Schur's lemma implies that $\phi$ admits a \emph{projective extension} {to $G(\phi)$}, i.e.\ a map $\phi':G(\phi)\to \GL(W)$ which becomes a group homomorphism upon passing to the quotient $\operatorname{PGL}(W)$, and which satisfies 
\[
\phi'(g_0 g) = \phi(g_0)\phi'(g) \quad \text{and}\quad \phi'(g g_0) = \phi'(g)\phi(g_0)
\]
for all $g\in G(\phi)$ and all $g_0\in G_0$. These two properties imply that there is a two-cocycle $\gamma:\ol{G}(\phi)\times \ol{G}(\phi)\to \C^\times$ whose inflation to a cocycle on $G(\phi)$, which we also denote by $\gamma$, satisfies 
\begin{equation}\label{eqn:def.of.gamma}
\phi'(g_1)\phi'(g_2)= \gamma(g_1,g_2)^{-1}\phi'(g_1 g_2).
\end{equation}

We call $\gamma^{-1}$ the two-cocycle associated to $\phi'$; the cocycles associated to different choices of projective extensions of $\phi$ are cohomologous. The projective representation $\phi'$ may be regarded as a module over the twisted group algebra $\H^{\gamma^{-1}}(G(\phi))$, a construction that we now recall. To a two-cocycle $\alpha$ on a finite group $\Gamma$, one may associate the {\em twisted group algebra} $\H^\alpha(\Gamma)$, that is, the algebra of complex-valued functions on $G$, with twisted convolution multiplication $\cdot_\alpha$ defined on the basis $\left\{ \delta_g \ |\ g \in \Gamma \right \}$ of $\delta$-functions by 
\[
\delta_{g_1}\cdot_\alpha \delta_{g_2} \coloneq \alpha(g_1,g_2) \delta_{g_1 g_2}.
\]

We let $\Rep^\alpha(\Gamma)$ denote the category of $\H^\alpha(\Gamma)$-modules. An immediate consequence of the definition is that if $\alpha,\beta: \Gamma \times \Gamma \to \C^\times$ are two-cocycles, and $M$ and $N$ are modules over $\H^\alpha(\Gamma)$ and $\H^\beta(\Gamma)$  respectively, then $M \otimes N$ is naturally an $\H^{\alpha \beta}(\Gamma)$-module with respect to the diagonal action.

 Returning to our setup, if $M$ is an $\H^\gamma(\ol{G}(\phi))$-module, then by inflation $M$ is also an $\H^\gamma(G(\phi))$-module. As $W \in \Rep^{\gamma^{-1}}(G(\phi))$ we get that $M \otimes W$ is 
an $\H^{\gamma \cdot \gamma^{-1}}(\ol{G}(\phi))$-module, i.e.\ an ordinary (as opposed to a projective) representation of $G(\phi)$, and the third assertion of Clifford theory is: 
\begin{equation}\tag{C3}\label{C3}
\text{The functor \ $\otimes\phi':\Rep^{\gamma}(\ol{G}(\phi))\xrightarrow{M \mapsto M\otimes W} \Rep(G(\phi))_{\phi}$  \ is an equivalence of categories.}
\end{equation}
An equivalent formulation of \eqref{C3}, which we shall use below, is that the map
\begin{equation*}
\theta\otimes\phi' : \H(G(\phi))e_\phi \to \H^\gamma(\ol{G}(\phi))\otimes_{\C} \End (W) \qquad \delta_g e_\phi \mapsto \delta_{\theta(g)}\otimes \phi'(g)
\end{equation*}
is an isomorphism of algebras. Then the equivalence \eqref{C3} decomposes as 
\begin{equation}\label{eq:C3_decomposition}
\Rep^\gamma(\ol{G}(\phi)) \xrightarrow[\cong]{M\mapsto M\otimes W} \Mod\left(\H^\gamma(\ol{G}(\phi))\otimes \End (W)\right) \xrightarrow[\cong]{ (\theta\otimes\phi')^* } \Rep(G(\phi))_\phi.
\end{equation}
{Here $\Mod(R)$ denotes the category of left $R$-modules.}

This ends our review of Clifford theory. We shall now explain the compatibility with the functors $\pind$ and $\pres$.

\subsection{Induction and Clifford theory}

In this section we let $G$ be a profinite group, with a virtual Iwahori decomposition $(U,L,V)$ as in Definition \ref{def:vI}. We also fix one open normal subgroup $G_0\subset G$ for which  the product mapping 
 \[ U_0 \times L_0 \times V_0 \to G_0\]
 is a homeomorphism, where $H_0\coloneq H\cap G_0$ for every subgroup $H\subseteq G$. We consider the induction functors
 \[ \pind=\pind_{U,V}:\Rep(L)\to \Rep(G) \qquad \text{and}\qquad \pind_0 = \pind_{U_0,V_0}:\Rep(L_0)\to \Rep(G_0),\]
 along with their adjoint restriction functors $\pres$ and $\pres_0$, as in Definition \ref{def:pind}.

It follows from part \eqref{item:pind-Iw-irr} of Theorem \ref{thm:pind-Iw} that the functor $\pind_0$ sends irreducible representations of $L_0$ to irreducible representations of $G_0$, and thus produces a map from $\Irr(L_0)$ to $\Irr(G_0)$.

\begin{lemma}\label{L_eq_lemma}
The map $\pind_0:\Irr(L_0)\to \Irr(G_0)$ is $L$-equivariant and injective.
\end{lemma}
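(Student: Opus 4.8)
The statement to be proved is Lemma \ref{L_eq_lemma}: that the map $\pind_0:\Irr(L_0)\to\Irr(G_0)$ is $L$-equivariant and injective. I would treat the two assertions separately.

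\emph{Equivariance.} The group $L$ normalises $G_0$ (since $G_0\lhd G$) and normalises $L_0=L\cap G_0$, $U_0=U\cap G_0$ and $V_0=V\cap G_0$ (since $L$ normalises $U$ and $V$, and $G_0$ is normal). Hence for $x\in L$, conjugation $\Ad_x$ is a common automorphism of the situation: it carries the Iwahori decomposition $(U_0,L_0,V_0)$ of $G_0$ to itself. The idempotents $e_{U_0},e_{V_0}\in\H(G_0)$ are fixed by $\Ad_x$ (an automorphism sends the trivial-representation idempotent of a subgroup to that of its image), so $\Ad_x$ intertwines the bimodule $\H(G_0)e_{U_0}e_{V_0}$ with itself compatibly with the $L_0$-action twisted by $\Ad_x$. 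Concretely, for $\psi\in\Irr(L_0)$ one checks that $\pind_0(\psi\circ\Ad_x^{-1})\cong \pind_0(\psi)\circ\Ad_x^{-1}$ as representations of $G_0$, which is exactly $L$-equivariance of $\pind_0$ for the conjugation actions of $L$ on $\Irr(L_0)$ and $\Irr(G_0)$. This is a short functoriality argument; I expect no obstacle here.

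\emph{Injectivity.} This is where the real content lies, and I would extract it from Theorem \ref{thm:pind-Iw}. Since $(U_0,L_0,V_0)$ is an \emph{actual} Iwahori decomposition of $G_0$, part \eqref{item:pind-Iw-prespind} of that theorem gives $\pres_0\circ\pind_0\cong\id_{\Rep(L_0)}$. In particular, for $\psi\in\Irr(L_0)$ we recover $\psi$ from $\phi:=\pind_0(\psi)$ as $\psi\cong\pres_0(\phi)$ (note $\pres_0(\phi)\neq 0$ because $\pres_0\pind_0(\psi)\cong\psi\neq 0$). So if $\pind_0(\psi_1)\cong\pind_0(\psi_2)$ then applying $\pres_0$ yields $\psi_1\cong\psi_2$. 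That is the whole argument.

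\emph{Expected main obstacle.} There is essentially no obstacle once Theorem \ref{thm:pind-Iw}\eqref{item:pind-Iw-prespind} is invoked; the only care needed is the bookkeeping for equivariance — making sure that the conjugation action of $L$ on $\Irr(L_0)$ used in the statement matches the one under which $\pres_0\pind_0\cong\id$ is natural, and that $\Ad_x$ genuinely fixes $e_{U_0}$ and $e_{V_0}$ (which it does, as these depend only on the subgroups $U_0,V_0$, and $\Ad_x$ preserves those subgroups). If one wanted to avoid quoting \eqref{item:pind-Iw-prespind}, one could instead argue injectivity directly via \eqref{item:pind-Iw-UV}: $\pind_0(\psi)$ contains $\psi$ as a common subrepresentation of its $U_0$- and $V_0$-invariants, and \eqref{item:pind-Iw-Hom} forces $\pres_0(\pind_0(\psi))\cong(\pind_0\psi)^{U_0}$ to be a single irreducible, pinning down $\psi$ uniquely; but invoking \eqref{item:pind-Iw-prespind} is cleaner.
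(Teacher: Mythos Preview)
Your proposal is correct and matches the paper's own proof essentially verbatim: the paper argues equivariance by noting that $L$ normalises $U_0$ and $V_0$ (hence commutes with $e_{U_0}$ and $e_{V_0}$), and derives injectivity from Theorem~\ref{thm:pind-Iw}\eqref{item:pind-Iw-prespind}.
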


\begin{proof}
$L$ normalises $U_0$ and $V_0$, and so commutes with $e_{U_0}$ and $e_{V_0}$.
The injectivity follows from part \eqref{item:pind-Iw-prespind} of Theorem \ref{thm:pind-Iw}.
\end{proof}

The functors $\pind$ and $\pres$ are compatible with the decomposition \eqref{C1}, in the following sense.

\begin{theorem}\label{thm:C1}
With the above notation one has 
$\pind(\Rep(L)_\psi)\subseteq \Rep(G)_{\pind_0(\psi)}$,
for every $\psi\in \Irr(L_0)$. 
\end{theorem}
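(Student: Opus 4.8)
The plan is to show directly that if $M \in \Rep(L)_\psi$, then every irreducible constituent of the restriction of $\pind(M)$ to $G_0$ lies in the $G$-orbit of $\phi \coloneq \pind_0(\psi)$. The starting point is the explicit description of the bimodule $\H(G)e_U e_V$ together with the fact (Observation \ref{ex:vI-subquotients}) that $(U_0, L_0, V_0)$ is an Iwahori decomposition of $G_0$, so Theorem \ref{thm:pind-Iw} applies to $\pind_0$.

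First I would reduce to the case where $M$ is $\psi$-isotypic as an $L_0$-representation and, using part \eqref{item:pind-finite} of Theorem \ref{thm:pind-properties}, to the case where $G$ is finite (pass to a quotient $G/K$ by an open normal $K$ with Iwahori decomposition small enough to kill the relevant kernels, noting $K \subseteq G_0$). Then I would analyse the $G_0$-module structure of $\pind(M) = \H(G)e_Ue_V \otimes_{\H(L)} M$. The key computation is a Mackey-style decomposition: as a right $\H(L)$- and left $\H(G_0)$-module, $\H(G)e_Ue_V$ decomposes over double cosets $G_0 \backslash G / LUV$ (or, more precisely, one uses the open embedding $U \times L \times V \hookrightarrow G$ to write $\H(G)e_Ue_V$ in terms of $\H(G_0)e_{U_0}e_{V_0}$ induced up and translated by coset representatives of $G/G_0$, since $L, U, V$ all normalise $G_0$). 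This should express $\pind(M)$, restricted to $G_0$, as a sum of $G$-conjugates of modules of the form $\pind_0$ applied to $L_0$-subquotients of $M$ — and each such $L_0$-subquotient is $\psi$-isotypic by assumption.

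Concretely, the cleanest route: fix coset representatives $g_1, \dots, g_r$ for $G/G_0$; since $U, L, V \subseteq \bigcup g_i G_0$ and in fact $LUV$ meets finitely many $G_0$-cosets, one obtains $e_{U_0}e_{V_0}\H(G_0) \otimes_{\H(G_0)}(\cdot)$ building blocks, and the restriction of $\pind(M)$ to $G_0$ becomes $\bigoplus_i \pind_0\bigl({}^{g_i}(M|_{L_0})\bigr)$ up to the appropriate identifications (using that conjugation by $g_i$ sends $(U_0, L_0, V_0)$ to itself since $G_0 \lhd G$ and $g_i$ normalises each of $U, L, V$ modulo checking it normalises their intersections with $G_0$). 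By Theorem \ref{thm:pind-Iw}\eqref{item:pind-Iw-irr}, $\pind_0$ sends irreducibles to irreducibles, so $\pind_0({}^{g_i}\psi) = {}^{g_i}\pind_0(\psi) = {}^{g_i}\phi$ by the $L$-equivariance of Lemma \ref{L_eq_lemma}, and hence every $G_0$-constituent of $\pind(M)|_{G_0}$ is a $G$-conjugate of $\phi$, which is exactly the assertion $\pind(M) \in \Rep(G)_\phi$.

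The main obstacle I anticipate is making the Mackey-type decomposition of $\H(G)e_Ue_V$ as a $(\H(G_0), \H(L))$-bimodule rigorous without the hypothesis that $ULV$ is a subgroup: one cannot simply invoke the classical Mackey formula, and one must work with the image of the intertwiner $J_V$ (see \eqref{eq:intertwiner}) rather than with $\H(G)e_U e_V$ directly, controlling how $J_V$ interacts with the $G_0$-isotypic decomposition. The element $z_M$ of Proposition \ref{prop:z}, which commutes with the $L$-action (and, one checks, with the $G_0$-action, since it commutes with everything commuting with $e_U, e_V$), should be the tool that lets one identify $\pind(M)$ with an honest direct summand and thereby transfer the $G_0$-isotype information cleanly.
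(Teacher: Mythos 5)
Your proposal correctly identifies the two ingredients that close the argument --- the $L$-equivariance and injectivity of $\pind_0$ (Lemma~\ref{L_eq_lemma}), and the irreducibility-preservation from Theorem~\ref{thm:pind-Iw}\eqref{item:pind-Iw-irr} --- and your reduction to finite $G$ via Theorem~\ref{thm:pind-properties}\eqref{item:pind-finite} is sound. But the central step is wrong as stated, and you have in fact flagged the problem yourself without resolving it. You claim ``the restriction of $\pind(M)$ to $G_0$ becomes $\bigoplus_i \pind_0\bigl({}^{g_i}(M|_{L_0})\bigr)$.'' That equality is false in general. The decomposition $\H(G)=\bigoplus_i g_i\H(G_0)$ is a decomposition of left $\H(G_0)$-modules, but right multiplication by $e_Ue_V$ does not respect it, since $U$ and $V$ are not contained in $G_0$; and dimension considerations already rule out the equality: $\dim\bigl(\H(G)e_{U_0}e_{V_0}\otimes_{\H(L_0)}M|_{L_0}\bigr) = [G:G_0]\cdot\dim\pind_0(M|_{L_0})$ is in general strictly larger than $\dim\pind(M)$, whereas your decomposition would force them to agree. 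There is also no known Mackey-type formula for $\H(G)e_Ue_V$ as a $(G_0,L)$-bimodule in the virtual (non-group) setting, which is precisely the ``main obstacle'' you name.

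The paper's proof avoids this obstacle entirely by proving an inclusion rather than an equality, which is all that is needed. It first produces a bimodule embedding
\[
\H(G)e_U e_V \;\subseteq\; \H(G) e_{U_0} e_V \;\cong\; \H(G)e_V e_{U_0} \;\subseteq\; \H(G)e_{V_0} e_{U_0} \;\cong\; \H(G)e_{U_0} e_{V_0},
\]
using Theorem~\ref{thm:pind-properties}\eqref{item:pind-UV-VU} for the middle isomorphisms. Then, for $N\in\Rep(L)_\psi$, one has (up to $G$-equivariant isomorphism and using semisimplicity to pass from quotients to submodules)
\[
\pind(N) \;\subseteq\; \H(G)e_{U_0}e_{V_0}\otimes_{\H(L)}N \;\subseteq\; \H(G)e_{U_0}e_{V_0}\otimes_{\H(L_0)}\res^L_{L_0}(N).
\]
The rightmost object is $\ind_{G_0}^{G}\pind_0\bigl(\res^L_{L_0}(N)\bigr)$; its restriction to $G_0$ is a direct sum of $G$-conjugates of $\pind_0\bigl(\res^L_{L_0}(N)\bigr)$, and since $\res^L_{L_0}(N)$ is a direct sum of $L$-conjugates of $\psi$, Lemma~\ref{L_eq_lemma} finishes the proof. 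To repair your argument you would need to replace ``becomes'' by ``is contained in,'' and the cleanest justification of that containment is exactly the chain of bimodule inclusions above, not a Mackey decomposition.
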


\begin{proof}
We first claim that $\H(G)e_U e_V$ is isomorphic,  as an $\H(G)$-$\H(L)$ bimodule,  to some  submodule of $\H(G)e_{U_0}e_{V_0}$. This is because 
\[
\H(G)e_U e_V \subseteq \H(G) e_{U_0} e_V \cong \H(G)e_V e_{U_0} \subseteq \H(G)e_{V_0} e_{U_0} \cong \H(G)e_{U_0} e_{V_0} ,
\]
where the inclusions hold because $U_0$ and $V_0$ are subgroups of $U$ and $V$, respectively, and the isomorphisms hold by part \eqref{item:pind-UV-VU} of Theorem \ref{thm:pind-properties}.

For each $N\in \Rep(L)_\psi$ we now have (up to $G$-equivariant isomorphism)
\begin{equation}\label{eqn:submodules.induction} 
\pind (N) \subseteq \H(G)e_{U_0} e_{V_0} \otimes_{\H(L)} N \subseteq \H(G)e_{U_0}e_{V_0}\otimes_{\H(L_0)} \res^L_{L_0}(N)
\end{equation}
where the first inclusion holds because of the inclusion of bimodules established above, and the second inclusion holds because the tensor product over $\H(L)$ is a quotient---and therefore also a submodule---of the tensor product over the subalgebra $\H(L_0)$. The restriction to $G_0$ of the right-hand side in \eqref{eqn:submodules.induction} is a direct sum of $G$-conjugates of $\pind_0 (\res^L_{L_0} (N))$, where $\res^L_{L_0}(N)$ is a direct sum of $L$-conjugates of $\psi$. Since $\pind_0$ is $L$-equivariant this implies that the restriction of $\pind (N)$ to $G_0$ is a direct sum of $G$-conjugates of $\pind_0(\psi)$, as claimed.
\end{proof}

For the rest of this section we shall fix an irreducible representation $\psi:L_0\to \GL(W)$ of $L_0$, and study the functor $\pind$ on the subcategory $\Rep(L)_\psi$. There is an open normal subgroup $G_{00}\subset G$ with an Iwahori decomposition $G_{00}= U_{00} L_{00} V_{00}$, such that $\psi$ is trivial on $L_{00}$. Part \eqref{item:pind-finite} of Theorem \ref{thm:pind-properties} allows us to replace $G$ by the quotient $G/G_{00}$, and so we may assume without loss of generality for the rest of this section that \textbf{$\boldsymbol G$ is a finite group}. We consequently take all Haar measures to be counting measures, so that the $\delta$ functions on elements of $G$ satisfy $\delta_g\delta_h=\delta_{gh}$ inside $\H(G)$.

To simplify the notation let us write $\phi$ for $\pind_0(\psi)$. Lemma \ref{L_eq_lemma} implies that $G(\phi)\cap L = L(\psi)$. Let $U(\phi)$ and $V(\phi)$ denote the inertia groups of $\phi$ in $U$ and $V$, respectively, and consider the functor
\[
\pind_\psi\coloneq \pind_{U(\phi),V(\phi)}:\Rep(L(\psi))_{\psi} \to \Rep(G(\phi))_{\phi}
\] 
given by tensor product with the $e_\phi\H(G(\phi))$-$e_\psi\H(L(\psi))$ bimodule $e_\phi \H(G(\phi))e_{U(\phi)}e_{V(\phi)}e_\psi$. 

The functors $\pind$ and $\pres$ are compatible with the equivalence \eqref{C2} as follows:

\begin{theorem}\label{thm:C2} The diagram
\begin{equation*} 
 \xymatrix@C=50pt{
\Rep(L)_{\psi} \ar[r]^-{\pind} & \Rep(G)_{\phi} \\
\Rep(L(\psi))_\psi \ar[u]^-{\ind}_-{\cong} \ar[r]^-{\pind_\psi} & \Rep(G(\phi))_{\phi} \ar[u]_-{\ind}^-{\cong}
}
\end{equation*}
commutes up to a natural isomorphism.
\end{theorem}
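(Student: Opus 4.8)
The plan is to reduce the square to a statement about $\phi$-isotypic subspaces and then to prove that by hand, via Mackey's formula and the intertwiner presentation of $\pind$. Since (as already noted) one may assume $G$ finite, and both vertical functors are the Clifford equivalence \eqref{C2} with quasi-inverse $N\mapsto e_\phi N$ (respectively $N\mapsto e_\psi N$), where $e_\phi\in\H(G_0)$ and $e_\psi\in\H(L_0)$ are the central idempotents of $\phi$ and $\psi$, it suffices to construct a natural isomorphism $e_\phi\bigl(\pind(\ind M_0)\bigr)\cong\pind_\psi(M_0)$ for $M_0\in\Rep(L(\psi))_\psi$. Now $\ind=\ind_{L(\psi)}^L$, and induction in stages together with the compatibility of inflation with induction along $L(\psi)U\le LU$ gives $\ind_{LU}^G\ind_{L(\psi)}^L M_0\cong\ind_{L(\psi)U}^G M_0$ (with $U$ acting trivially on $M_0$), and likewise for $V$. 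So by \eqref{eq:intertwiner}, $\pind(\ind M_0)$ is the image of the standard operator $J_V\colon\ind_{L(\psi)U}^G M_0\to\ind_{L(\psi)V}^G M_0$; since $J_V$ is $G$-equivariant it commutes with $e_\phi\in\H(G_0)$, so $e_\phi\pind(\ind M_0)$ is the image of $J_V$ on $\phi$-isotypic subspaces, and the same picture inside $G(\phi)$ identifies $\pind_\psi(M_0)$ with $\Image(e_\phi J_{V(\phi)})$ inside the $\phi$-isotypic subspaces of $\ind_{L(\psi)U(\phi)}^{G(\phi)}M_0$ and $\ind_{L(\psi)V(\phi)}^{G(\phi)}M_0$. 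I would then introduce the $G(\phi)$-equivariant ``restriction to $G(\phi)$'' maps $\pi$ (well defined since $L(\psi)U\cap G(\phi)=L(\psi)U(\phi)$) together with their sections $\iota$ (``extension by zero off $(L(\psi)U)G(\phi)=UG(\phi)$'').

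The argument then rests on two facts that I would establish:
\begin{enumerate}[(a)]
\item every $\phi$-isotypic vector of $\ind_{L(\psi)U}^G M_0$ is supported on $UG(\phi)$, and $e_\phi\iota$ is an isomorphism. To see this I would restrict to $G_0$ by Mackey's formula: the summand indexed by a double coset $L(\psi)U\,g\,G_0$ is an induced representation of $G_0$ in which the multiplicity of $\phi$ is a multiple of $\langle\pres_{U_0,V_0}({}^g\phi),\psi\rangle$ (Theorem~\ref{thm:pind-Iw}\eqref{item:pind-Iw-Hom}, applied to the Iwahori decomposition $(U_0,L_0,V_0)$ of $G_0$); since $\pres_{U_0,V_0}({}^g\phi)$ is irreducible or zero (Theorem~\ref{thm:pind-Iw}\eqref{item:pind-Iw-irr}) and $\psi\ne0$, this is nonzero only if ${}^g\phi\cong\pind_0(\pres_{U_0,V_0}({}^g\phi))\cong\pind_0(\psi)=\phi$ (Theorem~\ref{thm:pind-Iw}\eqref{item:pind-Iw-pindpres}), i.e.\ $g\in G(\phi)$, and then it equals $1$. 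Since $G_0\subseteq G(\phi)$ one has $U(\phi)\cap G_0=U_0$, so the same computation carried out inside $G(\phi)$ shows that both $\phi$-isotypic subspaces are direct sums of copies of $\phi$ indexed by $G_0\backslash G(\phi)/L(\psi)U(\phi)$; hence $e_\phi\iota$ is bijective, and similarly for $V$;
\item $V\cap UG(\phi)=V(\phi)$, which I would deduce from the compatibility $G(\phi)\cap ULV=U(\phi)L(\psi)V(\phi)$ of the inertia subgroup with the virtual Iwahori decomposition (cf.\ Observation~\ref{ex:vI-subquotients}): if $u^{-1}v\in G(\phi)$ with $u\in U$ and $v\in V$, uniqueness of the factorisation $u^{-1}\cdot1\cdot v$ forces $u\in U(\phi)$ and $v\in V(\phi)$.
\end{enumerate}

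Granting (a) and (b), for a $\phi$-isotypic vector $f$ and $g\in G(\phi)$ the value $(J_Vf)(g)=\int_V f(vg)\,dv$ is an integral over $\{v:vg\in UG(\phi)\}=V\cap UG(\phi)=V(\phi)$, where $f(vg)$ depends only on $\pi f=f|_{G(\phi)}$; so $\pi(J_Vf)=J_{V(\phi)}(\pi f)$, and since $e_\phi J_Vf$ is again $\phi$-isotypic (hence lies in the image of the $V$-side $\iota$) this reads $e_\phi J_V=\iota\circ e_\phi J_{V(\phi)}\circ\pi$ on $\phi$-isotypic subspaces. As $\pi$ is an isomorphism there (inverse to $e_\phi\iota$), it carries $\Image(e_\phi J_V)=e_\phi\pind(\ind M_0)$ isomorphically onto $\Image(e_\phi J_{V(\phi)})=\pind_\psi(M_0)$, naturally in $M_0$. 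The restriction diagram is handled identically with $\pres$ in place of $\pind$. I expect the genuinely delicate point to be (b): the equality $G(\phi)\cap ULV=U(\phi)L(\psi)V(\phi)$ is a combinatorial assertion about how the stabiliser of $\phi$ meets the big cell, which I would prove by first passing, using smoothness of $\phi$, to a finite quotient $G/K$ on which $(U,L,V)$ restricts to an honest Iwahori decomposition, and then checking directly that the conditions imposed by ${}^{ulv}\phi\cong\phi$ decouple across the factors $u$, $l$, $v$.
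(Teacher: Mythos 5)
Your proposal takes a genuinely different route from the paper's. The paper proves the theorem by working directly with the bimodule $e_\phi\H(G)e_Ue_Ve_\psi$: the crucial step is the idempotent computation that inserts $e_{U_0}e_\psi e_{V_0}$, replaces it (up to scalar, via the character identity in Theorem~\ref{thm:pind-Iw}\eqref{item:pind-Iw-character}) by $e_{U_0}e_\phi e_{V_0}$, and then uses orthogonality of the central idempotents $e_{(gu)\cdot\phi}$ to concentrate the support on $G(\phi)$; the second half of the paper's proof then shows $f\mapsto fe_V$ is a bijection by a disjoint-support argument over $V(\phi)\backslash V$. You instead work in the function-space/intertwiner model and compare the operators $J_V$ and $J_{V(\phi)}$ via restriction/extension between $G$ and $G(\phi)$. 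That framework is coherent and the reduction to your Facts (a) and (b) is correct, but neither supporting fact is actually established as written.

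Regarding Fact (a): the Frobenius/Mackey multiplicity of $\phi$ in the summand indexed by $g$ is $\dim\Hom_{L_0}\bigl(({}^g\phi)^{U_0},\psi\bigr)$, not a multiple of $\langle\pres_{U_0,V_0}({}^g\phi),\psi\rangle$. These agree precisely when $\pres_{U_0,V_0}({}^g\phi)\neq 0$, since then $({}^g\phi)^{U_0}\cong\pres_0({}^g\phi)$ by Theorem~\ref{thm:pind-Iw}\eqref{item:pind-Iw-Hom}. But when $\pres_0({}^g\phi)=0$, Theorem~\ref{thm:pind-Iw}\eqref{item:pind-Iw-Hom} only says $\Hom_{L_0}\bigl(({}^g\phi)^{U_0},({}^g\phi)^{V_0}\bigr)=0$; the space $({}^g\phi)^{U_0}$ need not vanish and could a priori still contain $\psi$. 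So the vanishing of the $\phi$-isotype off $UG(\phi)$, which is the whole content of (a), is not proved by your argument. You would either need a genuinely separate argument that $\psi\subseteq({}^g\phi)^{U_0}$ forces $g\in UG(\phi)$, or you should weaken (a) to a statement about the images of the intertwiners only (which is closer to what the paper's idempotent calculation actually gives).

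Regarding Fact (b): the citation of Observation~\ref{ex:vI-subquotients} does not deliver $G(\phi)\cap ULV = U(\phi)L(\psi)V(\phi)$. The observation only says that $(U(\phi),L(\psi),V(\phi))$ is a virtual Iwahori decomposition of $G(\phi)$, i.e.\ the product map $U(\phi)\times L(\psi)\times V(\phi)\to G(\phi)$ is injective; it says nothing about whether an element of $G(\phi)$ lying in the big cell $ULV$ of $G$ must have all its $ULV$-factors in $G(\phi)$. Your appeal to ``uniqueness of the factorisation $u^{-1}\cdot 1\cdot v$'' is circular: uniqueness tells you the factors are determined, not that they lie in $G(\phi)$. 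That said, the equality $V\cap UG(\phi)=V(\phi)$ is in fact true, but for a different reason: if $v=ug$ with $u\in U$, $g\in G(\phi)$, set $\chi:={}^u\phi\cong{}^v\phi$. Since $u$ normalises $U_0$ and, modulo $U_0$, commutes with the $L_0$-action on $\phi^{U_0}$, one gets $\chi^{U_0}\cong\phi^{U_0}\cong\psi$; the symmetric computation for $v\in V$ gives $\chi^{V_0}\cong\psi$. By Theorem~\ref{thm:pind-Iw}\eqref{item:pind-Iw-UV}, $\chi\cong\pind_0(\psi)=\phi$, so $v\in V(\phi)$. So (b) is salvageable, but via the $U_0$/$V_0$-invariants characterisation of $\pind_0$, not via the Iwahori decomposition of $G(\phi)$.

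In summary: the strategy is a legitimate alternative to the paper's idempotent manipulation, and it is interesting precisely because it keeps the argument on the level of functions and intertwiners. But as currently written the proof has a real gap at Fact (a), and the justification given for Fact (b) does not prove it. The paper sidesteps both issues by never decomposing $\ind_{L(\psi)U}^G M_0$ itself: it works from the outset with the smaller bimodule $\H(G)e_Ue_Ve_\psi$, where the presence of \emph{both} idempotents $e_U$ and $e_V$ is what makes the central idempotent $e_\phi$ appear and kills the unwanted double cosets.
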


\begin{proof}
We will replace the right-hand vertical arrow in the diagram by its inverse, and prove that the diagram
\begin{equation}\label{Clifford_diagram2}
 \xymatrix@C=50pt{
\Rep(L)_{\psi} \ar[r]^-{\pind} & \Rep(G)_{\phi}\ar[d]^-{e_\phi} \\
\Rep(L(\psi))_\psi \ar[u]^-{\ind} \ar[r]^-{\pind_\psi} & \Rep(G(\phi))_{\phi}
}
\end{equation}
commutes up to natural isomorphism. This amounts to producing an isomorphism of $G(\phi)$-$L(\psi)$ bimodules
\begin{equation} \label{Clifford_equation} 
e_{\phi}\H(G) e_{U} e_{V} e_\psi \cong 
e_{\phi} \H(G(\phi))e_{U(\phi)}e_{V(\phi)} e_\psi.
\end{equation}

We first claim that
\begin{equation}\label{Clifford_step1}
e_{\phi}\H(G) e_{U} e_{V}e_\psi = e_{\phi}\H(G(\phi))e_{U(\phi)} e_{V} e_\psi,
\end{equation}
the equality holding inside $\H(G)$. For vectors $x$ and $y$ we write $x \sim y$ if they differ by a non-zero scalar multiple. To prove~\eqref{Clifford_step1} we compute for every $g\in G$:
\[
 \delta_g e_{U}e_{V} e_\psi  {=} \delta_g e_{U} ( e_{U_0} e_\psi e_{V_0} )e_{V} {e_\psi}
 \sim  \delta_g e_{U} (e_{U_0}e_\phi e_{V_0})e_{V} {e_\psi}
 {=} \delta_g e_U e_{\phi} e_V {e_\psi} 
 \sim \sum_{u\in U/U(\phi)} {\delta_{gu}} e_\phi e_{U(\phi)} e_{V} {e_\psi},
\] 
where {in the first step we have used that $e_\psi$ is an idempotent which commutes with $e_V$, and in the second step we have used part \eqref{item:pind-Iw-character} of Theorem \ref{thm:pind-Iw}}. Orthogonality of characters then implies 
\begin{align*}
e_\phi \delta_g e_{U}e_{V}e_\psi & \sim
 \sum_{u\in U/U(\phi)} e_\phi e_{(gu)\cdot\phi} {\delta_{gu}} e_{U(\phi)} e_{V} {e_\psi}\\
& \sim 
\begin{cases}   e_\phi  {\delta_{gu}}  e_{U (\phi)} e_{V}e_\psi & \text{if $\exists u\in U$ with $gu\in G(\phi)$,}\\ 0 &\text{otherwise.}
\end{cases}
\end{align*}
This shows that every element of the left-hand side of \eqref{Clifford_step1} can be written as an element of the right-hand side, and vice versa.

Now, the $G(\phi)\times L(\psi)$-equivariant map
\begin{equation}\label{Clifford_step2}
e_\phi \H(G(\phi)) e_{U(\phi)}e_{V(\phi)}e_\psi \xrightarrow{f\mapsto fe_{V}} 
e_\phi \H(G(\phi)) e_{U(\phi)}e_{V}e_\psi
\end{equation}
is obviously surjective. It is injective as well, for if $f\in \H(G(\phi))e_{V(\phi)}$ then 
\[
fe_V \sim \sum_{v\in V(\phi)\backslash V} fe_{V(\phi)} \delta_v = \sum_{v\in V(\phi)\backslash V} f\delta_v,
\]
where the functions $f\delta_v$ (as $v$ varies over $V(\phi)\backslash V$) are supported on the disjoint cosets $G(\phi)v$ and are therefore linearly independent. 
Thus the map \eqref{Clifford_step2} is an isomorphism. Composing with the equality \eqref{Clifford_step1} gives the desired isomorphism \eqref{Clifford_equation}.
\end{proof}

We are still fixing an irreducible representation $\psi:L_0\to \GL(W)$ and {letting} $\phi$ {denote the induced representation} $\pind_0(\psi):G_0\to GL(\pind_0 (W))$. 
Consider the  quotients 
\[
\ol{G}(\phi)= G(\phi)/G_0,\quad \ol{L}(\psi) = L(\psi)/L_0,\quad  \ol{U}(\phi)= U(\phi)/U_0,\quad  \ol{V}(\phi) =V(\phi)/V_0.
\] 
 
Choose a projective extension $\phi'$ of $\phi$ to $G(\phi)$, and let $\gamma^{-1}$ be the associated two-cocycle as in \eqref{eqn:def.of.gamma}.
Part~\eqref{item:pind-Iw-prespind} of Theorem \ref{thm:pind-Iw} implies that there is an $L_0$-equivariant isomorphism 
\[
\Theta:W\xrightarrow{\cong} \phi(e_{U_0})\phi(e_{V_0})\pind_0 (W),
\]
unique up to a nonzero scalar {multiple}. 
Since the subgroup $L(\psi)$ normalizes the subgroups $U_0$ and $V_0$ it follows that $\phi'(l)$ commutes with $\phi(e_{U_0})\phi(e_{V_0})$ for every $l\in L(\psi)$, and therefore
$\phi'(l)$ stabilises the subspace $\phi(e_{U_0})\phi(e_{V_0})\pind_0 (W) \subset \pind_0 (W)$. 
The map $\psi':L(\psi)\to \GL(W)$ defined by 
\begin{equation}\label{eq:psiprime_def} 
\psi'(l) = \Theta^{-1}\phi'(l)\Theta
\end{equation}
is then a projective extension of $\psi$ to $L(\psi)$, independent of the choice of $\Theta$.
(It does depend, however, on the choice of $\phi'$.) An easy argument shows that the resulting two-cocycle on $\ol{L}(\psi)$ is just the restriction of the two-cocycle $\gamma$ to ${\ol{L}(\psi)}$, and we shall therefore denote both two-cocycles by the same letter.

\begin{lemma}\label{lem:C3_eX}
Given a projective extension $\phi'$ of $\phi$ as above, there are unique scalars $a_x,b_y\in \C^\times$ for $x\in \ol{U}(\phi)$ and $y\in \ol{V}(\phi)$ such that the elements 
\[
e^{\phi'}_{\ol{U}(\phi)} \coloneq \frac{1}{|\ol{U}(\phi)|} \sum_{x\in \ol{U}(\phi)} a_x \delta_x 
\qquad\text{and}\qquad e_{\ol{V}(\phi)}^{\phi'} \coloneq \frac{1}{|\ol{V}(\phi)|}\sum_{y\in \ol{V}(\phi)} b_y \delta_y
\] 
are idempotents in $\H^\gamma(\ol{G}(\phi))$, commute with the subalgebra $\H^{\gamma}(\ol{L}(\psi))$, and such that the image of the elements $e_{U(\phi)}e_\phi$ and $e_{V(\phi)}e_\phi$ under the isomorphism of algebras
\[
\theta\ot \phi':\H(G(\phi))e_{\phi} \to  \H^{\gamma}(\ol{G}(\phi))\ot \End(\pind_0 (W))
\] 
are $e_{\ol{U}(\phi)}^{\phi'}\ot \phi(e_{U_0})$ and $e_{\ol{V}(\phi)}^{\phi'}\ot \phi(e_{V_0})$, respectively.
\end{lemma}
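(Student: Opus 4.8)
The plan is to compute the images of $e_{U(\phi)}e_\phi$ and $e_{V(\phi)}e_\phi$ under the algebra isomorphism $\theta\ot\phi'$ directly, recognise them as decomposable tensors, and then deduce the idempotency, commutation and uniqueness assertions formally. Taking counting measures, $e_{U(\phi)}=|U(\phi)|^{-1}\sum_{u\in U(\phi)}\delta_u$; since $U(\phi)$ normalises $G_0$ and fixes $\phi$ it commutes with $e_\phi$, so $e_{U(\phi)}e_\phi$ is an idempotent in $\H(G(\phi))e_\phi$. Grouping $(\theta\ot\phi')(e_{U(\phi)}e_\phi)=|U(\phi)|^{-1}\sum_{u}\delta_{\theta(u)}\ot\phi'(u)$ over the fibres of the quotient map $U(\phi)\to\ol U(\phi)$, whose kernel is $U(\phi)\cap G_0=U_0$, and using $\phi'(u_0\ti x)=\phi(u_0)\phi'(\ti x)$ and $|U_0|^{-1}\sum_{u_0\in U_0}\phi(u_0)=\phi(e_{U_0})$, one obtains
\[
(\theta\ot\phi')(e_{U(\phi)}e_\phi)=\frac{1}{|\ol U(\phi)|}\sum_{x\in\ol U(\phi)}\delta_x\ot\phi(e_{U_0})\phi'(\ti x),
\]
where $\ti x\in U(\phi)$ is any lift of $x$ and where $\phi(e_{U_0})\phi'(\ti x)$ is independent of the lift because $\phi(e_{U_0})\phi(u_0)=\phi(e_{U_0})$.

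The heart of the argument is the claim that for each $x$ there is a unique $a_x\in\C^\times$ with $\phi(e_{U_0})\phi'(\ti x)=a_x\phi(e_{U_0})$; equivalently, that $\phi'(\ti x)$ acts by a scalar on the subspace $\phi^{U_0}:=\phi(e_{U_0})\pind_0(W)=\pind_0(W)^{U_0}$. I would establish this in three steps. First, $\phi^{U_0}$ is an irreducible $L_0$-module, since Theorem~\ref{thm:pind-Iw}\eqref{item:pind-Iw-prespind} and~\eqref{item:pind-Iw-Hom} give $\phi^{U_0}\cong\pres_0(\phi)\cong\psi$ as $L_0$-modules. Second, $\phi'(\ti x)$ preserves $\phi^{U_0}$: as $\ti x\in U(\phi)\subseteq U$ normalises $U_0$, the relations for $\phi'$ give $\phi'(\ti x)\phi(e_{U_0})\phi'(\ti x)^{-1}=\phi(e_{U_0})$. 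Third, $\phi'(\ti x)$ is $L_0$-equivariant on $\phi^{U_0}$: for $l_0\in L_0$ one has $\phi'(\ti x)\phi(l_0)=\phi(\ti x l_0\ti x^{-1})\phi'(\ti x)$ on $\pind_0(W)$, so it suffices that $\phi(\ti x l_0\ti x^{-1})$ and $\phi(l_0)$ agree on $\phi^{U_0}$, which holds because the commutator $[\ti x,l_0]$ lies both in $G_0$ (as $G_0\lhd G$) and in $U$ (as $L$ normalises $U$), hence in $U_0$, and therefore acts trivially on $\phi^{U_0}$. Schur's lemma then forces $\phi'(\ti x)|_{\phi^{U_0}}=a_x\,\id$ with $a_x\neq0$ (as $\phi'(\ti x)$ is invertible), which is the claim. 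The analogous argument with $V$, $V_0$, $V(\phi)$ produces the scalars $b_y$.

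With the scalars in hand, set $e^{\phi'}_{\ol U(\phi)}:=|\ol U(\phi)|^{-1}\sum_x a_x\delta_x$ and $e^{\phi'}_{\ol V(\phi)}:=|\ol V(\phi)|^{-1}\sum_y b_y\delta_y$ in $\H^\gamma(\ol G(\phi))$; the displayed formula then reads $(\theta\ot\phi')(e_{U(\phi)}e_\phi)=e^{\phi'}_{\ol U(\phi)}\ot\phi(e_{U_0})$, and likewise for $V$, giving the image assertion. Idempotency: $e_{U(\phi)}e_\phi$ is idempotent and $\theta\ot\phi'$ is an algebra isomorphism, so $e^{\phi'}_{\ol U(\phi)}\ot\phi(e_{U_0})$ is idempotent, and cancelling the nonzero idempotent $\phi(e_{U_0})$ yields $(e^{\phi'}_{\ol U(\phi)})^2=e^{\phi'}_{\ol U(\phi)}$. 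Commutation: $L(\psi)$ normalises $U(\phi)$ and fixes $\phi$, so $\delta_l e_\phi$ commutes with $e_{U(\phi)}e_\phi$ in $\H(G(\phi))e_\phi$ for $l\in L(\psi)$; applying $\theta\ot\phi'$, noting that $\phi'(l)$ commutes with $\phi(e_{U_0})$ (again since $l$ normalises $U_0$) and that $\phi(e_{U_0})\phi'(l)\neq0$, and cancelling this operator from the second tensor factor, gives that $e^{\phi'}_{\ol U(\phi)}$ commutes with each $\delta_{\theta(l)}$, hence with $\H^\gamma(\ol L(\psi))$ (whose twisting cocycle is the restriction of $\gamma$, as noted just before the lemma). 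Uniqueness: the image condition forces $|\ol U(\phi)|^{-1}\sum_x a_x\delta_x\ot\phi(e_{U_0})$ to equal the prescribed decomposable tensor, and cancelling $\phi(e_{U_0})$ determines each $a_x$; similarly each $b_y$.

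I expect the third step above to be the main obstacle. Everything else is bookkeeping with the isomorphism $\theta\ot\phi'$ and with idempotents, while that step rests on the two genuinely structural inputs: the elementary commutator identity $[\ti x,l_0]\in U_0$, and the identification of $\phi^{U_0}$ as an irreducible $L_0$-module via Theorem~\ref{thm:pind-Iw}.
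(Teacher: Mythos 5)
Your proof is correct and follows essentially the same route as the paper's: compute the image of the idempotent $e_{U(\phi)}e_\phi$ under $\theta\ot\phi'$ by grouping over the fibres of $U(\phi)\to\ol U(\phi)$, reduce to showing $\phi'(\tilde x)$ acts by a nonzero scalar on the irreducible $L_0$-module $\phi(e_{U_0})\pind_0(W)$, and invoke Schur's lemma; the idempotency, commutation, and uniqueness claims then fall out from the algebra-isomorphism property of $\theta\ot\phi'$. Your step (c) spells out the commutator identity $[\tilde x,l_0]\in U\cap G_0=U_0$ and the relation $\phi'(\tilde x)\phi(l_0)=\phi(\tilde x l_0\tilde x^{-1})\phi'(\tilde x)$ somewhat more explicitly than the paper does (which merely writes $ul=luu_0$ and observes commutation), but the underlying idea is identical.
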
 

\begin{proof}
We will prove the lemma for the $U$-subgroups. The proof for the $V$-subgroups is identical.

We know, by Parts \eqref{item:pind-Iw-prespind} and \eqref{item:pind-Iw-Hom}  of Theorem \ref{thm:pind-Iw}, 
that $\phi(e_{U_0})\pind_0 (W)\cong W$ as representations of $L_0$, and in particular that $\phi(e_{U_0})\pind_0(W)$ is irreducible over $L_0$.

Let $u\in U(\phi)$. Since $U_0$ is a normal subgroup of $U(\phi)$, we know that $u$ commutes with $e_{U_0}$ in $\H(G(\phi))$, and so $\phi'(u)$ induces a linear automorphism of the subspace $\phi(e_{U_0})\pind_0 (W)$. 
Moreover, for $l\in L_0$ we have that ${ulu^{-1}l^{-1}\in U\cap G_0 = U_0}$. Writing $ul=luu_0$ with $u_0\in U_0$, we observe that on $\phi(e_{U_0})\pind_0 (W)$ the operator $\phi'(u)$ commutes with $\phi(l)$ for every $l\in L_0$. Hence, by Schur's lemma,  the operator $\phi'(u)$ acts on $\phi(e_{U_0})\pind_0(W)$ as a non-zero scalar $a_u \in \C^\times$.      
The scalar $a_u$ depends only on the class of $u$ in the quotient $\ol{U}(\phi)=U(\phi)/U_0$, and so for each $x\in {\ol{U}(\phi)}$ we may define $a_x\coloneq a_{\tilde{x}}$, where $\tilde{x}\in U(\phi)$ is any lift of $x$. 

The image of the idempotent $e_{U(\phi)}e_\phi$ under $\theta\ot \phi'$ is therefore 
\[
\begin{split}
\theta\ot \phi' (e_{U(\phi)}e_\phi) &= \frac{1}{|U(\phi)|} \sum_{u\in U(\phi)} \theta(\delta_u)\otimes \phi'(u)\\ 
&= \frac{1}{|{\ol{U}(\phi)}|\cdot |U_0|} \sum_{\substack{x\in {\ol{U}(\phi)}\\  u_0\in U_0}} \delta_x \otimes \phi'(\tilde{x})\phi(u_0) \\ 
&= \frac{1}{|{\ol{U}(\phi)}|} \sum_{x\in {\ol{U}(\phi)}} \delta_x \otimes \phi'(\tilde{x})\phi(e_{U_0}) = e^{\phi'}_{\ol{U}(\phi)} \otimes \phi(e_{U_0}).
\end{split}
\]
 
From the fact that the element $e_{U(\phi)}e_\phi$ is an idempotent which commutes with the elements of $L(\psi)$ in $\H(G(\phi))$, and the fact that $\theta\ot \phi'$ is an algebra homomorphism, 
it follows immediately that $e_{\ol{U}(\phi)}^{\phi'}$ is an idempotent which commutes with the subalgebra $\H^{\gamma}({\ol{L}(\psi)})$.

Finally, the uniqueness of the scalars $a_x$ follows from the linear independence of the elements $\delta_x\otimes \phi(e_{U_0})$ in $\H^\gamma(\ol{G}(\phi))\otimes \End(\pind_0(W))$.
\end{proof}

\begin{remark}
{From the proof of {Lemma \ref{lem:C3_eX}} it follows that} the restriction of $\gamma$ to ${\ol{U}(\phi)}$ and to ${\ol{V}(\phi)}$ is cohomologous to the trivial two-cocycle.
Indeed, following the proof of the lemma, we see that for ${x_1,x_2\in {\ol{U}(\phi)}}$ {one has}  $a_{x_1}a_{x_2}\gamma(x_1,x_2) = a_{x_1x_2}$.
In other words, $a:{\ol{U}(\phi)}\to \C^\times$ is a coboundary which provides a trivialisation of the restriction of $\gamma$ to ${\ol{U}(\phi)}$. 
The same is true for ${\ol{V}(\phi)}$ and $b$. By changing the choice of $\phi'$ by a suitable  coboundary, {one} can therefore arrange that all the numbers $a_x$ and $b_y$ are 1.  {We shall continue to work with an arbitrary choice of $\phi'$ in what follows.}
\end{remark}

Lemma \ref{lem:C3_eX} implies that $\H^\gamma(\ol{G}(\phi))e_{\ol{U}(\phi)}^{\phi'} e_{\ol{V}(\phi)}^{\phi'}$ is an $\H^\gamma(\ol{G}(\phi))$-$\H^\gamma({\ol{L}(\psi)})$ bimodule. We denote by ${\pind_{{\ol{U}(\phi)},{\ol{V}(\phi)}}^{\phi'}:\Rep^\gamma({\ol{L}(\psi)})\to \Rep^\gamma(\ol{G}(\phi))}$ the functor of tensor product with this bimodule. Likewise, we denote by $\pres_{{\ol{U}(\phi)},{\ol{V}(\phi)}}^{\phi'}$ the functor of tensor product with the $\H^\gamma({\ol{L}(\psi)})$-$\H^\gamma(\ol{G}(\phi))$ bimodule $e_{\ol{U}(\phi)}^{\phi'} e_{\ol{V}(\phi)}^{\phi'}\H^\gamma(\ol{G}(\phi))$. The arguments of Theorem \ref{thm:pind-properties} carry over to this twisted setting, and show that the functors $\pind_{{\ol{U}(\phi)},{\ol{V}(\phi)}}^{\phi'}$ and $\pres_{{\ol{U}(\phi)},{\ol{V}(\phi)}}^{\phi'}$ are two-sided adjoints, and that up to natural isomorphism they do not depend on the order of ${\ol{U}(\phi)}$ and ${\ol{V}(\phi)}$.

Our induction functors are compatible with the final assertion \eqref{C3} of Clifford theory, as follows:

\begin{theorem}\label{thm:C3} Let $\phi'$ be a projective extension of $\phi$, with corresponding cocycle $\gamma^{-1}$, and let $\psi'$ be the projective extension of $\psi$ defined by \eqref{eq:psiprime_def}. The diagram
\begin{equation*} 
 \xymatrix@C=50pt{
\Rep(L(\psi))_{\psi} \ar[r]^-{\pind_{\psi}} & \Rep(G(\phi))_{\phi} \\
\Rep^{\gamma }({\ol{L}(\psi)}) \ar[u]_-{\cong}^-{\otimes\psi'} \ar[r]^-{\pind_{{\ol{U}(\phi)},{\ol{V}(\phi)}}^{\phi'}} & \Rep^{\gamma }(\ol{G}(\phi)) \ar[u]_-{\otimes \phi'}^-{\cong}
}
\end{equation*}
is commutative up to natural isomorphism.
\end{theorem}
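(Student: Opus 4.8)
The plan is to reduce the commutativity of the diagram to a single isomorphism of bimodules, just as in the proof of Theorem~\ref{thm:C2}, and then to compute that bimodule using Lemma~\ref{lem:C3_eX} together with Theorem~\ref{thm:pind-Iw}. As in the paragraph preceding the statement we may assume $G$ is finite, so that $\ol{G}(\phi)=G(\phi)/G_0$ is finite and $R_G\coloneq\H^\gamma(\ol{G}(\phi))$, $R_L\coloneq\H^\gamma(\ol{L}(\psi))$ are finite-dimensional twisted group algebras. Recall from \eqref{eq:C3_decomposition} that $\theta\otimes\phi'$ and $\theta\otimes\psi'$ are algebra isomorphisms $e_\phi\H(G(\phi))\cong R_G\otimes\End(\pind_0(W))$ and $e_\psi\H(L(\psi))\cong R_L\otimes\End(W)$, under which the vertical functor $\otimes\phi'$ (resp.\ $\otimes\psi'$) becomes the Morita equivalence $N\mapsto N\otimes\pind_0(W)$ (resp.\ $N\mapsto N\otimes W$), a Morita bimodule being $R_G\otimes\pind_0(W)$ (resp.\ $R_L\otimes W$). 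Since $\pind_\psi$ is tensoring with the $e_\phi\H(G(\phi))$-$e_\psi\H(L(\psi))$ bimodule $B\coloneq e_\phi\H(G(\phi))e_{U(\phi)}e_{V(\phi)}e_\psi$, the diagram will commute once we show that, transported along $\theta\otimes\phi'$ and $\theta\otimes\psi'$, the bimodule $B$ is isomorphic to
\[
\bigl(R_G\,e^{\phi'}_{\ol{U}(\phi)}\,e^{\phi'}_{\ol{V}(\phi)}\bigr)\otimes_{\C}\Hom_{\C}\!\bigl(W,\pind_0(W)\bigr),
\]
equipped with the product of the $R_G$-$R_L$ bimodule structure on the first factor (as described after Lemma~\ref{lem:C3_eX}) with the standard Morita $\End(\pind_0(W))$-$\End(W)$ bimodule structure on $\Hom_{\C}(W,\pind_0(W))$. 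Granting this, one tensors the displayed isomorphism on both sides with the Morita bimodules implementing $\otimes\phi'$ and $\otimes\psi'$; the two copies of $\Hom_{\C}(W,\pind_0(W))$ then cancel by the standard isomorphisms $\pind_0(W)^*\otimes_{\End(\pind_0(W))}\Hom_{\C}(W,\pind_0(W))\cong W^*$ and $W^*\otimes_{\End(W)}W\cong\C$, and what remains is exactly the bimodule $R_G\,e^{\phi'}_{\ol{U}(\phi)}\,e^{\phi'}_{\ol{V}(\phi)}$ defining $\pind^{\phi'}_{\ol{U}(\phi),\ol{V}(\phi)}$.

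For the computation of $B$, note first that $e_\psi$ commutes with $e_{U(\phi)}$ and $e_{V(\phi)}$ --- because $L_0$ normalises the inertia subgroups $U(\phi)=U\cap G(\phi)$ and $V(\phi)=V\cap G(\phi)$ (using that $G_0$ normalises $G(\phi)$) --- and that $e_\phi$ is central in $\H(G(\phi))$; hence $B=\bigl(e_\phi\H(G(\phi))\bigr)\cdot\epsilon$ with $\epsilon=(e_{U(\phi)}e_\phi)(e_\psi e_\phi)(e_{V(\phi)}e_\phi)\in e_\phi\H(G(\phi))$. Applying the algebra homomorphism $\theta\otimes\phi'$ and invoking Lemma~\ref{lem:C3_eX} (for the $U$- and $V$-factors) together with $(\theta\otimes\phi')(e_\psi e_\phi)=\delta_1\otimes\phi(e_\psi)$, one gets $(\theta\otimes\phi')(\epsilon)= e^{\phi'}_{\ol{U}(\phi)}\,e^{\phi'}_{\ol{V}(\phi)}\otimes\bigl(\phi(e_{U_0})\phi(e_\psi)\phi(e_{V_0})\bigr)$. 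By Theorem~\ref{thm:pind-Iw}\,\eqref{item:pind-Iw-prespind}\,and\,\eqref{item:pind-Iw-Hom}, $\phi^{V_0}$ is irreducible over $L_0$ and isomorphic to $\psi$, so $\phi(e_\psi)$ is the identity on $\Image\phi(e_{V_0})=\phi^{V_0}$; hence $\phi(e_\psi)\phi(e_{V_0})=\phi(e_{V_0})$ and $(\theta\otimes\phi')(\epsilon)=e^{\phi'}_{\ol{U}(\phi)}\,e^{\phi'}_{\ol{V}(\phi)}\otimes p$ with $p\coloneq\phi(e_{U_0})\phi(e_{V_0})$, whose image is $\phi^{U_0}\cong\psi\cong W$ by the same result applied with $U_0$ and $V_0$ interchanged (and Theorem~\ref{thm:pind-properties}\,\eqref{item:pind-UV-VU}). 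Thus $p$ has rank $\dim W$ and $\End(\pind_0(W))\cdot p\cong\Hom_{\C}(W,\pind_0(W))$ via $\Theta\colon W\xrightarrow{\;\cong\;}\phi^{U_0}$. Since $B=(e_\phi\H(G(\phi)))\epsilon$ and $\theta\otimes\phi'$ is an algebra isomorphism, we obtain $(\theta\otimes\phi')(B)=(R_G\otimes\End(\pind_0(W)))\cdot\bigl(e^{\phi'}_{\ol{U}(\phi)}\,e^{\phi'}_{\ol{V}(\phi)}\otimes p\bigr)=\bigl(R_G\,e^{\phi'}_{\ol{U}(\phi)}\,e^{\phi'}_{\ol{V}(\phi)}\bigr)\otimes\bigl(\End(\pind_0(W))\cdot p\bigr)$ --- the required isomorphism, \emph{as left $R_G\otimes\End(\pind_0(W))$-modules}.

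It remains to check that under this isomorphism the right $e_\psi\H(L(\psi))$-action on $B$ corresponds to the product right $R_L\otimes\End(W)$-action on the right-hand side, and I expect this to be the main obstacle, because it is precisely the point where the definition \eqref{eq:psiprime_def} of the projective extension $\psi'$ is used. Since $L(\psi)$ normalises $U(\phi)$, $V(\phi)$ and $L_0$, right multiplication on $B=(e_\phi\H(G(\phi)))\epsilon$ by an element $e_\psi\delta_l$ with $l\in L(\psi)$ equals right multiplication by $\delta_l$, hence --- after $\theta\otimes\phi'$ --- by $\delta_{\theta(l)}\otimes\phi'(l)$; and as the elements $e_\psi\delta_l$ span $e_\psi\H(L(\psi))$ it suffices to treat these. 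Using that $\phi'(l)$ commutes with $p$ and preserves $\phi^{U_0}=\Image p$ (which hold, for $l\in L(\psi)$, because $\gamma$ is inflated from $\ol{G}(\phi)$ and $L(\psi)$ normalises $U_0$ and $V_0$), together with the relation $\Theta\,\psi'(l)=\phi'(l)\,\Theta$ of \eqref{eq:psiprime_def}, one verifies that under $\End(\pind_0(W))\cdot p\cong\Hom_{\C}(W,\pind_0(W))$ this right action becomes right multiplication by $\delta_{\theta(l)}$ on $R_G\,e^{\phi'}_{\ol{U}(\phi)}\,e^{\phi'}_{\ol{V}(\phi)}$ together with precomposition by $\psi'(l)$ on $\Hom_{\C}(W,\pind_0(W))$ --- that is, exactly the right action of $\delta_{\theta(l)}\otimes\psi'(l)\in R_L\otimes\End(W)$ on the product bimodule. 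This gives the bimodule isomorphism above, and hence the theorem. (Alternatively one can run the whole argument through the image-of-intertwiner description of $\pind$ from \eqref{eq:intertwiner}, applying $\otimes\phi'$ to the $\gamma$-twisted intertwiner $R_G\,e^{\phi'}_{\ol{U}(\phi)}\otimes_{R_L}M\to R_G\,e^{\phi'}_{\ol{V}(\phi)}\otimes_{R_L}M$ and identifying its image with the intertwiner computing $\pind_\psi(M\otimes W)$; this still rests on Lemma~\ref{lem:C3_eX} and the relation $\Theta\psi'=\phi'\Theta$.)
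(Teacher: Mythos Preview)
Your proof is correct and follows essentially the same route as the paper. Both arguments transport the bimodule $B=e_\phi\H(G(\phi))e_{U(\phi)}e_{V(\phi)}e_\psi$ through the algebra isomorphism $\theta\otimes\phi'$, invoke Lemma~\ref{lem:C3_eX} to identify the images of the idempotents, and then use the isomorphism $\Theta$ together with the defining relation \eqref{eq:psiprime_def} to match the right $\H(L(\psi))$-action with the right $R_L\otimes\End(W)$-action. The paper packages the identification $\End(\pind_0(W))\cdot p\cong\Hom_{\C}(W,\pind_0(W))$ as a separate linear-algebra lemma (Lemma~\ref{lem:lin_alg}, with $S=\phi(e_{U_0}e_{V_0}e_\psi)$) and writes the target as $\pind_0(W)\otimes W^*$ rather than $\Hom_{\C}(W,\pind_0(W))$, and it organises the argument via an intermediate functor $T$ splitting the square into two; your observation that $\phi(e_\psi)\phi(e_{V_0})=\phi(e_{V_0})$ lets you drop $e_\psi$ from the operator, but in fact the paper's $S$ equals your $p$ for the same reason. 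These are cosmetic differences; the substance is identical.
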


\begin{example}\label{independence_example}
Suppose that the irreducible representation $\psi$ of $L_0$ satisfies $G(\pind_0\psi)=L(\psi)G_0$. We then have $\ol{G}(\phi)={\ol{L}(\psi)}$ in Theorem \ref{thm:C3}, and $\pind_{{\ol{U}(\phi)},{\ol{V}(\phi)}}^{\phi'}$ is the identity functor, and so we conclude from Theorems \ref{thm:C2} and \ref{thm:C3} that in this case
\[ 
\pind_{U,V}:\Rep(L)_\psi \to \Rep(G)_{\pind_0(\psi)}
\]
is an equivalence of categories. Specific examples of this kind arise in Section \ref{sec:sp}.
\end{example}

The proof of Theorem \ref{thm:C3} uses the following lemma, whose proof is a matter of straightforward linear algebra:

\begin{lemma}\label{lem:lin_alg}
Let $E$ be a finite dimensional vector space over $\C$, and let $S:E\to E$ be a linear endomorphism.
Let $A\subseteq \End_{\C}(E)$ be the centraliser of $S$ in $\End_{\C}(E)$. Then
{we have isomorphisms
\[
E\otimes \left(\Image(S)\right)^* \xrightarrow{e\otimes f\longmapsto [e'\mapsto ef(e')]} \Hom\left(\Image(S), E\right) \xrightarrow{T\mapsto T\circ S} \End(E)S
\]
of $\End(E)$-$A$-bimodules.}\hfill\qed
\end{lemma}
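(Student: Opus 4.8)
The plan is to handle the two displayed arrows separately and then check equivariance for the bimodule structures. The first map $\Phi\colon E\ot(\Image S)^*\to\Hom(\Image S,E)$, $e\ot f\mapsto[e'\mapsto e\,f(e')]$, is the standard evaluation isomorphism $V\ot W^*\xrightarrow{\cong}\Hom(W,V)$ specialised to $V=E$ and $W=\Image S$; it is an isomorphism because both sides are finite-dimensional of equal dimension and $\Phi$ is visibly injective (a pure tensor $e\ot f$ with $e\ne 0\ne f$ goes to a nonzero rank-one map). The second map $\Psi\colon\Hom(\Image S,E)\to\End(E)$, $T\mapsto T\circ S$, lands in $\End(E)S$. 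It is injective, since $T\circ S=0$ forces $T$ to vanish on $\Image S$, hence $T=0$; and it is surjective onto $\End(E)S$, since for $R\in\End(E)$ the restriction $R|_{\Image S}\colon\Image S\to E$ satisfies $\Psi(R|_{\Image S})=R\circ S$ because $S$ takes values in $\Image S$. Thus $\Psi$ is an isomorphism onto $\End(E)S$.

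It remains to describe the $\End(E)$-$A$-bimodule structures and verify that $\Phi$ and $\Psi$ intertwine them. First one notes that $A$ preserves $\Image S$: for $a\in A$ one has $a\,\Image S=aS(E)=Sa(E)\subseteq S(E)=\Image S$, and we write $a|$ for the induced endomorphism of $\Image S$. The left $\End(E)$-action is post-composition on $\Hom(\Image S,E)$, left multiplication on $\End(E)S$, and action through the $E$-factor on $E\ot(\Image S)^*$; the right $A$-action is $(e\ot f)\cdot a=e\ot(f\circ a|)$ on the first space, $T\cdot a=T\circ a|$ on the second, and $(XS)\cdot a=XSa=XaS$ on the third (the two expressions agreeing because $a$ commutes with $S$, which is also why $\End(E)S$ is closed under this action). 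Left-equivariance of $\Phi$ and of $\Psi$ is immediate. For right-equivariance of $\Phi$: $\Phi\big((e\ot f)\cdot a\big)=[e'\mapsto e\,f(ae')]=\Phi(e\ot f)\circ a|=\Phi(e\ot f)\cdot a$. For right-equivariance of $\Psi$ the key observation is the identity $a|\circ S=S\circ a$ as maps $E\to\Image S$ (both send $e$ to $a(Se)=S(ae)$); consequently $\Psi(T\cdot a)=T\circ a|\circ S=T\circ S\circ a=\Psi(T)\cdot a$.

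There is no genuine obstacle in this lemma; the one point that deserves a moment's care is the compatibility of the right $A$-action on $\End(E)S$ with $\Psi$, which relies precisely on the commutation identity $a|_{\Image S}\circ S=S\circ a$ holding for $a$ in the centraliser of $S$ — and correspondingly on the fact, used to make the right action well defined in the first place, that $a\in A$ maps $\Image S$ into itself.
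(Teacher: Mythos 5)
Your proof is correct and complete; the paper itself gives no proof of this lemma (it is stated with a terminal $\qed$, the preceding sentence describing it as ``a matter of straightforward linear algebra''), so you have simply supplied the routine details the authors left out, including the one point they implicitly rely on in the application — that $a\in A$ stabilises $\Image S$ and that $a|_{\Image S}\circ S=S\circ a$. The only cosmetic blemish is the parenthetical justification for injectivity of $\Phi$: nonvanishing on nonzero pure tensors does not by itself imply injectivity of a linear map on a tensor product, though the conclusion is of course correct since $\Phi$ is the standard canonical isomorphism $V\otimes W^*\xrightarrow{\cong}\Hom(W,V)$ for finite-dimensional $W$ (alternatively, write an arbitrary kernel element as $\sum_i e_i\otimes f_i$ with the $f_i$ linearly independent and evaluate on suitable $e'$ to force each $e_i=0$).
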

In our application the space $E$ will be $\pind_0(W)$, and the endomorphism $S$ will be the action of $e_{U_0}e_{V_0}e_\psi$.

\begin{proof}[Proof of Theorem \ref{thm:C3}]
Let
\[
T:\Mod\left( \H^\gamma({\ol{L}(\psi)})\otimes \End(W)\right) \to \Mod\left( \H^\gamma(\ol{G}(\phi))\otimes \End(\pind_0(W)\right)
\]
be the functor of tensor product with the bimodule 
$\H^\gamma(\ol{G}(\phi))e_{\ol{U}(\phi)}^{\phi'} e_{\ol{V}(\phi)}^{\phi'} \otimes_{\C} \left(\pind_0 (W)\otimes_{\C} W^*\right)$,
where $\pind_0(W)$ is viewed as a left $\End(\pind_0(W))$ module and $W^*$ is viewed as a right $\End(W)$-module in the obvious way.

We shall decompose the equivalences $\otimes\psi'$ and $\otimes\phi'$ into compositions of two equivalences, as in \eqref{eq:C3_decomposition}, and show both squares in the diagram
\begin{equation}\label{eq:C3_proof_diagram}
\xymatrix@C=60pt@R=40pt{
\Rep(L(\psi))_\psi \ar[r]^-{\pind_\psi} & \Rep(G(\phi))_\phi \\
\Mod\left( \H^\beta({\ol{L}(\psi)})\otimes \End(W)\right) \ar[u]_-{\cong}^-{(\theta\otimes\psi')^*} \ar[r]^-{T} &
\Mod \left( \H^\gamma(\ol{G}(\phi)) \otimes \End(\pind_0(W))\right)  \ar[u]^-{\cong}_-{(\theta\otimes\phi')^*} \\
\Rep^\beta({\ol{L}(\psi)}) \ar[u]_-{\cong}^-{\otimes W} \ar[r]^-{\pind_{{\ol{U}(\phi)},{\ol{V}(\phi)}}^{\phi'}} & \Rep^\gamma(\ol{G}(\phi)) \ar[u]^-{\cong}_-{\otimes \pind_0(W)} 
}
\end{equation}
commute. 

To show that the bottom square of \eqref{eq:C3_proof_diagram} commutes, let $M$ be an $\H^{\gamma}({\ol{L}(\psi)})$ module. We have natural isomorphisms of $\H^\gamma(\ol{G}(\phi))\otimes \End(\pind_0(W))$ modules 
\begin{align*}
T(M\otimes W)&= \left(\H^\gamma(\ol{G}(\phi))e_{{\ol{U}(\phi)}}^{\phi'}e_{{\ol{V}(\phi)}}^{\phi'} \otimes (\pind_0(W)\otimes W^*) \right)\otimes_{\H^\gamma({\ol{L}(\psi)})\otimes \End(W)} (M\otimes W) \\
& \cong \left(\H^\gamma(\ol{G}(\phi))e_{\ol{U}(\phi)}^{\phi'} e_{{\ol{V}(\phi)}}^{\phi'}\otimes_{\H^\gamma({\ol{L}(\psi)})}M\right) \otimes \left( \pind_0(W)\otimes (W^*\otimes_{\End(W)} W)\right) \\
& \cong \pind_{{\ol{U}(\phi)},{\ol{V}(\phi)}}^{\phi'}(M) \otimes \pind_0(W),
\end{align*}
because $W^*\otimes_{\End(W)} W\cong \C$, as $W$ is finite-dimensional. Thus the bottom square of the diagram commutes.

To show that the top square of \eqref{eq:C3_proof_diagram} commutes, it is enough to construct a linear isomorphism 
\[
F: e_\phi \H(G(\phi))e_{U(\phi)}e_{V(\phi)}e_\psi \longrightarrow \H^\gamma(\ol{G}(\phi))e_{\ol{U}(\phi)}^{\phi'} e_{\ol{V}(\phi)}^{\phi'} \otimes_{\C} \left(\pind_0 (W)\otimes_{\C} W^*\right)
\]
between the bimodules associated to the functors $\pind_\psi$ and $T$, satisfying 
\begin{equation}\label{eq:C3_proof_F}
F( f \cdot h \cdot k) = (\theta\otimes \phi')(f) \cdot F(h) \cdot (\theta\otimes\psi')(k)
\end{equation}
for all $f\in e_\phi\H(G(\phi))$, $h\in e_\phi\H(G(\phi))e_{U(\phi)}e_{V(\phi)}e_\psi$ and $k\in e_\psi\H(L(\psi))$. 

We shall construct $F$ as a composition $F=F_3 F_2  F_1$:
\begin{align*}
e_\phi \H(G(\phi))e_{U(\phi)}e_{V(\phi)}e_\psi & \xrightarrow[\cong]{F_1} 
\H^\gamma(\ol{G}(\phi))e_{\ol{U}(\phi)}^{\phi'}e_{{\ol{V}(\phi)}}^{\phi'} \otimes \End(\pind_0(W))\phi(e_{U_0}e_{V_0}e_\psi) \\
& \xrightarrow[\cong]{F_2} \H^\gamma(\ol{G}(\phi))e_{\ol{U}(\phi)}^{\phi'}e_{\ol{V}(\phi)}^{\phi'} \otimes \pind_0(W)\otimes \left( \phi(e_\psi e_{U_0}e_{V_0})\pind_0(W)\right)^* \\
& \xrightarrow[\cong]{F_3} \H^\gamma(\ol{G}(\phi)) e_{\ol{U}(\phi)}^{\phi'} e_{\ol{V}(\phi)}^{\phi'} \otimes \pind_0(W)\otimes W^*   ,
\end{align*}
where the isomorphisms $F_1$, $F_2$ and $F_3$ are defined below. 

The map $F_1$ is the restriction of the algebra isomorphism $\theta\otimes \phi'$ to the bimodule $e_\phi \H(G(\phi))e_{U(\phi)}e_{V(\phi)}e_\psi$. The image is as claimed because of Lemma \ref{lem:C3_eX}. By definition, $F_1$ satisfies 
\[
F_1(f\cdot h \cdot k) = (\theta\otimes\phi')(f) \cdot F_1(h) \cdot (\theta\otimes\phi')(k)
\]
(where $f$, $h$ and $k$ are as in \eqref{eq:C3_proof_F}).

The map $F_2$ is the identity on the first tensor factor, while on the second factor it is the isomorphism given by Lemma \ref{lem:lin_alg}, with $E=\pind_0(W)$ and $S=\phi(e_{U_0} e_{V_0} e_\psi)$. Clearly $F_2$ is a map of left $\H^\gamma(\ol{G}(\phi))\otimes \End(\pind_0(W))$ modules. Turning to the right module structure, fix $l\in L(\psi)$. The operator $\phi'(e_\psi\delta_l)\in \End(\pind_0(W))$ commutes with $\phi(e_\psi e_{U_0}e_{V_0})$, because $L(\psi)$ centralises $\psi$ and normalises $U_0$ and $V_0$. Therefore $\phi'(e_\psi\delta_l)$ lies in the algebra $A$ of Lemma \ref{lem:lin_alg}, and so the isomorphism $F_2$ satisfies   
\[
F_2 \left( F_1(h)\cdot \theta(l)\otimes\phi'(e_\psi\delta_l)\right) = F_2 F_1(h) \cdot \left(\theta(l)\otimes \phi'(e_\psi\delta_l)\right).
\]

The map $F_3$ is the identity on the first two tensor factors, while on the third factor it is the linear dual $\Theta^*$ of the isomorphism $\Theta: W\to \phi(e_{U_0}e_{V_0})\pind_0(W)$. (Note that $e_\psi$ acts as the identity on $W$.) Clearly $F_3$ is a map of left $\H^\gamma(\ol{G}(\phi))\otimes \End(\pind_0(W))$ modules. The isomorphism $\Theta$ satisfies 
$
\phi'(e_\psi\delta_l)\circ \Theta = \Theta\circ \psi'(e_\psi \delta_l),
$
by the definition \eqref{eq:psiprime_def} of $\psi'$, and we therefore have 
\[
F_3\left[ F_2 F_1(h) \cdot \left(\theta(l)\otimes\phi'(e_\psi\delta_l)\right) \right] = F(h)\cdot \left(\theta(l)\otimes\psi'(e_\psi\delta_l)\right).
\]
We have now shown that the isomorphism $F$ satisfies \eqref{eq:C3_proof_F}, and this completes the proof of Theorem \ref{thm:C3}.
\end{proof}


\section{The functor $\pind$ and the orbit method}\label{sec:orbit}

In this section we examine the induction functor $\pind_{U,V}$ in situations to which the orbit method applies, and show that it corresponds to a natural inclusion map on coadjoint orbits. We begin with an abstract formulation and then discuss a natural family of groups to which it applies, namely uniform {pro-$p$ groups and finite $p$-groups of nilpotency class less than $p$}. In particular, this family includes many compact open subgroups in reductive groups over $p$-adic fields.

\subsection{An abstract formulation}\label{subsec:abstract}

The orbit method in the context of profinite groups goes back to the work of Howe~\cite{Howe}. An abstract formulation was given by Boyarchenko and Sabitova in \cite{Boy-Sab}, and it is this latter point of view that we shall adopt here.

Let $G$ be a profinite group, let $\germ{g}$ be an abelian profinite group, and let $\exp:\germ{g}\to G$ be a homeomorphism satisfying
\begin{enumerate}[(A)]
\item The formula $\Ad_g(x)\coloneq \log(g\exp(x)g^{-1})$ for $g\in G$, $x\in \germ{g}$, and $\log=\exp^{-1}$ defines an action of $G$ on $\germ{g}$ by group automorphisms.
\item The pullback map $\exp^*:\H(G)^{G}\to \H(\germ{g})^{G}$, from the $\Ad_G$-invariant locally constant functions on $G$ to those on $\germ{g}$, is an isomorphism of convolution algebras.
\end{enumerate}
The adjoint action of $G$ on $\germ{g}$ induces a coadjoint action on the Pontryagin dual group~$\widehat{\germ{g}}$. It is shown in \cite[Theorem 1.1]{Boy-Sab} that for each irreducible smooth representation $\tau$ of $G$, with character $\ch_\tau\in \H(G)^G$, there is an $\Ad_G$-orbit $\Omega\subset\widehat{\germ{g}}$ such that
\begin{equation}\label{orbit_equation}
\exp^*(\ch_\tau) = |\Omega|^{-1/2}\sum_{\psi\in \Omega} \psi,
\end{equation}
and the map $\ch_\tau \mapsto \Omega$ sets up a bijection $\mathcal O_G:\Irr(G) \to G\backslash \widehat{\germ{g}}$ from the set of isomorphism classes of irreducible representations of $G$ to the set of coadjoint orbits in $\widehat{\germ g}$.

\begin{theorem}\label{orbit_theorem}  Let $G$, $\germ{g}$ and $\exp$ be as above. Let $U$, $L$ and $V$ be closed subgroups of $G$ such that:
\begin{enumerate}[\rm(1)]
\item $(U,L,V)$ is an Iwahori decomposition of $G$.
\item The preimages $\germ{u},\germ{l},\germ{v}$ of $U,L,V$ under $\exp$ are subgroups of $\germ{g}$, and $\germ g = \germ u \oplus \germ l \oplus \germ v$ as abelian groups.
\item The map $\exp:\germ{g}\to G$ restricts to homeomorphisms
\[
\germ{l}\to L,\quad \germ{l}\oplus\germ{u} \to LU\quad \text{and}\quad \germ{l}\oplus\germ{v}\to LV,
\]
each of which satisfies the conditions (A) and (B).
\end{enumerate}
{Then }the projection $\Lambda$ of $\germ{g}$ onto its summand $\germ{l}$ induces an injective map 
\[
\Lambda^*:L\backslash \widehat{\germ{l}} \to  G\backslash \widehat{\germ{g}}, \qquad \Omega \mapsto \Ad_G^*(\Omega\circ \Lambda)
\]
which makes the diagram
\[
 \xymatrix@C=40pt{ \Irr(L) \ar[r]^-{\pind_{U,V}} \ar[d]_-{{\mathcal O_L}} & \Irr(G) \ar[d]^-{{\mathcal O_G}}\\
L\backslash \widehat{\germ l}  \ar[r]^-{\Lambda^*}   & G\backslash \widehat{\germ g}   
 }
\]
commutative.
\end{theorem}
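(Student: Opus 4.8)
The strategy is to compare the two ways of extracting information from an irreducible representation: its character (via the orbit formula \eqref{orbit_equation}) and its behaviour under $\pres_{U,V}$. Fix $(\sigma,N)\in\Irr(L)$ and let $(\tau,M)=\pind_{U,V}(N)\in\Irr(G)$, which is irreducible by Theorem \ref{thm:pind-Iw}\eqref{item:pind-Iw-irr}. Let $\Omega_\sigma\subset\widehat{\germ l}$ be the coadjoint orbit with $\mathcal O_L(\sigma)=\Omega_\sigma$, and let $\Omega_\tau\subset\widehat{\germ g}$ be the orbit with $\mathcal O_G(\tau)=\Omega_\tau$. I must show $\Omega_\tau=\Ad_G^*(\Omega_\sigma\circ\Lambda)$, equivalently (since both sides are single $G$-orbits) that $\Omega_\sigma\circ\Lambda\subseteq\Omega_\tau$. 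By \eqref{orbit_equation} applied in both $G$ and $L$, this is a statement comparing $\exp^*(\ch_\tau)$ with the pushforward (under the inclusion $\widehat{\germ l}\hookrightarrow\widehat{\germ g}$ dual to $\Lambda$) of $\exp^*(\ch_\sigma)$, up to the normalising constants $|\Omega_\tau|^{-1/2}$ and $|\Omega_\sigma|^{-1/2}$.

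First I would record a character formula for $\pind_{U,V}$. Using Theorem \ref{thm:pind-Iw}\eqref{item:pind-Iw-prespind} ($\pres_{U,V}\pind_{U,V}\cong\id$) together with the character formula for $\pres_{U,V}$ from \cite[Proposition 1.11]{Crisp_parahoric} (already invoked in the proof of Theorem \ref{thm:pind-Iw}\eqref{item:pind-Iw-character}), one gets, up to a positive scalar, $\ch_\sigma(l)\sim\int_U\int_V\ch_\tau(vlu)\,\dd u\,\dd v$ for $l\in L$; I would then invert this, or rather combine it with the explicit description of $\tau$ as the image of the intertwiner $J_V$, to express $\ch_\tau$ on the open piece $ULV\subset G$ in terms of $\ch_\sigma$. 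The second preparatory step is purely in the abelian world: using hypotheses (2) and (3), I would transport everything through $\exp$. Condition (3) guarantees that $\exp^*$ is a convolution-algebra isomorphism on $L$, on $LU$ and on $LV$, and that the decomposition $\germ g=\germ u\oplus\germ l\oplus\germ v$ matches the product decomposition $G\supset ULV$; hence the projection $\Lambda:\germ g\to\germ l$ "is" the map $ulv\mapsto l$ transported through $\exp$, at least on the relevant open set. Dualising, the inclusion $\widehat{\germ l}\hookrightarrow\widehat{\germ g}$ sends a character $\eta$ of $\germ l$ to $\eta\circ\Lambda$, which is trivial on $\germ u$ and $\germ v$.

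The heart of the argument is then to chase the identity \eqref{orbit_equation} through these two reductions. Writing $\exp^*(\ch_\sigma)=|\Omega_\sigma|^{-1/2}\sum_{\eta\in\Omega_\sigma}\eta$ on $\germ l$, I would: (i) inflate each $\eta$ to the character $\eta\circ\Lambda$ of $\germ g$; (ii) average over the $G$-action, i.e. apply $e_G$ in $\H(\germ g)^G$, using condition (B) for $G$ to identify this with averaging characters of $G$; and (iii) match the result against $\exp^*(\ch_\tau)$ by means of the character formula from the first step. The key input making (iii) work is that $J_V$-averaging on the $G$-side corresponds, under $\exp$, precisely to $\germ u$- and $\germ v$-averaging on the $\germ g$-side, so that $\ch_\tau$ "sees" only the $\germ l$-component — and this is exactly what pins down $\Omega_\tau$ as the $G$-saturation of $\Omega_\sigma\circ\Lambda$. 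Injectivity of $\Lambda^*$ is then immediate from commutativity of the square together with injectivity of $\pind_{U,V}$ on $\Irr$ (Theorem \ref{thm:pind-Iw}\eqref{item:pind-Iw-prespind}), or can be seen directly: if $\Omega_1\circ\Lambda$ and $\Omega_2\circ\Lambda$ are $G$-conjugate then, restricting the conjugating element's action and using condition (3) applied to $LU$ and $LV$ (which forces the $\germ u$- and $\germ v$-components to play no role for characters trivial there), one recovers an $L$-conjugacy $\Omega_1=\Omega_2$.

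The step I expect to be the main obstacle is (iii): carefully reconciling the two normalising constants $|\Omega_\sigma|^{-1/2}$ and $|\Omega_\tau|^{-1/2}$, and checking that the $\germ u,\germ v$-averaging does not merely send $\eta\circ\Lambda$ into the right orbit but does so with the correct multiplicity so that the sum over $\Omega_\tau$ comes out with coefficient exactly $|\Omega_\tau|^{-1/2}$. Concretely one needs $|\Omega_\tau|=|\Omega_\sigma|\cdot[\,G:ULV\cdot(\text{stabiliser bookkeeping})\,]$-type identities, which should follow from comparing $\dim\tau$, $\dim\sigma$ and the scalar $z_M$ of Proposition \ref{prop:z} and Remark \ref{rem:z} (recall $z_M=\dim\pres_{U,V}(M)/\dim M=\dim\sigma/\dim\tau$), together with the orbit-method fact that $\dim\tau=|\Omega_\tau|^{1/2}$ and $\dim\sigma=|\Omega_\sigma|^{1/2}$ coming from evaluating \eqref{orbit_equation} at the identity. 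Once the dimension/orbit-size bookkeeping is in place, the remaining verification is a formal manipulation of the Fourier-type identities supplied by conditions (A) and (B).
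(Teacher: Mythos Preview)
Your overall direction---comparing characters via the orbit formula \eqref{orbit_equation} and the character formula for $\pres_{U,V}$---is sound, and your injectivity argument and dimension bookkeeping ($z_M=\dim\sigma/\dim\tau$, $\dim\tau=|\Omega_\tau|^{1/2}$) are correct. But step (iii) has a genuine gap that is more serious than the normalisation issue you flag. You assert that ``$J_V$-averaging on the $G$-side corresponds, under $\exp$, precisely to $\germ u$- and $\germ v$-averaging on the $\germ g$-side'', and nothing in the hypotheses justifies this. The map $\exp$ is only a homeomorphism, not a group homomorphism; condition (B) gives an algebra isomorphism on \emph{$\Ad$-invariant} functions, but the transport you need would require controlling $\log(v\exp(\xi)u)$ in terms of $\log v+\xi+\log u$, and no such relation is available. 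The character formula you cite runs from $\ch_\tau$ to $\ch_\sigma$, and inverting it to pin down $\Omega_\tau$ requires exactly this unsupported transport. The constants are a symptom; the disease is that the double $U,V$-averaging does not sit inside any single subgroup to which hypothesis (3) applies.

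The paper's proof sidesteps both problems by reversing the direction and weakening the target. It starts from the orbit side: fix $\Omega\in L\backslash\widehat{\germ l}$, let $\tau$ be the irreducible of $G$ with $\mathcal O_G(\tau)=\Lambda^*\Omega$, and show $\tau\cong\pind(\sigma)$ via the criterion of Theorem~\ref{thm:pind-Iw}\eqref{item:pind-Iw-UV}: it suffices that $\sigma$ appear in $\tau^U$ and in $\tau^V$ \emph{separately}. Each of these is checked by a commuting diagram passing through the intermediate group $LU$ (respectively $LV$), where hypothesis (3) does apply; the bridge is the elementary Lemma~\ref{orbit_U_lemma}, which says an irreducible of $LU$ is trivial on $U$ if and only if every character in its coadjoint orbit is trivial on $\germ u$. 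Because only a \emph{positive coefficient} of $\ch_\sigma$ in $\ch_{\tau^U}$ is needed, no normalising constants enter, and working one unipotent piece at a time keeps everything inside a group for which condition (B) is assumed.
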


We require the following lemma:

\begin{lemma}\label{orbit_U_lemma}
Let $\Omega$ be an orbit in $LU\backslash\widehat{\germ{l}\germ{u}}$, corresponding via the orbit method to an irreducible representation $\tau$ of $LU$. Then $\tau$ is trivial on $U$ if and only if every $\psi\in \Omega$ is trivial on $\germ{u}$.
\end{lemma}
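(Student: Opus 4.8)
\textbf{Plan for the proof of Lemma \ref{orbit_U_lemma}.}

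The plan is to exploit the character formula \eqref{orbit_equation} applied to the group $LU$ together with the orbit-method hypotheses on $LU$ inherited from condition (3) of Theorem \ref{orbit_theorem}. Write $\germ h = \germ l \oplus \germ u$, so $\exp$ restricts to a homeomorphism $\germ h \to LU$ satisfying (A) and (B). For an irreducible representation $\tau$ of $LU$ with associated coadjoint orbit $\Omega \subset \widehat{\germ h}$, formula \eqref{orbit_equation} reads $\exp^*(\ch_\tau) = |\Omega|^{-1/2}\sum_{\psi \in \Omega}\psi$. The statement ``$\tau$ is trivial on $U$'' is equivalent, since $U$ is normal in $LU$ and $\tau$ is irreducible, to $\tau|_U$ being a multiple of the trivial character, which in turn is equivalent to $\ch_\tau$ being constant on $U$-cosets, i.e. to $\ch_\tau$ being the inflation of a class function from $LU/U \cong L$.

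First I would translate this Lie-algebra-side: $\ch_\tau$ is constant on cosets of $U$ in $LU$ if and only if $\exp^*(\ch_\tau)$ is constant on cosets of $\germ u$ in $\germ h$. Here I need that $\exp$ carries the coset decomposition of $LU$ by $U$ to the coset decomposition of $\germ h$ by $\germ u$; this follows because $\exp$ restricts to a homeomorphism $\germ l \to L$ (hypothesis (3)) and $U = \exp(\germ u)$ is normal, so that $\exp(\germ l + \germ u)$ meeting a coset $gU$ is controlled by the $\germ l$-component — more precisely, one checks that $\exp(x+y)U = \exp(x')U$ whenever $x,x'\in\germ l$ have the same image in the relevant quotient and $y,y'\in\germ u$, using that conjugation by $U$ on $\germ h$ preserves $\germ u$ and acts trivially on $\germ h/\germ u$. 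Next, a locally constant function $\sum_{\psi\in\Omega}\psi$ (a sum of distinct characters of the abelian group $\germ h$) is constant on $\germ u$-cosets if and only if every $\psi\in\Omega$ is trivial on $\germ u$: indeed, averaging such a sum over $\germ u$ projects each $\psi$ to $0$ unless $\psi|_{\germ u}$ is trivial, and by linear independence of characters the averaged sum equals the original sum precisely when each $\psi$ already kills $\germ u$. Combining these two equivalences with the one in the previous paragraph gives the claim.

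The one point requiring a little care — and the step I expect to be the main obstacle — is the first translation, namely verifying cleanly that ``$\ch_\tau$ constant on $U$-cosets'' corresponds under $\exp^*$ to ``$\exp^*(\ch_\tau)$ constant on $\germ u$-cosets.'' The subtlety is that $\exp$ is only a homeomorphism, not a homomorphism, so one cannot directly say $\exp(x)\exp(y) = \exp(x+y)$; instead one must use that $U$ and $\germ u$ are genuine subgroups (not merely images under $\exp$ of a subgroup), that $L$ normalises $U$ and the $\Ad$-action respects the decomposition $\germ g = \germ u \oplus \germ l \oplus \germ v$, and that $\exp$ is $\Ad$-equivariant by construction (condition (A)). Concretely: given $g = \exp(w)\in LU$ with $w\in\germ h$, the coset $gU$ equals $\{\exp(w)u : u\in U\}$; using $\Ad$-equivariance and the fact that $\exp|_{\germ l}\colon\germ l\to L$ is a homeomorphism, one shows this coset is determined by and determines the class $w + \germ u \in \germ h/\germ u$. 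Once this bijection between $LU/U$ and $\germ h/\germ u$ (compatible with $\exp^*$ on functions) is in hand, the rest is the elementary character-theoretic averaging argument sketched above, and no further input from the orbit method of \cite{Boy-Sab} is needed beyond formula \eqref{orbit_equation} itself.
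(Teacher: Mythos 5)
Your plan is correct in spirit, and the averaging argument at its core is sound, but the step you flag as the main obstacle --- establishing a bijection $LU/U \leftrightarrow (\germ{l}\germ{u})/\germ{u}$ compatible with $\exp$ --- is both harder than you suggest in the abstract setting of Theorem~\ref{orbit_theorem} (hypotheses (A) and (B) control $\exp$ only with respect to conjugation, not products, so there is no Campbell--Hausdorff formula to fall back on) and, crucially, unnecessary. The paper never uses constancy of $\ch_\tau$ on $U$-\emph{cosets}; it uses only the values of $\ch_\tau$ on the subgroup $U=\exp(\germ{u})$ itself, which requires nothing beyond the given bijection $\germ{u}\to U$. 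In the forward direction, if each $\psi\in\Omega$ is trivial on $\germ{u}$ then \eqref{orbit_equation} gives $\exp^*(\ch_\tau)(y)=|\Omega|^{1/2}$ for all $y\in\germ{u}$, so $\ch_\tau(u)=|\Omega|^{1/2}=\ch_\tau(1)=\dim\tau$ for all $u\in U$; since $\ch_\tau(u)$ is a sum of $\dim\tau$ eigenvalues of the unitary operator $\tau(u)$, each of modulus one, this forces $\tau(u)=1$. Conversely, if $\tau$ is trivial on $U$ then $\exp^*(\ch_\tau)$ is identically $|\Omega|^{1/2}$ on $\germ{u}$, so \eqref{orbit_equation} gives $\sum_{\psi\in\Omega}\psi(y)=|\Omega|$ for every $y\in\germ{u}$; each $\psi(y)$ has modulus one, so each must equal $1$. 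This trades your coset-averaging argument for a one-line ``sum of modulus-one numbers'' observation and sidesteps the compatibility question entirely.
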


\begin{proof} 
If every $\psi\in \Omega$ is trivial on $\germ{u}$, then the character formula \eqref{orbit_equation} ensures that the character of $\tau$ is constant on $U$, and therefore that $\tau$ is trivial on $U$. Conversely, if $\tau$ is trivial on $U$ then its character is constant on $U$, with constant value $\dim(\tau)=|\Omega|^{1/2}$, and so \eqref{orbit_equation} implies that $\sum_{\psi\in \Omega}\psi(y)=|\Omega|$ for every $y\in \germ{u}$. Since each $\psi(y)$ is a complex number of modulus one, this equality forces $\psi(y)=1$ for every $\psi$ and every $y$.
\end{proof}

Note that by Theorem \ref{thm:pind-Iw}\eqref{item:pind-Iw-irr} the functor $\pind= \pind_{U,V}:\Rep(L)\to \Rep(G)$ preserves irreducibility, and therefore induces a map $\pind: \Irr(L)\to \Irr(G)$. We recall the following characterisation of the map~$\pind$ from Theorem~\ref{thm:pind-Iw}\eqref{item:pind-Iw-Hom}: given irreducible representations $\tau \in \Rep(G)$ and $\sigma\in \Rep(L)$, one has $\tau\cong \pind(\sigma)$ if and only if $\sigma$ is a common subrepresentation of $\tau^U$ and $\tau^V$.

\begin{proof}[Proof of Theorem \ref{orbit_theorem}]  Fix an orbit $\Omega\in L\backslash \widehat{\germ{l}}$ and let $\sigma={\mathcal O_L^{-1}(\Omega)\in \Irr(L)}$ be {the corresponding} irreducible representation of $L$. Let $\tau={\mathcal O_G^{-1}(\Lambda^*\Omega)\in \Irr(G)}$ be {the corresponding} irreducible representation {of $G$}. We will show that $\sigma$ is isomorphic to a subrepresentation of $\tau^U$. The same argument shows that $\sigma$ is isomorphic to a subrepresentation of $\tau^V$, and then Theorem~\ref{thm:pind-Iw}\eqref{item:pind-Iw-Hom} gives $\tau \cong \pind(\sigma)$ as required.

The characters $\ch_\rho$, where $\rho$ ranges over $\Irr(LU)$, constitute a linear basis for the space $\H(LU)^{LU}$. We let $P:\H(LU)^{LU}\to \H(LU)^{LU}$ be the projection
\[ P(\ch_\rho) = \begin{cases} \ch_\rho &\text{if $\rho$ is trivial on $U$} \\ 0 & \text{otherwise.}\end{cases}\]
On the other hand, the functions 
\[\chi_\Psi \coloneq |\Psi|^{-1/2}\sum_{\psi\in \Psi} \psi, \] as $\Psi$ ranges over $LU\backslash\widehat{\germ{l}\germ{u}}$, constitute a basis for $\H(\germ{l}\germ{u})^{LU}$, and we let $Q$ be the idempotent operator on $\H(\germ{l}\germ{u})^{LU}$ defined by
\[ Q(\chi_{\Psi}) = \begin{cases} \chi_{\Psi} & \text{if every $\psi\in \Psi$ is trivial on $\germ{u}$} \\ 0 & \text{otherwise}.\end{cases}\]

Lemma \ref{orbit_U_lemma} implies the commutativity of the middle square in the diagram 
\begin{equation}\label{orbit_proof_eq}
\xymatrix@R=30pt@C=50pt{
\H(G)^{G} \ar[r]^-{\text{restrict}} \ar[d]^-{\exp^*} & \H(LU)^{LU} \ar[r]^-{P} \ar[d]^-{\exp^*} & \H(LU)^{LU} \ar[r]^-{\text{restrict}} \ar[d]^-{\exp^*} & \H(L)^{L} \ar[d]^-{\exp^*} \\
\H(\germ{g})^{G} \ar[r]^-{\text{restrict}} & \H(\germ{l}\germ{u})^{LU} \ar[r]^-{Q} & \H(\germ{l}\germ{u})^{LU} \ar[r]^-{\text{restrict}} & \H(\germ{l})^L
}
\end{equation}
where \lq restrict\rq~ means restriction of functions. The two outer squares in the diagram obviously commute. For each irreducible representation $\rho$ of $G$, the composition along the top row of \eqref{orbit_proof_eq} sends the character of $\rho$ to the character of $\rho^U$.

Choose a point $\psi$ in the orbit $\Omega \subset  \widehat{\germ{l}}$, and write $\Lambda^*\Omega=\{{\psi\circ \Lambda},\phi_1,\ldots,\phi_n\}$. Since the character $\psi\circ\Lambda\in \widehat{\germ{g}}$ is trivial on $\germ{u}$, and has $\psi\circ\Lambda\restrict_{\germ{l}}=\psi$, we find that the composition along the bottom row of \eqref{orbit_proof_eq} sends $\chi_{\Lambda^*\Omega}=\exp^*(\ch_\tau)$ to the function 
\[ |\Lambda^*\Omega|^{-1/2} \left( \psi + \sum_{\phi_i\equiv 1\text{ on } \germ{u}} \phi_i\restrict_{\germ{l}}\right).\]
Since this sum contains $\psi$---and hence $\chi_\Omega=\exp^*(\ch_\sigma)$---with a positive coefficient, we conclude from the commutativity of \eqref{orbit_proof_eq} that $\tau^U$ contains a copy of $\sigma$.
\end{proof}

\subsection{Application to (pro-) $\boldsymbol p$-groups}
The results of the previous subsection apply to a rich and well-behaved family of (pro-) $p$-groups which we now discuss. Roughly speaking these groups admit good linearisations, that is, to each such group one may associate a Lie algebra that carries complete information on the group. 

\subsubsection*{Uniform pro-$p$-groups} A finite $p$-group  is called {\em powerful} if $[G,G] \subset G^p$ when $p$ is odd (and $[G,G] \subset G^4$ when $p=2$). Here $G^m$ is the group generated by $m$-powers. A pro-$p$ group is called powerful if it is the inverse limit of finite powerful groups. A pro-$p$ group is called {\em uniform} if it is powerful, finitely generated (as a pro-$p$ group), and torsion-free. To each uniform pro-$p$ group $G$ one may associate a {\em uniform} {$\Z_p$-Lie algebra} $\germ g = \Lie(G)$, that is, a  $\Z_p$-Lie algebra which is free of finite rank as a $\Z_p$-module, and which satisfies $[\germ g, \germ g]_{\Lie} \subset p\germ g$ for $p$ odd (and $[\germ g, \germ g]_{\Lie} \subset 4\germ g$ for $p=2$); see~\cite{DDMS} for a comprehensive treatment. This association defines an equivalence of categories between the category of uniform pro-$p$ groups and uniform $\Z_p$-Lie algebras. Starting with  a   uniform  Lie  algebra $\germ g$ this association is made concrete using the Campbell-Hausdorff  series 
\[
H(u,v)=\log\left(\exp(u)\exp(v)\right) = u+v+(\textrm{Lie brackets})  \in \Q \langle \!\langle u,v \rangle \!\rangle, 
\]
which is expressible in terms of $u,v\in \germ g$ by means of the Lie bracket, and which allows one to define a uniform pro-$p$ group $G$ having the same underlying set  as $\germ g$ and group operation $u \cdot v = H(u,v)$. Let $\exp$ denote the identity map on $\germ g$, thought of as a map from the Lie algebra $\germ g$ to the group $G$. This map is well-behaved with respect to passage to subgroups or quotients: Lie subalgebras of $\germ g$ correspond bijectively to closed uniform subgroups of $G$, and ideals in $\germ g$ correspond to normal subgroups in $G$; see \cite[Section 4.5]{DDMS}. Moreover, it is shown in \cite[Theorem~2.6]{Boy-Sab}     that for $p\neq 2$  this map $\exp$ satisfies the conditions (A) and (B) from Section \ref{subsec:abstract}, meaning that the orbit method applies and gives a bijection $\mathcal O_G:\Irr(G)\to G\backslash \widehat{g}$. {This generalises an earlier result of Howe \cite[Theorem 1.1]{Howe}.} 

\subsubsection*{Finite $p$-groups of nilpotency class less than $p$} There is a similar Lie-type correspondence for finite $p$-groups of nilpotency class less than $p$. To each group $G$ of this type one may associate a finite $\Z$-Lie algebra $\germ g=\Lie(G)$ which is nilpotent of class less than $p$, and whose additive group is a $p$-group, such that $G$ is isomorphic to the group $\exp(\germ g)$ whose underlying set is $\germ g$ and whose multiplication is given by the Campbell-Hausdorff series (which is finite, in this case); see \cite[Section 10.2]{Khukhro}. If $p$ is odd then the orbit method applies to the map $\exp:\germ g\to G$; see \cite[Theorem~2.6]{Boy-Sab}. 

\subsubsection*{Application of Theorem \ref{orbit_theorem}} For the rest of this section  let $G=\exp(\germ g)$ be either a uniform pro-$p$ group  or a finite $p$-group of nilpotency class less than $p$, with corresponding Lie algebra $\germ g$. For each subalgebra $\germ h$ of $\germ g$ we write $H$ for the corresponding subgroup $\exp(\germ h)$ of $G$.

\begin{definition}\label{def:Lie_Iwahori}
An \emph{Iwahori decomposition} of $\germ g$ is a triple of Lie subalgebras $(\germ u, \germ l, \germ v)$ of $\germ g$ such that   $[\germ l,\germ u]\subseteq \germ u$,   $[\germ l, \germ v]\subseteq \germ v$, and such that $\germ g = \germ u \oplus \germ l \oplus v$ as $\Z_p$-modules (in the uniform pro-$p$ case) or as $\Z$-modules (in the finite $p$-group case).
\end{definition}

\begin{lemma}\label{lem:Lie_Iwahori} 
If $(\germ u, \germ l, \germ v)$ is an Iwahori decomposition of $\germ g$, then $(U,L,V)$ is an Iwahori decomposition of $G$. 
\end{lemma}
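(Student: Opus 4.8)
The plan is to verify the three conditions in Definition~\ref{def:vI} directly, using the fact that the exponential map $\exp:\germ g\to G$ restricts to homeomorphisms on Lie subalgebras and intertwines the direct-sum decomposition of $\germ g$ with the multiplication map of $G$. First I would recall that for a subalgebra $\germ h$ of $\germ g$ we have set $H = \exp(\germ h)$, and that by the Lie-type correspondence (\cite[Section 4.5]{DDMS} in the uniform pro-$p$ case, \cite[Section 10.2]{Khukhro} in the finite $p$-group case) the restriction $\exp\restrict_{\germ h}:\germ h\to H$ is a homeomorphism onto the closed subgroup $H$. The normalisation conditions $[\germ l,\germ u]\subseteq\germ u$ and $[\germ l,\germ v]\subseteq\germ v$ translate via the Campbell--Hausdorff formula into the statement that $L$ normalises $U$ and $V$: indeed $\log(\exp(x)\exp(y)\exp(x)^{-1}) = y + [x,y]_{\Lie} + (\textrm{higher brackets in } x,y)$ lies in $\germ u$ whenever $x\in\germ l$ and $y\in\germ u$, since $[\germ l,\germ u]\subseteq\germ u$ and all higher terms are iterated brackets that stay inside $\germ u$.

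Next I would establish condition (1), that the multiplication map $U\times L\times V\to G$ is a homeomorphism (surjectivity being automatic once we show it, since we claim it is \emph{onto}, so in fact the decomposition of $G$ we get is an honest Iwahori decomposition in the strong sense). The key point is that the composite
\[
\germ u\oplus\germ l\oplus\germ v \xrightarrow{\exp\times\exp\times\exp} U\times L\times V \xrightarrow{\textrm{mult.}} G
\]
equals the map $(x,y,z)\mapsto \exp(x)\exp(y)\exp(z)$, which I would analyse by writing it as $\exp(H(H(x,y),z))$ via the Campbell--Hausdorff series; since $\germ g=\germ u\oplus\germ l\oplus\germ v$ as $\Z_p$- (resp. $\Z$-) modules, one checks that $(x,y,z)\mapsto H(H(x,y),z)$ is a bijection of $\germ g$ onto itself whose inverse is continuous. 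In the finite $p$-group case this is elementary as the series is finite and the leading term is $x+y+z$; in the uniform pro-$p$ case one argues by passing to the finite quotients $\germ g/p^k\germ g$ and taking inverse limits, using that the statement holds modulo each power of $p$ and the maps are compatible. Hence the multiplication map $U\times L\times V\to G$ is a homeomorphism onto $G$.

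For condition (2) I would exhibit the required filtration of $G$ by open normal subgroups admitting Iwahori decompositions. In the uniform case, the subgroups $G_k := \exp(p^k\germ g)$ (for $k\geq 1$, or $k\geq 2$ if $p=2$) are open normal subgroups of $G$ forming a neighbourhood basis of the identity (this is part of the structure theory of uniform groups, \cite[Chapter 3]{DDMS}), and $p^k\germ g = p^k\germ u\oplus p^k\germ l\oplus p^k\germ v$ is again an Iwahori decomposition of the subalgebra $p^k\germ g$; applying the already-proven condition (1) to this subalgebra shows that $(U\cap G_k)\times(L\cap G_k)\times(V\cap G_k)\to G_k$ is a homeomorphism, where one identifies $U\cap G_k=\exp(p^k\germ u)$ etc. In the finite $p$-group case condition (2) is automatic, as noted in the text, by taking $K=\{1\}$. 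I expect the main obstacle to be the careful bookkeeping in the uniform pro-$p$ case: one must make sure that the Campbell--Hausdorff manipulations converge appropriately and that $U\cap\exp(p^k\germ g)$ really equals $\exp(p^k\germ u)$ rather than something larger, which requires knowing that $\germ u$ is a direct summand of $\germ g$ as a $\Z_p$-module and that $\exp$ respects this; this is where one leans most heavily on the structure theory of \cite{DDMS}.
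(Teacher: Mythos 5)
Your proof follows essentially the same route as the paper's: use the Lie correspondence and the Campbell--Hausdorff formula to transfer the direct-sum decomposition of $\germ g$ to a product decomposition of $G$, and handle condition (2) of Definition~\ref{def:vI} via the open normal subgroups $\exp(p^k\germ g)$ in the uniform case (trivially in the finite case). The step you leave at the level of ``one checks'' --- that $(x,y,z)\mapsto H(H(x,y),z)$ is a bijection of $\germ g$ --- is exactly the content the paper makes explicit, by iteratively peeling off factors via the identity $\exp(x+y)=\exp(x)\exp(z_1)\exp(y)$ with $z_1\in[\germ g,\germ g]$, then using compactness of $G$ and the vanishing of $\bigcap_n\germ g_n$ (where $\germ g_n=[\germ g,\germ g_{n-1}]$) in the nilpotent or uniform setting to conclude $\exp(\germ b\oplus\germ v)\subseteq (UL)V$.
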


\begin{proof} 
The Lie correspondence ensures that $U$, $L$ and $V$ are closed subgroups of $G$ such that $L$ normalises $U$ and $V$. The subgroups $V$ and $B\coloneq UL$   have trivial intersection in $G$, because the subalgebras $\germ v$ and $\germ u\oplus \germ l$  have trivial intersection in $\germ g$, and so the product map $U\times L \times V\to G$ is injective. We shall now show that this map is surjective.

 We must show that for each $x\in \germ b\coloneq \germ u\oplus \germ l$ and each $y\in \germ v$ one has $\exp(x+y)\in BV$. The Campbell-Hausdorff formula implies that $\exp(x+y) =  \exp(x)\exp(z_1)\exp(y)$ for some $z_1\in \germ g_1=[\germ g,\germ g]$. Writing $z_1=x_1+y_1$, where $x_1\in \germ b$ and $y_1\in \germ v$, another application of Campbell-Hausdorff gives $\exp(z_1)=\exp(x_1)\exp(z_2)\exp(y_1)$ for some $z_2\in \germ g_2=[\germ g,\germ g_1]$. Continuing in this way   we find $z_n \in {\germ g}_n = [\germ g,\germ g_{n-1}]$, $x_{n-1} \in \germ b$ and $y_{n-1} \in \germ v$, for every $n \in \N$, such that  $\exp(z_{n-1})=\exp(x_{n-1})\exp(z_n)\exp(y_{n-1})$, and we deduce that   
\[ 
\exp(x+ y  ) \in \bigcap_{n\geq 0} B\exp(\germ g_n) V = B\left(\bigcap_{n\geq 0} \exp(\germ g_n)\right) V = BV,
\]
where the first equality holds because  the groups $\exp(\germ g_n)$ form a descending chain and  $G$ is compact, and the second holds because $\germ g$ is either uniform or nilpotent. 

We are left to verify condition (2) of Definition \ref{def:vI}. If $G$ is finite this condition is trivially satisfied, so suppose that $G$ is a uniform pro-$p$ group. For each $n\geq 0$ the triple $(p^n\germ u, p^n\germ l, p^n\germ v)$ is an Iwahori decomposition of the {ideal} $p^n\germ g$ of $\germ g$, and so the above argument shows that the open {normal} subgroups $K_n = \exp(p^n\germ g)$ of $G$ satisfy condition (2).
\end{proof}

We now have the following corollary of Theorem~\ref{orbit_theorem}:

\begin{corollary} \label{cor:Lie_Iwahori}
Let $p$ be an odd prime. Let $G$ be either a uniform pro-$p$ group, or a finite $p$-group of nilpotency class less than $p$. Let $(\germ u, \germ l, \germ v)$ be an Iwahori decomposition of the Lie algebra $\germ g$ of $G$, and let $(U,L,V)$ be the corresponding Iwahori decomposition of $G$. The diagram
\[
 \xymatrix@R=30pt@C=50pt{ \Irr(L) \ar[r]^-{\pind_{U,V}} \ar[d]_-{{\mathcal O_L}} & \Irr(G) \ar[d]^-{{\mathcal O_G}}\\
L\backslash \widehat{\germ l}  \ar[r]^-{\Lambda^*}   & G\backslash \widehat{\germ g}  
 }
\]
is commutative.
\end{corollary}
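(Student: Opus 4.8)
The plan is to deduce Corollary~\ref{cor:Lie_Iwahori} from Theorem~\ref{orbit_theorem} by checking that the hypotheses of that theorem are met in the present Lie-theoretic situation. By Lemma~\ref{lem:Lie_Iwahori}, the triple $(U,L,V)$ is an Iwahori decomposition of $G$, so hypothesis~(1) of Theorem~\ref{orbit_theorem} holds. For hypothesis~(2): the map $\exp$ is by construction the identity map on the underlying set of $\germ g$, the subalgebras $\germ u,\germ l,\germ v$ are precisely the preimages of $U,L,V$ under $\exp$, and the decomposition $\germ g=\germ u\oplus\germ l\oplus\germ v$ as $\Z_p$-modules (resp.\ $\Z$-modules) is exactly the hypothesis of Definition~\ref{def:Lie_Iwahori}. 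So hypothesis~(2) is immediate.

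The substance of the proof is hypothesis~(3). First I would observe that $\germ l$, $\germ b\coloneq\germ l\oplus\germ u$, and $\germ b^-\coloneq\germ l\oplus\germ v$ are Lie subalgebras of $\germ g$ (the first is a subalgebra by assumption; for $\germ b$ use $[\germ l,\germ l]\subseteq\germ l$, $[\germ l,\germ u]\subseteq\germ u$ and $[\germ u,\germ u]\subseteq\germ u$ since $\germ u$ is a subalgebra, and symmetrically for $\germ b^-$). They are moreover uniform (resp.\ nilpotent of class $<p$), being subalgebras of a uniform (resp.\ such nilpotent) Lie algebra; this is where one invokes \cite[Section~4.5]{DDMS} in the pro-$p$ case and \cite[Section~10.2]{Khukhro} in the finite case. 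Hence $\exp$ restricts to homeomorphisms $\germ l\to L$, $\germ b\to LU$ and $\germ b^-\to LV$ — the identifications $\exp(\germ b)=LU$ and $\exp(\germ b^-)=LV$ follow from the surjectivity argument already carried out inside the proof of Lemma~\ref{lem:Lie_Iwahori} (applied with $\germ v$ replaced by the zero algebra, so that $B=UL=\exp(\germ b)$), or more simply from the Campbell--Hausdorff formula together with $L$ normalising $U$. Finally, each of these restrictions satisfies conditions~(A) and~(B): this is precisely the content of \cite[Theorem~2.6]{Boy-Sab} applied to the uniform pro-$p$ group $L$, $LU$, $LV$ (resp.\ the finite $p$-group of class $<p$), since each is again a group of the type under consideration. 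With all three hypotheses of Theorem~\ref{orbit_theorem} verified, the commutative diagram of that theorem is exactly the assertion of the corollary.

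The main obstacle — really the only non-bookkeeping point — is making sure that $LU$ and $LV$ are themselves groups in the class to which the orbit method of \cite{Boy-Sab} applies, i.e.\ that $\germ b$ and $\germ b^-$ are uniform $\Z_p$-Lie algebras (resp.\ finite $\Z$-Lie algebras nilpotent of class $<p$). Uniformity requires $[\germ b,\germ b]_{\Lie}\subseteq p\germ b$ (for $p$ odd), which follows because $[\germ b,\germ b]_{\Lie}\subseteq[\germ g,\germ g]_{\Lie}\subseteq p\germ g$ and $[\germ b,\germ b]_{\Lie}\subseteq\germ b$, whence $[\germ b,\germ b]_{\Lie}\subseteq p\germ g\cap\germ b$; and since $\germ g=\germ b\oplus\germ v$ as $\Z_p$-modules one has $p\germ g\cap\germ b=p\germ b$. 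In the finite $p$-group case, nilpotency of $\germ b$ of class $<p$ is inherited from $\germ g$, and its additive group is a $p$-group as a subgroup of one. Everything else is a routine application of the Lie correspondence and of Theorem~\ref{orbit_theorem}, so I would keep the write-up short: state the reductions, check hypothesis~(3) in the two or three lines above, and cite Theorem~\ref{orbit_theorem}.
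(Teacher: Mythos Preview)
Your proposal is correct and follows essentially the same approach as the paper: verify the three hypotheses of Theorem~\ref{orbit_theorem}, citing Lemma~\ref{lem:Lie_Iwahori} for (1), the definition for (2), and \cite[Theorem~2.6]{Boy-Sab} for (3). The paper's own proof is a three-line sketch, and your write-up simply fills in the details the paper leaves implicit---in particular the check that $\germ l\oplus\germ u$ and $\germ l\oplus\germ v$ are again uniform (resp.\ nilpotent of class $<p$) so that \cite{Boy-Sab} applies to $LU$ and $LV$.
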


\begin{proof}
This follows from Theorem \ref{orbit_theorem}. The hypothesis (1) of that theorem is satisfied because of Lemma \ref{lem:Lie_Iwahori}; hypothesis (2) is satisfied by assumption; and the hypothesis (3) is satisfied because of \cite[Theorem 2.6]{Boy-Sab}.
\end{proof}

We remark that for uniform pro-$2$ groups the orbit method does not fully apply, though one has weaker versions; see~\cite{Jaikin, Boy-Sab}.

\begin{example}\label{ex:orbit_tidy} 
In \lq real life\rq~one may find a rich supply of  groups to which the corollary may be applied. {Let $\mathcal G$ be a $p$-adic Lie group, let $\alpha$ be an automorphism of $\mathcal G$, and denote by  $\alpha_*$ the derived automorphism of the Lie algebra $\germ g$ {of $G$}. Then
\[
\germ u_\alpha  \coloneq \{x\in \germ g\ |\ \alpha_*^n(x)\to 0\text{ as }n\to\infty\}\quad \text{and}\quad \germ v_\alpha\coloneq \germ u_{\alpha^{-1}}
\]
are nilpotent Lie subalgebras of $\germ g$, normalised by the subalgebra
\[ 
\germ l_\alpha  \coloneq \{ x\in \germ g\ |\ \{\alpha_*^n(x)\ |\ n\in \Z\}\text{ is precompact in }\germ g\},
\]
and we have $\germ g = \germ u_\alpha\oplus \germ l_\alpha\oplus \germ v_\alpha$ as $\Q_p$-vector spaces. Moreover, $\germ u_\alpha$, $\germ l_\alpha$ and $\germ v_\alpha$ are the respective Lie algebras of the closed subgroups $\mathcal U_\alpha$, $\mathcal L_\alpha$ and $\mathcal V_\alpha$ of $\mathcal G$, where we are using the notation of Example \ref{ex:vI-tidy}. These assertions are proved in \cite[Theorem 3.5]{Wang}. It is shown in \cite[Lemma 3.3]{Gloeckner_manuscripta} that $\germ g$ contains arbitrary small open uniform $\Z_p$-Lie subalgebras $\germ k$ having $\germ k = (\germ u_\alpha\cap \germ k)\oplus  (\germ l_\alpha \cap \germ k) \oplus (\germ v_\alpha \cap \germ k)$. The compact open subgroups $K= \exp(\germ k)$ of $\mathcal G$ then have Iwahori decompositions $(\mathcal U_\alpha\cap K, \mathcal L_\alpha\cap K, \mathcal V_\alpha\cap K)$. If $p$ is odd, Corollary \ref{cor:Lie_Iwahori} describes the induction functor $\pind_{\mathcal U_\alpha\cap K, \mathcal V_\alpha\cap K}:\Rep(\mathcal L_\alpha\cap K)\to \Rep(K)$ in terms of the orbit method and of the projection $\germ k\to \germ l_\alpha\cap \germ k$.
  }
\end{example}

\begin{example} 
For a finite example, let $\O$ be a compact discrete valuation ring with maximal ideal $\germ p$ and  residue characteristic $p$. Let $K=K_1$ be the first principal congruence subgroup  in  $\GL_n(\O/\germ p^{\ell})$, for $\ell>1$. Then $K$ is a finite $p$-group of nilpotency class $\ell-1$, with Lie algebra $\germ k=M_n(\germ p/\germ p^\ell)$.
As explained in Example \ref{ex:vI-GLnO}, each partition $n=n_1+\cdots+n_m$ gives an Iwahori decomposition $(U\cap K, L\cap K, V\cap K)$ of $K$, corresponding to the decomposition of  $\germ k$ into block-upper-triangular, block-diagonal and block-lower-triangular matrices. If~${p>\ell-1}$ and odd, Corollary \ref{cor:Lie_Iwahori} gives a description of the resulting induction functor $\pind_{U\cap K, V\cap K}:\Rep(L\cap K)\to \Rep(K)$ in terms of the orbit method and the projection of  $\germ k$  onto its subalgebra of block-diagonal matrices. 
\end{example}

\begin{remark} We have taken the point of view of the theory of uniform groups due to its fairly concrete and algebraic formulation. Historically, the Lie correspondence {for (pro-)p-groups} goes back to the seminal work of Lazard~\cite{Lazard-finite}, \cite{Lazard}. {The technique of obtaining Iwahori decompositions of groups from decompositions of Lie algebras is well known in the setting of $p$-adic reductive groups: see \cite{WorksInProgress} and \cite{Deligne_support}, for example.}
\end{remark}


\newpage

\section{Case study: Siegel Levi subgroup in $\Sp_4(\O_2)$}\label{sec:sp}
 
Let $\O$ be a compact discrete valuation ring with maximal ideal $\mathfrak{p}$, a fixed uniformiser $\pi$ and finite residue field~$\k$ of odd characteristic. Let $\O_\ell:=\O/\mathfrak{p}^\ell$. In this section we illustrate how the results of the previous sections may be applied to study the representations of the symplectic group $\Sp_4(\O_2)$ that are induced, in the sense of Definition \ref{def:pind}, from the Siegel Levi subgroup of $2\times 2$ block-diagonal   matrices. {Note that this is equivalent to studying {those} induced representations of $\Sp_4(\O)$ which factor through $\Sp_4(\O_2)$: see Theorem \ref{thm:pind-properties}\eqref{item:pind-finite}.} The main results in this section are a double-coset formula, \`a la Mackey, for the composition of induction and restriction for these groups (Theorem~\ref{thm:sp_Mackey}); and an answer to a question of Dat regarding parahoric induction (Corollary~\ref{Dat_corollary}).

Let us introduce the notation used to state the Mackey formula. Let 
\[
G= \Sp_4(\O_2) = \{g\in \GL_4(\O_2) \ |\ g^{\transpose} j g = j\}, \quad\text{where}\quad j=\left[\begin{smallmatrix} & & -1 & \\ & & & -1 \\ 1 & & & \\ & 1 & &\end{smallmatrix}\right].
\]
This group admits a virtual Iwahori decomposition $(U,L,V)$, with
 \[ 
 \begin{split}
  L &= \left. \left\{ \begin{bmatrix} a & 0 \\ 0 & a^{-\transpose} \end{bmatrix} \ \right|\ a\in \GL_2(\O_2)\right\}, \quad \\
  U &= \left. \left\{ \begin{bmatrix} 1 & m \\ & 1 \end{bmatrix} \ \right|\ m\in M_2(\O_2),\ m=m^{\transpose} \right\},\quad \text{and}\quad V =U^\transpose,
 \end{split}
 \]
where $(\cdot)^{\transpose}$ means transpose and $(\cdot)^{-\transpose}$ means transpose inverse. We consider the associated functors 
\[
\pind_L^G\coloneq \pind_{U,V}:\Rep(L)\to \Rep(G) \qquad \text{and}\qquad \pres^G_L\coloneq \pres_{U,V}:\Rep(G)\to \Rep(L).
\]

The subgroup $L\cong\GL_2(\O_2)$   has a virtual Iwahori decomposition $(U',D,V')$, where 
\[
\begin{split}
D &= \{ \diag(\alpha,\delta,\alpha^{-1},\delta^{-1})\ |\ \alpha,\delta\in \O_2^\times\},\quad \\
U'&= \left.\left\{ \diag\left( \left[\begin{smallmatrix} 1 & \beta  \\ & 1 \end{smallmatrix}\right], \left[\begin{smallmatrix} 1 & \\ -\beta& 1\end{smallmatrix}\right]\right) \ \right|\ \beta \in \O_2\right\} \quad\text{and}\quad V'= (U')^\transpose.
\end{split}
\]
We consider the associated functors 
\[
\pind_D^L\coloneq \pind_{U',V'}:\Rep(D)\to \Rep(L) \qquad \text{and}\qquad \pres^L_D\coloneq \pres_{U',V'}:\Rep(L)\to \Rep(D).
\]

{We let
\[
W_G \coloneq N_G(D)/D \qquad \text{and}\qquad W_L\coloneq N_L(D)/D 
\]
denote the  Weyl groups of $D$ in $G$ and in $L$, respectively}. We write $\Ad_g$ for the conjugation action of a group on itself and subsets thereof, and with a slight abuse of notation also for the corresponding action on representations. 

\begin{theorem}\label{thm:sp_Mackey}
There is a natural isomorphism of functors $\Rep(L)\to \Rep(L)$,
\[
\pres^G_L \pind_L^G \cong \bigoplus_{g\in W_L\backslash W_G/W_L} \pind_{gLg^{-1}\cap L}^L \, \Ad_g \, \pres^L_{L\cap g^{-1}Lg} .
\]
\end{theorem}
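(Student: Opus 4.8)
\emph{Clifford reduction.}
First, note that the right-hand side of the theorem is $\id\oplus\Ad_{w_0}\oplus\bigl(\pind^L_D\,\Ad_{s_2}\,\pres^L_D\bigr)$, where $\{1,s_2,w_0\}$ is a set of representatives for $W_L\backslash W_G/W_L$: for $g=1$ both $\pind^L_{L}$ and $\pres^L_{L}$ are the identity; for the class $g=w_0$ of the longest element one again has $gLg^{-1}\cap L=L$, but $\Ad_{w_0}$ is the nontrivial (outer) automorphism of $L$; and for the remaining class $g=s_2$ one has $s_2Ls_2^{-1}\cap L=D$. Now let $G_0$ be the first principal congruence subgroup of $G=\Sp_4(\O_2)$, i.e.\ the kernel of $\Sp_4(\O_2)\to\Sp_4(\k)$, and write $H_0=H\cap G_0$ for $H\in\{U,L,V,D,U',V'\}$. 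Since $\germ p^2=0$ in $\O_2$, the group $G_0$ is the abelian finite $p$-group $\exp(\germ{sp}_4(\k))$ (nilpotency class $1<p$), so by Lemma~\ref{lem:Lie_Iwahori} the triples $(U_0,L_0,V_0)$ and $(U'_0,D_0,V'_0)$ are honest Iwahori decompositions of $G_0$ and of $L_0$. By Clifford theory (see~\eqref{C1}) the categories $\Rep(G)$, $\Rep(L)$, $\Rep(D)$ decompose as products over $G\backslash\widehat{G_0}$, $L\backslash\widehat{L_0}$, $L\backslash\widehat{D_0}$, and Theorems~\ref{thm:C1}--\ref{thm:C3} and their restriction analogues show that $\pind_L^G$, $\pres^G_L$ --- and, applied to $(L,D)$, also $\pind^L_D$, $\pres^L_D$ --- preserve these decompositions, while $\Ad_{w_0}$ and $\Ad_{s_2}$ permute the summands. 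Hence it suffices to prove the isomorphism after restricting both sides to each block $\Rep(L)_\psi$, $\psi\in\widehat{L_0}$.

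\emph{Translation via the orbit method.}
Fix a nontrivial additive character of $\k$; the trace forms give equivariant identifications $\widehat{L_0}\cong\germ{gl}_2(\k)$ and $\widehat{G_0}\cong\germ{sp}_4(\k)$. As $G_0$ is abelian, Corollary~\ref{cor:Lie_Iwahori}, applied to the Iwahori decomposition $\germ{sp}_4(\k)=\germ u\oplus\germ l\oplus\germ v$ with $\germ l\cong\germ{gl}_2(\k)$, identifies $\pind_0:=\pind_{U_0,V_0}$ with the inclusion $Y\mapsto X_Y:=\diag(Y,-Y^{\transpose})$ dual to the projection $\germ{sp}_4(\k)\to\germ{gl}_2(\k)$; dually $\pres_0$ is supported on $\Image\pind_0$ and $\pres_0\pind_0\cong\id$ (Theorem~\ref{thm:pind-Iw}). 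Writing $\phi=\pind_0(\psi_Y)$, the inertia groups in Theorems~\ref{thm:C2}--\ref{thm:C3} become the centralisers $\ol L(\psi)=Z_{\GL_2(\k)}(Y)$ and $\ol G(\phi)=Z_{\Sp_4(\k)}(X_Y)$, and transport across the Clifford equivalences $\Rep(L)_\psi\cong\Rep^\gamma(\ol L(\psi))$, $\Rep(G)_\phi\cong\Rep^\gamma(\ol G(\phi))$ turns $\pres^G_L\pind^G_L\vert_{\Rep(L)_\psi}$ into the twisted composite $\pres^{\phi'}_{\ol U(\phi),\ol V(\phi)}\circ\pind^{\phi'}_{\ol U(\phi),\ol V(\phi)}$ over the finite groups $\ol L(\psi)\subseteq\ol G(\phi)$, with $\ol U(\phi),\ol V(\phi)$ the inertia subgroups in $\ol U=U/U_0$, $\ol V=V/V_0$; the same holds for $(L,D)$ with centralisers in $\GL_2(\k)$ and its diagonal torus. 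One also checks that the Clifford cocycles $\gamma$ are trivial (or carries the twist through the argument).

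\emph{The semisimple blocks.}
Suppose $Y$ is semisimple, so $X_Y$ is a semisimple element of $\germ l=\Lie(L)$. Since $LU$ is a parabolic subgroup of $\Sp_4$ with Levi $L$, the standard fact that the intersection of the centraliser of a semisimple element with a parabolic is a parabolic subgroup of that centraliser shows that $\ol L(\psi)\ol U(\phi)$ and $\ol L(\psi)\ol V(\phi)$ are an opposite pair of parabolic subgroups of the connected reductive $\k$-group $\ol G(\phi)=Z_{\Sp_4(\k)}(X_Y)$, with common Levi $\ol L(\psi)$. The Howlett--Lehrer theorem (Example~\ref{ex:pind-field}) then identifies $\pind^{\phi'}_{\ol U(\phi),\ol V(\phi)}$ and $\pres^{\phi'}_{\ol U(\phi),\ol V(\phi)}$ with Harish--Chandra induction and restriction for $\ol G(\phi)$, so the classical Mackey formula~\eqref{eq:Mackey_intro} for $\ol L(\psi)\subseteq\ol G(\phi)$ computes $\pres^{\phi'}\pind^{\phi'}$ as a sum over the relative Weyl double cosets of $\ol L(\psi)$ in $\ol G(\phi)$. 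For $Y=0$ this is exactly the Harish--Chandra Mackey formula for $\Sp_4(\k)$ and its Siegel Levi $\GL_2(\k)$, whose three summands match $\id$, $\Ad_{w_0}$ and $\pind^L_D\Ad_{s_2}\pres^L_D$ on this block (via the same analysis applied to the pair $(L,D)$); for $Y$ a nonzero scalar or a generic regular semisimple element one has $\ol G(\phi)=\ol L(\psi)$, so $\pres^{\phi'}\pind^{\phi'}=\id$ (cf.\ Example~\ref{independence_example}); for the intermediate semisimple classes $\ol G(\phi)$ is a smaller group (a $\GL_2(\k)$, an $\SL_2(\k)\times\k^{\times}$, a torus, etc.) and one gets the corresponding smaller Mackey sums.

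\emph{The nilpotent block, reassembly, and the main obstacle.}
The only block not reached above is $Y$ regular nilpotent: then $X_Y$ is nilpotent of Jordan type $(2,2)$, $\ol G(\phi)=Z_{\Sp_4(\k)}(X_Y)$ is non-reductive, and it properly contains $\ol L(\psi)\cong\k^{\times}\ltimes\k$. Here one computes $\pres^{\phi'}\pind^{\phi'}$ directly, by decomposing the bimodule $e^{\phi'}_{\ol U(\phi)}e^{\phi'}_{\ol V(\phi)}\,\H^\gamma(\ol G(\phi))\,e^{\phi'}_{\ol U(\phi)}e^{\phi'}_{\ol V(\phi)}$ according to $\ol L(\psi)$-double cosets in $\ol G(\phi)$; the answer should be $\id\oplus\Ad_{w_0}$. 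To conclude, one reassembles: restricted to a block $\Rep(L)_{\psi_Y}$, the right-hand side receives a contribution from $g\in\{1,s_2,w_0\}$ only when that term neither kills nor leaves the block --- here one uses that $\pres^L_D$ annihilates $\Rep(L)_{\psi_Y}$ unless $Y$ is conjugate into the diagonal torus, that $\Ad_{s_2}$ reflects the eigenvalues of $Y$, and that $\Ad_{w_0}$ sends $\psi_Y$ to $\psi_{-Y^{\transpose}}$ --- and one verifies block by block that these contributions sum to exactly the Mackey expressions obtained above, the bimodule isomorphisms gluing into a natural isomorphism of functors $\Rep(L)\to\Rep(L)$. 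I expect the crux, and the main obstacle, to be the regular-nilpotent block: with no reductive structure to invoke, the Mackey decomposition of the non-reductive group $\ol G(\phi)$ must be obtained by an explicit computation (together with checking that $\gamma$ is trivial there); the semisimple blocks and the final bookkeeping are comparatively routine.
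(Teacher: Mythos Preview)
Your overall architecture matches the paper's: reduce via Clifford theory and the orbit method to statements about centralisers in $\Sp_4(\k)$, handle semisimple orbits by the classical Harish--Chandra Mackey formula, compute the nilpotent orbit by hand, and reassemble. You have also correctly identified the regular-nilpotent block as the crux.

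There is, however, a genuine gap in your ``Translation via the orbit method'' step. You assert that Clifford transport turns $\pres^G_L\pind_L^G\big\vert_{\Rep(L)_\psi}$ into the single composite $\pres^{\phi'}_{\ol U(\phi),\ol V(\phi)}\circ\pind^{\phi'}_{\ol U(\phi),\ol V(\phi)}$ over $\ol L(\psi)\subseteq\ol G(\phi)$. This is not correct: $\pind_L^G$ sends $\Rep(L)_{\psi_y}$ into $\Rep(G)_{\phi_y}$, but $\pres^G_L$ applied to $\Rep(G)_{\phi_y}$ lands not in $\Rep(L)_{\psi_y}$ alone but in the direct sum of $\Rep(L)_{\psi_z}$ over all $L$-orbits $L\cdot z\subset\germ l$ meeting the $G$-orbit of $y$. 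The paper organises this via the set $G(y,\germ l)=\{g\in G : g\cdot y\in\germ l\}$ and shows (Lemma~\ref{lem:Sp4_C3_yz}, Corollary~\ref{cor:Sp4_Clifford}) that the theorem is equivalent to a family of isomorphisms
\[
\pres^{\ol G(g\cdot y)}_{\ol L(g\cdot y)}\,\Ad_g\,\pind_{\ol L(y)}^{\ol G(y)}\ \cong\ \Delta(g\cdot y,y)\oplus\Delta(g\cdot y,s\cdot y)\Ad_s\oplus\Xi(g\cdot y,y),
\]
one for each $y\in\germ l$ and each $g\in L\backslash G(y,\germ l)/G(y)$. For instance, in the generic regular semisimple case (distinct nonzero eigenvalues $\mu,\nu,-\mu,-\nu$) one has $\ol G(y)=\ol L(y)=\ol D$, so your composite $\pres^{\phi'}\pind^{\phi'}$ is merely the identity; but $|L\backslash G(y,\germ l)/G(y)|=4$, and the four cross-terms match the four summands on the right-hand side. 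Your ``reassembly'' paragraph gestures at this bookkeeping but cannot succeed without first correcting the transport.

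Two smaller points. First, your appeal to ``the standard fact that the intersection of the centraliser of a semisimple element with a parabolic is a parabolic of the centraliser'' is morally right for $\Sp_4$ (which is simply connected, so semisimple centralisers are connected), but you must also identify the relative Weyl groups and match them to the terms $\id$, $\Ad_s$, $\Xi$; the paper does this by explicit computation of each centraliser (Appendix~\ref{appendix}). Second, the triviality of the Clifford cocycle $\gamma$ is not automatic: the paper secures it via Lemma~\ref{lem:Sp4_extension}, using Singla's result that each $\phi_y$ extends to a genuine character of the centraliser of $y$ in $\GL_4(\O_2)$, and then checking that this extension is trivial on $U(y)$ and $V(y)$ by a commutator argument. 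Without this, your twisted functors $\pind^{\phi'}$, $\pres^{\phi'}$ are not the ordinary $\pind_{\ol U(\phi),\ol V(\phi)}$, $\pres_{\ol U(\phi),\ol V(\phi)}$, and the Howlett--Lehrer and Mackey inputs do not apply directly.
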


\begin{remarks}\label{rem:sp_Mackey} Let us unpack Theorem \ref{thm:sp_Mackey} a {little}.
\begin{enumerate}[(1)]
\item The right-hand side of the formula in Theorem \ref{thm:sp_Mackey} is a sum over a set of representatives $g\in N_G(D)$ for the double cosets of $W_L$ in $W_G$; the resulting functor does not depend on the choices made, up to natural isomorphism. 
\item For each $g\in N_G(D)$, the intersection $gLg^{-1}\cap L$ is either $L$ or $D$. The functors $\pind_D^L$ and $\pres^L_D$ were defined above; the functors $\pind_L^L$ and $\pres^L_L$ are, by definition, the identity functors on $\Rep(L)$.
\item The group $W_L$ is the two-element group generated  (modulo $D$)  by the matrix \[
t{=} \diag\left( \sigma,\sigma \right) \in L,\qquad \text{where}\qquad \sigma=\left[\begin{smallmatrix} & -1 \\ 1& \end{smallmatrix}\right]\in \GL_2(\O_2).
\]   The eight-element dihedral group $W_G$ is generated (modulo $D$) by $W_L$ together with the matrix
\[
w\coloneq  \begin{bmatrix} {0}  &   & -1 &   \\  & 1 &   &   \\ 1 &   & {0} &   \\   &   &   & 1 \end{bmatrix}.
\]
Defining \[
s\coloneq \begin{bmatrix} & \sigma \\ \sigma^{-1} & \end{bmatrix} \in G,
\]
we have the double-coset decomposition 
\[
W_G = W_L \sqcup W_L s W_L \sqcup W_L w W_L.
\]
The element $s$ normalises $L$, while   $wLw^{-1}\cap L = D$. Putting all of this together, the formula in Theorem~\ref{thm:sp_Mackey} takes the following more explicit form:
\begin{equation*} 
\pres^G_L \pind_L^G \cong \id \oplus \Ad_s \oplus \pind_D^L \Ad_w \pres^L_D.
\end{equation*} 
\item Note that the definition of the functors $\pind_L^G$ and $\pres^G_L$, and the statement of Theorem \ref{thm:sp_Mackey}, continue to make sense when $\O_2$ is replaced by $\O_\ell$, or indeed by any  finite (or profinite) commutative ring. Over $\O_1$, the formula is valid: as explained in Example \ref{ex:pind-field}, the functors $\pind_L^G$, $\pres^G_L$, $\pind_D^L$ and $\pres^L_D$ are isomorphic in that case to Harish-Chandra functors, and the formula in Theorem~\ref{thm:sp_Mackey} is an instance of the well-known formula~{\eqref{eq:Mackey_intro}} for the composition of these functors (cf. \cite[Theorem 5.1]{DM}).  We do not know whether the formula in Theorem~\ref{thm:sp_Mackey} is valid for $\Sp_4$ over more general rings; the proof presented below relies on some very special features of~$\O_2$.
\end{enumerate}
\end{remarks}

Our strategy for proving Theorem \ref{thm:sp_Mackey} is as follows. Reduction modulo $\pi$ gives rise to a surjective group homomorphism $G= \Sp_4(\O_2) \to \Sp_4(\k)$, whose kernel  is an abelian group isomorphic to the Lie algebra~$\germ{sp}_4(\k)$. In Sections \ref{subsec:Sp4_congruence} and \ref{subsec:Sp4_Clifford} we apply the orbit method and Clifford theory to reduce  Theorem \ref{thm:sp_Mackey} to a statement about orbits and representations of stabilisers for the adjoint action of $\Sp_4(\k)$ on  $\germ{sp}_4(\k)$. In Sections~\ref{subsec:Sp4_centralisers} and~\ref{subsec:Sp4_Mackey_proof} we verify the theorem through a case-by-case analysis of the orbits (with some details  postponed to Appendix~\ref{appendix}). 

For the semisimple orbits  our induction and restriction functors correspond to Harish-Chandra induction and restriction for (reductive) subgroups of $\Sp_4(\k)$, and our Mackey formula  follows from the well-known Mackey formula \eqref{eq:Mackey_intro} for Harish-Chandra functors.  The computation for the non-semisimple orbits---and in particular, for the one nilpotent orbit that is relevant here---is more subtle. In Corollary~\ref{Dat_corollary} we shall see that it is precisely this nilpotent orbit that witnesses the difference between our induction functor and Dat's parahoric induction.

\subsection{The congruence subgroup}\label{subsec:Sp4_congruence}

Let $G_0$ denote the kernel of the reduction map $\Sp_4(\O_2) \to \Sp_4(\k)$, and let 
 \[
 \germ{g}=\germ{sp}_4(\k ) = \{ y\in  M_4(\k )\ |\ jy + y^{\transpose} j =0\},
 \] 
 viewed as an additive abelian group on which $G$ acts via the adjoint action of its quotient $\Sp_4(\k)$. To {reduce} the notational load we shall write
 \begin{equation*}
g \cdot y = \Ad_g(y)=gyg^{-1}~~\text{(modulo $\pi$)}, \quad g \in G, y \in \germ g.
\end{equation*}

\begin{lemma}\label{lem:Sp4_exp} 
 The map $\exp:\germ{g}\to G_0$ defined as the composition
\[ 
\germ{g} \xrightarrow{y\mapsto \pi y} \pi \germ{sp}_4(\O_2) \xrightarrow{z \mapsto 1+z} G_0
\]
is a $G$-equivariant group isomorphism.
\end{lemma}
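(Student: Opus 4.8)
The whole statement rests on the single relation $\pi^2 = 0$ holding in $\O_2$, so the plan is to make everything explicit and let that relation do the work. First I would describe $G_0$ concretely. An element of $G_0$ has the form $1 + z$ with $z \in M_4(\O_2)$ and $z \equiv 0 \pmod{\pi}$, hence $z = \pi w$ for some $w \in M_4(\O_2)$; since $\pi^2 = 0$, the matrix $\pi w$ depends only on the reduction $\bar w \in M_4(\k)$, and in particular $\pi \tilde y$ is independent of the chosen lift $\tilde y \in M_4(\O_2)$ of a given $y \in \germ g$. Expanding the symplectic relation $(1+z)^{\transpose} j (1+z) = j$ gives $z^{\transpose} j + j z + z^{\transpose} j z = 0$; the quadratic term is divisible by $\pi^2$ and so vanishes, whence $1 + \pi w \in G_0$ exactly when $w^{\transpose} j + j w \equiv 0 \pmod{\pi}$, i.e. $\bar w \in \germ{sp}_4(\k) = \germ g$. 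Therefore
\[
G_0 = \{\, 1 + \pi w \ \mid\ w \in M_4(\O_2),\ \bar w \in \germ g \,\},
\]
and, using surjectivity of the reduction $\germ{sp}_4(\O_2)\to \germ g$, the composite $\exp$ in the statement is precisely the map $y \mapsto 1 + \pi \tilde y$ (any lift $\tilde y$ of $y$); the description of $G_0$ above shows it is a well-defined bijection onto $G_0$.

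Next I would check that $\exp$ is a group homomorphism from the additive group $\germ g$ onto $G_0$: for lifts $\tilde y_1, \tilde y_2$ one has $(1 + \pi \tilde y_1)(1 + \pi \tilde y_2) = 1 + \pi(\tilde y_1 + \tilde y_2) + \pi^2 \tilde y_1 \tilde y_2 = 1 + \pi(\tilde y_1 + \tilde y_2)$, so $\exp(y_1)\exp(y_2) = \exp(y_1 + y_2)$, and a bijective homomorphism is an isomorphism (its inverse being $1 + \pi w \mapsto \bar w$). For $G$-equivariance I would compute, for $g \in G$ and $y \in \germ g$ with lift $\tilde y$,
\[
g\,\exp(y)\,g^{-1} = g(1 + \pi \tilde y)g^{-1} = 1 + \pi\, g \tilde y g^{-1};
\]
since multiplication by $\pi$ factors through reduction modulo $\pi$, the right-hand side equals $1 + \pi\,\overline{g \tilde y g^{-1}}$, and $\overline{g \tilde y g^{-1}} = \bar g\,\bar y\,\bar g^{-1} = \Ad_g(y)$ by the very definition of the $G$-action on $\germ g$ through its quotient $\Sp_4(\k)$. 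Hence $g\,\exp(y)\,g^{-1} = \exp(\Ad_g y)$, which is the required equivariance.

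No step here presents a genuine difficulty; the only point needing care is the bookkeeping with lifts. The clean way to organise it is to note once and for all that $\pi\cdot(-)\colon M_4(\O_2)\to M_4(\O_2)$ factors through reduction modulo $\pi$: this makes all the formulas above independent of the chosen lifts, turns the symplectic condition on $1 + \pi w$ into the Lie-algebra condition on $\bar w$, and identifies the adjoint action computed inside $\GL_4(\O_2)$ with the adjoint action of $\Sp_4(\k)$ on $\germ g$. Once that observation is in place, the three verifications (bijectivity, homomorphism, equivariance) are immediate.
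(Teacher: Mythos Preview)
Your proposal is correct and is precisely the direct verification that the paper has in mind; the paper's own proof consists of the single word ``Clear.'' Your write-up makes explicit the one substantive point, namely that $\pi^{2}=0$ in $\O_{2}$ kills both the quadratic term in the symplectic condition and the cross term in the product $(1+\pi\tilde y_{1})(1+\pi\tilde y_{2})$, which is exactly why the authors regard the lemma as evident.
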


\begin{proof} Clear.
\end{proof}
 
For every subgroup $H$ of $G$ we set  
\[
H_0 \coloneq H \cap G_0,\quad \overline{H}_{\phantom{0}} \coloneq HG_0/G_0 \cong H/H_0,\quad \text{and}\quad 
\germ{h} \coloneq \log(H_0),
\]
where $\log:G_0\to \germ{g}$ denotes the inverse to $\exp$. In particular, $\germ l$ is the additive subgroup of $M_4(\k)$ consisting of the block-diagonal matrices $\diag(x,-x^{\transpose})$, for $x\in M_2(\k)$.

It is easily checked that the triple of subgroups $(\germ u, \germ l, \germ v)$ forms an Iwahori decomposition of $\germ g$, and it follows that the triple $(U_0, L_0, V_0)$ is an Iwahori decomposition of $G_0$. Similarly, $(\germ u', \germ d, \germ v')$ is an Iwahori decomposition of $\germ l$, and so $(U'_0, D_0, V_0')$ is an Iwahori decomposition of $L_0$.

\begin{lemma}\label{lem:Sp4_pairing}
Choose and fix a nontrivial character $\zeta:\k\to \C^\times$. For each $y\in \germ g$, denote by $\phi_y:G_0\to \C^\times$ the character 
\[
\phi_y :g \mapsto \zeta\circ \trace\left( \log (g ) y\right). 
\]
The mapping $y\mapsto \phi_y$ is a $G$-equivariant bijection $\germ g\xrightarrow{\cong} \Irr(G_0)$, which restricts to an $L$-equivariant bijection $\germ l \xrightarrow{\cong} \Irr(L_0)$, and to a $D$-equivariant bijection $\germ d\xrightarrow{\cong} \Irr(D_0)$.
\end{lemma}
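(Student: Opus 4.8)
The plan is to exploit that $G_0$, and its subgroups $L_0$ and $D_0$, are \emph{abelian} finite groups. By Lemma~\ref{lem:Sp4_exp} the map $\exp$ is a group isomorphism from the additive group $\germ g$ onto $G_0$ (this is where $\germ p^2=0$ in $\O_2$ enters), and since $\germ l=\log(L_0)$ and $\germ d=\log(D_0)$ are additive subgroups of $\germ g$, the same map restricts to group isomorphisms $\germ l\xrightarrow{\cong}L_0$ and $\germ d\xrightarrow{\cong}D_0$. In particular $\Irr(G_0)$, $\Irr(L_0)$ and $\Irr(D_0)$ are the character groups $\widehat{G_0}$, $\widehat{L_0}$ and $\widehat{D_0}$, all of whose elements are one-dimensional.

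First I would check that $\phi_y$ is a character of $G_0$ for each $y\in\germ g$: it is the composite $G_0\xrightarrow{\log}\germ g\xrightarrow{x\mapsto\trace(xy)}\k\xrightarrow{\zeta}\C^\times$ of group homomorphisms. The assignment $y\mapsto\phi_y$ is moreover a homomorphism $\germ g\to\widehat{G_0}$, by additivity of $\trace(x\,\cdot\,)$ in $y$. The essential point is injectivity: if $\phi_y=1$, then $\zeta(\trace(xy))=1$ for every $x\in\germ g$; since $\germ g=\germ{sp}_4(\k)$ is a $\k$-vector space and $x\mapsto\trace(xy)$ is $\k$-linear, the set $\{\trace(xy)\mid x\in\germ g\}$ is a $\k$-linear subspace of $\k$ contained in the proper subgroup $\ker\zeta$ (proper since $\zeta$ is nontrivial), hence is $\{0\}$; so $\trace(\,\cdot\,y)=0$ on $\germ g$, and nondegeneracy of the trace form on $\germ{sp}_4(\k)$ (here one uses that $\k$ has odd characteristic) forces $y=0$. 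An injective homomorphism between the finite groups $\germ g$ and $\widehat{G_0}$, which have the same order $|G_0|$, is a bijection.

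To see that the trace form on $\germ{sp}_4(\k)$ is nondegenerate, note that $\theta(m)=j^{-1}m^{\transpose}j$ is an involution of $M_4(\k)$ preserving the nondegenerate trace form, and that $\germ{sp}_4(\k)$ is its $(-1)$-eigenspace; as $\k$ has odd characteristic, $M_4(\k)$ decomposes as the orthogonal direct sum of the two $\theta$-eigenspaces, so the trace form restricts nondegenerately to each, in particular to $\germ{sp}_4(\k)$. The statements about the restricted maps $\germ l\to\Irr(L_0)$ and $\germ d\to\Irr(D_0)$ then follow by running the previous paragraph verbatim with $(\germ l,L_0)$ and $(\germ d,D_0)$ in place of $(\germ g,G_0)$; the only extra input is that $\germ l$ and $\germ d$ are $\k$-subspaces carrying nondegenerate trace forms, which is clear from the identifications $\germ l\cong\germ{gl}_2(\k)$ via $\diag(x,-x^{\transpose})\mapsto x$ (under which $\trace$ becomes $2\trace(xx')$) and $\germ d\cong\k^2$ (under which $\trace$ becomes $((a,b),(a',b'))\mapsto 2(aa'+bb')$), both nondegenerate since $2$ is invertible in $\k$.

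Finally, for equivariance: with the conjugation action ${}^g\rho(h)=\rho(g^{-1}hg)$ on $\Irr(G_0)$ and the adjoint action $g\cdot y=gyg^{-1}$ on $\germ g$, the $G$-equivariance of $\exp$ from Lemma~\ref{lem:Sp4_exp} gives $\log(g^{-1}hg)=g^{-1}\log(h)g$, so by invariance of the trace under conjugation,
\[
{}^g\phi_y(h)=\zeta(\trace(g^{-1}\log(h)g\cdot y))=\zeta(\trace(\log(h)\cdot gyg^{-1}))=\phi_{g\cdot y}(h),
\]
i.e.\ ${}^g\phi_y=\phi_{g\cdot y}$. The identical computation, using that $L$ normalises $L_0$ (hence $\Ad_L$ preserves $\germ l$) and $D$ normalises $D_0$, yields the $L$- and $D$-equivariance of the restricted bijections. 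I expect the only real work to be the nondegeneracy of these three trace forms; beyond keeping track of the exclusion of characteristic $2$, I foresee no genuine obstacle.
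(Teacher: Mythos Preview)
Your proof is correct and follows essentially the same route as the paper. Both arguments reduce to the nondegeneracy of the trace form on $\germ{sp}_4(\k)$ (and on $\germ l$, $\germ d$), established via the orthogonal eigenspace decomposition $M_4(\k)=\germ g\oplus\germ g'$ for the involution $m\mapsto j^{-1}m^{\transpose}j$ in odd characteristic; the paper phrases this through the annihilator $\germ g^\perp$ inside $M_4(\k)$ and Pontryagin duality, while you argue injectivity directly, but the content is the same.
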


\begin{proof} 
Let $\langle z,y\rangle\coloneq \zeta\circ\trace(zy)$ for $z,y\in M_4(\k)$. It is well-known that the map $M_4(\k)\to \widehat{M_4(\k)}$ sending $y$ to $\langle \cdot, y\rangle$ is an isomorphism. By Pontryagin duality  this map restricts to an isomorphism between $\germ g$ and the dual of $M_4(\k)/\germ{g}^\perp$, where $\germ{g}^\perp=\{z\in M_4(\k)\ |\ \langle z,\germ g\rangle =1\}$. Let $\germ{g}'=\{z\in M_4(\k)\ |\ jz-z^\transpose j=0\}$. For each $z\in \germ{g}'$ and $y\in \germ g$ we have 
$\trace(zy)=\trace(\Ad_j(z)\Ad_j(y)) = -\trace(zy)$,
showing that $\germ{g}'\subseteq \germ{g}^\perp$. We also have $M_4(\k)=\germ{g}\oplus \germ{g}'$ (this is the eigenspace decomposition for the involution $y\mapsto \Ad_j(y^\transpose)$), and since $\germ{g}$ and its dual $M_4(\k)/\germ{g}^\perp$ have the same cardinality we must have $\germ{g}'=\germ{g}^\perp$. Thus the pairing $\langle\cdot,\cdot\rangle$ restricts to an isomorphism $\germ g\to \widehat{\germ g}$.  Composing with the isomorphism $\widehat{\log}:\widehat{\germ{g}}\to \widehat{G_0}=\Irr(G_0)$ shows that $y\mapsto \phi_y$ is an isomorphism $\germ g\to \Irr(G_0)$.  The $G$-equivariance of this map   follows from the invariance of the trace. Similar arguments apply to $\germ l$ and $\germ d$.
\end{proof}

Theorem \ref{orbit_theorem}, applied to this particularly simple setting, gives the following identification of the induction maps
\[
\pind_0 \coloneq \pind_{U_0,V_0}:\Irr(L_0)\to \Irr(G_0)\quad \text{and}\quad \pind_0'\coloneq \pind_{U_0',V_0'}:\Irr(D_0)\to \Irr(L_0).
\]

\begin{lemma}\label{lem:Sp4_orbit}
The diagram 
\[
\xymatrix@R=30pt@C=50pt{
\Irr(D_0) \ar[r]^-{\pind_0'} 
 & \Irr(L_0) \ar[r]^-{\pind_0} & \Irr(G_0) \\
\germ d \ar[r]^-{\mathrm{inclusion}} \ar[u]^-{y\mapsto \phi_y}_-{\cong} & \germ l \ar[r]^-{\mathrm{inclusion}} \ar[u]^-{y\mapsto \phi_y}_-{\cong} & \germ g \ar[u]^-{y\mapsto \phi_y}_-{\cong}
}
\]
is commutative.
\end{lemma}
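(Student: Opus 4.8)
The plan is to deduce Lemma~\ref{lem:Sp4_orbit} directly from Theorem~\ref{orbit_theorem}, applied twice: once to the Iwahori decomposition $(U_0,L_0,V_0)$ of $G_0$, and once to the Iwahori decomposition $(U_0',D_0,V_0')$ of $L_0$. The point is that in the present setting the exponential map of Lemma~\ref{lem:Sp4_exp} is a $G$-equivariant \emph{group} isomorphism from the abelian group $\germ g$ onto $G_0$, so the hypotheses (A) and (B) of Section~\ref{subsec:abstract} are satisfied trivially: the adjoint action of $G$ on $\germ g$ is by group automorphisms by construction, and the pullback $\exp^*$ is an isomorphism of convolution algebras because $\exp$ is an isomorphism of groups. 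Moreover, for an abelian group the orbit method bijection $\mathcal O_{G_0}:\Irr(G_0)\to G_0\backslash\widehat{\germ g}=\widehat{\germ g}$ (the coadjoint orbits are singletons, since $G_0$ acts trivially on its own irreducible characters) is precisely the composition of $\widehat{\log}:\widehat{\germ g}\xrightarrow{\cong}\Irr(G_0)$ with the self-duality $\germ g\xrightarrow{\cong}\widehat{\germ g}$, $y\mapsto\langle\cdot,y\rangle$, established in Lemma~\ref{lem:Sp4_pairing}; that is, $\mathcal O_{G_0}^{-1}$ is the map $y\mapsto\phi_y$, once we identify $\widehat{\germ g}$ with $\germ g$.

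First I would verify that the three hypotheses of Theorem~\ref{orbit_theorem} hold for the decomposition $(U_0,L_0,V_0)$ of $G_0$. Hypothesis~(1) is the statement, already noted in the text just before Lemma~\ref{lem:Sp4_pairing}, that $(U_0,L_0,V_0)$ is an Iwahori decomposition of $G_0$. For hypothesis~(2): the preimages under $\exp$ of $U_0,L_0,V_0$ are by definition the additive subgroups $\germ u,\germ l,\germ v$ of $\germ g$, and $(\germ u,\germ l,\germ v)$ is an Iwahori decomposition of $\germ g$, so in particular $\germ g=\germ u\oplus\germ l\oplus\germ v$ as abelian groups. For hypothesis~(3): since $\exp$ is a global group isomorphism $\germ g\to G_0$ carrying the subgroups $\germ l$, $\germ l\oplus\germ u$, $\germ l\oplus\germ v$ onto $L_0$, $L_0U_0$, $L_0V_0$ respectively, each of these restrictions is again a group isomorphism, hence (the source being abelian) automatically satisfies (A) and (B). Theorem~\ref{orbit_theorem} then gives a commuting square relating $\pind_0=\pind_{U_0,V_0}$ to the map $\Lambda^*$ on coadjoint orbits induced by the projection $\Lambda:\germ g\to\germ l$. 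The coadjoint orbits being singletons, $\Lambda^*$ is just the dual map $\widehat{\germ l}\to\widehat{\germ g}$, $\chi\mapsto\chi\circ\Lambda$; and under the self-duality identifications $\germ l\cong\widehat{\germ l}$, $\germ g\cong\widehat{\germ g}$ of Lemma~\ref{lem:Sp4_pairing} this is exactly the inclusion $\germ l\hookrightarrow\germ g$, because for $y\in\germ l$ and $z=z_{\germ u}+z_{\germ l}+z_{\germ v}\in\germ g$ one has $\langle z,y\rangle=\zeta\circ\trace(zy)$, and $\trace(z_{\germ u}y)=\trace(z_{\germ v}y)=0$ since $\germ u y$ and $\germ v y$ are strictly block-triangular (here I would use the explicit block forms of $\germ u$, $\germ l$, $\germ v$). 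This identifies the right-hand square of the diagram in Lemma~\ref{lem:Sp4_orbit}. Repeating the same argument verbatim with $G_0$ replaced by $L_0$, $\germ g$ by $\germ l$, and $(U_0,L_0,V_0)$ by $(U_0',D_0,V_0')$ — using that $(\germ u',\germ d,\germ v')$ is an Iwahori decomposition of $\germ l$, as noted in the text — identifies the left-hand square.

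The one point requiring a little care, and the only real content beyond bookkeeping, is reconciling the normalisation factors: the orbit-method character formula \eqref{orbit_equation} carries factors $|\Omega|^{-1/2}$, and the bijection $\mathcal O_{G_0}$ of Theorem~\ref{orbit_theorem} is \emph{a priori} only asserted to be a bijection, not to coincide on the nose with $y\mapsto\phi_y$. However, for $G_0$ abelian every coadjoint orbit $\Omega$ is a single point $\{\psi\}$, so \eqref{orbit_equation} reads $\exp^*(\ch_\tau)=\psi$, i.e. $\ch_\tau=\psi\circ\log$; thus the one-dimensional representation $\tau$ with $\mathcal O_{G_0}(\tau)=\{\psi\}$ is literally the character $\psi\circ\log$, which under Lemma~\ref{lem:Sp4_pairing}'s identification $\widehat{\germ g}\cong\germ g$ is $\phi_y$ with $\langle\cdot,y\rangle=\psi$. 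Hence $\mathcal O_{G_0}^{-1}$ \emph{is} the map $y\mapsto\phi_y$ of Lemma~\ref{lem:Sp4_pairing}, with no discrepancy, and the same for $L_0$ and $D_0$. Pasting the two commuting squares supplied by the two applications of Theorem~\ref{orbit_theorem} along their common edge $\Irr(L_0)\cong\germ l$ then yields exactly the commutative diagram asserted in Lemma~\ref{lem:Sp4_orbit}. I do not anticipate any serious obstacle; the mild nuisance is simply checking that the various identifications of duals with Lie algebras, and of $\Lambda^*$ with the inclusion, are mutually consistent, which is the trace computation indicated above.
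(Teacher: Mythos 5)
Your proof is correct and follows essentially the same route as the paper's: apply Theorem~\ref{orbit_theorem} twice (to $(U_0,L_0,V_0)$ in $G_0$ and to $(U_0',D_0,V_0')$ in $L_0$), invoke Lemma~\ref{lem:Sp4_exp} to satisfy the hypotheses, and reduce the claim to the observation that the self-duality $y\mapsto\langle\cdot,y\rangle$ of Lemma~\ref{lem:Sp4_pairing} intertwines the inclusions $\germ d\hookrightarrow\germ l\hookrightarrow\germ g$ with the duals of the projections $\Lambda'$ and $\Lambda$, which is exactly the trace computation you indicate. The paper's proof is terser (it states this reduction in one sentence and displays the dual diagram without further comment), whereas you spell out the verification of hypotheses (1)--(3), the collapse of coadjoint orbits to singletons, and the identification $\mathcal O_{G_0}^{-1}=(y\mapsto\phi_y)$; none of this extra detail deviates from the paper's argument.
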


\begin{proof}
In view of Theorem \ref{orbit_theorem} and Lemma \ref{lem:Sp4_exp}, it is enough to observe that the diagram 
\[
\xymatrix@R=30pt@C=60pt{ 
\widehat{\germ d} \ar[r]^-{(\Lambda')^*} & \widehat{\germ l} \ar[r]^-{\Lambda^*} & \widehat{\germ g} \\
\germ d \ar[u]^-{y\mapsto \langle \cdot, y\rangle}_-{\cong}\ar[r]^-{\mathrm{inclusion}} &
\germ l \ar[u]^-{y\mapsto \langle\cdot, y\rangle}_-{\cong}  \ar[r]^-{\mathrm{inclusion}} & \germ g \ar[u]^-{y\mapsto \langle \cdot, y\rangle } _-{\cong}
}
\]
commutes, where $\Lambda$ is the projection of $\germ g = \germ u \oplus \germ l \oplus \germ v$ onto its summand $\germ l$, and $\Lambda'$ is the projection of $\germ l$ onto its summand $\germ d$. 
\end{proof}

\subsection{Application of Clifford theory}\label{subsec:Sp4_Clifford}  

We shall use Theorems \ref{thm:C1}, \ref{thm:C2} and \ref{thm:C3} to transport the functors  $\pind_L^G$ and $\pres_L^G$ to the setting of (projective) representations of the centralisers $\overline{L}(y)\subseteq \GL_2(\k)$ and $\overline{G}(y)\subseteq \Sp_4(\k)$ associated to the characters $\phi_y$. 

The first assertion \eqref{C1} of Clifford theory decomposes the categories $\Rep(D)$, $\Rep(L)$ and $\Rep(G)$ as products over the sets $D\backslash \Irr(D_0)$, $L\backslash \Irr(L_0)$ and $G\backslash \Irr(G_0)$, respectively. For each $y\in \germ g$, let $\phi_y$ be the character in $\Irr(G_0)$ defined in Lemma \ref{lem:Sp4_pairing}. We denote by 
\[
E^G_{y}: \Rep(G)\to \Rep(G)_{\phi_y} 
\]
the projection  onto the subcategory associated to (the $G$-orbit of) the character $\phi_y$. We similarly define $E^L_y$ and $E^D_y$, for $y\in \germ l$ and $y\in\germ d$  respectively.

For each $y\in \germ l$ we write
\[
G(y,\germ l) \coloneq \{g\in G\ |\ g\cdot y\in \germ l\}
\]
for the set of elements in $G$ which conjugate $y$ back into $\germ l$. Notice that $G(y,\germ l)$ is stable under left multiplication by $L$, and under right multiplication by $G(y)$.

The first step is to show that we may deal with ordinary, as opposed to projective, representations of the centralisers.  

\begin{lemma}\label{lem:Sp4_extension}
There is a family of maps $(\phi'_y)_{y\in \germ l}$ with the following properties:
\begin{enumerate}[\rm(1)]
\item $\phi'_y$ is a one-dimensional (ordinary) representation of the centraliser $G(y)$ that extends $\phi_y$.
\item For each $g\in G(y,\germ l)$ one has $\Ad_g(\phi'_y)=\phi'_{g\cdot y}$.
\item $\phi'_y(g)=1$ for all $g\in U(y)\cup V(y)$.
\item If $y\in \germ d$ then $\phi'_y(g)=1$ for all $g\in U'(y)\cup V'(y)$.
\end{enumerate}
\end{lemma}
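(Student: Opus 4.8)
The plan is to reduce the statement to an extension problem for the one-dimensional characters $\phi_y$ and then to settle it one adjoint orbit at a time. First I would pin down the groups. By the $G$-equivariance of $y\mapsto\phi_y$ together with the identity $\phi_y^{\,g}=\phi_{\bar g\cdot y}$ (immediate from Lemma \ref{lem:Sp4_exp} and invariance of the trace), the inertia group of $\phi_y$ is the full preimage, under the reduction $G\to\Sp_4(\k)$, of the centraliser $C_{\Sp_4(\k)}(y)$; likewise $U(y),V(y)$ (resp.\ $U'(y),V'(y)$) are the preimages of $C_{\Sp_4(\k)}(y)\cap\ol U$, $\cap\ol V$ (resp.\ $\cap\ol{U'}$, $\cap\ol{V'}$). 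So properties (1)--(4) come down to extending $\phi_y$ to a one-dimensional character of $G(y)$ that is trivial on $U(y)\cup V(y)$ (and on $U'(y)\cup V'(y)$ when $y\in\germ d$), chosen compatibly in $y$. I would then record that $\phi_y$ is already trivial on $U_0$ and $V_0$: for $n\in\germ u$ and $y=\diag(x,-x^{\transpose})\in\germ l$ the matrix $ny$ is strictly block upper-triangular, so $\trace(ny)=0$ and $\phi_y(\exp n)=\zeta(\trace(ny))=1$; the same computation covers $\germ v$, and $\germ u',\germ v'$ when $y\in\germ d$. Hence only the triviality of the extension on the reductive-level quotients $\ol U(y),\ol V(y)$ (resp.\ $\ol{U'}(y),\ol{V'}(y)$) need be arranged.

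Second, I would pass to orbit representatives. A character being invariant under conjugation by its own group, once $\phi'_{y}$ is chosen on $G(y)$ the character $\Ad_g(\phi'_{y})$ on $G(\bar g\cdot y)$ depends only on $\bar g\cdot y$; setting $\phi'_{\bar g\cdot y}:=\Ad_g(\phi'_{y})$ yields a well-defined family, and property (2) follows from functoriality of $\Ad$. So it is enough to build $\phi'_{y_0}$ for one representative $y_0$ of each $\Sp_4(\k)$-orbit meeting $\germ l$ (taking $y_0\in\germ d$ when the orbit meets $\germ d$), using the classification of orbits and centralisers from Section \ref{subsec:Sp4_centralisers}; properties (3)--(4) for the chosen $y_0$ then transport to all $y$ in its orbit.

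For a \emph{semisimple} representative I would lift $y_0$ to $\hat y_0\in\germ{sp}_4(\O)$; as the residue characteristic is odd (a good prime for type $C_2$), $\mathbf C:=C_{\Sp_4}(\hat y_0)$ is a smooth reductive $\O$-group scheme, $\mathbf C(\k)=C_{\Sp_4(\k)}(y_0)$ is connected (Steinberg), and $G(y_0)=G_0\cdot\mathbf C(\O_2)$ with $G_0\cap\mathbf C(\O_2)$ the first congruence subgroup of $\mathbf C(\O_2)$. Since $\phi_{y_0}$ is $G(y_0)$-invariant, the character $\phi_{y_0}\restrict_{G_0\cap\mathbf C(\O_2)}$ kills $\Lie(\mathbf C_{\mathrm{der}})$ and so factors through the torus $\mathbf C^{\mathrm{ab}}$; because a congruence extension of an $\O$-torus splits (Schur--Zassenhaus, $|\k^\times|$ being prime to $p$), it extends to a character of $\mathbf C(\O_2)$ inflated from $\mathbf C^{\mathrm{ab}}(\O_2)$, and gluing this with $\phi_{y_0}$ along $G_0$ gives a one-dimensional extension of $\phi_{y_0}$ to $G(y_0)$; the normalisation of the remark following Lemma \ref{lem:C3_eX} (making all $a_x,b_y$ equal to $1$) then secures the triviality on the unipotent subgroups in (3) and (4). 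For the \emph{non-semisimple} orbits --- in particular the unique nilpotent orbit relevant here, whose centraliser has a non-trivial unipotent radical and hence order divisible by $p$ --- neither the reductive-scheme nor the splitting argument applies, and one has to write the extension down by an explicit matrix computation, using that $\pi^2=0$ in $\O_2$; I would carry out those computations together with the centraliser analysis (part of it deferred to the appendix), and verify (3)--(4) for them by hand.

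The main obstacle is exactly this last step: producing an honest one-dimensional extension for the non-semisimple orbits --- above all the nilpotent one, where no cohomological or determinant shortcut is available --- and checking that a single $\phi'_{y_0}$ meets (1)--(4) at once. Everything else (the identification of the inertia groups, the vanishing-trace computation, the reduction to orbit representatives, and the semisimple cases) is routine.
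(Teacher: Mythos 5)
Your proposal does not match the paper's proof, and it has genuine gaps.

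The paper's proof is shorter and avoids all the case-by-case work you anticipate. The key move you are missing is to extend $\phi_y$ not to $G(y)$ directly but to the larger group $H(y)=\GL_4(\O_2)(y)$, the centraliser of $y$ in $\GL_4(\O_2)$, and then restrict. For $\GL_n$ such extensions are known to exist by a theorem of Singla (\cite[Proposition 2.2]{Singla_GLn}), which handles the nilpotent and non-semisimple orbits on exactly the same footing as the semisimple ones. Properties (3) and (4) then come for free from a clean algebraic observation: the subgroups $U(y)$ and $V(y)$ (and $U'(y),V'(y)$ for $y\in\germ d$) lie inside the commutator subgroup of $H(y)$ (resp.\ of the block-diagonal centraliser), because for $u=\big[\begin{smallmatrix}1&m\\&1\end{smallmatrix}\big]\in H(y)$ one has $u=[u',z]$ with $u'=\big[\begin{smallmatrix}1&m/2\\&1\end{smallmatrix}\big]$ and $z=\diag(1,-1)$, both in $H(y)$. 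Any one-dimensional character of $H(y)$ is therefore automatically trivial there, for \emph{every} $y$, with no normalisation and no orbit-by-orbit discussion.

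Your version has two concrete problems. First, you explicitly do not produce the extension for the non-semisimple orbits, above all the nilpotent one; you say you ``would carry out those computations'' and identify this as the main obstacle. That case is precisely where the content lies (it is the orbit where our functor and parahoric induction actually differ), so leaving it as ``an explicit matrix computation to be done'' is a real gap, not a routine verification. Second, the transport step does not deliver (3) and (4) as you claim. If $g\in G(y_0,\germ l)$ and $z=g\cdot y_0$, then $U(z)$ is the stabiliser of $z$ in $U$, which is in general not $gU(y_0)g^{-1}$, so knowing $\phi'_{y_0}\restrict_{U(y_0)}=1$ does not give $\Ad_g(\phi'_{y_0})\restrict_{U(z)}=1$. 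Likewise, the normalisation remark after Lemma~\ref{lem:C3_eX} only says that the functions $a$ and $b$ are coboundaries on $\ol U(\phi)$ and $\ol V(\phi)$ \emph{separately}; it does not provide a single modification that kills both, keeps (4) for $y\in\germ d$, and survives the transport in (2). The commutator argument in the paper is exactly what replaces all of this: it makes (3)--(4) automatic for every $y\in\germ l$, so the transport defining the family causes no further constraints.
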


\begin{proof}
For each $y\in \germ l \subset M_4(\k)$, let $H(y)$ denote the centraliser of $y$ in the group $\GL_4(\O_2)$ (which acts on $M_4(\k)$  through the adjoint action of its quotient $\GL_4(\k)$). Singla showed in \cite[Proposition 2.2]{Singla_GLn}  that the character $\phi_y$ extends to a linear character of $H(y)$. If $\phi'_{y}$ is such an extension, then for each $g\in G(y,\germ l)$ the character $\Ad_g(\phi'(y))$ is an extension of $\phi_{g\cdot y}$ to $H(g\cdot y)$.  Moreover, if $g\in G(y)$ then $\Ad_g(\phi'_y)=\phi'_y$. We may thus choose a family of characters $\phi'_y$ satisfying (1) and (2) by fixing one $y$ in each $G$-orbit, choosing an extension $\phi'_y$ as above, and then defining $\phi'_{g\cdot y}\coloneq \Ad_g(\phi'_y)$ for each $g\in G(y,\germ l)$. 

We will prove that the characters $\phi'_y$ constructed above are  trivial on $U(y)$ and $V(y)$ by showing that these two groups belong to the commutator subgroup of $H(y)$. Indeed, let $m\in M_2(\O_2)$ be any matrix such that the $4\times 4$ matrix 
$
u= \left[\begin{smallmatrix} 1 & m  \\   & 1\end{smallmatrix} \right]
$
lies in $H(y)$. Then the matrices 
$
u' = \left[\begin{smallmatrix} 1 & m/2 \\    & 1\end{smallmatrix}\right]$ and $z = \left[\begin{smallmatrix} 1 &   \\   & -1\end{smallmatrix}\right]
$
also lie in $H(y)$, and we have $u=[u',z]$. This shows  that $U(y)$ lies in $[H(y),H(y)]$, and a similar argument applies to $V(y)$. Thus the family $\phi'_y$ constructed above satisfies condition (3).

Finally, if $y\in \germ d$, then a similar argument to the above shows that $U'(y)$ and $V'(y)$ belong to the commutator subgroup of the centraliser of $y$ inside the block-diagonal subgroup $\diag(\GL_2(\O_2),\GL_2(\O_2))\subset \GL_4(\O_2)$, and so property (4) is also satisfied.
\end{proof}

For the rest of {Section \ref{sec:sp}} we fix a family of characters $\phi'_y$ as in Lemma \ref{lem:Sp4_extension}. {As explained in Section \ref{subsec:Clifford_review},} Clifford theory gives equivalences of categories
\[
F^L_y : \Rep(\ol{L}(y))\xrightarrow{\otimes\phi'_y} \Rep(L(y))_{\phi_y} \xrightarrow{\ind_{L(y)}^L} \Rep(L)_{\phi_y} 
\]
and
\[
F^G_y : \Rep(\ol{G}(y))\xrightarrow{\otimes\phi'_y} \Rep(G(y))_{\phi_y} \xrightarrow{\ind_{G(y)}^G} \Rep(G)_{\phi_y} .
\]

\begin{lemma}\label{lem:Sp4_FG} 
For each $y\in \germ l$,  each $g\in G(y,\germ l)$ and each $h\in N_G(L)$, the diagrams
\[
\xymatrix@R=30pt@C=50pt{
\Rep(G)_{\phi_y} \ar[r]^-{\id} & \Rep(G)_{\phi_{g\cdot y}} \\
\Rep(\ol{G}(y)) \ar[u]^-{F^G_y} \ar[r]^-{\Ad_{ {g}}} & \Rep(\ol{G}(g\cdot y))\ar[u]_-{F^G_{g\cdot y}}
} \qquad \text{and}\qquad 
\xymatrix@R=30pt@C=50pt{
\Rep(L)_{\phi_y} \ar[r]^-{\Ad_h} & \Rep(L)_{\phi_{h\cdot y}} \\
\Rep(\ol{L}(y)) \ar[u]^-{F^L_y} \ar[r]^-{\Ad_{ {h}}} & \Rep(\ol{L}(h\cdot y))\ar[u]_-{F^L_{h\cdot y}}
} 
\]
commute  up to natural isomorphism.
\end{lemma}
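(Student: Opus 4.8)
The plan is to reduce both squares to the elementary, well-known fact that induction intertwines conjugation: if $H$ is a closed subgroup of a finite group $G$, if $g\in G$, and if $N\in\Rep(H)$, then there is a natural isomorphism $\ind_H^G(N)\cong\ind_{gHg^{-1}}^G(\Ad_g N)$. (After replacing $G$ by a suitable finite quotient through which all representations in sight factor, everything is a finite group, so this is just the classical statement.) Everything else is bookkeeping with the definitions of $F^G_y$ and $F^L_y$.

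First I would record the relevant compatibilities for the left-hand square. Fix $y\in\germ l$ and $g\in G(y,\germ l)$, so that $g\cdot y\in\germ l$ by definition of $G(y,\germ l)$. Since $g$ normalises $G_0$ we have $G(g\cdot y)=gG(y)g^{-1}$, hence $\ol G(g\cdot y)=g\ol G(y)g^{-1}$, so that $\Ad_g$ is a well-defined equivalence $\Rep(\ol G(y))\to\Rep(\ol G(g\cdot y))$, given by conjugation by the image of $g$ in $\ol G$. Inflation along $G(y)\to\ol G(y)$ commutes with these conjugations, since the quotient maps do; and property (2) of Lemma~\ref{lem:Sp4_extension} gives $\Ad_g(\phi'_y)=\phi'_{g\cdot y}$. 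Combining these, for every $M\in\Rep(\ol G(y))$ one obtains a natural isomorphism of representations of $G(g\cdot y)$,
\[
\Ad_g\bigl(\infl(M)\otimes\phi'_y\bigr)\;\cong\;\infl(\Ad_g M)\otimes\phi'_{g\cdot y}.
\]

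Next I would apply induction. Using the conjugation--induction isomorphism with $H=G(y)$ together with $G(g\cdot y)=gG(y)g^{-1}$, and then the display above, one gets natural isomorphisms
\[
F^G_y(M)=\ind_{G(y)}^G\bigl(\infl(M)\otimes\phi'_y\bigr)\;\cong\;\ind_{G(g\cdot y)}^G\,\Ad_g\bigl(\infl(M)\otimes\phi'_y\bigr)\;\cong\;\ind_{G(g\cdot y)}^G\bigl(\infl(\Ad_g M)\otimes\phi'_{g\cdot y}\bigr)=F^G_{g\cdot y}(\Ad_g M).
\]
Since $\phi_{g\cdot y}$ lies in the $G$-orbit of $\phi_y$ (Lemma~\ref{lem:Sp4_pairing}), the subcategories $\Rep(G)_{\phi_y}$ and $\Rep(G)_{\phi_{g\cdot y}}$ coincide, and the isomorphism just produced is precisely the commutativity of the left-hand square, whose top arrow is this identification. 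The right-hand square is handled identically: for $h\in N_G(L)$ one has $hL_0h^{-1}=L_0$, hence $h$ normalises $\germ l=\log(L_0)$, so $h\cdot y\in\germ l$, $L(h\cdot y)=hL(y)h^{-1}$, and in particular $h\in G(y,\germ l)$, so property (2) of Lemma~\ref{lem:Sp4_extension} again yields $\Ad_h(\phi'_y)=\phi'_{h\cdot y}$; running the same chain of isomorphisms with $\ind_{L(y)}^L$ in place of $\ind_{G(y)}^G$ gives $\Ad_h\,F^L_y(M)\cong F^L_{h\cdot y}(\Ad_h M)$, which is the asserted commutativity (now with top arrow $\Ad_h\colon\Rep(L)_{\phi_y}\to\Rep(L)_{\phi_{h\cdot y}}$).

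I do not expect any serious obstacle here: the statement is a formal consequence of Clifford theory and Lemma~\ref{lem:Sp4_extension}. The only points needing a little care are checking that the functor $\Ad_g$ appearing in the diagram is exactly conjugation by the image of $g$ in $\ol G$ (so that it matches the $\Ad_g$ on $\Rep(\ol G(g\cdot y))$ produced above), and that inflation, the twist by $\phi'$, and induction are each compatible with conjugation in the precise sense used; both are immediate from the definitions.
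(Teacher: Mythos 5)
Your argument is correct and follows the same route as the paper: both proofs hinge on property (2) of Lemma~\ref{lem:Sp4_extension} together with the standard compatibility of induction with conjugation, $\ind_{H}^{G}(N)\cong\ind_{gHg^{-1}}^{G}(\Ad_g N)$; the paper merely presents this in the form $\Ad_h\circ\ind_{L(y)}^L\cong\ind_{L(h\cdot y)}^L\circ\Ad_h$ and then invokes $\Ad_g\cong\id$ on $\Rep(G)$ separately, whereas you fold that step into the conjugation--induction isomorphism itself.
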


\begin{proof}
The commutativity of the second diagram follows from  property (2) of Lemma \ref{lem:Sp4_extension}, and from the well-known fact that $\Ad_h\circ \ind_{L(y)}^L \cong \ind_{L(h\cdot y)}^L \circ \Ad_h$. The commutativity of the first diagram follows from the same argument, plus the fact that $\Ad_g$ is isomorphic to the identity functor on $\Rep(G)$ for every $g\in G$.  
\end{proof}

For each $y\in \germ l$  we consider the functors
\[
\pind_{\overline{L}(y)}^{\ol{G}(y)} \coloneq \pind_{\ol{U}(y),\ol{V}(y)}:\Rep(\ol{L}(y))\to \Rep(\ol{G}(y)) \quad \text{and}\quad \pres^{\ol{G}(y)}_{\ol{L}(y)}: \Rep(\ol{G}(y))\to \Rep(\ol{L}(y)).
\]

\begin{lemma}\label{lem:Sp4_C3_LG}
For each $y\in \germ l$ the  diagrams
\[
\xymatrix@R=30pt@C=50pt{
\Rep(L)_{\phi_y} \ar[r]^-{\pind_L^G E^L_y} & \Rep(G)_{\phi_y} \\
\Rep(\overline{L}(y)) \ar[r]^-{\pind_{\ol{L}(y)}^{\ol{G}(y)}} \ar[u]^-{F^L_y}_-{\cong} & \Rep(\ol{G}(y))\ar[u]_-{F^G_y}^-{\cong}
} 
\qquad \text{and}\qquad 
\xymatrix@R=30pt@C=50pt{
\Rep(G)_{\phi_y} \ar[r]^-{E^L_y\pres_L^G} & \Rep(L)_{\phi_y} \\
\Rep(\overline{G}(y)) \ar[r]^-{\pres_{\ol{L}(y)}^{\ol{G}(y)}} \ar[u]^-{F^G_y}_-{\cong} & \Rep(\ol{L}(y))\ar[u]_-{F^L_y}^-{\cong} 
} 
\]
 commute up to natural isomorphism.
\end{lemma}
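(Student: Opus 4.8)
The plan is to deduce Lemma~\ref{lem:Sp4_C3_LG} from Theorems~\ref{thm:C1}, \ref{thm:C2} and~\ref{thm:C3}, specialised to the one-dimensional representation $\psi = \phi_y$ of $L_0$, by stacking the commutative diagrams of Theorems~\ref{thm:C2} and~\ref{thm:C3}. First I would set up the dictionary with Section~\ref{sec:Clifford}: by Lemma~\ref{lem:Sp4_orbit} we have $\pind_0(\phi_y) = \phi_y$, so one takes $\psi = \phi_y\in\Irr(L_0)$ and $\phi = \pind_0(\psi) = \phi_y\in\Irr(G_0)$, and the $G$-equivariance of $y\mapsto\phi_y$ from Lemma~\ref{lem:Sp4_pairing} identifies the inertia groups as $G(\phi) = G(y)$ and $L(\psi) = L(y)$, and likewise $U(\phi) = U(y)$, $V(\phi) = V(y)$. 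Thus the quotients appearing in Theorem~\ref{thm:C3} are exactly $\ol{G}(y)$, $\ol{L}(y)$, $\ol{U}(y)$ and $\ol{V}(y)$, and Theorem~\ref{thm:C1} ensures that the horizontal functors in the lemma take values in the indicated subcategories.

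Next I would arrange that the twisted objects of Section~\ref{sec:Clifford} collapse to untwisted ones. Take the character $\phi'_y$ of Lemma~\ref{lem:Sp4_extension}(1) as the chosen projective extension of $\phi$. Since $\phi'_y$ is a genuine homomorphism, the associated two-cocycle $\gamma$ of~\eqref{eqn:def.of.gamma} is trivial, so $\Rep^\gamma(\ol{G}(y)) = \Rep(\ol{G}(y))$ and $\Rep^\gamma(\ol{L}(y)) = \Rep(\ol{L}(y))$. As $\psi$ and $\pind_0(W)$ are one-dimensional, the intertwiner $\Theta$ of~\eqref{eq:psiprime_def} is a scalar, so the projective extension $\psi'$ it produces is simply $\phi'_y|_{L(y)}$. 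Finally, Lemma~\ref{lem:Sp4_extension}(3) forces the scalars $a_x,b_y$ of Lemma~\ref{lem:C3_eX} all to equal $1$, so $e^{\phi'}_{\ol{U}(\phi)} = e_{\ol{U}(y)}$ and $e^{\phi'}_{\ol{V}(\phi)} = e_{\ol{V}(y)}$ are the ordinary idempotents; hence the twisted functor $\pind^{\phi'}_{\ol{U}(\phi),\ol{V}(\phi)}$ is the untwisted $\pind_{\ol{U}(y),\ol{V}(y)} = \pind_{\ol{L}(y)}^{\ol{G}(y)}$, and similarly for $\pres$.

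With this dictionary in place, the vertical composite of the equivalences of Theorems~\ref{thm:C2} and~\ref{thm:C3} is, on the $L$-side, induction $\ind^L_{L(y)}$ applied after tensoring with $\phi'_y$ — that is, precisely $F^L_y$ — and on the $G$-side it is $F^G_y$; the bottom arrow of the stacked square is $\pind_{\ol{L}(y)}^{\ol{G}(y)}$, and the top arrow is $\pind^G_L$, which on $\Rep(L)_{\phi_y}$ coincides with $\pind^G_L E^L_y$ since $E^L_y$ is the identity there. This stacked square is exactly the first diagram of the lemma. The second diagram follows in the same way from the restriction analogues of Theorems~\ref{thm:C2} and~\ref{thm:C3}; alternatively, since $F^L_y$ and $F^G_y$ are equivalences and $\pres$ is the left adjoint of $\pind$ both for $(\pind^G_L,\pres^G_L)$ (Theorem~\ref{thm:pind-properties}\eqref{item:pind-Hom}) and for the twisted pair (by the remarks following Lemma~\ref{lem:C3_eX}), one may pass to left adjoints in the first diagram, using that $E^L_y\pres^G_L$, restricted to $\Rep(G)_{\phi_y}$, is the left adjoint of the restriction of $\pind^G_L$ to $\Rep(L)_{\phi_y}$.

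This is essentially a bookkeeping argument, and I do not expect a genuine obstacle; the points requiring care are the vanishing of $\gamma$ and the identification $\psi' = \phi'_y|_{L(y)}$, which both rest on $\phi_y$ being one-dimensional, together with — should one wish to avoid invoking the restriction analogues of Theorems~\ref{thm:C2}--\ref{thm:C3} — the adjunction bookkeeping needed to transfer the first diagram to the second.
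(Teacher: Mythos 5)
Your proposal is correct and follows essentially the same route as the paper's own proof: invoke Lemma \ref{lem:Sp4_extension}(3) to force $a\equiv b\equiv 1$ in Lemma \ref{lem:C3_eX} so that the twisted functor of Theorem \ref{thm:C3} becomes the untwisted $\pind_{\ol L(y)}^{\ol G(y)}$, stack the squares of Theorems \ref{thm:C2} and \ref{thm:C3} to get the $\pind$-diagram, and pass to adjoints for the $\pres$-diagram. You supply more of the bookkeeping than the paper does — in particular the observations that $\gamma$ is trivial because $\phi'_y$ is an honest character, that $\Theta$ is a scalar so $\psi'=\phi'_y\restrict_{L(y)}$, and that the vertical composites recover $F^L_y$, $F^G_y$ — all of which the paper leaves implicit.
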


\begin{proof}
The fact that $\phi'_y$ is trivial on the subgroups $U(y)$ and $V(y)$ ensures that the functions $a$ and $b$ of Lemma \ref{lem:C3_eX} are {identically} equal to $1$, and thus that the functor $\pind_{\ol{U}(y),\ol{V}(y)}^{\phi'_y}$ appearing in Theorem \ref{thm:C3} is equal to $\pind_{\ol{L}(y)}^{\ol{G}(y)}$. This proves the commutativity of the $\pind$-diagram; taking adjoints proves the commutativity of the $\pres$-diagram.
\end{proof}

Combining Lemmas \ref{lem:Sp4_FG} and \ref{lem:Sp4_C3_LG} gives immediately:

\begin{lemma}\label{lem:Sp4_C3_yz}
For each $y\in \germ l$ and each $g\in G(y,\germ l)$  the diagram
\[
\xymatrix@R=30pt@C=50pt{
\Rep(L)_{\phi_y} \ar[r]^-{\pind_L^G E^L_y} & \Rep(G)_{\phi_y} \ar[r]^-{\id} & \Rep(G)_{\phi_{g\cdot y}} \ar[r]^{E^L_{g\cdot y}\pres^G_L} & \Rep(L)_{\phi_{g\cdot y}} \\
\Rep(\ol{L}(y)) \ar[u]^-{F^L_y} \ar[r]^-{\pind_{\ol{L}(y)}^{\ol{G}(y)}} & \Rep(\ol{G}(y)) \ar[u]^-{F^G_y} \ar[r]^-{\Ad_{ {g}}} & \Rep(\ol{G}(g\cdot y)) \ar[u]^-{F^G_{g\cdot y}} \ar[r]^-{\pres_{\ol{L}(g\cdot y)}^{\ol{G}(g\cdot y)}} & \Rep(\ol{L}(g\cdot y)) \ar[u]^-{F^L_{g\cdot y}}
}
\]
commutes up to natural isomorphism.
\hfill\qed
\end{lemma}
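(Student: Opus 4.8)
The plan is to observe that the large rectangle in the statement is nothing more than the horizontal pasting of three squares, each of which has already been shown to commute; so the only real step is to identify these three squares and invoke the two preceding lemmas.

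First I would cut the rectangle along the vertical arrows $F^G_y$ and $F^G_{g\cdot y}$. The leftmost piece is the square with horizontal edges $\pind_L^G E^L_y$ and $\pind_{\ol{L}(y)}^{\ol{G}(y)}$ and vertical edges $F^L_y$ and $F^G_y$: this is exactly the $\pind$-diagram of Lemma \ref{lem:Sp4_C3_LG}, so it commutes up to natural isomorphism. The middle piece has $\id$ along the top, $\Ad_g$ along the bottom, and $F^G_y$, $F^G_{g\cdot y}$ up the sides; since the hypothesis $g\in G(y,\germ l)$ guarantees $g\cdot y\in\germ l$, this is the first diagram of Lemma \ref{lem:Sp4_FG}, which commutes. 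The rightmost piece has $E^L_{g\cdot y}\pres^G_L$ on top, $\pres_{\ol{L}(g\cdot y)}^{\ol{G}(g\cdot y)}$ on the bottom, and $F^G_{g\cdot y}$, $F^L_{g\cdot y}$ up the sides; this is the $\pres$-diagram of Lemma \ref{lem:Sp4_C3_LG}, applied with $g\cdot y$ in place of $y$, and so it too commutes.

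Pasting the three commutative squares side by side produces a commutative rectangle, which is precisely the assertion of the lemma. I do not expect a genuine obstacle here: the only point that requires a moment's thought is that $g\cdot y$ again lies in $\germ l$, so that the subgroups $\ol{L}(g\cdot y)$, $\ol{G}(g\cdot y)$ and the functors on the bottom row are even defined — but that is exactly what the hypothesis $g\in G(y,\germ l)$ records. All the substantive content has been placed in Lemmas \ref{lem:Sp4_FG} and \ref{lem:Sp4_C3_LG}, and the present lemma is a formal consequence obtained by gluing their diagrams together.
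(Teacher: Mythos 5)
Your proposal is correct and matches the paper's argument exactly: the paper states the lemma as an immediate consequence of Lemmas \ref{lem:Sp4_FG} and \ref{lem:Sp4_C3_LG} with no further proof, and your decomposition into three pasted squares (the $\pind$-diagram of Lemma \ref{lem:Sp4_C3_LG}, the first diagram of Lemma \ref{lem:Sp4_FG}, and the $\pres$-diagram of Lemma \ref{lem:Sp4_C3_LG} applied to $g\cdot y$) is precisely that deduction, including the remark that $g\in G(y,\germ l)$ ensures $g\cdot y\in\germ l$.
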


Now we use Clifford theory to analyse the right-hand side $\id\oplus \Ad_s \oplus \pind_{D}^L\Ad_w\pres^L_D$ of the Mackey formula (cf. Remarks \ref{rem:sp_Mackey}(3)).   For each pair of elements $y,z\in \germ l$, define a functor 
\[
\Delta(z,y):\Rep(\overline{L}(y)) \to \Rep(\overline{L}(z)),\qquad \Delta(z,y)=\begin{cases} \Ad_{ {l}} & \text{if }z=l\cdot y \\ 0 & \text{if }z\not\in L\cdot y.\end{cases}
\]
 Note that $\Delta(z,l)$ is well-defined up to natural isomorphism, because $\Ad_l\cong \id$ on $\Rep(\ol{L}(y))$ for every $l\in L(y)$.

\begin{lemma}\label{lem:Sp4_RHS_1}
For each $y,z\in \germ l$ the diagrams  
\[
\xymatrix@R=30pt@C=70pt{
\Rep(L)_{\phi_y} \ar[r]^-{E^L_z  E^L_y} & \Rep(L)_{\phi_z} \\
\Rep(\ol{L}(y)) \ar[r]^-{\Delta(z,y)} \ar[u]^-{F^L_y} & \Rep(\ol{L}(z))\ar[u]_-{F^L_z}
}\qquad \text{and}\qquad 
\xymatrix@R=30pt@C=70pt{
\Rep(L)_{\phi_y} \ar[r]^-{E^L_z \Ad_s E^L_y} & \Rep(L)_{\phi_z} \\
\Rep(\ol{L}(y)) \ar[r]^-{\Delta(z,s\cdot y)\Ad_s} \ar[u]^-{F^L_y} & \Rep(\ol{L}(z))\ar[u]_-{F^L_z}
}
\]
commute up to natural isomorphism.
\end{lemma}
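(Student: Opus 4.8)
The plan is to deduce both squares from Lemma~\ref{lem:Sp4_FG}, together with two elementary facts about the Clifford decomposition~\eqref{C1}. The first is that, since $F^L_y$ takes values in the subcategory $\Rep(L)_{\phi_y}$ and $E^L_y$ restricts to the identity functor on that subcategory, one has $E^L_y\circ F^L_y\cong F^L_y$. The second is that the subcategory $\Rep(L)_{\phi_y}$ depends only on the $L$-orbit of $\phi_y$ in $\Irr(L_0)$, hence (by the $L$-equivariance of $y\mapsto\phi_y$ in Lemma~\ref{lem:Sp4_pairing}) only on the $L$-orbit $L\cdot y$. Consequently, if $z\notin L\cdot y$ then $\Rep(L)_{\phi_z}$ is a different factor of the product~\eqref{C1} than $\Rep(L)_{\phi_y}$, so $E^L_z$ annihilates $\Rep(L)_{\phi_y}$; while if $z=l\cdot y$ for some $l\in L$ then $\Rep(L)_{\phi_z}=\Rep(L)_{\phi_y}$ and $E^L_z$ restricts to the identity on it.

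For the first diagram I would argue by cases. If $z\notin L\cdot y$, then the left-then-top composite $E^L_z\circ E^L_y\circ F^L_y$ is the zero functor by the remarks above, matching $\Delta(z,y)=0$. If $z=l\cdot y$ with $l\in L$, then $E^L_y\circ F^L_y\cong F^L_y$ lands in $\Rep(L)_{\phi_y}=\Rep(L)_{\phi_z}$, on which $E^L_z$ is the identity, so the left-then-top composite is naturally isomorphic to $F^L_y$. On the other side, $\Delta(z,y)=\Ad_l$ (well-defined up to natural isomorphism, since two choices of $l$ differ by an element of $L(y)$, whose conjugation action on $\Rep(\ol L(y))$ is isomorphic to the identity), and applying Lemma~\ref{lem:Sp4_FG} with $h=l\in L\subseteq N_G(L)$ gives $F^L_z\circ\Ad_l=F^L_{l\cdot y}\circ\Ad_l\cong\Ad_l\circ F^L_y\cong F^L_y$, the last isomorphism because $\Ad_l\cong\id$ on $\Rep(L)$. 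Thus the two composites agree up to natural isomorphism.

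For the second diagram I would reduce to the first. Since $s\in N_G(L)$ (Remarks~\ref{rem:sp_Mackey}(3)), $s$ acts on $\Irr(L_0)\cong\germ l$ and $\Ad_s$ defines functors at every level. Using $E^L_y\circ F^L_y\cong F^L_y$ and then Lemma~\ref{lem:Sp4_FG} with $h=s$, the left-then-top composite $E^L_z\circ\Ad_s\circ E^L_y\circ F^L_y$ is naturally isomorphic to $E^L_z\circ F^L_{s\cdot y}\circ\Ad_s$, which (inserting $E^L_{s\cdot y}\circ F^L_{s\cdot y}\cong F^L_{s\cdot y}$ and invoking the first diagram with $s\cdot y$ in place of $y$) is naturally isomorphic to $F^L_z\circ\Delta(z,s\cdot y)\circ\Ad_s$, exactly the right-then-up composite.

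I do not expect a genuine obstacle here: once Lemmas~\ref{lem:Sp4_extension} and~\ref{lem:Sp4_FG} are in hand, the argument is bookkeeping with the factors of~\eqref{C1}. The only points needing a little care are the well-definedness of $\Delta(z,y)$ up to natural isomorphism and the fact that $\Ad_l\cong\id$ on $\Rep(L)$ for $l\in L$, both of which are standard.
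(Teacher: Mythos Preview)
Your proposal is correct and follows essentially the same approach as the paper: the paper's proof is a terse two-sentence appeal to Lemma~\ref{lem:Sp4_FG} together with the vanishing $E^L_z E^L_y=0$ for $z\notin L\cdot y$ (and, for the second square, the identity $\Ad_s E^L_y = E^L_{s\cdot y}\Ad_s$), and you have simply unpacked these ingredients in detail. Your reduction of the second square to the first via $\Ad_s\circ F^L_y\cong F^L_{s\cdot y}\circ\Ad_s$ is exactly what the paper's ``similar argument'' amounts to.
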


\begin{proof}
The commutativity of the first diagram follows from Lemma \ref{lem:Sp4_FG}, and from the fact that $E_z^L E_y^L=0$ unless $y$ and $z$ are $L$-conjugate. The commutativity of the second diagram follows from a similar argument, plus the equality $\Ad_s E_y^L = E_{s\cdot y}^L \Ad_s$.
\end{proof}

The analysis of the functors $\pind_D^L$ and $\pres^L_D$ follows the above analysis of $\pind_L^G$ and $\pres^G_L$. Because $D$ is abelian we have $D(y)=D$ for each $y\in \germ d$. Clifford theory gives an equivalence of categories 
\[
F^D_y: \Rep(\ol{D}) \xrightarrow{\otimes\phi'_y} \Rep(D)_{\phi_y}, 
\]
such that for each $h\in N_G(D)$ the diagram 
\[
\xymatrix@R=30pt@C=50pt{
\Rep(D)_{\phi_y} \ar[r]^-{\Ad_h} & \Rep(D)_{\phi_{h\cdot y}} \\
\Rep(\ol{D}) \ar[r]^-{\Ad_{ {h}}} \ar[u]^-{F^D_y} & \Rep(\ol{D}) \ar[u]_-{F^D_{h\cdot y}}
}
\]
commutes up to natural isomorphism.

We consider the functors 
\[
\pind_{\ol{D}}^{\ol{L}(y)} \coloneq \pind_{\ol{U_0'}(y),\ol{V_0'}(y)}:\Rep(\ol{D})\to \Rep(\ol{L}(y)) \qquad \text{and}\qquad \pres^{\ol{L}(y)}_{\ol{D}}\coloneq \pres_{\ol{U_0'}(y),\ol{V_0'}(y)}:\Rep(\ol{L}(y)) \to \Rep(\ol{D}).
\] 

For each $y,z\in \germ l$, we define the functor 
\[
\Xi(z,  y): \Rep(\ol{L}(y))\to \Rep(\ol{L}(z)) 
\]
as the direct sum, over $d\in \germ d$, of the compositions 
\[
\Rep(\ol{L}(y)) \xrightarrow{\Delta(d,y)} \Rep(\ol{L}(d)) \xrightarrow{\pres^{\ol{L}(d)}_{\ol{D}}} \Rep(\ol{D}) \xrightarrow{\Ad_w} \Rep(\ol{D}) \xrightarrow{\pind_{\ol{D}}^{\ol{L}(w\cdot d)}} \Rep(\ol{L}(w\cdot d)) \xrightarrow{\Delta(z, w\cdot d)} \Rep(\ol{L}(z)).
\]

\begin{lemma}\label{lem:Sp4_RHS_2}
For each $y,z\in \germ l$ the diagram 
\[
\xymatrix@R=30pt@C=80pt{
\Rep(L)_{\phi_y}\ar[r]^-{E_z^L \left( \pind_D^L \Ad_w \pres^L_D\right) E_y^L} & \Rep(L)_{\phi_z} \\
\Rep(\ol{L}(y)) \ar[r]^-{ \Xi(z,  y)} \ar[u]^-{F^L_y} &
\Rep(\ol{L}(z)) \ar[u]_-{F^L_z}
}
\]
commutes up to natural isomorphism.  
\end{lemma}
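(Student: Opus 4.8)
The plan is to decompose the functor $\pind_D^L \Ad_w \pres^L_D$ according to the Clifford-theoretic blocks $E_y^L$ and $E_z^L$, and then identify each block with the corresponding composition in the definition of $\Xi(z,y)$. Concretely, I would insert a complete system of block-projectors: since $\pres^L_D$ lands in $\Rep(D)$, which decomposes as $\prod_{d \in \germ d} \Rep(D)_{\phi_d}$ up to the $D$-action (and $D$ acts trivially here since $D$ is abelian, so actually over $\germ d / $ nothing — but the intermediate group is $D$ itself), I can write $\id_{\Rep(D)} \cong \bigoplus_{d \in \germ d} E^D_d$. Likewise, since $\pind_D^L$ lands in $\Rep(L)$, I interpolate $\bigoplus_{z' \in \germ l} E^L_{z'}$ before applying $E^L_z$, and the only term surviving the outer $E^L_z$ is $z' = z$ (more precisely, $z'$ running over the $L$-orbit of $z$, but since $\Delta$ absorbs the ambiguity this is harmless). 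Thus
\[
E_z^L \left( \pind_D^L \Ad_w \pres^L_D \right) E_y^L \cong \bigoplus_{d \in \germ d} E_z^L \pind_D^L E^D_{w\cdot d} \Ad_w E^D_d \pres^L_D E_y^L,
\]
where I have used $\Ad_w E^D_d = E^D_{w \cdot d} \Ad_w$.

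Next I would analyze each of the three building blocks separately. For the restriction block $E^D_d \pres^L_D E^L_y$: restriction $\pres^L_D = \pres_{U'_0, V'_0}$ — wait, the subgroups here are $U', V'$ for $L = \GL_2(\O_2)$, which have their own virtual Iwahori decomposition with congruence part $(U'_0, D_0, V'_0)$ — so I apply Theorem \ref{thm:C1}, \ref{thm:C2}, \ref{thm:C3} to the group $L$ with its normal subgroup $L_0$ and the functor $\pres^L_D$. By Lemma \ref{lem:Sp4_C3_LG}'s analogue for the pair $(D, L)$ in place of $(L, G)$ — which is exactly how $F^D_y$ and $F^L_y$ were set up, together with property (4) of Lemma \ref{lem:Sp4_extension} ensuring the $a, b$ cocycles are trivial — we get that $E^D_d \pres^L_D E^L_y$ corresponds under the equivalences $F^L_y, F^D_d$ to $\pres^{\ol{L}(d)}_{\ol{D}} \circ \Delta(d, y)$ (the $\Delta(d,y)$ coming in because $E^D_d \pres^L_D E^L_y$ vanishes unless $d$ is $L$-conjugate into the relevant locus, matching the case distinction in $\Delta$). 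Dually, the induction block $E^L_z \pind_D^L E^D_{w \cdot d}$ corresponds to $\Delta(z, w\cdot d) \circ \pind_{\ol{D}}^{\ol{L}(w\cdot d)}$. The middle block $E^D_{w\cdot d} \Ad_w E^D_d$ corresponds, via the $F^D$-compatibility diagram already displayed, to $\Ad_w : \Rep(\ol{D}) \to \Rep(\ol{D})$. Stringing these five corresponded functors together is precisely the definition of $\Xi(z, y)$ as the direct sum over $d \in \germ d$ of
\[
\Rep(\ol{L}(y)) \xrightarrow{\Delta(d,y)} \Rep(\ol{L}(d)) \xrightarrow{\pres^{\ol{L}(d)}_{\ol{D}}} \Rep(\ol{D}) \xrightarrow{\Ad_w} \Rep(\ol{D}) \xrightarrow{\pind_{\ol{D}}^{\ol{L}(w\cdot d)}} \Rep(\ol{L}(w\cdot d)) \xrightarrow{\Delta(z, w\cdot d)} \Rep(\ol{L}(z)),
\]
so the diagram commutes.

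There is one bookkeeping subtlety I would be careful about, and I expect it to be the main obstacle: the functor $\pres^L_D$ uses the virtual Iwahori decomposition $(U', D, V')$ of $L$, whose congruence-level decomposition is $(U'_0, D_0, V'_0)$ of $L_0$, but $D_0 \subsetneq D$ is \emph{not} the full diagonal — so when I apply the Clifford machinery of Section \ref{sec:Clifford} to the triple $(D, L)$ relative to $L_0 \lhd L$, I must track the inertia groups $U'(y), V'(y)$ (inertia of $\phi_{\pind_0'(\cdot)}$) and verify that the hypotheses of Theorems \ref{thm:C2} and \ref{thm:C3} hold in this nested setting. The key inputs are: (i) Lemma \ref{lem:Sp4_extension}(4), which makes the relevant cocycles trivial so that $\pind^{\phi'}_{\ol{U'}(y),\ol{V'}(y)}$ reduces to the untwisted $\pind^{\ol{L}(y)}_{\ol D}$; and (ii) the bijections of Lemma \ref{lem:Sp4_pairing} and the orbit-method identification in Lemma \ref{lem:Sp4_orbit}, which tell us that $\phi_{\pind_0'(\phi_d)} = \phi_d$ as elements of $\Irr(L_0)$ for $d \in \germ d$, so that the inertia group of this induced character in $L$ is just $L(d) = \ol{L}(d)$-lift, consistent with the notation $\pres^{\ol{L}(d)}_{\ol D}$. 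Once these compatibilities are in place the argument is a formal concatenation of commutative squares, exactly parallel to the proof of Lemma \ref{lem:Sp4_C3_yz}; I would present it as such rather than re-deriving the Clifford compatibilities from scratch.
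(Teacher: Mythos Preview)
Your proposal is correct and follows essentially the same route as the paper: insert the projectors $E^D_d$, commute $\Ad_w$ past them, and then identify the restriction, $\Ad_w$, and induction blocks with their Clifford-theoretic counterparts via the $(D,L)$-analogue of Lemma~\ref{lem:Sp4_C3_LG} together with Lemma~\ref{lem:Sp4_RHS_1}. The only presentational difference is that the paper explicitly inserts the intermediate projectors $E^L_d$ and $E^L_{w\cdot d}$ (via Theorem~\ref{thm:C1}) to cleanly separate the $\Delta$-pieces from the $\pres$/$\pind$-pieces before invoking Theorem~\ref{thm:C3}, whereas you fold the $\Delta$-step into the adjacent block.
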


\begin{proof}
Decomposing the category $\Rep(D)$ over $\Irr(D_0)\cong \germ d$, we have 
\[
\pind_D^L \Ad_w \pres^L_D = \bigoplus_{d\in \germ d} \pind_D^L \Ad_w E_d^D \pres^L_D = \bigoplus_{d\in \germ d} \pind_D^L E_{w\cdot d}^D \Ad_w E_d^D \pres^L_D = \bigoplus_{d\in \germ d} E_{w\cdot d}^L \pind_D^L E_{w\cdot d}^D \Ad_w E_d^D \pres^L_D E_d^L 
\]
where in the last equality we have used Theorem \ref{thm:C1}. After applying Lemma \ref{lem:Sp4_RHS_1} to the functors $E_y^L E_{w\cdot d}^L$ and $E_d^L E_z^L$, we are left to prove the commutativity of 
\[
\xymatrix@R=30pt@C=50pt{
\Rep(L)_{\phi_d} \ar[r]^-{E^D_d \pres^L_D} & \Rep(D)_{\phi_d} \ar[r]^-{\Ad_w} & \Rep(D)_{\phi_{w\cdot d}} \ar[r]^-{\pind_D^L} & \Rep(L)_{\phi_{w\cdot d}} \\
\Rep(\ol{L}(d)) \ar[r]^-{\pres^{\ol{L}(d)}_{\ol{D}}} \ar[u]^-{F^L_d} & \Rep(\ol{D}) \ar[r]^-{\Ad_w} \ar[u]^-{F^D_d} & \Rep(\ol{D})\ar[r]^-{\pind_{\ol{D}}^{\ol{L}(w\cdot d)}} \ar[u]^-{F^D_{w\cdot d}} & \Rep(\ol{L}(w\cdot d)) \ar[u]^-{F^L_{w\cdot d}}
}
\] 
for each $d\in \germ d$. This follows from Theorem \ref{thm:C3}, just as in Lemma \ref{lem:Sp4_C3_yz}.
\end{proof}

The end result of our Clifford analysis is as follows:

\begin{corollary}\label{cor:Sp4_Clifford}
Theorem \ref{thm:sp_Mackey} is equivalent to the assertion that for every $y\in \germ l$ and every $g\in G(y,\germ l)$, there is a natural isomorphism
\[
\pres^{\ol{G}(g\cdot y)}_{\ol{L}(g\cdot y)}\, \Ad_{ {g}} \, \pind_{\ol{L}(y)}^{\ol{G}(y)} \cong \Delta(g\cdot y,y) \bigoplus \Delta(g\cdot y, s\cdot y)\Ad_s \bigoplus  \Xi(g\cdot y,  y)
\]
of functors $\Rep(\ol{L}(y))\to \Rep(\ol{L}(g\cdot y))$.
\end{corollary}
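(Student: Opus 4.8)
The plan is to transport both sides of the Mackey formula of Theorem~\ref{thm:sp_Mackey} through Clifford theory into ``matrices'' of functors between the centraliser categories $\Rep(\ol L(y))$, and then to compare the matrices block by block. First, by the decomposition~\eqref{C1} together with Lemma~\ref{lem:Sp4_pairing}, one has $\Rep(L)\simeq\prod_{[y]\in L\backslash\germ l}\Rep(L)_{\phi_y}$, and the equivalences $F^L_y$ identify each factor with $\Rep(\ol L(y))$. Both functors under comparison, $\pres^G_L\pind^G_L$ and $\id\oplus\Ad_s\oplus\pind^L_D\Ad_w\pres^L_D$, are assembled from $\pind$'s, $\pres$'s and conjugation functors by composition and direct sum; since $\pind$ and $\pres$ are tensor products with bimodules (Definition~\ref{def:pind}) and the conjugations are equivalences, each of the two functors is isomorphic to tensoring with an $\H(L)$-bimodule, and, after transport along the $F^L_y$, with a bimodule over $\prod_{[y]}\H(\ol L(y))$ --- here we use that the cocycle $\gamma$ is trivial, since $\phi'_y$ is an honest linear character by Lemma~\ref{lem:Sp4_extension}. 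A bimodule over such a product algebra is the direct sum of its blocks $e_z(-)e_y$, and two such bimodules are isomorphic if and only if each pair of corresponding blocks is, with no further compatibility required. Hence Theorem~\ref{thm:sp_Mackey} is equivalent to the blockwise comparison of the two functors.

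The blocks of the right-hand side are read off from the earlier lemmas: after transport along $F^L_y$ and $F^L_z$, Lemma~\ref{lem:Sp4_RHS_1} identifies the $(y,z)$-block of $\id$ with $\Delta(z,y)$ and that of $\Ad_s$ with $\Delta(z,s\cdot y)\Ad_s$, while Lemma~\ref{lem:Sp4_RHS_2} identifies the $(y,z)$-block of $\pind^L_D\Ad_w\pres^L_D$ with $\Xi(z,y)$; summing, the $(y,z)$-block of $\id\oplus\Ad_s\oplus\pind^L_D\Ad_w\pres^L_D$ transports to $\Delta(z,y)\oplus\Delta(z,s\cdot y)\Ad_s\oplus\Xi(z,y)$. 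For the left-hand side, Theorem~\ref{thm:C1} and Lemma~\ref{lem:Sp4_orbit} show that $\pind^G_L$ maps $\Rep(L)_{\phi_y}$ into $\Rep(G)_{\phi_y}$; moreover the $(y,z)$-block of $\pres^G_L\pind^G_L$ vanishes unless $z\in G\cdot y$, because for $N\in\Rep(G)_{\phi_y}$ and $M'\in\Rep(L)_{\phi_z}$ with $z\notin G\cdot y$ the adjunction of Theorem~\ref{thm:pind-properties}\eqref{item:pind-Hom} gives $\Hom_L(\pres^G_L N,M')\cong\Hom_G(N,\pind^G_L M')=0$, since $\pind^G_L M'\in\Rep(G)_{\phi_z}$ lies in a different block. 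When $z=g\cdot y$ with $g\in G(y,\germ l)$, Lemma~\ref{lem:Sp4_C3_yz} identifies the $(y,z)$-block of $\pres^G_L\pind^G_L$, transported along $F^L_y$ and $F^L_z$, with $\pres^{\ol G(g\cdot y)}_{\ol L(g\cdot y)}\Ad_g\pind^{\ol G(y)}_{\ol L(y)}$; and the choice of $g$ within its right $G(y)$-coset is immaterial, since for $h\in G(y)$ the functor $\Ad_h$ is conjugation by the image of $h$ in $\ol G(y)$ and hence naturally isomorphic to the identity on $\Rep(\ol G(y))$.

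Combining these computations yields the asserted equivalence. If the displayed isomorphism holds for every $y\in\germ l$ and every $g\in G(y,\germ l)$, then for $z=g\cdot y\in G\cdot y$ the $(y,z)$-blocks of the two functors agree up to isomorphism by hypothesis, and for $z\notin G\cdot y$ both $(y,z)$-blocks vanish --- the left one as just noted, and the right one because $\Delta(z,y)$, $\Delta(z,s\cdot y)$ and every summand of $\Xi(z,y)$ contain a factor $\Delta(\cdot,\cdot)$ which is zero unless its two arguments are $L$-conjugate, forcing $z\in G\cdot y$ --- so all blocks match and Theorem~\ref{thm:sp_Mackey} follows. Conversely, Theorem~\ref{thm:sp_Mackey} implies the blockwise isomorphisms, hence the displayed isomorphism for one representative $g$ in each $(L,G(y))$-double coset in $G(y,\germ l)$, and then for every $g\in G(y,\germ l)$ since replacing $g$ by $gh$ with $h\in G(y)$ alters neither side up to natural isomorphism. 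I expect there to be no serious obstacle here: the argument is a formal assembly of Lemmas~\ref{lem:Sp4_C3_yz}, \ref{lem:Sp4_RHS_1} and~\ref{lem:Sp4_RHS_2} once the bookkeeping (which orbit maps to which, and the indexing by double cosets) is kept straight, the only points needing a little care being the two vanishing statements that pin down the off-diagonal behaviour of $\pres^G_L\pind^G_L$ and of $\Xi$. The genuinely computational content --- verifying the displayed isomorphism for each orbit $y$ --- is carried out in the subsections that follow.
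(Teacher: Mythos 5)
Your proposal is correct and follows essentially the same route as the paper's own argument: decompose both sides over the block decomposition of $\Rep(L)$ given by \eqref{C1} and Lemma~\ref{lem:Sp4_pairing}, use Theorem~\ref{thm:C1} (together with Lemma~\ref{lem:Sp4_orbit}) to restrict attention to pairs of orbits that are $G$-conjugate, and then read off the blocks from Lemmas~\ref{lem:Sp4_C3_yz}, \ref{lem:Sp4_RHS_1} and \ref{lem:Sp4_RHS_2}. The only cosmetic difference is that you establish the vanishing of the off-diagonal blocks of $\pres^G_L\pind^G_L$ via the adjunction in Theorem~\ref{thm:pind-properties}\eqref{item:pind-Hom}, whereas the paper uses the direct computation $E^G_z E^G_y=0$ for $z\notin G\cdot y$; both reduce to the same content of Theorem~\ref{thm:C1}. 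You are also slightly more explicit than the paper about the independence of the statement from the choice of representative $g$ in its $(L,G(y))$-double coset, which is a useful point to make given that the corollary quantifies over all $g\in G(y,\germ l)$ rather than over double-coset representatives.
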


\begin{proof}
By Lemma \ref{lem:Sp4_pairing} and \eqref{C1} we have 
\[
\pres^G_L \pind_L^G = \bigoplus_{L\cdot y,\ L\cdot z\, \in L\backslash \germ l} E^L_z \pres^G_L \pind_L^G E^L_y.
\]
Theorem \ref{thm:C1} implies that 
\[
E^L_z \pres^G_L \pind_L^G E^L_y = E^L_z \pres^G_L E^G_z E^G_y \pind_L^G E^L_y,
\]
and $E^G_z E^G_y=0$ unless $z$ and $y$ lie in the same $G$-orbit. This proves that
\begin{equation}\label{eq:Sp4_Clifford_cor1}
\pres^G_L\pind_L^G = \bigoplus_{\substack{L\cdot y\in L\backslash \germ l, \\ g\in L\backslash G(y,\germ l)/G(y)}} E^L_{g\cdot y} \pres^G_L \pind_L^G E^L_y.
\end{equation}

Clifford theory likewise gives a decomposition 
\[
\id\oplus \Ad_s \oplus \pind_D^L \Ad_w \pres^L_D =  \bigoplus_{L\cdot y,\ L\cdot z\, \in L\backslash \germ l} \left( E^L_{z}E^L_y \oplus E^L_z\Ad_s E^L_y \oplus E^L_z\pind_D^L \Ad_w \pres^L_D E^L_y\right),
\]
and Lemmas \ref{lem:Sp4_RHS_1} and \ref{lem:Sp4_RHS_2} imply that each term in the sum vanishes if $z$ and $y$ are not $G$-conjugate. This proves that 
\begin{equation}\label{eq:Sp4_Clifford_cor2}
\id\oplus \Ad_s \oplus \pind_D^L \Ad_w \pres^L_D = \bigoplus_{\substack{L\cdot y\in L\backslash \germ l, \\ g\in L\backslash G(y,\germ l)/G(y)}}  \left(E^L_{g\cdot y}E^L_y \oplus E^L_{g\cdot y} \Ad_s E^L_y \oplus E^L_{g\cdot y} \pind_D^L \Ad_w \pres^L_D E^L_y\right)  .
\end{equation}

Thus the reformulation  of Theorem \ref{thm:sp_Mackey} given in Remarks \ref{rem:sp_Mackey}(3) is equivalent to the existence of a natural isomorphism, for each $y\in \germ l$ and each $g\in G(y,\germ l)$, between the $(y,g)$ terms on the right-hand sides of \eqref{eq:Sp4_Clifford_cor1} and \eqref{eq:Sp4_Clifford_cor2}. Conjugating each of these terms by the equivalences $F^L_{y}$ and $F^L_{g\cdot y}$, and applying Lemmas \ref{lem:Sp4_C3_yz}, \ref{lem:Sp4_RHS_1} and \ref{lem:Sp4_RHS_2}, we bring Theorem \ref{thm:sp_Mackey} into the asserted form.
\end{proof}

\subsection{Centralisers}\label{subsec:Sp4_centralisers} 

We now present the facts about the centralisers $\overline{G}(y)$ and $\overline{L}(y)$ and   about the orbit  spaces   $L\backslash G(y,\germ l)/G(y)$ that will be needed for the proof of Theorem \ref{thm:sp_Mackey}. More details, and the proofs of the assertions made here, are given in Appendix~\ref{appendix}.

Fix $x\in M_2(\k)$, and let $y=\diag(x,-x^\transpose)$ be the corresponding element of $\germ l$. We divide our analysis according to the Jordan normal form of $x$. Up to conjugacy by $\ol{L}\cong \GL_2(\k)$, the following nine cases   exhaust all of the possibilities. In the following $s$, $t$, $w$ and $\sigma$ are as in Remarks \ref{rem:sp_Mackey}. We shall write  \lq $L\backslash G(y,\germ l)/G(y)=\{g,h,k\}$\rq~  to mean that $G(y,\germ l) = L g G(y) \sqcup L h G(y) \sqcup Lk G(y)$.
 
\medskip

\noindent {\bf Case  1:} $x=\diag(\mu,\mu)$, $\mu\in \k$. In this case $\overline{L}(y)=\overline{L} \cong \GL_2(\k)$. There are two subcases:

\smallskip

\noindent {\bf  1A:}  $\mu \ne 0$. Here $\overline{G}(y)=\overline{L}(y)$, and $L\backslash G(y,\germ l)/G(y)=\{1,s,w\}$. 

\smallskip

\noindent{\bf 1B:} ${\mu=0}$. Here $\ol{G}(y)=\ol{G}$ and $L\backslash G(y,\germ l)/G(y)=\{1\}$.

\medskip

\noindent {\bf Case  2:} $x=\diag(\mu,\nu)$, $\mu \ne \nu$. In this case $\overline{L}(y)=\overline{D}$. There are three subcases:

\smallskip
\noindent{\bf  2A:} {$\mu \neq \pm \nu$, $\mu\neq 0\neq \nu$.} Here $\overline{G}(y)=\overline{L}(y)$, and $L\backslash G(y,\germ l)/G(y) = \{1,s,w,wt\}$. 

\smallskip
\noindent {\bf 2A$\!^\star$:} {$\nu= 0$.} Here $\overline{G}(y)$ is a   reductive group over $\k$; $\ol{L}(y)=\ol{D}$ is a rational maximal torus, whose Weyl group in $\ol{G}(y)$ is  generated (modulo $\ol{D}$) by the involution $t^{-1}w t$; and $\ol{U}(y)$ and $\ol{V}(y)$ are the unipotent radicals of an opposite pair of rational Borel subgroups of $\ol{G}(y)$ containing $\ol{L}(y)$.  We have $L\backslash \ol{G}(y)/G(y)=\{1,w\}$.
 
 \smallskip
\noindent{\bf 2B:} {${\mu =-\nu}$.} In this case we have $\ol{G}(y)=\Ad_w(\overline{L})$, $\ol{U}(y) = \Ad_w\left( \ol{V'}  \right)$ and $\ol{V}(y) = \Ad_w\left( \ol{U'}  \right)$, while $L\backslash G(y,\germ l)/G(y)=\{1,w,wt\}$.

\medskip

\noindent {\bf Case 3:}  $x=\big[\begin{smallmatrix} \alpha & \beta \\  \mu\beta &  \alpha\end{smallmatrix}\big], \mu \in \k$ non-square, $\alpha\in \k$, $\beta\in \k^\times$. In this case $\k_2\coloneq M_2(\k)(x)$ is a quadratic field extension of $\k$, and $\overline{L}(y)=\{\diag(a,a^{-\transpose})\ | \ a\in \k_2^\times \} \cong \k_2^\times$. There are two subcases.

\smallskip
\noindent{\bf 3A:} {${\alpha\neq 0}$.}  We have 
$\ol{G}(y)=\overline{L}(y)$ and $L\backslash G(y,\germ l)/G(y) = \{1,s\}$. 

\smallskip
\noindent{\bf 3B:} {$\alpha=0$.} Here $\ol{G}(y)$ is a reductive group over $\k$; $\ol{L}(y)$ is a rational maximal torus of $\ol{G}(y)$ whose Weyl group is generated by  $s$; and $\ol{U}(y)$ and $\ol{V}(y)$ are the unipotent radicals of an opposite pair of rational Borel subgroups of $\ol{G}(y)$ containing $\ol{L}(y)$. In this case $L\backslash G(y,\germ l)/G(y)=\{1\}$.

\medskip

\noindent{\bf Case 4:}  ${x=\left[\begin{smallmatrix} \mu & 1 \\ & \mu \end{smallmatrix}\right]}$. In this case $\overline{L}(y)=\{\diag(a,a^{-\transpose})\ |\ a\in \k[x]\} \cong \GL_1(\k[\epsilon]/(\epsilon^2))$. There are two subcases.

\smallskip
\noindent{\bf  4A:} {${\mu\neq 0}$.} Here $\overline{G}(y)=\overline{L}(y)$ and $L\backslash G(y,\germ l)/G(y) = \{1,s\}$. 

 \smallskip
\noindent{\bf  4B:} {$\mu =0$.} The subgroups $\ol{U}(y)$ and $\ol{V}(y)$ commute with one another in $\ol{G}(y)$, and we have 
\[
\ol{G}(y) = \left( \ol{U}(y)\times \ol{V}(y) \right) \rtimes \left( \ol{L}(y)\rtimes S\right) 
\]
where $S$ is the two-element group generated by $s$. We have $L\backslash G(y,\germ l)/G(y)=\{1\}$.

\subsection{Proof of Theorem \ref{thm:sp_Mackey}}\label{subsec:Sp4_Mackey_proof}

In this section we shall use the results of the previous section to prove that for each $y\in \germ l$ and each $g\in G(y,\germ l)$, there is a natural isomorphism 
\begin{equation}\label{eq:sp_Mackey_proof}
\pres^{\ol{G}(g\cdot y)}_{\ol{L}(g\cdot y)}\, \Ad_{ {g}} \, \pind_{\ol{L}(y)}^{\ol{G}(y)} \cong \Delta(g\cdot y,y) \bigoplus \Delta(g\cdot y, s\cdot y)\Ad_s \bigoplus  \Xi(g\cdot y,   y)
\end{equation}
of functors $\Rep(\ol{L}(y))\to \Rep(\ol{L}(g\cdot y)$. By Corollary \ref{cor:Sp4_Clifford}, this constitutes a proof of Theorem \ref{thm:sp_Mackey}. We recall from Section \ref{subsec:Sp4_Clifford} that 
\[
\Delta(z,y)=\begin{cases} \Ad_{ {l}} & \text{if }z=l\cdot y \\ 0 & \text{if }z\not\in L\cdot y \end{cases} \qquad \text{and}\qquad 
\Xi(z,y) = \bigoplus_{d\in \germ d} \left(\Delta(z,w\cdot d) \pind_{\ol{D}}^{\ol{L}(w\cdot d)} \Ad_w \pres^{\ol{L}(d)}_{\ol{D}} \Delta(d,y)\right)
\]
for all $y,z\in \germ l$.

  The proof of \eqref{eq:sp_Mackey_proof} goes through a case-by-case analysis of the various possibilities for $y=\diag(x,-x^\transpose)$. The cases are labelled as in Section \ref{subsec:Sp4_centralisers}. The reader who is more interested in ideas than in details might like to focus on cases  1A ,  2A$\!^\star$  and  4B, which together contain all of the techniques used in the other cases.

\medskip
\noindent{\bf Case 1A:} Take $y=\diag(\mu,\mu,-\mu,-\mu)$, $\mu\neq 0$. We must consider $g=1$, $g=s$ and $g=w$. 

For $g=1$, the left-hand side of \eqref{eq:sp_Mackey_proof} is the identity on $\Rep(\ol{L})$, because all of the centralisers are equal to $\ol{L}$. We have $\Delta(y,s\cdot y)=0$ because $y$ and $s\cdot y=-y$ are not $L$-conjugate. The only diagonal matrix $d\in \germ d$ that is $L$-conjugate to $y$ is $d=y$ itself, and we have $\Delta(y,w\cdot y)=0$, and so $\Xi(y,y)=0$. Thus the only nonzero term on the right-hand side of \eqref{eq:sp_Mackey_proof} is $\Delta(y,y)$, which is the identity on $\Rep(\ol{L})$. Thus the two sides of \eqref{eq:sp_Mackey_proof} are isomorphic.

For $g=s$ all of the centralisers are again equal to $\overline{L}$, and so the left-hand side of \eqref{eq:sp_Mackey_proof} equals $\Ad_s$. One finds as above that the only nonzero term on the right-hand side is $\Delta(s\cdot y, s\cdot y)\Ad_s$, which equals  $\Ad_s$.

For $g=w$ we have $w\cdot y = \diag(-\mu,\mu,\mu,-\mu)$, and so the centralisers of $g\cdot y$ are as in case  2B. The left-hand side of \eqref{eq:sp_Mackey_proof} is thus equal to $\pres^{\ol{G}(w\cdot y)}_{\ol{D}} \Ad_w$. Since $\Delta(w\cdot y, y)$ and $\Delta(w\cdot y, s\cdot y)$ are both zero, 
the only potentially nonzero term on the right-hand side of \eqref{eq:sp_Mackey_proof} is $\Xi(w\cdot y, y)$. Since the only diagonal matrix that is $L$-conjugate to $y$ is $y$ itself, we have 
\[
\Xi(w\cdot y,  y) = \Delta(w\cdot y, w\cdot y) \pind_{\ol{D}}^{\ol{L}(w\cdot y)} \Ad_w \pres^{\ol{L}(y)}_{\ol{D}} \Delta(y,y) =  \Ad_w \pres^{\ol{L}}_{\ol{D}} .
\]
Now, we have $\ol{G}(w\cdot y)=\Ad_w(\ol{L})$, $\ol{U}(w\cdot y)=\Ad_w(\ol{V'})$, and $\ol{V}(w\cdot y) = \Ad_w(\ol{U'})$ (see case 2B in Section \ref{subsec:Sp4_centralisers}), 
and therefore
\[
\pres^{\ol{G}(w\cdot y)}_{\ol{D}} \Ad_w =   \pres_{\ol{U}(w\cdot y),\ol{V}(w\cdot y)} \Ad_w \cong \Ad_w \pres_{\ol{V'},\ol{U'}} \cong \Ad_w\pres_{\ol{U'},\ol{V'}} = \Ad_w \pres^{\ol{L}}_{\ol{D}}
\]
where we used Theorem \ref{thm:pind-properties}\eqref{item:pind-UV-VU} to switch $\ol{U'}$ and $\ol{V'}$. This completes the proof of \eqref{eq:sp_Mackey_proof} in case  1A.

\medskip
\noindent{\bf Case 2A:} Take $y=\diag(\mu ,\nu,-\mu ,-\nu)$, where $\mu$ and $\nu$ are nonzero and $\mu \neq \pm \nu$. We must consider $g=1$, $g=s$, $g=w$ and $g=wt$. For each of these $g$ the matrix $g\cdot y$ is again of the form  2A, and so all of the centralisers appearing in \eqref{eq:sp_Mackey_proof} are equal to $\overline{D}$, and the left-hand side of \eqref{eq:sp_Mackey_proof} is equal to the functor $\Ad_g$ on~$\Rep(\overline{D})$. 

For $g=1$ the functor $\Delta( y, y)$ equals the identity, while $\Delta(y, s\cdot y)=0$ (because $y$ and $s\cdot y$ are not $L$-conjugate) and $\Xi(y,y)=0$ (because the only diagonal matrices that are $L$-conjugate to $y$ are $y$ and $t\cdot y$, and neither of these is $L$-conjugate to $w\cdot y$). So both sides of \eqref{eq:sp_Mackey_proof} equal the identity.  

For $g=s$ the functor  $\Delta(s\cdot y, s\cdot y)$ is the identity, while $\Delta(s\cdot y,y)$ and $\Xi(s\cdot y, s\cdot y)$ are both zero. So both sides of \eqref{eq:sp_Mackey_proof} equal $\Ad_s$. 

For $g=w$, the only potentially nonzero term on the right-hand side of \eqref{eq:sp_Mackey_proof} is $\Xi(w\cdot y,  y)$. There are two diagonal matrices $d\in \germ d$ that are $L$-conjugate to $y$, namely $y$ itself and $t\cdot y$. Since $w\cdot y=\diag(-\mu,\nu,\mu,\-\nu)$ and $wt\cdot y=\diag(-\nu,\mu,\nu,-\mu)$ are not $L$-conjugate, we have $\Delta(w\cdot y, wt\cdot y)=0$, and so the summand in $\Xi(w\cdot y, w\cdot y)$ corresponding to $d=t\cdot y$ is equal to zero. Therefore  
\[
\Xi(w\cdot y,  y) = \Delta(w\cdot y, w\cdot y) \pind_{\ol{D}}^{\ol{L}(w\cdot y)} \Ad_w \pres^{\ol{L}(y)}_{\ol{D}} \Delta(y,y) = \Ad_w
\] 
as required.

For $g=wt$  the argument of the previous paragraph shows that the right-hand side of \eqref{eq:sp_Mackey_proof} is equal to $\Xi(wt\cdot y, y)$, and that only the $d=t\cdot y$ summand in the latter is nonzero. We have 
\[
 \Xi(wt\cdot y, y)   = \Delta(wt\cdot y,wt\cdot y) \pind_{\ol{D}}^{\ol{L}(wt\cdot y)} \Ad_w \pres^{\ol{L}(t\cdot y)}_{\ol{D}} \Delta(t\cdot y,y)  
  =  \Ad_w \Ad_t 
 \]
 because $\Delta(t\cdot y,y)=\Ad_t$ and all of the centralisers equal $\ol{D}$. This completes the proof of \eqref{eq:sp_Mackey_proof} in case  2A.

\medskip
\noindent{\bf Case 2A$\!^\star$:} Let $y=\diag(\mu,0,-\mu,0)$ where $\mu\neq 0$. We must consider $g=1$ and $g=w$.

For $g=1$, the left-hand side of \eqref{eq:sp_Mackey_proof} equals $\pres^{\ol{G}(y)}_{\ol{D}} \pind_{\ol{D}}^{\ol{G}(y)}$. We are in the situation of Example \ref{ex:pind-field}, and so $\pres^{\ol{G}(y)}_{\ol{D}}$ and $\pind_{\ol{D}}^{\ol{G}(y)}$ are isomorphic  to the functors of Harish-Chandra restriction and induction (respectively) for the maximal torus $\ol{D}\subset \ol{G}(y)$. Since the Weyl group of $\ol{D}$ in $\ol{G}(y)$ is equal to $\{1,t^{-1}wt\}$, the usual Mackey formula \eqref{eq:Mackey_intro} (cf. \cite[Theorem 5.1]{DM}) for the composition of Harish-Chandra functors   gives  
\[
\pres^{\ol{G}(y)}_{\ol{D}} \pind_{\ol{D}}^{\ol{G}(y)} \cong \id \oplus \Ad_{t^{-1}w t}.
\]

Still taking $g=1$, we have $\Delta(g\cdot y,s\cdot y)=0$, and so  the right-hand side of \eqref{eq:sp_Mackey_proof} equals $\id \oplus \Xi(y,y)$. The only diagonal matrices that are $L$-conjugate to $y$ are $d=y$ and $d=t\cdot y$. For $d=y$ we have $\Delta(y,w\cdot y)=0$, and so the only potentially nonzero summand in $\Xi(y,y)$ is the one corresponding to $d=t\cdot y$. 
Computing this summand, we find 
\[
\Xi(y,y)=\Delta(y,wt\cdot y) \pind_{\ol{D}}^{\ol{L}(wt\cdot y)} \Ad_w \pres^{\ol{L}(t\cdot y)}_{\ol{D}} \Delta(t\cdot y, y) = \Ad_{t^{-1}} \Ad_w \Ad_t,
\]
because $\ol{L}(wt\cdot y)=\ol{L}(t\cdot y)=\ol{D}$. Thus the right-hand side of \eqref{eq:sp_Mackey_proof} is, like the left-hand side, isomorphic to $\id\oplus \Ad_{t^{-1}wt}$.

Now take $g=w$. Notice that $w\cdot y = -y$. The left-hand side of \eqref{eq:sp_Mackey_proof} is 
\[
\pres^{\ol{G}(y)}_{\ol{D}} \Ad_w \pind_{\ol{D}}^{\ol{G}(y)}  \cong \Ad_w \pres^{\ol{G}(y)}_{\ol{D}} \pind_{\ol{D}}^{\ol{G}(y)} \cong \Ad_w \oplus \Ad_{ts},
\]
where for the first isomorphism we have used Theorem \ref{thm:pind-properties}\eqref{item:pind-UV-VU}, and for the second we have used the Mackey formula \eqref{eq:Mackey_intro} for Harish-Chandra induction together with the equality $wt^{-1}wt=ts$ in $G$.

Keeping $g=w$ and turning to the right-hand side of \eqref{eq:sp_Mackey_proof}, the term $\Delta(w\cdot y,y)$ vanishes, while the fact that $w\cdot y = ts\cdot y$ implies that $\Delta(w\cdot y, s\cdot y)\Ad_s=\Ad_{ts}$. So we are left to show that $\Xi(w\cdot y,y)=\Ad_w$. The $d=y$ term in $\Xi(w\cdot y, y)$ is equal to 
\[
\Delta(w\cdot y, w\cdot y) \pind_{\ol{D}}^{\ol{D}} \Ad_w \pres^{\ol{D}}_{\ol{D}} \Delta(y,y) = \Ad_w, 
\]
while the $d=t\cdot y$ term vanishes because $\Delta(w\cdot y, t\cdot y)=0$. Thus both sides of \eqref{eq:sp_Mackey_proof} are isomorphic to $\Ad_w\oplus \Ad_{ts}$ in this case.

\medskip
\noindent{\bf Case 3A:}  Take $x=\left[\begin{smallmatrix}   \alpha & \beta \\ \beta\mu & \alpha \end{smallmatrix}\right]$, where ${\mu\in \k}$ is a non-square and $\alpha,\beta\in \k^\times$, and let $y=\diag(x,-x^\transpose)$.  We must consider $g=1$ and $g=s$. We have $\ol{G}(y)=\ol{G}(s\cdot y) = \ol{L}(s\cdot y)=\ol{L}(y)$, so that the left-hand side of \eqref{eq:sp_Mackey_proof} is equal to $\Ad_g$ for each $g$. Note that since $y$ is not $L$-conjugate to a diagonal matrix we have $\Xi(z,y)=0$ for every $z$. 
 
For $g=1$ we have $\Delta(y,y)=\id$ while $\Delta(y,s\cdot y)=0$, so both sides of \eqref{eq:sp_Mackey_proof} equal the identity.

For $g=s$ we have $\Delta(s\cdot y,y)=0$ while $\Delta(s\cdot y,s\cdot y)=\id$ and so both sides of \eqref{eq:sp_Mackey_proof} equal $\Ad_s$. So \eqref{eq:sp_Mackey_proof} holds in case  3A .

\medskip
\noindent{\bf Case 4A:} Take ${x=\left[\begin{smallmatrix} \mu & 1 \\ & \mu \end{smallmatrix}\right]}$, where ${\mu\neq 0}$, and let $y=\diag(x,-x^\transpose)$. The argument is the same as in case  3A.

\medskip
\noindent{\bf Case 1B:} Take $y=0$. We need only consider $g=1$. Then \eqref{eq:sp_Mackey_proof} becomes the assertion that 
\[
\pres^{\ol{G}}_{\ol{L}} \pind_{\ol{L}}^{\ol{G}} \cong \id\oplus \Ad_s \oplus \pind_{\ol{D}}^{\ol{L}}\Ad_w \pres^{\ol{L}}_{\ol{D}}.
\]
This is true: the functors $\pind_{\ol{L}}^{\ol{G}}$ and $\pres^{\ol{G}}_{\ol{L}}$ identify, as in Example \ref{ex:pind-field}, with the functors of Harish-Chandra induction and restriction for the Siegel Levi subgroup in $\ol{G}=\Sp_4(\k)$, and the above formula is just the standard Mackey formula \eqref{eq:Mackey_intro} for the composition of these functors.

\medskip
\noindent{\bf Case 2B:} Let $y=\diag(\mu,-\mu,-\mu,\mu)$, $\mu\neq 0$. We must consider $g=1$, $g=w$ and $g=wt$. 
 
 For $g=1$  the left-hand side of \eqref{eq:sp_Mackey_proof} is equal to 
 \[
 \pres^{\Ad_w(\ol{L})}_{\ol{D}} \pind_{\ol{D}}^{\Ad_w(\ol{L})} = \Ad_w \pres^{\ol{L}}_{\ol{D}} \pind_{\ol{D}}^{\ol{L}} \Ad_{w^{-1}} \cong \Ad_w(\id\oplus \Ad_t) \Ad_{w^{-1}} \cong \id \oplus \Ad_{s}
 \]
 where we have identified $\pind_{\ol{D}}^{\ol{L}}$ and $\pres^{\ol{L}}_{\ol{D}}$ with Harish-Chandra functors and applied the usual Mackey formula \eqref{eq:Mackey_intro} for the group $\ol{L}\cong \GL_2(\k)$ and its diagonal torus $\ol{D}$. On the right-hand side of \eqref{eq:sp_Mackey_proof} we have $\Delta(y,y)=\id$ and $\Delta(y,s\cdot y)=\id$, so we are left to show that $\Xi(y,y)=0$. The only $d\in \germ d$ with $\Delta(d,y)\neq 0$ are $d=y$ and  $d=t\cdot y$. In both of these cases we have $\Delta(y, w\cdot d)=0$, and so $\Xi(y,y)=0$ as required.
 
The $g=w$ and $g=wt$ cases follow the argument for the \lq $g=w$ component\rq~ of case  1A. The left-hand side of \eqref{eq:sp_Mackey_proof} is isomorphic to $\Ad_g \pind_{\ol{D}}^{\Ad_g(\ol{L})}$, while the right-hand side is isomorphic to $\pind_{\ol{D}}^{\ol{L}} \Ad_g$, and the two sides are isomorphic to each other by Theorem \ref{thm:pind-properties}\eqref{item:pind-UV-VU}.

\medskip
\noindent{\bf Case 3B:} Let ${x=\left[\begin{smallmatrix}   & \beta \\ \beta\mu &  \end{smallmatrix}\right]}$, ${\mu\in \k}$ a non-square, ${\beta\in \k^\times}$, and take $y=\diag(x,-x^\transpose)$. We need consider only $g=1$. We have on the one hand $\Delta(y,s\cdot y)=\id$, while on the other hand $\Xi(y,y)=0$ (since $y$ is not $L$-conjugate to any $d\in \germ d$), and so  \eqref{eq:sp_Mackey_proof} reads 
\[
\pres^{\ol{G}(y)}_{\ol{L}(y)} \pind_{\ol{L}(y)}^{\ol{G}(y)} \cong \id \oplus \Ad_s.
\]
This is true: the functors on the right-hand side are isomorphic to Harish-Chandra functors as in Example \ref{ex:pind-field}, and the above formula is the usual Mackey formula for these functors.

\medskip
\noindent{\bf Case 4B:} Take $x=\left[\begin{smallmatrix} 0 & 1 \\ 0 & 0 \end{smallmatrix}\right]$ and $y=\diag(x,-x^\transpose)$. We need only consider $g=1$. As in case 3B, the right-hand side of \eqref{eq:sp_Mackey_proof} is $\id\oplus \Ad_s$, while the left-hand side is $\pres^{\ol{G}(y)}_{\ol{L}(y)} \pind_{\ol{L}(y)}^{\ol{G}(y)}$. Since $\ol{U}(y)$ and $\ol{V}(y)$ commute, the latter functor is isomorphic as in Example \ref{ex:pind-commuting} to the tensor product with the $\H(\ol{L}(y))$-bimodule 
\[
e_{\ol{U}(y)} e_{\ol{V}(y)} \H(\ol{G}(y)) e_{\ol{U}(y)} e_{\ol{V}(y)} \cong \H\left( (\ol{U}(y)\times \ol{V}(y))\backslash \ol{G}(y) / (\ol{U}(y)\times \ol{V}(y)\right) = \H\left( (\ol{U}(y)\times \ol{V}(y))\backslash \ol{G}(y)\right),
\]
with the last equality holding because $\ol{U}(y)\times \ol{V}(y)$ is normal in $\ol{G}(y)$. The semidirect product decomposition of $\ol{G}(y)$ given in Section~\ref{subsec:Sp4_centralisers} for this case implies that 
\[
\H\left( (\ol{U}(y)\times \ol{V}(y))\backslash \ol{G}(y)\right) \cong \H(\ol{L}(y)\rtimes S)\cong \H(\ol{L}(y))\oplus \H(\ol{L}(y))s
\]
as $\H(\ol{L}(y))$-bimodules, and so the corresponding tensor product functor $\pres^{\ol{G}(y)}_{\ol{L}(y)}\pind_{\ol{L}(y)}^{\ol{G}(y)}$ is isomorphic to $\id\oplus \Ad_s$ as required. 

This completes the proof of \eqref{eq:sp_Mackey_proof} and hence, by Corollary \ref{cor:Sp4_Clifford}, of Theorem \ref{thm:sp_Mackey}. \hfill\qed

\subsection{Comparison with  parahoric induction}\label{parahoric_section}
 
We now come to the second corollary of the analysis of Sections~\ref{subsec:Sp4_congruence}--\ref{subsec:Sp4_centralisers}. In addition to the functor 
\[ \pind_{U,V}:\Rep(L)\to \Rep(G)\]
that we have been considering until now, we also have the functor
\[ \pind_{U_0,V} :\Rep(L)\to \Rep(G),\]
which is an example of \emph{parahoric induction} as defined by Dat in \cite{Dat_parahoric}. More precisely, $\pind_{U_0,V}$ is the restriction of a parahoric induction functor for $\Sp_4(\O)$ to the subcategory of representations inflated from $\Sp_4(\O_2)$, where~$\O$ is the ring of integers in a non-archimedean local field. It follows immediately from the definitions that  we have a  natural inclusion  $ \pind_{U,V} \subseteq \pind_{U_0, V}$. 
We shall show that  this inclusion is proper. This gives a negative answer to  \cite[Question 2.15]{Dat_parahoric} in this case.

\begin{corollary}\label{Dat_corollary}
Let $x\in M_2(\k)$ and consider $y=\diag(x,-x^\transpose)\in \germ l$. The restrictions of the functors 
\[
\pind_{U,V} ,\ \pind_{U_0,V}  :\Rep(L) \to \Rep(G)
\]
to the subcategory $\Rep(L)_{\phi_y}$ are mutually nonisomorphic if $x$ is   nonzero and nilpotent; and these restrictions are  mutually isomorphic if $x$ is zero or non-nilpotent.
\end{corollary}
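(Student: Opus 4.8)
The plan is to reduce the corollary, via the Clifford-theoretic machinery of Section~\ref{sec:Clifford}, to an explicit comparison of two functors between representation categories of the centralisers $\ol{G}(y)$ and $\ol{L}(y)$ tabulated in Section~\ref{subsec:Sp4_centralisers}. The starting observation is that $(U_0,L,V)$ is itself a virtual Iwahori decomposition of $G$, and that intersecting it with the normal subgroup $G_0$ returns the \emph{same} triple $(U_0,L_0,V_0)$ that is used for $(U,L,V)$ in Section~\ref{subsec:Sp4_congruence}; hence both $\pind_{U,V}$ and $\pind_{U_0,V}$ restrict from $\Rep(L)_{\phi_y}$ into the single block $\Rep(G)_{\phi_y}$ (Theorem~\ref{thm:C1} and Lemma~\ref{lem:Sp4_orbit}), and Theorems~\ref{thm:C2} and~\ref{thm:C3} apply to both. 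Since by Lemma~\ref{lem:Sp4_extension}(1) the character $\phi_y$ extends to an \emph{ordinary} character $\phi'_y$ of $G(y)$, the relevant two-cocycle $\gamma$ is trivial, and the computation behind Lemma~\ref{lem:Sp4_C3_LG} identifies $\pind_{U,V}|_{\Rep(L)_{\phi_y}}$ with $\pind_{\ol{U}(y),\ol{V}(y)}\colon\Rep(\ol{L}(y))\to\Rep(\ol{G}(y))$. The key new point for $\pind_{U_0,V}$ is that $U_0\subseteq G_0\subseteq G(\phi_y)$, so the inertia subgroup $\ol{U_0}(\phi_y)=U_0/(U_0\cap G_0)$ is \emph{trivial}; running the argument of Lemma~\ref{lem:C3_eX} and the proof of Theorem~\ref{thm:C3} with $U_0$ in place of $U(\phi)$ (noting also that $\phi'_y$ is trivial on $V(y)$, so the scalars $b_y$ of Lemma~\ref{lem:C3_eX} are all $1$) then identifies $\pind_{U_0,V}|_{\Rep(L)_{\phi_y}}$ with the functor $M\mapsto\H(\ol{G}(y))e_{\ol{V}(y)}\otimes_{\H(\ol{L}(y))}M\cong\ind_{\ol{V}(y)\rtimes\ol{L}(y)}^{\ol{G}(y)}(\infl M)$, i.e.\ ordinary parabolic induction. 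Under this translation the given inclusion $\pind_{U,V}\subseteq\pind_{U_0,V}$ becomes the evident inclusion of bimodules $\H(\ol{G}(y))e_{\ol{U}(y)}e_{\ol{V}(y)}\subseteq\H(\ol{G}(y))e_{\ol{V}(y)}$, so the corollary amounts to deciding when $\pind_{\ol{U}(y),\ol{V}(y)}\cong\ind_{\ol{V}(y)\rtimes\ol{L}(y)}^{\ol{G}(y)}\circ\infl$.

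Next I would dispose of the cases in which the two functors agree. If $x=0$ (Case~1B of Section~\ref{subsec:Sp4_centralisers}) then $\ol{G}(y)=\Sp_4(\k)$ with $\ol{L}(y)$ the Siegel Levi and $\ol{U}(y),\ol{V}(y)$ the unipotent radicals of an opposite pair of Siegel parabolics, so by the Howlett--Lehrer theorem (Example~\ref{ex:pind-field}) $\pind_{\ol{U}(y),\ol{V}(y)}$ coincides with Harish--Chandra induction, which is precisely $\ind_{\ol{V}(y)\rtimes\ol{L}(y)}^{\ol{G}(y)}\circ\infl$. If $x$ is non-nilpotent, I would run through Cases~1A, 2A, 2A$\!^\star$, 2B, 3A, 3B and~4A: in each, either $\ol{G}(y)=\ol{L}(y)$, in which case $\ol{U}(y)=\ol{V}(y)=\{1\}$ and both functors are the identity; or, as recorded in Section~\ref{subsec:Sp4_centralisers}, $\ol{G}(y)$ is a reductive group over $\k$ with $\ol{L}(y)$ a Levi subgroup and $\ol{U}(y),\ol{V}(y)$ the unipotent radicals of an opposite pair of parabolics containing it, whence Example~\ref{ex:pind-field} again gives $\pind_{\ol{U}(y),\ol{V}(y)}\cong\ind_{\ol{V}(y)\rtimes\ol{L}(y)}^{\ol{G}(y)}\circ\infl$. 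Either way the inclusion above is an equality.

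Finally, for $x$ nonzero and nilpotent---Case~4B, where $x$ is conjugate to $\left[\begin{smallmatrix}0&1\\0&0\end{smallmatrix}\right]$---I would argue by a dimension count. Here Section~\ref{subsec:Sp4_centralisers}, with the details of Appendix~\ref{appendix}, tells us that $\ol{U}(y)$ and $\ol{V}(y)$ commute, that $\ol{U}(y)\neq\{1\}$, and that $\ol{G}(y)=(\ol{U}(y)\times\ol{V}(y))\rtimes(\ol{L}(y)\rtimes S)$ with $S$ of order two. By Example~\ref{ex:pind-commuting}, $\pind_{\ol{U}(y),\ol{V}(y)}(M)\cong\ind_{(\ol{U}(y)\times\ol{V}(y))\rtimes\ol{L}(y)}^{\ol{G}(y)}(\infl M)$, of dimension $[\ol{G}(y):(\ol{U}(y)\times\ol{V}(y))\rtimes\ol{L}(y)]\cdot\dim M=2\dim M$, whereas $\ind_{\ol{V}(y)\rtimes\ol{L}(y)}^{\ol{G}(y)}(\infl M)$ has dimension $[\ol{G}(y):\ol{V}(y)\rtimes\ol{L}(y)]\cdot\dim M=2\,|\ol{U}(y)|\cdot\dim M$. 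Since $|\ol{U}(y)|>1$, the inclusion $\pind_{\ol{U}(y),\ol{V}(y)}\hookrightarrow\ind_{\ol{V}(y)\rtimes\ol{L}(y)}^{\ol{G}(y)}\circ\infl$ is strict on every nonzero object, so the two functors are not isomorphic; transporting this back through the Clifford equivalences shows that the restrictions of $\pind_{U,V}$ and $\pind_{U_0,V}$ to $\Rep(L)_{\phi_y}$ are mutually non-isomorphic.

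I expect the main obstacle to be the first step: one must check that the reductions of Section~\ref{sec:Clifford} go through for the modified triple $(U_0,L,V)$---which they do, essentially because intersecting with $G_0$ returns the triple $(U_0,L_0,V_0)$ again---and, more delicately, that the vanishing of $\ol{U_0}(\phi_y)$ makes the Clifford-reduced form of $\pind_{U_0,V}$ genuine parabolic induction $\ind_{\ol{V}(y)\rtimes\ol{L}(y)}^{\ol{G}(y)}\circ\infl$ rather than a twisted variant; verifying that the cocycle $\gamma$ and the scalars $a_x,b_y$ of Lemma~\ref{lem:C3_eX} are all trivial (via Lemma~\ref{lem:Sp4_extension}) is what makes this identification clean. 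Once this is in place, the remaining two steps are a routine case-by-case bookkeeping over the nine possibilities of Section~\ref{subsec:Sp4_centralisers}.
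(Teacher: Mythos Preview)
Your proposal is correct and follows essentially the same approach as the paper: reduce both $\pind_{U,V}$ and $\pind_{U_0,V}$ via the Clifford machinery of Section~\ref{sec:Clifford} to the functors $\pind_{\ol{U}(y),\ol{V}(y)}$ and $\pind_{\ol{V}(y)}$ on $\Rep(\ol{L}(y))$, then compare these case-by-case using Section~\ref{subsec:Sp4_centralisers}---invoking Howlett--Lehrer in the semisimple cases, triviality of $\ol{U}(y),\ol{V}(y)$ in Case~4A, and the dimension count $2$ versus $2\,|\ol{U}(y)|=2\,|\k|$ in Case~4B. Your treatment of the first step (why the Clifford reduction of $\pind_{U_0,V}$ yields ordinary parabolic induction, via $U_0\subseteq G_0$ forcing $\ol{U_0}(\phi_y)=\{1\}$) is in fact more explicit than the paper, which simply asserts the second commutative square follows from ``the computations of Section~\ref{subsec:Sp4_Clifford}''.
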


\begin{proof}
The computations of Section  \ref{subsec:Sp4_Clifford}  show that there are commutative (up to natural isomorphism) diagrams 
\[
\xymatrix@R=30pt@C=50pt{ 
\Rep(L)_{\phi_y} \ar[r]^-{\pind_{U,V}} & \Rep(G)_{\phi_y} \\
\Rep(\ol{L}(y)) \ar[r]^-{\pind_{\ol{U}(y),\ol{V}(y)}} \ar[u]^-{F^L_y}_-{\cong} & \Rep(\ol{G}(y)) \ar[u]_-{F^G_y}^-{\cong} 
} \qquad \text{and}\qquad
\xymatrix@R=30pt@C=50pt{ 
\Rep(L)_{\phi_y} \ar[r]^-{\pind_{U_0,V}} & \Rep(G)_{\phi_y} \\
\Rep(\ol{L}(y)) \ar[r]^-{\pind_{\ol{V}(y)}} \ar[u]^-{F^L_y}_-{\cong} & \Rep(\ol{G}(y)) \ar[u]_-{F^G_y}^-{\cong} 
}  
\]
If $x$ is semisimple  then  $\ol{G}(y)$ is a finite reductive group, and $\ol{U}(y)$ and $\ol{V}(y)$ are the unipotent radicals of an opposite pair of rational parabolic subgroups  with common Levi subgroup $\ol{L}(y)$. The functor  $\pind_{\ol{V}(y)}$is the Harish-Chandra induction functor  associated to the  parabolic subgroup  $\ol{L}(y)\ol{V}(y)$    of $\ol{G}(y)$, and as in Example \ref{ex:pind-field} the natural inclusion $\pind_{\ol{U}(y),\ol{V}(y)}\subseteq \pind_{\ol{V}(y)}$ is an isomorphism. 

If $x$ is neither semisimple nor nilpotent, as in Case  4A, then the groups $\ol{U}(y)$ and $\ol{V}(y)$ are both trivial, $\ol{G}(y)=\ol{L}(y)$, and the functors $\pind_{\ol{U}(y),\ol{V}(y)}$ and $\pind_{\ol{V}(y)}$ are both isomorphic to the identity.

We are left to consider the case where $x$ is nilpotent; say $x=\left[\begin{smallmatrix} 0 & 1\\ 0 & 0 \end{smallmatrix}\right]$. In this case we have 
\[
\ol{G}(y)= \left( \ol{U}(y)\times \ol{V}(y)\right) \rtimes \left(\ol{L}(y)\rtimes S\right),
\]
from which it follows (cf. Example \ref{ex:pind-commuting}) that the functors $\pind_{\ol{U}(y),\ol{V}(y)}$ and $\pind_{\ol{V}(y)}$ are isomorphic, respectively, to the compositions
\[
\begin{aligned}
& \pind_{\ol{U}(y),\ol{V}(y)}:\Rep(\ol{L}(y)) \xrightarrow{\infl} \Rep\left( \left(\ol{U}(y)\times \ol{V}(y) \right)\rtimes \ol{L}(y)\right) \xrightarrow{\ind} \Rep(\ol{G}(y)),  \\
& \pind_{\ol{V}(y)}: \Rep(\ol{L}(y)) \xrightarrow{\infl} \Rep(\ol{V}(y)\rtimes \ol{L}(y)) \xrightarrow{\ind} \Rep(\ol{G}(y)).
\end{aligned}
\]
The functor $\pind_{\ol{U}(y),\ol{V}(y)}$ thus scales the $\C$-dimension of representations by a factor of 
\[
\left[\ol{G}(y) : (\ol{U}(y)\times \ol{V}(y))\rtimes \ol{L}(y) \right]  = |S| = 2,
\]
 while $\pind_{\ol{V}(y)}$ scales the dimension by 
 \[
 \left[\ol{G}(y) :\ol{V}(y)\rtimes \ol{L}(y) \right] = |S|\cdot |\ol{U}(y)| = 2  |\k|.
 \]
  Thus $\pind_{U,V}$ is not isomorphic to $\pind_{U_0,V}$ as functors on $\Rep(L)_{\phi_y}$.
\end{proof}

The above proof also shows that the parahoric induction and restriction functors do not satisfy the analogue of Theorem \ref{thm:sp_Mackey}:

\begin{corollary}
Let $x\in M_2(\k)$ be nonzero and nilpotent, and let $y=\diag(x,-x^\transpose)$. The restriction of the functor 
\[
\pres_{U_0,V}\pind_{U_0,V} : \Rep(L) \to \Rep(L)
\]
to the subcategory $\Rep(L)_{\phi_y}$ is not isomorphic to $\id\oplus \Ad_s$.
\end{corollary}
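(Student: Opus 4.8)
The plan is to reuse the Clifford-theoretic reduction already carried out for $\pind_{U,V}$ and $\pres_{U,V}$, transplanting it to the parahoric functors $\pind_{U_0,V}$ and $\pres_{U_0,V}$. First I would note that exactly as in Lemma~\ref{lem:Sp4_C3_LG} and Lemma~\ref{lem:Sp4_C3_yz}, Theorems~\ref{thm:C1}, \ref{thm:C2} and \ref{thm:C3} (which apply equally to Dat's functors, as remarked in the introduction) give a commutative diagram identifying the restriction of $\pres_{U_0,V}\pind_{U_0,V}$ to $\Rep(L)_{\phi_y}$ with the functor $\pres_{\ol{V}(y)}\,\pind_{\ol{V}(y)}$ on $\Rep(\ol{L}(y))$, conjugated by the Clifford equivalence $F^L_y$. (Here I use the second commutative diagram from the proof of Corollary~\ref{Dat_corollary} to identify $\pind_{U_0,V}$ with $\pind_{\ol{V}(y)}$, together with its evident $\pres$-analogue obtained by taking adjoints.) Thus it suffices to show that for $x=\left[\begin{smallmatrix}0&1\\0&0\end{smallmatrix}\right]$, the functor $\pres_{\ol{V}(y)}\,\pind_{\ol{V}(y)}$ on $\Rep(\ol{L}(y))$ is not isomorphic to $\id\oplus\Ad_s$.

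The key computation then follows the template of Case~4B in Section~\ref{subsec:Sp4_Mackey_proof}. From the semidirect product decomposition $\ol{G}(y)=\left(\ol{U}(y)\times\ol{V}(y)\right)\rtimes\left(\ol{L}(y)\rtimes S\right)$ recorded in Section~\ref{subsec:Sp4_centralisers}, the functor $\pind_{\ol{V}(y)}$ is inflation from $\ol{L}(y)$ to $\ol{V}(y)\rtimes\ol{L}(y)$ followed by ordinary induction to $\ol{G}(y)$, and $\pres_{\ol{V}(y)}$ is its two-sided adjoint. Hence the composite $\pres_{\ol{V}(y)}\,\pind_{\ol{V}(y)}$ is tensor product with the $\H(\ol{L}(y))$-bimodule $e_{\ol{V}(y)}\H(\ol{G}(y))e_{\ol{V}(y)}$, which (since $\ol{U}(y)\times\ol{V}(y)$ is normal in $\ol{G}(y)$) is isomorphic to $\H\left((\ol{L}(y)\rtimes S)\ltimes\ol{U}(y)\right)$ as an $\H(\ol{L}(y))$-bimodule. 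A dimension count settles the matter: applied to a representation $M$ of $\ol{L}(y)$, the functor $\pres_{\ol{V}(y)}\,\pind_{\ol{V}(y)}$ yields a space of dimension $\bigl[\ol{G}(y):\ol{V}(y)\rtimes\ol{L}(y)\bigr]\cdot\bigl[\ol{V}(y)\rtimes\ol{L}(y):\ol{L}(y)\bigr]\cdot\dim M$ divided appropriately; more simply, the multiplicity of $\id$ as a summand is $|S|\cdot|\ol{U}(y)|=2|\k|$ rather than the $|S|=2$ that would appear for $\id\oplus\Ad_s$, since the $U$-averaging present in $\pres_{U,V}$ is absent from $\pres_{U_0,V}$. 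Because $2|\k|>2$, the functor $\pres_{\ol{V}(y)}\,\pind_{\ol{V}(y)}$ scales dimensions by a strictly larger factor on the trivial summand than $\id\oplus\Ad_s$ does, so the two functors cannot be isomorphic.

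I expect the only genuine point requiring care, rather than routine bookkeeping, to be the verification that Theorems~\ref{thm:C1}--\ref{thm:C3} and Lemma~\ref{lem:Sp4_C3_LG} really do apply verbatim to the parahoric functors $\pind_{U_0,V}$, $\pres_{U_0,V}$ — this is asserted in the introduction (``the general methods developed in Sections~\ref{sec:Clifford} and \ref{sec:orbit} \ldots apply equally well to Dat's parahoric induction and restriction functors''), and the subgroups $U_0=U\cap G_0$ and $V$ satisfy all the hypotheses needed, so the reduction to $\Rep(\ol{L}(y))$ goes through exactly as for $\pind_{U,V}$. Once that is in place, the dimension comparison is immediate and the corollary follows. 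In fact, this is precisely the content already extracted in the proof of Corollary~\ref{Dat_corollary}: there it is shown that $\pind_{U,V}$ and $\pind_{U_0,V}$ scale dimensions on $\Rep(L)_{\phi_y}$ by $2$ and $2|\k|$ respectively, and since $\pres_{U,V}\pind_{U,V}\cong\id\oplus\Ad_s$ doubles dimensions on this subcategory by Theorem~\ref{thm:sp_Mackey}, any functor of the form $\pres_{U_0,V}\pind_{U_0,V}$ which multiplied dimensions by $\bigl(2|\k|\bigr)^2/|\ol{G}(y)\text{-factor}|$ — in any case strictly more than $2$ — cannot be isomorphic to $\id\oplus\Ad_s$.
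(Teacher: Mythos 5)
Your overall strategy (Clifford reduction to $\Rep(\ol{L}(y))$, then a dimension comparison using the semidirect product structure of $\ol{G}(y)$) can be made to work, but as written the decisive step is not justified, and the specific numbers you quote are off. The paper's proof is noticeably shorter and sidesteps your computation entirely.

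The concrete gap: your claim that $\pres_{U_0,V}\pind_{U_0,V}$ ``multiplied dimensions by $(2|\k|)^2/|\ol{G}(y)\text{-factor}|$ --- in any case strictly more than $2$'' is not established, and the parenthetical formula has no precise meaning (there is no $\ol{G}(y)$-factor making it true). Knowing that $\pind_{U_0,V}$ scales dimensions by $2|\k|$ tells you nothing direct about how the composite $\pres_{U_0,V}\pind_{U_0,V}$ scales dimensions, because restriction shrinks dimensions in a way that depends on the representation. Likewise, ``the multiplicity of $\id$ as a summand is $|S|\cdot|\ol{U}(y)| = 2|\k|$'' does not match any invariant of the situation: the dimension of the relevant bimodule $e_{\ol{V}(y)}\H(\ol{G}(y))e_{\ol{V}(y)}$ is $(|\k|+1)|\ol{L}(y)|$, not $2|\k|\cdot|\ol{L}(y)|$, and the correct endomorphism-algebra dimensions appear in the remark following Corollary~\ref{Dat_corollary} as $|\k|+1$ (or $|\k|$), not $2|\k|$. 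If you want to carry out your route, you need to compute the double-coset space $\ol{V}(y)\backslash\ol{G}(y)/\ol{V}(y)$ and show its cardinality exceeds $2|\ol{L}(y)|$, then invoke the fact that isomorphic bimodule-tensor functors have isomorphic (hence equidimensional) bimodules.

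The paper avoids all of this. It uses only the two facts already proved in Corollary~\ref{Dat_corollary}: (i) for nonzero $M\in\Rep(L)_{\phi_y}$ one has a proper inclusion $\pind_{U,V}(M)\subsetneq\pind_{U_0,V}(M)$, and (ii) $\pres_{U,V}$, $\pres_{U_0,V}$ are two-sided adjoints of $\pind_{U,V}$, $\pind_{U_0,V}$. Since everything is semisimple, the proper inclusion of $G$-representations gives a \emph{proper} inclusion $\End_G(\pind_{U,V}(M))\subsetneq\End_G(\pind_{U_0,V}(M))$, and by adjunction the first is $\Hom_L(M, M\oplus\Ad_s(M))$ (using Theorem~\ref{thm:sp_Mackey}) while the second is $\Hom_L(M, \pres_{U_0,V}\pind_{U_0,V}(M))$; a strict inequality of $\Hom$-dimensions immediately rules out $\pres_{U_0,V}\pind_{U_0,V}(M)\cong M\oplus\Ad_s(M)$. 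No Clifford reduction and no explicit bimodule computation is needed. Your plan can be salvaged, but you should either (a) tighten the bimodule dimension count (computing $|\ol{V}(y)\backslash\ol{G}(y)/\ol{V}(y)|=(|\k|+1)|\ol{L}(y)|\ne 2|\ol{L}(y)|$), or (b) switch to the adjunction argument, which is both shorter and already uses only ingredients you cite.
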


\begin{proof}
The proof of Corollary \ref{Dat_corollary} showed that for each nonzero $M\in \Rep(L)_{\phi_y}$ there is a proper inclusion $\pind_{U,V}(M)\subsetneq \pind_{U_0,V}(M)$, and hence a proper inclusion
\[
\Hom_L(M,M\oplus \Ad_s(M)) \cong \End_G(\pind_{U,V}(M)) \subsetneq \End_G(\pind_{U_0,V}(M)) \cong \Hom_L(M, \pres_{U_0,V}\pind_{U_0,V}(M)).
\]
Thus $\pres_{U_0,V}\pind_{U_0,V}(M)$ is not isomorphic to $M\oplus \Ad_s(M)$.
\end{proof}

\begin{remarks}
\begin{enumerate}[(1)]
\item A straightforward computation with the functors $\pind_{\ol{V}(y)}$ and $\pres_{\ol{V}(y)}$, using the semidirect product decomposition of $\ol{G}(y)$, shows that for each irreducible $M\in \Rep(L)_{\phi_y}$ one has 
\[
\dim_{\C} \End_G(\pind_{U_0,V}(M)) = \begin{cases} |\k|+1 & \text{if }M\cong \Ad_s(M),\\
|\k| & \text{if }M\not\cong \Ad_s(M).\end{cases}
\]
\item The nilpotent orbit $L\cdot y$ is the only one on which the Mackey formula fails to hold for the  functors  $\pind_{U_0,V}$ and $\pres_{U_0,V}$: on all of the other orbits  our proof of Theorem \ref{thm:sp_Mackey} carries over  to the parahoric functors, thanks to Corollary \ref{Dat_corollary}.
\end{enumerate}
\end{remarks}


\section{Representations of the Iwahori subgroup of the general linear group}\label{sec:Iwahori}

Let $\O$ be a {compact} discrete valuation ring with maximal ideal $\germ p$. In this section we shall present a simple application of the functors $\pind_{U,V}$ and $\pres_{U,V}$ to the representation theory of the \emph{Iwahori subgroups}
\[
I_n = I_n(\O) = \{ g\in \GL_n(\O)\ |\ g\textrm{ is upper-triangular modulo }\germ p\} 
\]
We shall relate the representations of $I_n$ to representations of its block-diagonal subgroups. Before stating the main result let us establish some notation (borrowed from \cite{Bernstein-Zelevinsky}) for these subgroups.

Let $\mathcal P_n$  denote the set of compositions (also called ordered partitions) of $n$: an element $\alpha\in\mathcal P_n$ is thus an  ordered tuple of positive integers $(\alpha_1,\alpha_2,\ldots,\alpha_m)$ having $\sum \alpha_i = n$. The \emph{blocks} of $\alpha$ are  the subsets 
\[
b_1(\alpha) = \{1,\ldots,\alpha_1\},\quad  b_2(\alpha)=\{\alpha_1+1,\ldots,\alpha_1+\alpha_2\},\quad \textrm{etc.}
\]
of $\{1,\ldots,n\}$. We shall usually write $n$, instead of $(n)$, for the composition with one block. 

The set $\mathcal P_n$ is partially ordered by refinement: $\alpha\leq \beta$ if each block of $\beta$ is a union of blocks of $\alpha$. This partial order makes $\mathcal P_n$ into a lattice, the greatest lower bound $\alpha\wedge\beta$ of two compositions being the composition whose blocks are the nonempty intersections $b_i(\alpha)\cap b_j(\beta)$ of the blocks of $\alpha$ and $\beta$. We also have an associative order-preserving product 
\[
\mathcal P_n \times \mathcal P_m \to \mathcal P_{n+m}, \qquad (\alpha,\beta)\mapsto \alpha\cdot \beta
\]
given by concatenation.

Given a composition $\alpha\in \mathcal P_n$ we denote by 
\[
I_\alpha   = \{g\in I_n\ |\ g_{ij}=0 \textrm{ unless $i$ and $j$ lie in the same block of $\alpha$}\}
\]
the closed subgroup of $\alpha$-block-diagonal matrices in $I_n$. These groups are compatible with the concatenation product:
\begin{equation}\label{eq:Iw_concat}
I_{\alpha\cdot \beta} \cong I_{\alpha}\times I_{\beta}
\end{equation}
in an obvious way, and this gives an equivalence on smooth representations,
\[
\Rep(I_\alpha)\times \Rep(I_\beta) \xrightarrow[\cong]{(M_\alpha,M_\beta)\mapsto M_\alpha\otimes M_\beta} \Rep(I_{\alpha\cdot\beta}).
\]

We also consider the groups
\[
\begin{split}
U_\alpha &= \left\{g\in I_n\ \left|\ \ \begin{aligned}&g\textrm{ is upper-triangular};\ g_{ii}=1 \textrm{ for every $i$}; \textrm{ and}\\ &g_{ij}=0\textrm{ if $i\neq j$ and $i$ and $j$ lie in the same block of $\alpha$}  \end{aligned}\ \right. \right\}, ~\text{and}\ \\ V_\alpha &= U_\alpha^\transpose \cap I_n. 
\end{split}
\]

If $\beta$ is a second composition with $\alpha\leq \beta$, we define 
\[
U_\alpha^\beta = U_\alpha \cap I_\beta \quad \text{and}\quad V_\alpha^\beta= V_\alpha \cap I_\beta. 
\]
If $\alpha\leq \beta\in \mathcal P_n$ and $\gamma\leq \delta\in \mathcal P_m$, then 
the isomorphism $I_{\beta\cdot\delta}\cong I_\beta\times I_\delta$ of \eqref{eq:Iw_concat} restricts to isomorphisms 
\begin{equation}\label{eq:Iw_concat2}
U_{\alpha\cdot\gamma}^{\beta\cdot\delta} \cong U_\alpha^\beta\times U_\gamma^\delta \qquad \text{and}\qquad V_{\alpha\cdot\gamma}^{\beta\cdot\delta} \cong V_\alpha^\beta\times V_\gamma^\delta.
\end{equation}

 \begin{example} 
 If $\alpha=(2,1)$ and $\beta=(3)$, then 
 \[
 I_\alpha = \left[\begin{smallmatrix}\O^\times  & \O \\ \germ p & \O^\times & \\ & & \O^\times \end{smallmatrix}\right], \quad 
 U_\alpha^\beta =   \left[\begin{smallmatrix} 1 &  & \O \\ & 1 & \O \\ & & 1 \end{smallmatrix}\right],\quad 
 V_\alpha^\beta = \left[\begin{smallmatrix} 1 &  & \\  & 1 & \\ \germ p & \germ p & 1 \end{smallmatrix}\right]
\]
where the blanks indicate zeros. 
\end{example}

\begin{lemma}\label{lem:Iw-linalg}\quad 
\begin{enumerate}[\rm(1)]
\item For each pair of compositions $\alpha \leq \beta$ in $\mathcal P_n$, the triple $(U_\alpha^\beta, I_\alpha, V_\alpha^\beta)$ is an Iwahori decomposition of $I_\beta$.
\item For each triple of compositions $\alpha \leq \beta \leq \gamma$ one has 
\[
U_{\alpha}^\gamma = U_{\alpha}^\beta \ltimes U_{\beta}^\gamma \qquad \text{and} \qquad  V_{\alpha}^\gamma = V_{\alpha}^\beta \ltimes V_{\beta}^\gamma.
\]
\item For each pair of compositions $\alpha,\beta\in \mathcal P_n$ one has 
\[
U_{\alpha\wedge\beta}^\alpha = U_{\beta} \cap I_\alpha  \qquad \text{and}\qquad V_{\alpha\wedge\beta}^\alpha = V_\beta \cap I_\alpha. 
\]
\end{enumerate}
\end{lemma}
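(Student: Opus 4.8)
The three assertions of Lemma \ref{lem:Iw-linalg} are all statements about the internal structure of matrix groups over $\O$ that become visible once one keeps careful track of where zeros, $\O$-entries, and $\germ p$-entries sit relative to the block structures. My plan is to reduce each part to explicit linear algebra with these "entry-pattern" descriptions, using the congruence kernel $K_1=\ker(I_n\to I_n(\O_1))$ as the candidate for the small open normal subgroup required by Definition \ref{def:vI}(2), and the principal congruence subgroups $K_\ell$ as the family in Definition \ref{def:vI}(2).

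\emph{Part (1).} First I would write down the entry patterns: a matrix $g\in I_\beta$ has $\O^\times$ on the diagonal, $\O$ in the positions $(i,j)$ with $i<j$ lying in a common block of $\beta$, $\germ p$ in the positions $(i,j)$ with $i>j$ in a common block of $\beta$, and $0$ elsewhere inside $I_\beta$ (i.e.\ for $i,j$ in distinct blocks of $\beta$). The subgroup $I_\alpha$ consists of those $g$ that moreover vanish off the $\alpha$-blocks; $U_\alpha^\beta$ (resp.\ $V_\alpha^\beta$) consists of the strictly-upper (resp.\ strictly-lower within $I_\beta$) matrices supported on positions straddling two different $\alpha$-blocks but lying in a common $\beta$-block, with $1$'s on the diagonal. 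That $I_\alpha$ normalises $U_\alpha^\beta$ and $V_\alpha^\beta$ is a direct check on conjugation of elementary-type matrices. The key step is showing the multiplication map $U_\alpha^\beta\times I_\alpha\times V_\alpha^\beta\to I_\beta$ is a bijective homeomorphism: injectivity follows because the three factors have disjoint support patterns (off-diagonal positions in distinct $\alpha$-blocks but common $\beta$-block split into an "upper" part going to $U_\alpha^\beta$ and a "lower" part going to $V_\alpha^\beta$, while $I_\alpha$ owns the within-$\alpha$-block positions), so that $g=uav$ forces $a$ to be the $\alpha$-block-diagonal part of $g$ and then $u,v$ are determined; surjectivity is the standard fact that any $g\in I_\beta$ can be written as (strictly upper)(block diagonal)(strictly lower) via a triangular LU-type decomposition — here one must check the arithmetic works modulo $\germ p$, i.e.\ that the $\germ p$-entries stay $\germ p$-entries and the $\O^\times$ diagonal entries stay units, which they do because the "lower" part of $I_\beta$ already lies in $\germ p$. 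Homeomorphy is automatic since everything is an algebraic map between profinite sets. Finally, condition (2) of Definition \ref{def:vI} holds because the congruence subgroups $K_\ell\cap I_\beta$ are open normal in $I_\beta$, arbitrarily small, and the same LU-decomposition argument, applied entrywise modulo $\germ p^\ell$, shows $(U_\alpha^\beta\cap K_\ell)\times(I_\alpha\cap K_\ell)\times(V_\alpha^\beta\cap K_\ell)\to K_\ell\cap I_\beta$ is a homeomorphism.

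\emph{Parts (2) and (3).} These are pure combinatorics of entry patterns, requiring no analysis. For (2): given $\alpha\le\beta\le\gamma$, a position $(i,j)$ with $i<j$ in a common $\gamma$-block but straddling two $\alpha$-blocks either straddles two $\beta$-blocks (contributing to $U_\beta^\gamma$) or lies within a single $\beta$-block (contributing to $U_\alpha^\beta$), and these two sets of positions are disjoint and exhaust $U_\alpha^\gamma$'s support; the semidirect-product (rather than just set-theoretic) decomposition $U_\alpha^\gamma=U_\alpha^\beta\ltimes U_\beta^\gamma$ follows because $U_\beta^\gamma$ is normalised by $U_\alpha^\beta$ (both being unipotent upper-triangular, and $U_\beta^\gamma$'s support is "further from the diagonal" in the appropriate sense — more precisely $U_\beta^\gamma$ is normal in the full group $U_\alpha^\gamma$ since conjugating a matrix supported on straddling-two-$\beta$-blocks positions by one supported on within-$\beta$-block positions stays in the former). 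The statement for $V$ is the transpose. For (3): by definition $U_{\alpha\wedge\beta}^\alpha$ is supported on positions $(i,j)$, $i<j$, that lie in a common $\alpha$-block but straddle two blocks of $\alpha\wedge\beta$; since the blocks of $\alpha\wedge\beta$ are the nonempty intersections $b_i(\alpha)\cap b_j(\beta)$, two indices in a common $\alpha$-block lie in distinct $(\alpha\wedge\beta)$-blocks precisely when they lie in distinct $\beta$-blocks, i.e.\ the support of $U_{\alpha\wedge\beta}^\alpha$ equals the support of $U_\beta\cap I_\alpha$; and the group structures match since both are unipotent upper-triangular with the same support. Again $V$ is the transpose.

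\emph{Main obstacle.} The only genuinely non-formal step is the LU-type decomposition in part (1): verifying that every $g\in I_\beta$ factors as $uav$ with $u\in U_\alpha^\beta$, $a\in I_\alpha$, $v\in V_\alpha^\beta$, and in particular that this can be done by an iterative elimination that respects the $\germ p$-adic constraints (unit pivots on the diagonal, $\germ p$-entries below). This is where one uses that $I_\beta$ is a "group of triangular type modulo $\germ p$" in an essential way; the rest is bookkeeping of index sets. I expect the cleanest writeup factors this through the already-proven Iwahori decomposition of $\GL_n$ and its congruence quotients (Example \ref{ex:vI-GLnO}, Example \ref{ex:vI-parahoric}), intersecting with $I_\beta$ via Observation \ref{ex:vI-subquotients}(1), rather than redoing elimination by hand.
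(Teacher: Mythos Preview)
Your proposal is correct and follows the same approach as the paper's (very brief) proof: elementary linear algebra for (1), and combinatorics of block supports for (2) and (3); in particular, your reduction of (3) to ``two indices in a common $\alpha$-block lie in distinct $(\alpha\wedge\beta)$-blocks iff they lie in distinct $\beta$-blocks'' is exactly what the paper records.

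One small correction to the injectivity argument in Part~(1): the claim that ``$g=uav$ forces $a$ to be the $\alpha$-block-diagonal part of $g$'' is not literally true, since multiplication mixes entries (already in the $2\times 2$ case the $(1,1)$-entry of $uav$ picks up a contribution from the off-diagonal entries). The standard argument is block-triangular: if $u_1a_1v_1=u_2a_2v_2$ then $(u_2^{-1}u_1)a_1=a_2(v_2v_1^{-1})$, the left side being $\alpha$-block upper-triangular and the right side $\alpha$-block lower-triangular, so both are block-diagonal; reading off the diagonal blocks gives $a_1=a_2$, and then $u_1=u_2$, $v_1=v_2$. Also, your proposed shortcut via Observation~\ref{ex:vI-subquotients}(1) would only yield a \emph{virtual} Iwahori decomposition of $I_\beta$, not the honest one you need here, so the direct elimination argument (or the reference to Bernstein--Zelevinsky) is actually required.
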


\begin{proof}
Part (1) is well-known, and can be established by elementary linear algebra as in \cite[3.11]{Bernstein-Zelevinsky}. Part (2) follows immediately from the Iwahori decompositions. Part (3) boils down to the (manifestly true) assertion that for integers $i$ and $j$ lying in the same block of $\alpha$, $i$ and $j$ lie in the same block of $\alpha\wedge\beta$ if and only if they lie in the same block of $\beta$. 
\end{proof}

\begin{definition}
For each pair of compositions $\alpha \leq \beta $ in $\mathcal P_n$, consider the functors 
\[
\pind_{\alpha}^\beta = \pind_{U_{\alpha}^\beta, V_{\alpha}^\beta} : \Rep(I_\alpha) \to \Rep(I_\beta) \qquad \text{and}\qquad 
\pres^\beta_\alpha = \pres_{U_{\alpha}^\beta, V_{\alpha}^\beta} : \Rep(I_\beta) \to \Rep(I_\alpha).
\]
\end{definition}

{The functors $\pind_{\alpha}^\beta$ and $\pres^\beta_\alpha$ are examples of parahoric induction as defined in \cite{Dat_parahoric}.} Theorems \ref{thm:pind-properties} and \ref{thm:pind-Iw} give some basic properties of these functors. Let us mention two that will be used below:

\begin{lemma}\label{lem:pind-Iw-props}
\begin{enumerate}[\rm(1)]
\item If $\alpha\leq \beta\leq \gamma$ are compositions of $n$, then
\[
\pind_{\alpha}^\gamma \cong \pind_{\beta}^\gamma \pind_{\alpha}^\beta \qquad \text{and}\qquad \pres^\gamma_\alpha \cong \pres^\beta_\alpha \pres^\gamma_\beta.
\]
\item If $\alpha\leq \beta\in \mathcal P_n$ and $\gamma\leq \delta \in \mathcal P_m$, then 
the diagram 
\[
\xymatrix@C=50pt{
\Rep(I_\alpha)\times \Rep(I_{\gamma}) \ar[r]^-{\pind_\alpha^\beta \times \pind_\gamma^\delta} \ar[d]_-{\otimes} & \Rep(I_\beta)\times \Rep(I_\delta) \ar[d]^-{\otimes} \\
\Rep(I_{\alpha\cdot\gamma}) \ar[r]^-{\pind_{\alpha\cdot\gamma}^{\beta\cdot\delta}} & \Rep(I_{\beta\cdot\delta}) 
}
\]
commutes up to natural isomorphism, as does the corresponding diagram of adjoint functors $\pres$.
\end{enumerate}
\end{lemma}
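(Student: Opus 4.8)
The plan is to prove both parts by exhibiting explicit isomorphisms of the relevant bimodules, since both $\pind$ and $\pres$ are defined as tensoring with bimodules of the form $\H(G)e_U e_V$ and $e_Ue_V\H(G)$. Part (1) is just a special case of Theorem \ref{thm:pind-properties}\eqref{item:pind-stages} (induction in stages), once one checks that the Iwahori decompositions fit together correctly. Indeed, by Lemma \ref{lem:Iw-linalg}(2) we have $U_{\alpha}^\gamma = U_{\alpha}^\beta \ltimes U_{\beta}^\gamma$ and $V_{\alpha}^\gamma = V_{\alpha}^\beta\ltimes V_{\beta}^\gamma$, and by Lemma \ref{lem:Iw-linalg}(1) the triple $(U_\beta^\gamma, I_\beta, V_\beta^\gamma)$ is an Iwahori decomposition of $I_\gamma$ while $(U_\alpha^\beta, I_\alpha, V_\alpha^\beta)$ is a virtual (in fact actual) Iwahori decomposition of $I_\beta$. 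Theorem \ref{thm:pind-properties}\eqref{item:pind-stages}, applied with $G=I_\gamma$, $L=I_\beta$, $U=U_\beta^\gamma$, $V=V_\beta^\gamma$ and $(X,H,Y)=(U_\alpha^\beta, I_\alpha, V_\alpha^\beta)$, then gives $\pind_{U_\beta^\gamma, V_\beta^\gamma}\circ \pind_{U_\alpha^\beta, V_\alpha^\beta}\cong \pind_{U_\beta^\gamma\rtimes U_\alpha^\beta,\, V_\alpha^\beta\ltimes V_\beta^\gamma}$; one checks that $U_\beta^\gamma\rtimes U_\alpha^\beta$ is the same subgroup as $U_\alpha^\gamma$ (up to the order of the semidirect factors, which does not affect the set or the idempotent $e_{U_\alpha^\gamma}$ since $e_{U_\alpha^\gamma} = e_{U_\alpha^\beta}e_{U_\beta^\gamma}$), and similarly for $V$. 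This yields $\pind_\alpha^\gamma\cong \pind_\beta^\gamma\pind_\alpha^\beta$. The statement for $\pres$ follows either by the same bimodule computation on the other side, or formally by taking adjoints using Theorem \ref{thm:pind-properties}\eqref{item:pind-Hom}, \eqref{item:pres-Hom} (uniqueness of adjoints).

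For part (2), I would again argue at the level of bimodules, using the concatenation isomorphisms already recorded in the excerpt. Under the algebra isomorphism $\H(I_{\beta\cdot\delta})\cong \H(I_\beta)\otimes_{\C}\H(I_\delta)$ coming from \eqref{eq:Iw_concat} (valid because $I_{\beta\cdot\delta}\cong I_\beta\times I_\delta$ as topological groups), and the analogous isomorphism for $I_{\alpha\cdot\gamma}$, the idempotents transform as $e_{U_{\alpha\cdot\gamma}^{\beta\cdot\delta}} = e_{U_\alpha^\beta}\otimes e_{U_\gamma^\delta}$ and $e_{V_{\alpha\cdot\gamma}^{\beta\cdot\delta}} = e_{V_\alpha^\beta}\otimes e_{V_\gamma^\delta}$ by \eqref{eq:Iw_concat2}. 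Hence the $\H(I_{\beta\cdot\delta})$-$\H(I_{\alpha\cdot\gamma})$ bimodule defining $\pind_{\alpha\cdot\gamma}^{\beta\cdot\delta}$ is
\[
\H(I_{\beta\cdot\delta}) e_{U_{\alpha\cdot\gamma}^{\beta\cdot\delta}} e_{V_{\alpha\cdot\gamma}^{\beta\cdot\delta}} \cong \bigl(\H(I_\beta)e_{U_\alpha^\beta}e_{V_\alpha^\beta}\bigr) \otimes_{\C} \bigl(\H(I_\delta)e_{U_\gamma^\delta}e_{V_\gamma^\delta}\bigr),
\]
which is exactly the external tensor product of the bimodules defining $\pind_\alpha^\beta$ and $\pind_\gamma^\delta$. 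Since tensoring over $\H(I_{\alpha\cdot\gamma})\cong \H(I_\alpha)\otimes_\C\H(I_\gamma)$ with an external tensor product of bimodules computes the external tensor product of the individual tensor products (for modules that are flat, which holds here as every smooth representation of a profinite group is a direct sum of representations of finite quotients, hence flat over the relevant Hecke algebra), the diagram commutes up to natural isomorphism. The corresponding statement for $\pres$ follows identically, replacing $\H(G)e_Ue_V$ by $e_Ue_V\H(G)$ throughout, or again by passing to adjoints.

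The only mild subtlety — and the closest thing to an obstacle — is bookkeeping: one must be careful that $U_{\alpha}^\gamma$ genuinely equals $U_\beta^\gamma\rtimes U_\alpha^\beta$ as a \emph{set} (not merely that both are semidirect products with the same factors in possibly different orders), so that the idempotents $e_{U_\alpha^\gamma}$ and $e_{U_\beta^\gamma}e_{U_\alpha^\beta}$ coincide in $\H(I_\gamma)$; this is immediate from Lemma \ref{lem:Iw-linalg}(2) together with the elementary fact that a product decomposition $K = AB$ of a group, with $A$ normalising $B$, gives $e_K = e_A e_B = e_B e_A$ when $A$ and $B$ are compact. Likewise in part (2) one must confirm that the isomorphism $\H(I_{\beta\cdot\delta})\cong\H(I_\beta)\otimes_\C\H(I_\delta)$ is one of algebras intertwining all the structure in sight, which is the content of the equivalence $\Rep(I_\beta)\times\Rep(I_\delta)\xrightarrow{\cong}\Rep(I_{\beta\cdot\delta})$ already noted in the excerpt. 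No deep input is needed beyond the results of Section \ref{sec:definition} and the linear-algebra statements of Lemma \ref{lem:Iw-linalg}.
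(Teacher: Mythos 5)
Your proof is correct and follows exactly the same route the paper takes: part (1) combines Lemma~\ref{lem:Iw-linalg}(2) with the induction-in-stages property, Theorem~\ref{thm:pind-properties}\eqref{item:pind-stages} (with adjointness giving the $\pres$ statement), and part (2) reduces to the compatibility of the concatenation isomorphisms \eqref{eq:Iw_concat} and \eqref{eq:Iw_concat2} at the level of Hecke algebras and bimodules. The paper's own proof is just a two-sentence citation of these facts, so you have simply supplied the details, including the correct (and worth-noting) check that $U_\beta^\gamma\rtimes U_\alpha^\beta$ and $U_\alpha^\beta\ltimes U_\beta^\gamma$ are the same subgroup $U_\alpha^\gamma$.
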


\begin{proof}
Part (1) follows from part (2) of Lemma \ref{lem:Iw-linalg} and part \eqref{item:pind-stages} of Theorem \ref{thm:pind-properties}. Part (2) follows from the compatibility of the decompositions \eqref{eq:Iw_concat} and \eqref{eq:Iw_concat2}.
\end{proof}

\begin{definition}
An irreducible representation $M$ of $I_n$ will be called \emph{primitive} if $\pres^n_\alpha(M) =0$ for every composition $\alpha\in \mathcal P_n$ except for $\alpha=n$. We denote the set of isomorphism classes of primitive irreducible representations by $\Prim(I_n)$.
\end{definition} 

The following lemma is key to our analysis of the functors $\pind$ and $\pres$.

\begin{lemma}\label{lem:Iw_pres}
Let $\alpha,\beta\in \mathcal P_n$ be compositions of $n$, and let $M$ be an irreducible representation of $I_n$. 
If $\pres^n_\alpha (M)$ and $\pres^n_\beta (M)$ are both nonzero, then so is $\pres^n_{\alpha\wedge\beta}(M) $.
\end{lemma}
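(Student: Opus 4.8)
The plan is to reduce the statement to a concrete assertion about the action of the relevant idempotents and then exploit the linear-algebra structure of the groups $U_\gamma^n$, $V_\gamma^n$. First I would recall from Theorem \ref{thm:pind-Iw}\eqref{item:pind-Iw-Hom} that, since $M$ is irreducible, $\pres^n_\gamma(M)\neq 0$ is equivalent to the operator $e_{V_\gamma^n}e_{U_\gamma^n}$ being nonzero on $M$, and in fact to $M^{U_\gamma^n}\neq 0$ together with the map $e_{V_\gamma^n}:M^{U_\gamma^n}\to M^{V_\gamma^n}$ being an isomorphism. So the hypothesis gives $M^{U_\alpha^n}\neq 0$ and $M^{U_\beta^n}\neq 0$, and the goal is to show $M^{U_{\alpha\wedge\beta}^n}\neq 0$ and that $e_{V_{\alpha\wedge\beta}^n}$ is injective on it.

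The key structural input is Lemma \ref{lem:Iw-linalg}: part (2) applied to $\alpha\wedge\beta\leq \alpha\leq n$ gives $U_{\alpha\wedge\beta}^n = U_{\alpha\wedge\beta}^\alpha \ltimes U_\alpha^n$, and part (3) gives $U_{\alpha\wedge\beta}^\alpha = U_\beta\cap I_\alpha = U_\beta^n\cap I_\alpha$. Thus $U_{\alpha\wedge\beta}^n$ is generated by $U_\alpha^n$ together with the piece of $U_\beta^n$ lying in the block-diagonal group $I_\alpha$; symmetrically $U_{\alpha\wedge\beta}^n = U_{\alpha\wedge\beta}^\beta\ltimes U_\beta^n$ with $U_{\alpha\wedge\beta}^\beta = U_\alpha^n\cap I_\beta$. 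The idea is then to relate $\pres^n_{\alpha\wedge\beta}$ to the composite $\pres^{\alpha}_{\alpha\wedge\beta}\circ\pres^n_\alpha$ (using Lemma \ref{lem:pind-Iw-props}(1), which is valid since $\alpha\wedge\beta\leq\alpha\leq n$): if $\pres^n_\alpha(M)$ is the irreducible $M^{U_\alpha^n}$, then I must show that applying $\pres^\alpha_{\alpha\wedge\beta}$ to it is nonzero, i.e.\ that $e_{V_{\alpha\wedge\beta}^\alpha}e_{U_{\alpha\wedge\beta}^\alpha}$ is nonzero on $M^{U_\alpha^n}$ — equivalently, by Lemma \ref{lem:Iw-linalg}(3), that $e_{V_\beta^n\cap I_\alpha}e_{U_\beta^n\cap I_\alpha}\neq 0$ on $M^{U_\alpha^n}$.

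To establish this last point I would work inside $M^{U_\alpha^n}$ using what the $\beta$-hypothesis gives. The natural route is a double-coset / Mackey-type decomposition of $e_{U_\alpha^n}\,e_{V_\beta^n}e_{U_\beta^n}$ (all as operators on $M$) broken up according to the double cosets of $U_\alpha^n$ and $U_\beta^n$ inside $I_n$, or more efficiently a direct computation: one writes $e_{U_\alpha^n}e_{U_\beta^n}$ and uses that $U_\alpha^n U_\beta^n$ generates $U_{\alpha\wedge\beta}^n$ (up to ordering, by the Iwahori-type factorizations), together with the fact that $e_{U_\alpha^n}$ and $e_{U_\beta^n}$ each commute with the relevant $\delta$-functions, to show $e_{U_\alpha^n}e_{U_\beta^n}e_{U_\alpha^n}$ is a positive multiple of $e_{U_{\alpha\wedge\beta}^n}$ when restricted appropriately. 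Concretely: since $M^{U_\alpha^n}$ and $M^{U_\beta^n}$ are both nonzero and $M$ is finite-dimensional and unitary, the self-adjoint idempotents $e_{U_\alpha^n}$, $e_{U_\beta^n}$ have overlapping ranges, and the product of the corresponding group-algebra idempotents (which is, up to the automorphism $z_M$ of Proposition \ref{prop:z}, an idempotent projecting onto $M^{U_\alpha^n}\cap M^{U_\beta^n}$) is nonzero precisely when $M^{\langle U_\alpha^n, U_\beta^n\rangle}=M^{U_{\alpha\wedge\beta}^n}\neq 0$; this forces $M^{U_{\alpha\wedge\beta}^n}\neq 0$. Then the isomorphism claim for $e_{V_{\alpha\wedge\beta}^n}$ follows from Theorem \ref{thm:pind-Iw}\eqref{item:pind-Iw-Hom} applied to $\alpha\wedge\beta$, since a nonzero $M^{U_{\alpha\wedge\beta}^n}$ already forces $\pres^n_{\alpha\wedge\beta}(M)\neq 0$ by that same dichotomy.

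The main obstacle I anticipate is the middle step: showing that the product idempotent $e_{U_\alpha^n}e_{U_\beta^n}$ is nonzero on $M$ exactly when $M$ has a vector fixed by the group $\langle U_\alpha^n,U_\beta^n\rangle$, and identifying that group with $U_{\alpha\wedge\beta}^n$. The group-identification part is pure linear algebra over $\O$ (the semidirect-product factorizations of Lemma \ref{lem:Iw-linalg}(2),(3) should give $\langle U_\alpha^n, U_\beta^n\rangle = U_{\alpha\wedge\beta}^n$ after checking closure — both $U_\alpha^n\subseteq U_{\alpha\wedge\beta}^n$ and $U_\beta^n\subseteq U_{\alpha\wedge\beta}^n$ trivially, and $U_{\alpha\wedge\beta}^n = U_{\alpha\wedge\beta}^\alpha U_\alpha^n$ with $U_{\alpha\wedge\beta}^\alpha = U_\beta^n\cap I_\alpha\subseteq U_\beta^n$ gives the reverse inclusion). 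The operator-theoretic part is where care is needed: $e_{U_\alpha^n}e_{U_\beta^n}$ is not itself an idempotent, so one should pass to the self-adjoint picture, or invoke Proposition \ref{prop:z} with the pair $(U_\alpha^n, U_\beta^n)$ in place of $(U,V)$, to conclude that the range of $e_{U_\alpha^n}e_{U_\beta^n}e_{U_\alpha^n}$ (which is the range of the corresponding genuine idempotent, up to scalar) is exactly $M^{U_\alpha^n}\cap M^{U_\beta^n}$, hence $M^{U_{\alpha\wedge\beta}^n}$, and this is nonzero iff both $M^{U_\alpha^n}$ and $M^{U_\beta^n}$ are — which may need a small argument using irreducibility of $M^{U_\alpha^n}$ as a representation of $I_\alpha$ together with the fact that $M^{U_\beta^n}\cap I_\alpha$-invariance, via Lemma \ref{lem:Iw-linalg}(3), lands one back in that irreducible module.
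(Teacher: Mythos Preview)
Your approach has two genuine gaps, both in the ``operator-theoretic'' middle step.

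First, the claim that a nonzero $M^{U_{\alpha\wedge\beta}^n}$ ``already forces $\pres^n_{\alpha\wedge\beta}(M)\neq 0$ by that same dichotomy'' is a misreading of Theorem~\ref{thm:pind-Iw}\eqref{item:pind-Iw-Hom}. The dichotomy there says $\pres_{U,V}(M)=0$ exactly when $\Hom_L(M^U,M^V)=0$; it does \emph{not} say $M^U\neq 0\Rightarrow \pres_{U,V}(M)\neq 0$. It is entirely possible that $M^U\neq 0$ while $e_V e_U$ vanishes on it. So even if you managed to show $M^{U_{\alpha\wedge\beta}^n}\neq 0$, you would still owe an argument that the map $e_{V_{\alpha\wedge\beta}^n}$ is nonzero on it---and your sketch never uses the $V$-side of the $\beta$-hypothesis at all.

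Second, the argument you propose for $M^{U_{\alpha\wedge\beta}^n}\neq 0$ itself does not hold. The idempotent $z_M^{-1}e_{U_\alpha^n}e_{U_\beta^n}$ from Proposition~\ref{prop:z} has image $e_{U_\alpha^n}e_{U_\beta^n}M\subseteq M^{U_\alpha^n}$, \emph{not} $M^{U_\alpha^n}\cap M^{U_\beta^n}$: already in a two-dimensional dihedral irreducible with two distinct non-orthogonal lines, $pq$ is nonzero while $\operatorname{ran}(p)\cap\operatorname{ran}(q)=0$. So ``both $M^{U_\alpha^n}$ and $M^{U_\beta^n}$ nonzero'' gives neither that their intersection is nonzero nor that $M^{\langle U_\alpha^n,U_\beta^n\rangle}\neq 0$. (Your group-theoretic identification $\langle U_\alpha^n,U_\beta^n\rangle=U_{\alpha\wedge\beta}^n$ is correct, but it is not enough.)

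The paper's proof avoids both problems by never separating the $U$- and $V$-sides. It writes $M\cong\pind^n_\alpha(N)$ with $N=\pres^n_\alpha(M)$, so that $\pres^n_\beta(M)$ becomes $e_{U_\beta}e_{V_\beta}\H(I_n)e_{U_\alpha}e_{V_\alpha}e_{U_\alpha}e_{V_\alpha}\otimes_{\H(I_\alpha)}N$, and then uses the factorisations of Lemma~\ref{lem:Iw-linalg} to rewrite the idempotent string: the identities $e_{V_\beta}e_{U_\alpha}e_{V_\alpha}=e_{V_\beta}e_{U_\alpha}e_{V_{\alpha\wedge\beta}}$ and $e_{U_\beta}e_{V_\beta}e_{U_\alpha}=e_{U_{\alpha\wedge\beta}}e_{V_\beta}e_{U_\alpha}$ (each coming from sliding a piece $e_{V_{\alpha\wedge\beta}^\alpha}$ or $e_{U_{\alpha\wedge\beta}^\beta}$ past the commuting factor) sandwich the nonzero subspace between $e_{U_{\alpha\wedge\beta}}$ and $e_{V_{\alpha\wedge\beta}}$. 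The point is that the $V_\beta$-idempotent is what carries information from the $\beta$-hypothesis across to produce an $e_{V_{\alpha\wedge\beta}}$; working only on the $U$-side throws this away.
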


\begin{proof}
Since the representation $M$ is irreducible and smooth,  it factors through the quotient map $I_n(\O) \to I_n(\O/\germ p^{\ell})$ for some $\ell$. The functors $\pind$ and $\pres$ commute with inflation (Theorem \ref{thm:pind-properties}\eqref{item:pind-finite}), and so we may replace $\O$ by $\O/\germ p^{\ell}$ and assume throughout the proof that $I_n$ is a finite group. 

We know that $N\coloneq \pres^n_{\alpha} (M)$ is nonzero. 
Therefore, up to isomorphism, we can write
\[
M = \pind^n_{\alpha} (N) = \H (I_n) e_{U_{\alpha}}e_{V_{\alpha}}\ot_{\H(I_{\alpha})} N= \H(I_n)e_{U_{\alpha}} e_{V_{\alpha}} e_{U_{\alpha}} e_{V_{\alpha}}\ot_{\H(I_{\alpha})} N.
\]
(In the last equality we used Proposition \ref{prop:z}.)

We know that the subspace 
\begin{equation}\label{eqn:res.n.beta.M}
\pres^n_{\beta}(M) = e_{U_{\beta}}e_{V_{\beta}}\H (I_n) e_{U_{\alpha}}e_{V_{\alpha}}e_{U_{\alpha}}e_{V_{\alpha}}\ot_{\H(I_{\alpha})} N
\end{equation}
of $M$ is nonzero. By part (2) of Lemma \ref{lem:Iw-linalg} we know that each element of $V_{\alpha}\subseteq V_{\alpha\wedge\beta}$ can be written as the product of an element of $V_{\beta}$ with an element of 
$V^{\beta}_{\alpha\wedge\beta}= V_{\alpha}\cap I_{\beta}$. Therefore, using the Iwahori decomposition of $I_n$ with respect to $\alpha$, we get  that $I_n=V_\alpha I_\alpha U_\alpha=V_\beta V^{\beta}_{\alpha\wedge\beta} I_\alpha U_\alpha$, which allows us to replace $\H(I_n)$ by $\H(V^{\beta}_{\alpha\wedge\beta})$ in \eqref{eqn:res.n.beta.M} and write   
\[
\begin{split}
\pres^n_{\beta}(M)&=e_{U_{\beta}}e_{V_{\beta}}\H (V^{\beta}_{\alpha\wedge\beta}) e_{U_{\alpha}}e_{V_{\alpha}} e_{U_{\alpha}} e_{V_{\alpha}}\ot_{\H(I_{\alpha})} N \\
&= \H (V^{\beta}_{\alpha\wedge\beta}) e_{U_{\beta}} e_{V_{\beta}} e_{U_{\alpha}} e_{V_{\alpha}} e_{U_{\alpha}} e_{V_{\alpha}} \ot_{\H(I_{\alpha})} N,
\end{split}
\]
where the second equality holds because the elements of $\H(V^{\beta}_{\alpha\wedge\beta})\subset \H(I_{\beta})$ commute with $e_{U_{\beta}}$ and $e_{V_{\beta}}$.
So we see that $\pres^n_\beta(M)$ is generated as a representation of $I_\beta$ by its subspace
\begin{equation}\label{eq:Iw_pres_pf_subspace}
e_{U_{\beta}} e_{V_{\beta}} e_{U_{\alpha}} e_{V_{\alpha}} e_{U_{\alpha}} e_{V_{\alpha}} \ot_{\H(I_{\alpha})} N
\end{equation}
and hence that this subspace is nonzero.

We now write $e_{V_{\beta}} = e_{V_{\beta}}e_{V_{\alpha\wedge\beta}^{\alpha}}$. We use the fact that elements of $\H(V_{\alpha\wedge\beta}^{\alpha})\subset \H(I_{\alpha})$ commute with $e_{U_{\alpha}}$ 
and that $e_{V_{\alpha\wedge\beta}^\alpha}e_{V_{\alpha}} = e_{V_{\alpha\wedge\beta}}$ (by part (2) of Lemma \ref{lem:Iw-linalg}), to obtain 
\[
e_{V_\beta} e_{U_\alpha} e_{V_\alpha} = e_{V_\beta}e_{V_{\alpha\wedge\beta}^{\alpha}}e_{U_\alpha} e_{V_\alpha} =
e_{V_\beta} e_{U_\alpha} e_{V_{\alpha\wedge\beta}^{\alpha}} e_{V_\alpha} = 
e_{V_\beta} e_{U_\alpha} e_{V_{\alpha\wedge\beta}}.
\]
A similar argument shows that $e_{U_\beta} e_{V_\beta} e_{U_\alpha} = e_{U_{\alpha\wedge\beta}} e_{V_\beta} e_{U_\alpha}$, and so the subspace \eqref{eq:Iw_pres_pf_subspace} is equal to 
\[
e_{U_{\alpha\wedge\beta}} e_{V_\beta} e_{U_\alpha}  e_{V_{\alpha\wedge\beta}} e_{U_{\alpha}} e_{V_{\alpha}} \ot_{\H(I_{\alpha})} N.
\]
This non-zero subspace of $M$ is contained in the subspace 
\[
\begin{aligned}
e_{U_{\alpha\wedge\beta}} \H(I_n) e_{V_{\alpha\wedge\beta}} e_{U_{\alpha}} e_{V_{\alpha}} \ot_{\H(I_{\alpha})} N & = 
e_{U_{\alpha\wedge\beta}} \H(I_{\alpha\wedge\beta})  e_{V_{\alpha\wedge\beta}} e_{U_{\alpha}} e_{V_{\alpha}} \ot_{\H(I_{\alpha})} N \\
& = e_{U_{\alpha\wedge\beta}} e_{V_{\alpha\wedge\beta}} e_{U_{\alpha}} e_{V_{\alpha}} \ot_{\H(I_{\alpha})} N,
\end{aligned}
\]
where  we have used the Iwahori decomposition of $I_n$ with respect to $\alpha\wedge\beta$, and the inclusion $I_{\alpha\wedge\beta}\subseteq I_\alpha$. But this last nonzero subspace of $M$ is exactly $\pres^n_{\alpha\wedge\beta}(M)$, so we are done. 
\end{proof}

Let us now present the main results of this section:

\begin{theorem}\label{thm:Iw_primitive}
Let $ M $ be an irreducible representation of the Iwahori subgroup $I_n\subset \GL_n(\O)$. There is a unique {composition} $\alpha=(\alpha_1,\ldots,\alpha_m)$ of $n$, and unique primitive irreducible representations $ M _i\in \Prim(I_{\alpha_i})$, such that 
\[
 M  \cong \pind_\alpha^n ( M _1\otimes\cdots\otimes  M _m).
\]
\end{theorem}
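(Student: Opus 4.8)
The plan is to prove existence by induction on $n$, extracting the finest composition for which a restriction functor does not vanish. Given an irreducible $M\in\Rep(I_n)$, consider the set $\mathcal S(M)=\{\alpha\in\mathcal P_n\ |\ \pres^n_\alpha(M)\neq 0\}$. This set is nonempty (it contains $n$), and by Lemma~\ref{lem:Iw_pres} it is closed under the meet operation $\wedge$; since $\mathcal P_n$ is a finite lattice, $\mathcal S(M)$ has a unique minimal element, call it $\alpha=(\alpha_1,\ldots,\alpha_m)$. Set $N\coloneq\pres^n_\alpha(M)$, a nonzero representation of $I_\alpha$; since $(U_\alpha^n,I_\alpha,V_\alpha^n)$ is an Iwahori decomposition of $I_n$ by Lemma~\ref{lem:Iw-linalg}(1), part~\eqref{item:pind-Iw-irr} of Theorem~\ref{thm:pind-Iw} forces $N$ to be irreducible, and part~\eqref{item:pind-Iw-pindpres} of the same theorem gives $M\cong\pind^n_\alpha(N)$. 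Under the equivalence $\Rep(I_\alpha)\cong\Rep(I_{\alpha_1})\times\cdots\times\Rep(I_{\alpha_m})$ coming from \eqref{eq:Iw_concat}, write $N\cong M_1\otimes\cdots\otimes M_m$ with each $M_i\in\Irr(I_{\alpha_i})$. It remains to check that each $M_i$ is primitive. If some $M_i$ were not primitive, there would be a proper refinement $\gamma_i<(\alpha_i)$ of the $i$-th block with $\pres^{(\alpha_i)}_{\gamma_i}(M_i)\neq 0$; concatenating $\gamma_i$ with the identity compositions on the other blocks produces $\beta\in\mathcal P_n$ with $\beta<\alpha$, and Lemma~\ref{lem:pind-Iw-props}(2) together with Lemma~\ref{lem:pind-Iw-props}(1) (writing $\pres^n_\beta=\pres^\alpha_\beta\pres^n_\alpha$) shows $\pres^n_\beta(M)\neq 0$, contradicting minimality of $\alpha$ in $\mathcal S(M)$.

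\textbf{Uniqueness.} Suppose $M\cong\pind^n_\alpha(M_1\otimes\cdots\otimes M_m)$ with all $M_i$ primitive. The plan is to show that $\alpha$ is forced to be the minimal element of $\mathcal S(M)$, and that $M_1\otimes\cdots\otimes M_m$ is then forced to be $\pres^n_\alpha(M)$. By Theorem~\ref{thm:pind-Iw}\eqref{item:pind-Iw-prespind} we have $\pres^n_\alpha(M)\cong M_1\otimes\cdots\otimes M_m$, so $\alpha\in\mathcal S(M)$; I must argue no strictly finer composition lies in $\mathcal S(M)$. Suppose $\beta\in\mathcal S(M)$. By Lemma~\ref{lem:Iw_pres}, $\alpha\wedge\beta\in\mathcal S(M)$ as well, so $\pres^n_{\alpha\wedge\beta}(M)\neq 0$; since $\alpha\wedge\beta\leq\alpha$, Lemma~\ref{lem:pind-Iw-props}(1) gives $\pres^n_{\alpha\wedge\beta}(M)\cong\pres^\alpha_{\alpha\wedge\beta}\pres^n_\alpha(M)\cong\pres^\alpha_{\alpha\wedge\beta}(M_1\otimes\cdots\otimes M_m)$. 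Now $\alpha\wedge\beta\leq\alpha$ means that within each block $b_i(\alpha)$ the composition $\alpha\wedge\beta$ restricts to some composition $\gamma_i$ of $\alpha_i$, and by the block-compatibility of the $\pres$ functors (Lemma~\ref{lem:pind-Iw-props}(2)) we get $\pres^\alpha_{\alpha\wedge\beta}(M_1\otimes\cdots\otimes M_m)\cong\bigotimes_i\pres^{(\alpha_i)}_{\gamma_i}(M_i)$. For this tensor product to be nonzero, each factor $\pres^{(\alpha_i)}_{\gamma_i}(M_i)$ must be nonzero; but $M_i$ is primitive, so $\gamma_i=(\alpha_i)$ for every $i$, i.e. $\alpha\wedge\beta=\alpha$, i.e. $\alpha\leq\beta$. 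Hence $\alpha$ is the unique minimal element of $\mathcal S(M)$, so it is determined by $M$; and then $M_1\otimes\cdots\otimes M_m\cong\pres^n_\alpha(M)$ is determined, whence the individual $M_i$ are determined by the uniqueness of the tensor decomposition in $\Rep(I_\alpha)\cong\prod_i\Rep(I_{\alpha_i})$.

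\textbf{Main obstacle.} The conceptual heart is entirely contained in Lemma~\ref{lem:Iw_pres}: the lattice-theoretic argument above only works because $\mathcal S(M)$ is closed under $\wedge$, which makes the minimum well-defined and, in the uniqueness half, lets me compare an arbitrary $\beta\in\mathcal S(M)$ with $\alpha$ via $\alpha\wedge\beta$. Granting that lemma, the remaining steps are bookkeeping: correctly matching up the combinatorics of refinement of compositions with the block-decompositions \eqref{eq:Iw_concat}, \eqref{eq:Iw_concat2} of the groups $I_\alpha$, $U_\alpha^\beta$, $V_\alpha^\beta$, and invoking the formal properties of $\pind$ and $\pres$ (irreducibility preservation, the unit/counit isomorphisms of Theorem~\ref{thm:pind-Iw}\eqref{item:pind-Iw-prespind}--\eqref{item:pind-Iw-pindpres}, transitivity and tensor-compatibility from Lemma~\ref{lem:pind-Iw-props}). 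The one point that needs a little care is the claim that a refinement $\beta\leq\alpha$ restricts, block by block of $\alpha$, to a composition $\gamma_i$ of $\alpha_i$ with $\beta=\gamma_1\cdots\gamma_m$ in the concatenation product; this is immediate from the definition of the refinement order but must be stated cleanly so that the tensor-factorisation of $\pres^\alpha_\beta$ via Lemma~\ref{lem:pind-Iw-props}(2) can be applied.
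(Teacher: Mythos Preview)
Your proof is correct and follows essentially the same approach as the paper: both take $\alpha$ to be the minimum of the set $\{\alpha\in\mathcal P_n\mid \pres^n_\alpha(M)\neq 0\}$ (well-defined by Lemma~\ref{lem:Iw_pres}), read off the primitive factors from $\pres^n_\alpha(M)$, and deduce uniqueness from the same minimality together with the block-factorisation of $\pres$. One cosmetic quibble: you announce ``induction on $n$'' but never actually induct---the argument is direct, so that phrase should be dropped.
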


\begin{proof}
First note the following consequence of part (2) of Lemma \ref{lem:pind-Iw-props}: if $ M _1,\ldots, M _m$ are irreducible representations of $I_{\alpha_1},\ldots,I_{\alpha_m}$, then 
\begin{equation}\label{eq:Iw_prim} 
 M _i\textrm{ is primitive for all } i \quad \Longleftrightarrow \quad \pres^\alpha_\gamma( M _1\otimes\cdots\otimes M _m)=0\textrm{ for all } \gamma \lneq \alpha.
\end{equation}

Consider the set 
\[
\mathcal Q = \{\alpha\in \mathcal P_n\ |\ \pres^n_\alpha (M) \neq 0\},
\]
which is nonempty since it contains the {composition} $n$. Let $\alpha=(\alpha_1,\ldots,\alpha_m)$ be the greatest lower bound of $\mathcal Q$ in the lattice $\mathcal P_n$; Lemma \ref{lem:Iw_pres} implies that $\alpha\in \mathcal Q$. The (nonzero) irreducible representation $\pres^n_\alpha (M)$ of the group $I_\alpha$ decomposes uniquely as a tensor product 
\[
\pres^n_\alpha(M)  \cong \bigotimes_{i=1}^m  M _i
\]
 of irreducible representations of the factors $I_{\alpha_i}$ of $I_\alpha$ (cf. \eqref{eq:Iw_concat}). If $\gamma\lneq\alpha$ then 
\[
\pres^\alpha_\gamma\left( \bigotimes  M _i\right) \cong \pres^n_\gamma (M)  =0 
\]
by Lemma \ref{lem:pind-Iw-props} part (1) and the minimality of $\alpha$, and so all of the $ M _i$'s are primitive by \eqref{eq:Iw_prim}. Since by part \eqref{item:pind-Iw-pindpres} of Theorem \ref{thm:pind-Iw} we have ${M  \cong \pind_\alpha^n\pres^n_\alpha (M)}$, we are done with the \lq existence\rq~part of the proof.

The uniqueness follows from \eqref{eq:Iw_prim}: if $\pres^n_\beta (M)  \cong N_1\otimes\cdots\otimes N_\ell$, where the $N_i$ are all primitive, then we must have $\beta=\alpha$ by minimality, and then $N_i\cong M_i$ for each $i$ by the uniqueness of the tensor product decomposition.
\end{proof}

Lemma \ref{lem:Iw_pres} also implies the following simple formula for the composition of induction and restriction:

\begin{proposition}\label{prop:Iw_ri}
For all $\alpha,\beta \in \mathcal P_n$ and all $ M \in \Irr(I_\alpha)$  one has 
\[
\pres^n_\beta \pind_{\alpha}^n (M)  \cong \pind_{\alpha\wedge\beta}^\beta \pres^\alpha_{\alpha\wedge\beta} (M) .
\]
\end{proposition}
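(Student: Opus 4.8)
The plan is to prove the identity at the level of the bimodules which represent these functors, exactly as in the proofs of Theorems \ref{thm:pind-properties} and \ref{thm:C3}. Since both sides commute with inflation (Theorem \ref{thm:pind-properties}\eqref{item:pind-finite}) and with direct sums, and since every irreducible $M$ is inflated from some $I_n(\O/\germ p^\ell)$, we may reduce to the case that $I_n$ is finite and all Haar measures are counting measures. The functor $\pres^n_\beta \pind^n_\alpha$ is then tensor product with the $\H(I_\beta)$-$\H(I_\alpha)$ bimodule $e_{U_\beta} e_{V_\beta}\,\H(I_n)\,e_{U_\alpha}e_{V_\alpha}$, while $\pind_{\alpha\wedge\beta}^\beta \pres^\alpha_{\alpha\wedge\beta}$ is tensor product with $\H(I_\beta)\,e_{U_{\alpha\wedge\beta}^\beta}e_{V_{\alpha\wedge\beta}^\beta}\,e_{U_{\alpha\wedge\beta}^\alpha}e_{V_{\alpha\wedge\beta}^\alpha}\,\H(I_\alpha)$ (using $I_{\alpha\wedge\beta}\subseteq I_\alpha$ and $I_{\alpha\wedge\beta}\subseteq I_\beta$, and convolution over $\H(I_{\alpha\wedge\beta})$ to splice the two pieces together, just as in the proof of Theorem \ref{thm:pind-properties}\eqref{item:pind-stages}). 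So it suffices to construct an isomorphism of $\H(I_\beta)$-$\H(I_\alpha)$ bimodules
\[
e_{U_\beta} e_{V_\beta}\,\H(I_n)\,e_{U_\alpha}e_{V_\alpha} \;\cong\; \H(I_\beta)\,e_{U_{\alpha\wedge\beta}^\beta}e_{V_{\alpha\wedge\beta}^\beta}\,\H(I_{\alpha\wedge\beta})\,e_{U_{\alpha\wedge\beta}^\alpha}e_{V_{\alpha\wedge\beta}^\alpha}\,\H(I_\alpha).
\]

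The key mechanism is the collapsing of idempotents seen in the proof of Lemma \ref{lem:Iw_pres}. First I would insert a harmless factor, replacing $\H(I_n)e_{U_\alpha}e_{V_\alpha}$ by $\H(I_n)e_{U_\alpha}e_{V_\alpha}e_{U_\alpha}e_{V_\alpha}$ (this is legitimate because $e_{U_\alpha}e_{V_\alpha}e_{U_\alpha}e_{V_\alpha}$ and $e_{U_\alpha}e_{V_\alpha}$ represent the same bimodule up to isomorphism, by Proposition \ref{prop:z}, since $z_{\H(I_n)}^{-1}e_{U_\alpha}e_{V_\alpha}$ is idempotent). Then I would use the Iwahori decomposition $I_n = V_\alpha I_\alpha U_\alpha$ together with part (2) of Lemma \ref{lem:Iw-linalg} — which gives $V_\alpha = V_\beta\cdot V^\beta_{\alpha\wedge\beta}$ (after reordering) and $U_\alpha = U^\beta_{\alpha\wedge\beta}\cdot U_\beta$ — to rewrite $\H(I_n)$ on the left-hand side as supported on the much smaller set $V^\beta_{\alpha\wedge\beta} I_\alpha U^\beta_{\alpha\wedge\beta}$ inside $I_\beta$, absorbing the $V_\beta$ and $U_\beta$ into the $e_{V_\beta}$ and $e_{U_\beta}$ factors. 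After that, the identities $e_{V_\beta}e_{U_\alpha}e_{V_\alpha} = e_{V_\beta}e_{U_\alpha}e_{V_{\alpha\wedge\beta}}$ and $e_{U_\beta}e_{V_\beta}e_{U_\alpha}=e_{U_{\alpha\wedge\beta}}e_{V_\beta}e_{U_\alpha}$ — both established in the proof of Lemma \ref{lem:Iw_pres} from part (2) of Lemma \ref{lem:Iw-linalg} — let me replace the "large" idempotents $e_{U_\beta},e_{V_\beta}$ by the "small" ones $e_{U_{\alpha\wedge\beta}},e_{V_{\alpha\wedge\beta}}$. A final application of the Iwahori decomposition of $I_n$ with respect to $\alpha\wedge\beta$, namely $e_{U_{\alpha\wedge\beta}}\H(I_n)e_{V_{\alpha\wedge\beta}} = e_{U_{\alpha\wedge\beta}}\H(I_{\alpha\wedge\beta})e_{V_{\alpha\wedge\beta}}$, identifies the result with the right-hand side; and then $e_{U_{\alpha\wedge\beta}} = e_{U^\beta_{\alpha\wedge\beta}}e_{U^\alpha_{\alpha\wedge\beta}}$ and similarly for $V$ (again Lemma \ref{lem:Iw-linalg}(2)) reorganises the idempotents into the form displayed above.

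An alternative, perhaps cleaner, route avoids bimodule surgery entirely: one shows the two functors agree on irreducibles $M$ by Theorem \ref{thm:pind-Iw}. Given $M\in\Irr(I_\alpha)$, put $\tilde M = \pind^n_\alpha(M)\in\Irr(I_n)$. By Theorem \ref{thm:pind-Iw}\eqref{item:pind-Iw-prespind}, $\pres^\alpha_{\alpha\wedge\beta}\,\pres^n_\alpha(\tilde M) = \pres^n_{\alpha\wedge\beta}(\tilde M) = \pres^\alpha_{\alpha\wedge\beta}(M)$ by Lemma \ref{lem:pind-Iw-props}(1). If $\pres^n_\beta(\tilde M)=0$ then also $\pres^n_{\alpha\wedge\beta}(\tilde M)=0$ (apply $\pres$ again, using Lemma \ref{lem:pind-Iw-props}(1)), hence $\pres^\alpha_{\alpha\wedge\beta}(M)=0$, and then $\pind^\beta_{\alpha\wedge\beta}\pres^\alpha_{\alpha\wedge\beta}(M)=0$ too, so both sides vanish. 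If $\pres^n_\beta(\tilde M)\neq 0$, then since $\tilde M\cong\pind^n_\alpha(M)$ we know $\pres^n_\alpha(\tilde M)\neq 0$ as well, so by Lemma \ref{lem:Iw_pres} $\pres^n_{\alpha\wedge\beta}(\tilde M)\neq 0$, and by Theorem \ref{thm:pind-Iw}\eqref{item:pind-Iw-pindpres} one recovers $\pres^n_\beta(\tilde M)\cong\pind^\beta_{\alpha\wedge\beta}\pres^n_{\alpha\wedge\beta}(\tilde M)=\pind^\beta_{\alpha\wedge\beta}\pres^\alpha_{\alpha\wedge\beta}(M)$, using again Lemma \ref{lem:pind-Iw-props}(1) for the last equality. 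To upgrade this from an agreement on objects to a natural isomorphism of functors one would either work with a general (not necessarily irreducible) $M$ by decomposing into Clifford components, or simply observe that the bimodule computation above produces the natural transformation directly. The main obstacle is bookkeeping: getting the orders of the subgroups $U_\beta, U^\beta_{\alpha\wedge\beta}, U_\alpha$ etc.\ right in the factorisations $V_\alpha = V_\beta V^\beta_{\alpha\wedge\beta}$ and $U_\alpha = U^\beta_{\alpha\wedge\beta} U_\beta$ and keeping track of which idempotents commute past which group algebras — exactly the manipulations already carried out in the proof of Lemma \ref{lem:Iw_pres}, so no genuinely new difficulty should arise.
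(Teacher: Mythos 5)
Your ``alternative, perhaps cleaner'' route is, up to inessential rearrangement, exactly the paper's own proof: treat the zero and nonzero cases of $\pres^n_\beta\bigl(\pind^n_\alpha(M)\bigr)$ separately, feed $\pres^n_\alpha\pind^n_\alpha(M)\cong M$ and Lemma~\ref{lem:Iw_pres} into the transitivity of $\pres$ (Lemma~\ref{lem:pind-Iw-props}(1)), and close with $\pind\pres\cong\mathrm{id}$ on the support (Theorem~\ref{thm:pind-Iw}\eqref{item:pind-Iw-pindpres}). That argument is complete as it stands, and the concern you raise at the end about ``upgrading from an agreement on objects to a natural isomorphism of functors'' is misplaced: Proposition~\ref{prop:Iw_ri} asserts an isomorphism of representations for each fixed irreducible $M$, not a natural transformation, so nothing further is required. (The one cosmetic difference from the paper: you deduce $\pres^n_\beta(\tilde M)=0\Rightarrow\pres^n_{\alpha\wedge\beta}(\tilde M)=0$ directly from $\pres^n_{\alpha\wedge\beta}\cong\pres^\beta_{\alpha\wedge\beta}\pres^n_\beta$, while the paper phrases the same fact as a contrapositive of the Lemma~\ref{lem:Iw_pres} implication; both are fine.)

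Your first route — establishing the isomorphism directly at the level of the bimodules $e_{U_\beta}e_{V_\beta}\H(I_n)e_{U_\alpha}e_{V_\alpha}$ and $\H(I_\beta)e_{U^\beta_{\alpha\wedge\beta}}e_{V^\beta_{\alpha\wedge\beta}}e_{U^\alpha_{\alpha\wedge\beta}}e_{V^\alpha_{\alpha\wedge\beta}}\H(I_\alpha)$ — is a genuinely different approach, not the one the paper takes, and as sketched it has real gaps. First, identifying the bimodule representing $\pind^\beta_{\alpha\wedge\beta}\pres^\alpha_{\alpha\wedge\beta}$ with the displayed subspace of $\H(I_n)$ requires showing that the convolution map
$\H(I_\beta)e_{U^\beta_{\alpha\wedge\beta}}e_{V^\beta_{\alpha\wedge\beta}}\otimes_{\H(I_{\alpha\wedge\beta})}e_{U^\alpha_{\alpha\wedge\beta}}e_{V^\alpha_{\alpha\wedge\beta}}\H(I_\alpha)$ $\to$ $\H(I_n)$ is injective; unlike in the proof of Theorem~\ref{thm:pind-properties}\eqref{item:pind-stages}, where one tensor factor is a full group algebra, this is not automatic and would need a separate argument. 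Second, the asserted set equality $V_\alpha = V_\beta\cdot V^\beta_{\alpha\wedge\beta}$ is false as written — only the containment $V_\alpha\subseteq V_\beta\cdot V^\beta_{\alpha\wedge\beta}$ holds (cf.\ the careful phrasing in the proof of Lemma~\ref{lem:Iw_pres}: each element of $V_\alpha$ can be so factored, but the product set is $V_{\alpha\wedge\beta}$, not $V_\alpha$). These points can probably be repaired, and the resulting argument would prove the stronger statement that the functors agree on all of $\Rep(I_\alpha)$, not just on irreducibles; but since the proposition only claims the identity on irreducibles, the functor-level argument (your Route 2) is exactly the right level of generality and I would drop Route 1.
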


\begin{proof}
If $\pres^n_\beta(\pind_\alpha^n (M) )$ is nonzero, then---since $\pres^n_\alpha(\pind_\alpha^n M )\cong M $ is also nonzero---Lemma \ref{lem:Iw_pres} implies that 
\[
\pres^\alpha_{\alpha\wedge\beta} (M)  \cong \pres^n_{\alpha\wedge\beta}(\pind_\alpha^n (M) ) \neq 0.
\]
In other words, if $\pres^\alpha_{\alpha\wedge\beta} (M) =0$, then $\pres^n_\beta\pind_\alpha^n (M) =0$ too.

If $\pres^\alpha_{\alpha\wedge\beta} (M)  \neq 0$, then we can use Theorem \ref{thm:pind-Iw} and Lemma \ref{lem:pind-Iw-props}(1) to compute
\[
\pres^n_\beta \pind_\alpha^n  (M)  \cong 
\pres^n_\beta \pind_\alpha^n (\pind_{\alpha\wedge\beta}^\alpha\pres^\alpha_{\alpha\wedge\beta}  (M) ) \cong 
\pres^n_\beta \pind_{\alpha\wedge\beta}^n  \pres^\alpha_{\alpha\wedge\beta} (M)   \cong
\pres^n_\beta \pind_\beta^n (\pind_{\alpha\wedge\beta}^\beta \pres^\alpha_{\alpha\wedge\beta} (M) ) \cong
\pind_{\alpha\wedge\beta}^\beta \pres^\alpha_{\alpha\wedge\beta} (M) 
\]
as claimed. 
\end{proof}
Theorem \ref{thm:Iw_primitive} has the following corollary, which gives a neat description of the way the representations of all the groups $I_n$ (for $n\geq 0$) fit in together.
Namely, let $\mathcal{K}\coloneq \bigoplus_{n\geq 0} K_0\left({\Rep_f}(I_n)\right)$ denote the direct sum of the Grothendieck groups of the categories of {finite-dimensional smooth representations of the groups $I_n$},   with the convention that $I_0$ is the trivial group . The maps induced {on Grothendieck groups} by the functors 
\[
\Rep(I_n)\times \Rep(I_m) \to \Rep(I_{n+m}),\qquad (M_1,M_2)\mapsto \pind_{(n,m)}^{n+m}(M_1\otimes M_2)
\] 
equip $\mathcal{K}$ with a graded multiplication structure.
It follows from Lemma \ref{lem:pind-Iw-props} that this multiplication is associative. Since the irreducible representations of $I_n$ constitute a $\Z$-basis for $K_0({\Rep_f}(I_n))$, Theorem \ref{thm:Iw_primitive} implies the following result:
\begin{corollary}
The ring $\mathcal{K}$ is isomorphic to  
$
\Z\left\langle \bigsqcup_{n\geq 0} \Prim(I_n) \right\rangle
$, the non-commutative polynomial algebra with indeterminates the primitive irreducible representations.\hfill\qed 
\end{corollary}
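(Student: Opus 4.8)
The plan is to show that the obvious ring homomorphism $\Phi\colon \Z\langle\bigsqcup_{n\geq 0}\Prim(I_n)\rangle \to \mathcal K$, sending a word $M_1\cdots M_k$ (with $M_i\in\Prim(I_{n_i})$) to the class of $\pind_{(n_1,\dots,n_k)}^{n_1+\cdots+n_k}(M_1\otimes\cdots\otimes M_k)$, is a ring isomorphism. First I would check that $\Phi$ is a well-defined ring homomorphism: on monomials it is the map just described, so one only needs that it respects the multiplication, i.e.\ that for a monomial $u=M_1\cdots M_k$ of total degree $n$ and a monomial $v=N_1\cdots N_\ell$ of total degree $m$ one has $\Phi(uv)=\Phi(u)\cdot\Phi(v)$ in $\mathcal K$. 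By the definition of the product on $\mathcal K$ this is the assertion that
\[
\pind_{(n_1,\dots,n_k,m_1,\dots,m_\ell)}^{n+m}\bigl(M_1\otimes\cdots\otimes M_k\otimes N_1\otimes\cdots\otimes N_\ell\bigr)
\cong
\pind_{(n,m)}^{n+m}\Bigl(\pind_{(n_1,\dots,n_k)}^{n}(M_1\otimes\cdots\otimes M_k)\otimes \pind_{(m_1,\dots,m_\ell)}^{m}(N_1\otimes\cdots\otimes N_\ell)\Bigr),
\]
which follows from Lemma \ref{lem:pind-Iw-props}: part (1) (induction in stages along $(n_1,\dots,n_k,m_1,\dots,m_\ell)\leq(n,m)\leq(n+m)$) reduces the right-hand side to $\pind$ along the refinement, and part (2) (compatibility with the concatenation product of compositions) identifies the external tensor product of two induced representations with the single induced representation of the tensor product. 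Associativity of the product on $\mathcal K$ is established the same way, using only Lemma \ref{lem:pind-Iw-props}(1), as already noted in the text.

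Next I would show $\Phi$ is bijective. Since both sides are free abelian groups — the left-hand side has the monomials in $\bigsqcup_n\Prim(I_n)$ as a $\Z$-basis, and $\mathcal K=\bigoplus_n K_0(\Rep_f(I_n))$ has $\bigsqcup_n\Irr(I_n)$ as a $\Z$-basis — it suffices to exhibit a bijection between these two bases and to check that $\Phi$ implements it, at least up to a unitriangular change of basis. Theorem \ref{thm:Iw_primitive} does exactly the first part: it asserts that every $M\in\Irr(I_n)$ is isomorphic to $\pind_\alpha^n(M_1\otimes\cdots\otimes M_m)$ for a \emph{unique} composition $\alpha=(\alpha_1,\dots,\alpha_m)$ of $n$ and unique $M_i\in\Prim(I_{\alpha_i})$; this is precisely a bijection
\[
\Irr(I_n)\;\longleftrightarrow\;\bigsqcup_{\alpha_1+\cdots+\alpha_m=n}\Prim(I_{\alpha_1})\times\cdots\times\Prim(I_{\alpha_m}),
\]
i.e.\ a bijection from $\Irr(I_n)$ to the degree-$n$ monomials. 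The point to verify is that $\Phi$ sends the monomial $M_1\cdots M_m$ to the class of the irreducible $\pind_\alpha^n(M_1\otimes\cdots\otimes M_m)$ in $K_0$, which it does by construction. Hence $\Phi$ carries the monomial basis bijectively onto the basis of irreducible classes, so $\Phi$ is an isomorphism of graded abelian groups, and combined with the previous paragraph, an isomorphism of graded rings.

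The only genuine subtlety, and the step I would be most careful about, is making sure the ring structure on the free algebra matches: one must confirm that $\pind_\alpha^n$ of an external tensor product of irreducibles is again irreducible (so that $\Phi$ lands in the basis rather than in a more complicated combination), but this is immediate from Theorem \ref{thm:pind-Iw}(1), since $(U_\alpha^n, I_\alpha, V_\alpha^n)$ is an Iwahori decomposition of $I_n$ by Lemma \ref{lem:Iw-linalg}(1). Given that, there is no triangularity issue at all — the correspondence is an honest bijection of bases, not merely unitriangular — and the corollary follows. In short: Lemma \ref{lem:pind-Iw-props} gives that $\Phi$ is a ring map, and Theorem \ref{thm:Iw_primitive} gives that it is a bijection on the distinguished bases; together these prove $\mathcal K\cong\Z\langle\bigsqcup_{n\geq0}\Prim(I_n)\rangle$. \hfill\qed
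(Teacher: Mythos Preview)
Your proof is correct and follows essentially the same approach as the paper, which simply notes that the irreducible representations form a $\Z$-basis for each $K_0(\Rep_f(I_n))$ and invokes Theorem \ref{thm:Iw_primitive} to conclude. You have spelled out more of the details---in particular the verification via Lemma \ref{lem:pind-Iw-props} that $\Phi$ is a ring homomorphism, and the observation via Theorem \ref{thm:pind-Iw}\eqref{item:pind-Iw-irr} that $\Phi$ carries monomials to irreducibles---but the argument is the same.
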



\appendix
\section{Centralisers for the adjoint action of $\Sp_4(\k)$}\label{appendix}

In this section we give proofs of the assertions in Section~\ref{subsec:Sp4_centralisers} regarding the centralisers $\overline{G}(y)$ and $\overline{L}(y)$ and the   spaces   $L\backslash G(y,\germ l)/G(y)$. Part of the computations here can be deduced from \cite{Srinivasan}, where the cardinalities of the centralisers of elements of $\Sp_4(\k)$ are computed, by using the Cayley map. As we require the precise structure of the centralisers we give a detailed computation below.

Fix $x\in M_2(\k)$, and let $y=\diag(x,-x^\transpose)$ be the corresponding element of $\germ l$. Clearly we have 
\[
\ol{L}(y) = \{ \diag(a,a^{-\transpose})\ |\ a\in M_2(\k)(x)\},
\]
where $M_2(\k)(x)$ denotes the centraliser of $x$ in the algebra $M_2(\k)$. Elements of $M_2(\k)$ are either scalar or regular (in the sense of admitting a cyclic vector in $\k^2$). We therefore have
\begin{equation*}\label{eqn:dichotomy}
M_2(\k)(x) =
  \begin{cases}
  M_2(\k)& \text{if $x$ is a scalar matrix}, \\
     \k[x] & \text{if $x$ is non-scalar}.
  \end{cases}
\end{equation*}

Turning to the centralisers in $\overline{G}=\Sp_4(\k)$, let us first note that the matrices $x$ and $-x^\transpose$ give rise to two $\k[T]$-module structures on $\k^2$, and that the centraliser of $y$ in $\GL_4(\k)$ is isomorphic, in an obvious way, to the automorphism group of the direct sum $\k^2_x\oplus \k^2_{-x^\transpose}$ of these modules. 

\begin{lemma}\label{lem:Sp_trace0}
For each $x\in M_2(\k)$ with $\trace(x)=0$, the centraliser $\GL_4(\k)(y)$ of $y=\diag(x,-x^\transpose)\in M_4(\k)$ inside $\GL_4(\k)$ is given by
\[
\GL_4(\k)(y) =  
\Sigma\cdot \GL_2\left( M_2(\k)(x) \right)\cdot \Sigma^{-1} 
\]
where $\sigma = \left[\begin{smallmatrix} & -1 \\ 1 & \end{smallmatrix}\right]\in \GL_2(\k)$ and $\Sigma=\left[\begin{smallmatrix} 1&  \\ & \sigma \end{smallmatrix}\right]\in \GL_4(\k)$. 
\end{lemma}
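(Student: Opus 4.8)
The proof hinges on the elementary identity
\[
\sigma x \sigma^{-1} = -x^{\transpose},
\]
valid for every $x\in M_2(\k)$ with $\trace(x)=0$, where $\sigma=\left[\begin{smallmatrix} & -1 \\ 1 & \end{smallmatrix}\right]$ (equivalently, $\sigma x^{\transpose}\sigma^{-1}$ equals the adjugate $\trace(x)1_2-x$ of $x$, which reduces to $-x$ in the trace-zero case). I would first record this by a one-line computation with $x=\left[\begin{smallmatrix} a & b \\ c & -a\end{smallmatrix}\right]$. Since $\Sigma=\diag(1_2,\sigma)$ satisfies $\Sigma\,\diag(x,x)\,\Sigma^{-1}=\diag(x,\sigma x\sigma^{-1})=\diag(x,-x^{\transpose})=y$, this identity exhibits $y$ as a $\GL_4(\k)$-conjugate of $x'\coloneq\diag(x,x)$, and therefore $\GL_4(\k)(y)=\Sigma\cdot\GL_4(\k)(x')\cdot\Sigma^{-1}$. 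The task is thus reduced to identifying the centraliser of the \emph{block-scalar} matrix $x'$.

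To do that, I would view $M_4(\k)$ as the matrix ring $M_2\!\left(M_2(\k)\right)$ of $2\times2$ block matrices with entries in $M_2(\k)$; then $x'=\left[\begin{smallmatrix} x & 0 \\ 0 & x\end{smallmatrix}\right]$, and a block matrix commutes with $x'$ precisely when each of its four entries commutes with $x$ in $M_2(\k)$, i.e.\ lies in the centraliser subalgebra $M_2(\k)(x)$ (recall that $M_2(\k)(x)$ is all of $M_2(\k)$ if $x$ is scalar and equals $\k[x]$ otherwise). Hence the centraliser of $x'$ in $M_4(\k)$ is the unital subalgebra $M_2\!\left(M_2(\k)(x)\right)$, and intersecting with $\GL_4(\k)$ gives exactly its group of units $\GL_2\!\left(M_2(\k)(x)\right)$: if $g\in\GL_4(\k)$ centralises $x'$ then so does $g^{-1}$, so $g$ lies in $M_2\!\left(M_2(\k)(x)\right)$ and is a unit there, while conversely a unit of this subalgebra is invertible in $M_4(\k)$. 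Substituting back into $\GL_4(\k)(y)=\Sigma\cdot\GL_4(\k)(x')\cdot\Sigma^{-1}$ gives the asserted formula.

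I do not expect any genuine obstacle here: once the identity $\sigma x\sigma^{-1}=-x^{\transpose}$ is in place the argument is purely formal. The only point that needs a moment's attention is the final one, namely that the centraliser of $x'$ taken inside the \emph{group} $\GL_4(\k)$ is precisely $\GL_2\!\left(M_2(\k)(x)\right)$ rather than merely those elements of that subalgebra that happen to be invertible in $M_4(\k)$; this causes no difficulty because $M_2\!\left(M_2(\k)(x)\right)$ is a unital subalgebra of $M_4(\k)$ containing $1_4$, so invertibility there is equivalent to invertibility in $M_4(\k)$. As an alternative route, matching the remark preceding the lemma, one can argue module-theoretically: $y$ makes $\k^4$ the $\k[T]$-module $\k^2_x\oplus\k^2_{-x^{\transpose}}$, the identity above says $\k^2_{-x^{\transpose}}\cong\k^2_x$, so this module is $(\k^2_x)^{\oplus 2}$, whose automorphism group is $\GL_2\!\left(\End_{\k[T]}(\k^2_x)\right)=\GL_2\!\left(M_2(\k)(x)\right)$; transporting the isomorphism $\k^4\cong(\k^2_x)^{\oplus 2}$ back through $\Sigma$ recovers the stated conjugate.
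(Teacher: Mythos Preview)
Your proof is correct and is essentially the same as the paper's: the paper records $\sigma x\sigma^{-1}=-x^{\transpose}$, interprets $\id\oplus\sigma$ as a $\k[T]$-module isomorphism $\k^2_x\oplus\k^2_x\to\k^2_x\oplus\k^2_{-x^{\transpose}}$, and conjugates $\GL_2(M_2(\k)(x))=\Aut(\k^2_x\oplus\k^2_x)$ by it. Your main argument is the block-matrix version of this, and the module-theoretic alternative you sketch at the end is exactly the paper's phrasing.
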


\begin{proof}
If $\trace(x)=0$ then $\sigma x \sigma^{-1} = -x^\transpose$, and so $\id\oplus\sigma:\k^2_x\oplus \k^2_x\to \k^2_x\oplus \k^2_{-x^\transpose}$ is a $\k[T]$-module isomorphism. Conjugating $\GL_2(M_2(\k)(x))=\Aut(\k^2_x\oplus \k^2_x)$ by this isomorphism gives the asserted description of $\GL_4(\k)(y)$. 
\end{proof}

We now proceed to the computation of  $\ol{L}(y)$, $\ol{G}(y)$ and $L\backslash G(y,\germ l)/G(y)$ in each of the cases listed in Section~\ref{subsec:Sp4_centralisers}. Note that $\trace(x)\neq 0$ in the \lq A\rq~cases, while $\trace(x)=0$ in the \lq B\rq~cases.

\medskip
\noindent{\bf Case 1:} $x=\diag(\mu,\mu)$. 

\smallskip\noindent 
We have $M_2(\k)(x)=M_2(\k)$, so $\ol{L}(y)=\ol{L}$. For $\ol{G}(y)$ and $L\backslash G(y,\germ l)/G(y)$ there are two subcases to consider:

\medskip
\noindent{\bf 1A:} $\mu\neq 0$.  
Since $x$ and $-x^\transpose$ share no eigenvalue, there are no nonzero morphisms between the $k[T]$ modules $\k^2_x$ and $\k^2_{-x^\transpose}$, and consequently we have  $\overline{G}(y)=\overline{L}(y)=\overline{L}$. 

We claim that $L\backslash G(y,\germ l)/G(y)=\{1,s,w\}$. This is equivalent to the claim that there are, up to conjugacy by $L$, three $G$-conjugates of $y$ lying in $\germ l$: namely $y$ itself, $s\cdot y$, and $w\cdot y$.  Indeed, any $G$-conjugate of $y$ in $\germ{l}$ must be split and semisimple, and must therefore be $L$-conjugate to a diagonal matrix $z$ whose entries form a permutation of the entries of $y$. Since $z$ lies in $\germ l$, and hence is of the form $\diag(z_1,z_2,-z_1,-z_2)$,  the only possibilities for $z$ are 
\[
\diag(\mu,\mu,-\mu,-\mu),\ \diag(-\mu,-\mu,\mu,\mu),\ \diag(-\mu,\mu,\mu,-\mu),\text{ or } \diag(\mu,-\mu,-\mu,\mu).
\]
The first three are equal to $y$, $s\cdot y$ and $w\cdot y$ respectively, while the last is $L$-conjugate to $w\cdot y$.

\medskip
\noindent {\bf 1B:} {$\mu=0$.} 
Obviously $\overline{H}(y)=\overline{H}$ for all  $H\subseteq G$, and $G(y,\germ l)=G(y)$.

\medskip
\noindent{\bf Case 2:} $x=\diag(\mu,\nu)$,  $\mu \neq  \nu$.

\smallskip\noindent
We have $M_2(\k)(x) = \left\{\left. \left[\begin{smallmatrix} \alpha & \\ & \beta \end{smallmatrix}\right]\ \right|\ \alpha,\beta\in \k\right\}\cong \k\oplus \k$, and so $\ol{L}(y)=\ol{D}$ is the group of diagonal matrices in $\ol{G}$. 
There are three subcases to consider:

\medskip
\noindent{\bf 2A:} $\nu\neq \pm\mu$, $\mu\neq 0\neq \nu$. 
Similar arguments to those of Case 1A show  that $\overline{G}(y)=\overline{L}(y)$, and that $L\backslash G(y,\germ l)/G(y) = \{1,s,w,wt\}$. 

\medskip
\noindent{\bf 2A$\!^\star$:} $\nu= 0$. 
The space $\Hom_{\k[T]}(\k^2_x,\k^2_{-x^\transpose})$ is one-dimensional, spanned by $p = \left[\begin{smallmatrix} 0 & \\ & 1\end{smallmatrix}\right]$, and so we have
\[
\GL_4(\k)(y) = \left\{\left. \begin{bmatrix} a & b \\ c& d\end{bmatrix}\ \right|\ a,d\in \overline{D},\ b,c\in \k p\right\}.
\]
Applying the condition $j^{-1}g^t j=g^{-1}$ defining $\Sp_4(\k)$ to a matrix of the above form, we find that 
\[
\ol{G}(y) = \left\{\left. \begin{bmatrix} \alpha_1  & & & \\ & \alpha_2 &  &\beta  \\ & & \delta_1 & \\ & \gamma & & \delta_2 \end{bmatrix}\in \GL_4(\k)\ \right|\ \alpha_1\delta_1=1=\alpha_2\delta_2-\beta\gamma \right\}\cong GL_1(\k)  \times \SL_2(\k).
\]
The Weyl group of $\SL_2(\k)$ with respect to its diagonal torus is generated by the matrix $\sigma$, and so the Weyl group of $\ol{G}(y)$ with respect to $\ol{D}$ is generated by the matrix 
\[ 
\begin{bmatrix} 1 & & & \\
& 0 & & -1 \\
& & 1 & \\ & 1 & & 0 \end{bmatrix} = t^{-1} w t.
\]
Up to $L$-conjugacy, the $G$-conjugates of $y$ lying in $\germ l$ are $y$ and $-y=w\cdot y$, and so $L\backslash G(y,\germ l)/G(y)=\{1,w\}$.
 
\medskip
\noindent{\bf 2B:} $\nu=-\mu$.
Let $z=\diag(-\mu,-\mu, \mu, \mu)$, so that $y=w\cdot z$. Then $\ol{G}(y)=\Ad_w(\ol{G}(z))$, and $\ol{G}(z)=\ol{L}$ as in Case 1A. Since $\Ad_{w}^{-1}(\ol{U})\cap \ol{L}=\ol{V'}$, and $\Ad_w^{-1}(\ol{V}) \cap \ol{L} = \ol{U'}$, we have $\ol{U}(y) = \Ad_w\left( \ol{V'}  \right)$ and $\ol{V}(y) = \Ad_w\left( \ol{U'}  \right)$. The argument of Case 1A gives $L\backslash G(y,\germ l)/G(y)=\{1,w,wt\}$.

 \medskip
 \noindent{\bf Case 3:} $x=\big[\begin{smallmatrix} \alpha & \beta \\  \mu\beta &  \alpha\end{smallmatrix}\big], \mu \in \k$ non-square, $\alpha\in \k$, $\beta\in \k^\times$.
 
 \smallskip\noindent
 In this case $M_2(\k)(x)=\left\{\big[\begin{smallmatrix} \alpha_1 & \beta_1 \\ \mu \beta_1 & \alpha_1 \end{smallmatrix}\big] \mid \alpha_1,\beta_1 \in \k\right\}$ is a quadratic field extension of $\k$, which we shall denote by $\k_2$. There are two subcases to consider. 
 
\medskip
\noindent{\bf 3A:} $\alpha \neq 0$.
Similar arguments to those of case  1A  (considering the eigenvalues in $\k_2$) show that   $\overline{G}(y)=\overline{L}(y)$, while $L\backslash G(y,\germ l)/G(y)=\{1,s\}$.
 
\medskip
\noindent{\bf 3B:}  $\alpha=0$.
Since $\trace(x)= 0$, Lemma \ref{lem:Sp_trace0} implies that
$\Ad_{\Sigma^{-1}}: \GL_4(\k)(y) \to \GL_2(\k_2)$
is an isomorphism. Observing that $\Ad_{\Sigma^{-1}}(j)= \left[\begin{smallmatrix} & \sigma^{-1} \\ \sigma^{-1} & \end{smallmatrix}\right]$, and that $\Sigma^\transpose = \Sigma^{-1}$, we find that the isomorphism $\Ad_{\Sigma^{-1}}$ sends $\overline{G}(y)$ to 
\[
\Ad_{\Sigma^{-1}}(\overline{G}(y)) = \{ g\in \GL_2(\k_2)\ |\ g^* g = 1\},
\]
where
\[
\begin{bmatrix} a & b \\ c & d\end{bmatrix}^* = \begin{bmatrix} \sigma d^\transpose \sigma^{-1} & \sigma b^\transpose \sigma^{-1} \\ \sigma c^\transpose \sigma^{-1} & \sigma a^\transpose \sigma^{-1} \end{bmatrix}.
\]
The map $a\mapsto \sigma a^\transpose \sigma^{-1}$ is a nontrivial $\k$-algebra  automorphism of $\k_2$, and so is equal to the nontrivial element $a\mapsto a^{|\k|}$ in $\operatorname{Gal}(\k_2/\k)$. 

Let $\overline{\k}$ denote an algebraic closure of $\k_2$. The above computations show that $\Ad_{\Sigma^{-1}}$ restricts to an isomorphism from $\overline{G}(y)$ to the (unitary) group   $\operatorname{GU}_2(\k)$ of fixed points   of the automorphism
\[
\GL_2(\overline{\k})\to \GL_2(\overline{\k}),\qquad  \begin{bmatrix} a & b \\ c & d  \end{bmatrix} \mapsto \begin{bmatrix} d^{|\k|} & b^{|\k|} \\ c^{|\k|} & a^{|\k|} \end{bmatrix}^{-1}.
\]
 The subgroup $\overline{L}(y)$ corresponds under this isomorphism to the {non-split rational} maximal torus of diagonal matrices {$\{\diag(a,a^{-|\k|}) \mid a \in \k_2^\times \}$}    in $\operatorname{GU}_2(\k)$, while $\overline{U}(y)$ and $\overline{V}(y)$ correspond to the unipotent radicals of  {rational} Borel subgroups of upper / lower triangular matrices. The Weyl group of $\operatorname{GU}_2(\k)$ with respect to its diagonal torus is generated by the matrix $\left[\begin{smallmatrix} & -1 \\ -1 &   \end{smallmatrix}\right]=\Ad_{\Sigma^{-1}}(s)$.
 
The argument of case 1A shows that all of the $G$-conjugates of $y$ lying in $L$ are already $L$-conjugate, and so we have $L\backslash G(y,\germ l)/G(y)=\{1\}$. 

\medskip
\noindent{\bf Case 4:} ${x=\left[\begin{smallmatrix} \mu & 1 \\ & \mu \end{smallmatrix}\right]}$.

\smallskip\noindent
We have $M_2(\k)(x)=  \left\{\left[\begin{smallmatrix} \alpha & \beta \\  & \alpha \end{smallmatrix}\right] \mid \alpha,\beta \in \k\right\} \cong \k[\epsilon]/(\epsilon^2)$. There are two subcases to consider.

\medskip
\noindent{\bf 4A:} $\mu\neq 0$.
Arguing as in case  1A  once again, we find that $\overline{G}(y)=\overline{L}(y)$, while 
$L\backslash G(y,\germ l)/G(y)= \{1,s\}$.

\medskip
\noindent{\bf 4B:} $\mu=0$.
Arguing as in case  3B, we find that the isomorphism 
\[
\Ad_{\Sigma^{-1}}: \GL_4(\k)(y) \to \GL_2(\k[x])
\]
of Lemma \ref{lem:Sp_trace0} restricts to an isomorphism between $\overline{G}(y)$ and the  group $Q\subset \GL_2(\k[x])$ of fixed points of the involution
\[
\GL_2(\k[x])\to \GL_2(\k[x]), \qquad \begin{bmatrix} a & b \\ c & d \end{bmatrix} \mapsto \begin{bmatrix} d^\# & b^\# \\ c^\# & a^\#\end{bmatrix}^{-1}
\]
where $\#$ denotes the $\k$-automorphism $x\mapsto -x$ of $\k[x]$. We have furthermore
\[
\begin{aligned}
 \Ad_{\Sigma^{-1}}(\overline{L}(y))& =\left\{\left. \begin{bmatrix} a & \\  & a^{-\#}\end{bmatrix}\in \GL_2(\k[x]) \ \right|\ a\in \k[x]^\times \right\}\eqcolon H, \\
 \Ad_{\Sigma^{-1}}(\overline{U}(y)) &=\left\{ \left. \begin{bmatrix} 1 & b \\ & 1 \end{bmatrix}\in \GL_2(\k[x]) \ \right| \ b\in x\k[x] \right\}\eqcolon X, \quad \text{and} \\
 \Ad_{\Sigma^{-1}}(\overline{V}(y))& = \left\{ \left. \begin{bmatrix} 1 & \\ c & 1 \end{bmatrix}\in \GL_2(\k[x]) \ \right| \ c\in x\k[x] \right\}\eqcolon Y.
\end{aligned}
\]
Let $S$ denote the two-element subgroup of $\ol{G}(y)$ generated by $s$, and let $R$ denote the subgroup $\Ad_{\Sigma^{-1}}(S)$ of $Q$; thus $R$ is the two-element group generated by $r=\Ad_{\Sigma^{-1}}(s)=\left[\begin{smallmatrix} & -1 \\ -1 & \end{smallmatrix}\right]$. 

The subgroups $X$ and $Y$ commute in $Q$, because $x^2=0$. Since $H$ normalises $X$ and $Y$, this implies that the product $XHY$ is a subgroup of $Q$, equal to $(X\times Y)\rtimes H$. 
Explicitly, 
\[
XHY = \left\{ \left. \begin{bmatrix} a & b \\ c & a^{-\#} \end{bmatrix}\ \right| \ a\in \k[x]^\times,\ b,c\in x\k[x]\right\},
\]
i.e. the group of $q\in Q$ such that $q$ is diagonal modulo $x$.

Now, for each $q\in Q$, the reduction of $q$ modulo $x$ is a fixed point of the involution
\[
\GL_2(\k)\to \GL_2(\k), \qquad \begin{bmatrix} a & b \\ c & d \end{bmatrix} \mapsto \begin{bmatrix} d & b \\ c & a \end{bmatrix}^{-1} 
\]
and so $q$ modulo $x$ is either of the form $\left[\begin{smallmatrix} a & \\ & a^{-1}\end{smallmatrix}\right]$   or $\left[\begin{smallmatrix} & b \\ b^{-1} & \end{smallmatrix}\right]$.
Thus the homomorphism 
\[
Q \to \{\pm 1\}, \qquad q\mapsto \det(q\text{ modulo } x)
\]
has kernel $XHY$, and is split by the homomorphism 
\[
\{\pm 1\}\to Q,\qquad -1\mapsto r.
\]
This gives a decomposition $Q  = \left( (X\times Y)\rtimes H \right) \rtimes R$. Since conjugation by $r$ preserves $H$ and permutes $X$ and $Y$, we may rewrite this decomposition as $Q= \left(X\times Y\right)\rtimes \left( H\rtimes R\right)$. Applying $\Ad_{\Sigma}$ gives 
\[
\ol{G}(y) = \left( \ol{U}(y)\times \ol{V}(y)\right) \rtimes \left(\ol{L}(y)\rtimes S\right).
\]

As in Case 3B we have $G\cdot y\cap \germ l = L\cdot y$, and so $L\backslash G(y,\germ l)/G(y)=\{1\}$.

\bibliographystyle{siam}
\bibliography{CMO}

\end{document}